\DeclareRobustCommand{\SkipTocEntry}[5]{}
\definecolor{blue}{rgb}{.255,.41,.884} 
\definecolor{red}{rgb}{1, 0, 0} 
\definecolor{green}{rgb}{.196,.804,.196} 
\definecolor{yellow}{rgb}{1,.648,0} 
\definecolor{pink}{rgb}{1,0.5,0.5}
\newtheorem{theorem}{Theorem}[section]
\newtheorem{lemma}[theorem]{Lemma}
\newtheorem{proposition}[theorem]{Proposition}
\newtheorem{corollary}[theorem]{Corollary}
\theoremstyle{definition}
\newtheorem{definition}[theorem]{Definition}
\newtheorem{example}[theorem]{Example}
\theoremstyle{remark}
\newtheorem{remark}[theorem]{Remark}
\newtheorem{problem}[theorem]{Problem}
\newcommand{\be}{\begin{equation}}
\newcommand{\ee}{\end{equation}}
\newcommand{\II}{{ I\hspace{-.8mm}I}}
\newcommand{\IIo}{\mathring{\!{ I\hspace{-.8mm} I}}{\hspace{.2mm}}}
\newcommand{\ba}{\begin{array}}
\newcommand{\ea}{\end{array}}
\newcommand{\beq}{\begin{eqnarray}}
\newcommand{\eeq}{\end{eqnarray}}
\newtheorem{lm}{lemma}
\newtheorem{thee}{theorem}
\newtheorem{proo}{proposition}
\newtheorem{co}{corollary}
\newtheorem{rem}{remark}
\newtheorem{deff}{definition}
\newcommand{\bd}{\begin{deff}}
\newcommand{\ed}{\end{deff}}
\newcommand{\bl}{\begin{lm}}
\newcommand{\el}{\end{lm}}
\newcommand{\bp}{\begin{proo}}
\newcommand{\ep}{\end{proo}}
\newcommand{\bt}{\begin{thee}}
\newcommand{\et}{\end{thee}}
\newcommand{\bc}{\begin{co}}
\newcommand{\ec}{\end{co}}
\newcommand{\brm}{\begin{rem}}
\newcommand{\erm}{\end{rem}}
\def\Cal{\mathcal}
\newcommand{\bS}{\mathbb{S}}
\newcommand{\newc}{\newcommand}
\let\ccdot.
\def\cmdot{\hbox to 2.5pt{\hss$\hh\cdot\hh$\hss}}
\newc{\aR}{\mbox{\boldmath{$ R$}}}
\newc{\aS}{\mbox{\boldmath{$ S$}}}
\newc{\aT}{\mbox{\boldmath{$ T$}}}
\newc{\aW}{\mbox{\boldmath{$ W$}}}
\newc{\aD}{\mbox{\boldmath{$ D$}}\hspace{-.2mm}}
\newc{\aK}{\mbox{\boldmath{$ K$}}}
\newc{\aL}{\mbox{\boldmath{$ L$}}}
\newcommand{\ce}{{\Cal E}}
\newcommand{\bT}{{\Bbb T}}
\newcommand{\Rho}{{\it P}}
\newcommand{\Ric}{{\it Ric}}
\newcommand{\Sc}{\it Sc}
\newcommand{\nn}[1]{(\ref{#1})}
\let\t=\tau
\newcommand{\J}{{\mbox{\it J}\hh}}
\newc{\obstrn}[2]{B^{#1}_{#2}}
\newcommand{\rpl}                         
{\mbox{$
\begin{picture}(12.7,8)(-.5,-1)
\put(0,0.2){$+$}
\put(4.2,2.8){\oval(8,8)[r]}
\end{picture}$}}
\newcommand{\lpl}                         
{\mbox{$
\begin{picture}(12.7,8)(-.5,-1)
\put(2,0.2){$+$}
\put(6.2,2.8){\oval(8,8)[l]}
\end{picture}$}}
\newc{\tensor}[1]{#1}
\newc{\Mvariable}[1]{\mbox{#1}}
\newc{\down}[1]{{}_{#1}}
\newc{\up}[1]{{}^{#1}}
\newc{\JulyStrut}{\rule{0mm}{6mm}}
\newc{\midtenPan}{\mbox{\sf S}}
\newc{\midten}{\mbox{\sf T}}
\newc{\midtenEi}{\mbox{\sf U}}
\newc{\ATen}{\mbox{\sf E}}
\newc{\BTen}{\mbox{\sf F}}
\newc{\CTen}{\mbox{\sf G}}
\def\sideremark#1{\ifvmode\leavevmode\fi\vadjust{\vbox to0pt{\vss
 \hbox to 0pt{\hskip\hsize\hskip1em
 \vbox{\hsize2cm\tiny\raggedright\pretolerance10000
  \noindent #1\hfill}\hss}\vbox to8pt{\vfil}\vss}}}
\numberwithin{equation}{section}
\newcommand{\hh}{{\hspace{.3mm}}}
\newcommand{\coker}{\operatorname{coker}}
\newcommand{\cc}{\boldsymbol{c}}
\renewcommand{\=}{\stackrel\Sigma =}
\newcommand{\sss}{\scriptscriptstyle}
\newcommand{\pdot}{{\textstyle\boldsymbol \cdot}\hspace{.05mm}}
\newcommand{\SSmidge}{{\hspace{-.2mm}}}
\renewcommand\geq{\geqslant}
\renewcommand\leq{\leqslant}
\newcommand{\ext}{{\rm d}}
\newcommand{\D}{\mathbf{L}}
\DeclareMathOperator{\Vol}{Vol}
\DeclareMathOperator{\Area}{Area}
\begin{document}
\subjclass[2010]{53A30, 53C40, 53C21, 53A55, 53B50, 53A10, 53C80}

\renewcommand{\today}{}
\title{Conformal Geometry of Embedded Manifolds with Boundary from 
Universal Holographic Formul\ae\  
}
\author{Cesar Arias${}^\flat$, A. Rod Gover${}^\sharp$ \&  Andrew Waldron${}^\natural$}

\address{${}^\flat$
  Departmento de Ciencias Fisicas\\ 
  Universidad Andres Bello\\ 
  Sazie 2212 piso 7, Santiago de Chile and
  the Riemann Center for Geometry and Physics, Leibniz Universit\"at Hannover, Appelstra\ss e 2, Hannover 30167 Germany } \email{cesar.arias@unab.cl}
 
\address{${}^\sharp$
  Department of Mathematics\\
  The University of Auckland\\
  Private Bag 92019\\
  Auckland 1142\\
  New Zealand,  and\\
  Mathematical Sciences Institute, Australian National University, ACT 
  0200, Australia} \email{gover@math.auckland.ac.nz}
  
  \address{${}^{\natural}$
  Center for Quantum Mathematics and Physics (QMAP)\\
  Department of Mathematics\\ 
  University of California\\
  Davis, CA95616, USA} \email{wally@math.ucdavis.edu}

\vspace{3pt}

\renewcommand{\arraystretch}{1}

\begin{abstract}

For an embedded conformal hypersurface with boundary,
we construct critical order local invariants and  their canonically associated differential operators.
These are obtained holographically in a construction that uses a singular Yamabe problem and a corresponding minimal hypersurface with boundary. 
They include an extrinsic $Q$-curvature for the boundary of the embedded conformal manifold and, for its interior, the $Q$-curvature and accompanying boundary  transgression curvatures. 
This gives universal formul\ae\ for extrinsic analogs of Branson $Q$-curvatures that simultaneously generalize the Willmore energy density, including the boundary transgression terms required for conformal invariance. It also gives extrinsic conformal Laplacian power type operators associated with all these curvatures.
 The construction also gives formul\ae\ for the divergent terms and anomalies in the volume and hyper-area asymptotics determined by minimal hypersurfaces having boundary at the conformal infinity.
A main feature is the development of a universal, distribution-based, boundary calculus for the treatment of these and related problems.

\vspace{10cm}

\noindent
{\sf \tiny Keywords: 
Conformal Geometry, Embedded Manifolds with Boundary, 
Extrinsic Laplacian Powers and Boundary Operators,
$Q$ and~$T$-transgression Curvatures,
Minimal Hypersurface Asymptotics,
 AdS/CFT,  Anomalies,  Renormalized Volume, 
     Conformally Compact, Yamabe problem, Willmore Energies with Boundary \color{black}}

\end{abstract}


\maketitle

\pagestyle{myheadings} \markboth{Arias, Gover \& Waldron}{Conformal geometry of embedded manifolds}

\newpage

\tableofcontents

\newcommand{\balpha}{{\bm \alpha}}
\newcommand{\balphas}{{\scalebox{.76}{${\bm \alpha}$}}}
\newcommand{\bnu}{{\bm \nu}}
\newcommand{\bnus}{{\scalebox{.76}{${\bm \nu}$}}}
\newcommand{\bnuss}{\hh\hh\!{\scalebox{.56}{${\bm \nu}$}}}

\newcommand{\bmu}{{\bm \mu}}
\newcommand{\bmus}{{\scalebox{.76}{${\bm \mu}$}}}
\newcommand{\bmuss}{\hh\hh\!{\scalebox{.56}{${\bm \mu}$}}}

\newcommand{\btau}{{\bm \tau}}
\newcommand{\btaus}{{\scalebox{.76}{${\bm \tau}$}}}
\newcommand{\btauss}{\hh\hh\!{\scalebox{.56}{${\bm \tau}$}}}

\newcommand{\bsigma}{{\bm \sigma}}
\newcommand{\bsigmas}{{{\scalebox{.8}{${\bm \sigma}$}}}}
\newcommand{\bbeta}{{\bm \beta}}
\newcommand{\bbetas}{{\scalebox{.65}{${\bm \beta}$}}}

\renewcommand{\bS}{{\bm {\mathcal S}}}
\newcommand{\bB}{{\bm {\mathcal B}}}
\newcommand{\bC}{{\bm {\mathcal C}}}

\renewcommand{\bT}{{\bm {\mathcal T}}}
\newcommand{\bM}{{\bm {\mathcal H}}}

\newcommand{\go}{{\mathring{g}}}
\newcommand{\nuo}{{\mathring{\nu}}}
\newcommand{\alphao}{{\mathring{\alpha}}}

\newcommand{\Ell}{\mathscr{L}}
\newcommand{\density}[1]{[g\, ;\, #1]}

\newcommand{\ceedot}{{\scalebox{2}{$\cdot$}}}

\newcommand{\Langle}{{\bm \langle}}
\newcommand{\Rangle}{{\bm \rangle}}
\newcommand{\Lodz}{\widehat{\mathbf L}}
\newcommand{\Dhat}{\mbox{\bf \DJ}}

\section{Introduction}

For closed, even dimension conformal manifolds, 
the study of Branson's $Q$-curvature invariant~\cite{BQ}
has been a major focus
in areas including
conformal geometry, geometric analysis and
physics, see for example~\cite{Chang-Qing,Mal,  Henningson,GrahamWitten,Anderson,FGQ,
Fefferman-Hirachi,
whatQ,Chang,
MalDj,
BaumJuhl,FGQJuhl}.
Partly motivating these studies is the result that  the integral of the $Q$-curvature over the manifold
 is a global conformal invariant.  The analog for a dimension $d-1\geq 2$ 
manifold $\widetilde\Sigma$ with boundary~$\partial \widetilde\Sigma=\Lambda$, is given by a $Q$-curvature--transgression pair $(\bm Q,\bm T)$ where 
$$\int_{\widetilde\Sigma}\bm Q + (d-2)\int_{\Lambda} \bm T\, ,$$
is then a global conformal invariant~\cite{Chang-Qing} (see for example~\cite{Chang,Ndi08,Case,GPnew}
for subsequent  $T$-curvature studies).
Any attempt to handle these objects using  standard Levi-Civita calculus is immediately frustrated by the fact that  (i) the formul\ae\  are extremely complicated and (ii) the indirect definition of the  $Q$-curvature. An aim of the current work is to solve this
 problem. In fact, we will treat a generalization.

 It has emerged recently 
 that there is a nice link between  $Q$-curvature and  the 
   Willmore energy 
  as well as its higher 
  dimensional analogs.
  This is in the form of a hierarchy where for conformally embedded hypersurfaces there is a curvature quantity, also denoted $Q$, that gives a Willmore-type energy density for hypersurfaces which
  specializes to the usual Branson $Q$-curvature
   for suitable even-dimensional embeddings. 
    When $\widetilde \Sigma$ is the boundary of any conformal compactification, the $Q$-curvature generalizes canonically~\cite{GW}. 
 In the case when
 the conformal compactification 
 obeys a suitable singular Yamabe problem~\cite{ACF}, 
 this $Q$-curvature is a distinguished, extrinsically coupled curvature invariant, determined by the conformal embedding and a choice of boundary metric with integral again a conformal invariant. 
 (See also~\cite{MazzeoC} for a general study of singular Yamabe problems.)
  When $\widetilde \Sigma$ is a surface,  the integral of this {\it extrinsic $Q$-curvature}
 is the  Willmore energy functional; in general this provides a higher dimensional analog~\cite{CRMouncementCRM,GW15,GW161,Grahamnew,GWvol}. The Willmore functional 
plays an important {\it r\^ole} in both mathematics and physics (see
{\it e.g.}~\cite{Riviere,Polyakov,Alexakis}); recently the celebrated
Willmore conjecture~\cite{Willmore} concerning absolute minimizers of
this energy was settled in~\cite{Marques}.  The Willmore energy is
also linked to  physical
observables~\cite{GrahamWitten} and in particular entanglement entropy
~\cite{RyuT1,RyuT2}.
We expect higher Willmore energies to be similarly important.

Recently the existence of a corresponding canonical, transgression for the extrinsic~$Q$-curvature
 was  established~\cite{GWvolII}. 
 Once again, the Levi-Civita  and  its accompanying Gau\ss--Codazzi--Ricci extrinsic calculus are inadequate for developing a general theory.
We also treat this problem.
  


We attack all these problems {\it holographically}.
This term originated in physics~\cite{'t  Hooft,Susskind} 
where it was concretely realized in  
 the AdS/CFT correspondence of~\cite{Mal}. This relates spacetime theories to theories living on the boundary of spacetime. Here this means that the geometry of $\widetilde\Sigma$ will be studied by embedding it in a conformal manifold $M$  of one dimension higher. The gain is simple formul\ae\ on $M$ that encode the complicated information of $(\bm Q,\bm T)$-pair;
 upon restriction to~$\widetilde\Sigma$
 these simple formul\ae\ produce
  the quantities of interest. 

The Branson $Q$-curvature was initially defined as an intrinsic quantity.
However, the holographic approach is a natural setting to study a generalizing analog 
that includes extrinsic embedding data, and for this extrinsic $Q$-curvature, the corresponding transgression~$T$. For a manifold $\widetilde \Sigma$ with boundary $\Lambda$, we consider a conformal embedding,  as depicted below:
%
%
%

\begin{center}
\begin{tikzpicture}[scale=0.45, ultra thick]
\draw (0,0) ellipse (0.5cm and 2cm);
\draw (0,2) to[out=210,in=-25] (-3,2.5);
\draw (0,-2) to[out=-210,in=25] (-3,-2.5);
\draw (-3,2.5)
arc[start angle=60, end angle=300, x radius=3.5, y radius=2.885];
\draw (-6.2,0) to[bend left] (-3.3,0);
\draw (-6.5,0.1) to[bend right] (-3,0.1);
\node at (-7,3.3) {$\widetilde\Sigma$};
\node at (0.2,-2.6) {$\Lambda$};
\node at (3,0) {{\Large$\hookrightarrow$} $(M, \cc)$};
\end{tikzpicture}
\end{center}

\noindent
Thus we study the sequence of submanifold embeddings
$$
\Lambda\hookrightarrow \Sigma\hookrightarrow (M,\cc)\, ,
$$
where $M$ is a dimension $d$ conformal manifold, $\Sigma$ is a hypersurface (meaning dimension~$d-1$), $\Lambda$ is a closed hypersurface in $\Sigma$
(so $\dim(\Lambda)=d-2$) with interior $\widetilde\Sigma$. 
Throughout this paper we assume $d\geq 3$ (the cases $d=1,2$ are easily handled by more na\"ive methods).
To facilitate the study of these embeddings we consider two classical PDE problems:

First, the data of the conformal embedding $\Sigma \hookrightarrow M$ determines an approximate  metric~$g^o$ that is singular along $\Sigma$.  Given $g\in \cc$, then~$g^o=\sigma^{-2}g$; this is determined canonically up to terms of order $\sigma^d$  where $\sigma$ is a defining function for $\Sigma$ (meaning that~$\Sigma$ is the zero locus of $\sigma$ and $\ext \sigma|_\Sigma\neq 0$).  This is achieved via a singular Yamabe problem 
which requires the scalar curvature to  obey
$$
\Sc^{g^o}=-d(d-1)+{\mathcal O}(\sigma^d)\, .
$$

Second, the singular metric $g^o$ can be used to determine the asymptotics of a hypersurface~$\Xi$ anchored along~$\Lambda$ (so $\partial \Xi=\Lambda$ that is minimal so that its mean curvature~$H_\Xi^{g^o}$ vanishes, again to 
some order in $\sigma$:
$$
H_\Xi^{g^o}=\mathcal O(\sigma^{d-1})\, .
$$
We term $\Xi$ an {\it asymptotically minimal hypersurface}.
Furthermore, the defining function~$\mu$  for $\Xi$
can be canonically determined up to terms of order $\mu^d$ by the singular Yamabe problem for the embedding~$\Xi\hookrightarrow M$.
This geometry is depicted below:

\begin{center}
\begin{tikzpicture}[scale=0.7, ultra thick]
\coordinate (LD) at (-3.5,-4);
\coordinate (RD) at (1.5,-3);
\coordinate (LU) at (-3.5,3);
\coordinate (RU) at (1.5,4);
\coordinate (CU)at (-1,2);
\coordinate (CD)at (-1,-2);
\coordinate(B) at (3,0);
\coordinate(IU) at (0.9,1.9);
\coordinate(ID) at (1,-1.8);
\coordinate(U) at (-2.9,1);
\coordinate(D) at (-3.1,-1);
\draw (CU) to[out=-160,in=165] (CD);
\draw [dashed] (CD) to[out=15,in=0] (CU);
\draw (CU) to[out=-5,in=90] (B);
\draw (CD) to[out=5,in=-90] (B);
\draw (LU) to[out=-15,in=-145] (RU);
\draw (LD) to[out=45,in=165] (RD);
\draw (RU) to[out=-110,in=80] (IU);
\draw (RD) to[out=120,in=-80] (ID);
\draw (LU) to[out=-60,in=90] (LD);
\draw [dashed] (CU) to[out=200,in=0] (U);
\draw[dashed] (CD)to[out=160,in=0](D);
\draw (U) to[out=-160,in=160] (D);
\draw [dashed] (IU) to[out=-100,in=100](ID);
\node at (-4,3) {$\Sigma$};
\node at (-4,0) {$\widetilde\Sigma$};
\node at (-1,-2.3) {$\Lambda$};
\node at (2.5,1.7) {$\Xi$};
\node at (5,0) {{\Large$\hookrightarrow$} $(M, \cc)$};
\end{tikzpicture}
\end{center}
This set-up allows us to study the embedding sequence $
\Lambda\hookrightarrow \Sigma\hookrightarrow (M,\cc)
$ holographically.

The above geometry was partly inspired by 
seminal work of Graham and Witten \cite{GrahamWitten} (see also~\cite{GrahamRiechert})
who study the embedding of closed submanifolds in $\Sigma$ by considering the volume asymptotics of a minimal submanifold in $M$ 
whose boundary is the given submanifold in $\Sigma$.
For this they considered the special case where the 
 metric $g^o$ obeys a Poincar\'e--Einstein condition; this effectively removes the information of the embedding
  $\Sigma\hookrightarrow (M,\cc)$.
In Section~\ref{Sec:MinSurf} we further develop the theory of aymptotically minimal hypersurfaces,
including results of independent interest.

%
%
%
%
%

%

\medskip

Returning to our general setting, 
note that even in the case that $\Xi$ and $\widetilde\Sigma$ close off a compact region $D$, the volume of this is infinite with respect to $g^o$.
Nevertheless, in the case of any suitably singular measure such as the one of $g^o$, there is a notion of a {\it regulated volume} $\Vol_\varepsilon$.  This is defined by the volume of a one parameter family of regions $D_\varepsilon\subseteq D$  approximating that enclosed and such that $D_0=D$. (See
 Equation~\nn{vol_eps} and preceding text for the precise definition of the regulated volume.) 
The asymptotic behavior of $\Vol_\varepsilon$ is then given by
\begin{equation*}\label{reg_vol0}
\Vol_\varepsilon=\sum_{k=d-1}^1 \frac{v_k}{\varepsilon^k}
+{\mathcal V}\, \log\varepsilon+\Vol_{\rm ren}
+\hh{\mathcal O}(\varepsilon)\, ,
\end{equation*} 
with details as follows:
The coefficients of the poles in $\varepsilon$ are local integrals (meaning their integrands are functions determined by finitely many jets of the given data)
along $\widetilde\Sigma =\Sigma \cap D$, these poles are termed {\it divergences}. These depend on the choice of regulating regions $D_\varepsilon$; we give universal holographic formul\ae\ for them in Theorem~\ref{Vdiv}.  
The constant term $\Vol_{\rm ren}$ is called the {\it renormalized volume}. The coefficient~${\mathcal V}$ of the logarithm is the {\it volume anomaly}. 
It is the non-derivative term in an  anomaly operator (see for example~\cite{GWvolII}) that measures the dependence of the renormalized volume on the choice of regulator and hence the metric in~$\cc_\Sigma$. Here  $\cc_\Sigma$  is  the conformal class of metrics induced on $\Sigma$ by $\cc$. The volume
anomaly
 is conformally invariant and expressed as a sum of local integrals along~$\widetilde\Sigma$ and $\Lambda=\partial\widetilde\Sigma$.
 In fact, distinguished integrands are canonically determined by our construction  
 and give the extrinsic $Q$-curvature and corresponding transgression introduced above.
As discussed above,  explicit formul\ae\ for $Q$-curvatures
and their extrinsic generalizations
explode in complexity beyond the simplest low dimensional examples (see~\cite{GoPetCMP}). However, 
our first main theorem establishes that these have  remarkably simple and universal holographic formul\ae\ in all dimensions.


In the following theorem we determine the regulated region $D_\varepsilon$ by introducing a nowhere vanishing positive conformal density $\bm \tau$, termed a  true scale or {\it regulator}. This additional data can be used to  determine a metric in $\cc$, and hence an induced metric along~$\Sigma$.
These notions are explained in Sections~\ref{CONFG} and~\ref{Sec:RenVA}.

\begin{theorem}
\label{Vanomaly} 
The anomaly in the regulated volume $\Vol_\varepsilon$ is determined by the embeddings 
$\Lambda\hookrightarrow \Sigma\hookrightarrow (M,\cc)$
and 
is conformally invariant. For any regulator $\btau$, it is given~by
\begin{equation*}
\mathcal V = \frac{(-1)^{d-1}}{(d-1)!} 
\Bigg[ \frac{1}{(d-2)!} \int_{\widetilde\Sigma} \bm Q_{\Sigma \hookrightarrow (M,\cc)}^{\btaus} 
+ \frac{1}{(d-3)!}\int_{\Lambda=\partial \widetilde\Sigma} \bm T_{\Lambda\hookrightarrow \Sigma \hookrightarrow (M,\cc)}^{\btaus}  \Bigg] ~,
\end{equation*}
where  $(\bm Q , \bm T )$ are local (density-valued) curvatures
depending on the regulator~$\btau$ and the conformal embeddings indicated. Given  
unit and minimal unit conformal defining 
densities $\bsigma$ and $\bmu$ for $\Sigma$ and $\Xi$, respectively, these have holographic formul\ae
\begin{equation*}
\bm Q_{\Sigma \hookrightarrow (M,\cc)}^{\btaus} = \D^{d-1}_\bsigma \log \bm\tau \Big|_{\Sigma} ~, \qquad
\bm T_{\Lambda\hookrightarrow \Sigma \hookrightarrow (M,\cc)}^{\btaus} = \sum_{j=0}^{d-3} (\D^{\!\rm T}_\bsigma)^j\, 
\D^\prime_\bmu \bm \D_\bsigma^{d-j-3}\log\bm\tau\Big|_{\Lambda} \, .
\end{equation*}
\end{theorem}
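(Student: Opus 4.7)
The plan is to extract the $\log\varepsilon$ coefficient directly from the asymptotic expansion of $\Vol_\varepsilon$ by combining the singular Yamabe structure of $\bsigma$ with the asymptotic minimality of $\bmu$, then reformulating the result holographically via iterated Laplace--Robin calculus. First I would introduce the regulating region $D_\varepsilon$ cut off by the locus $\bsigma \geq \varepsilon$ and express the volume form in the scale supplied by the regulator $\btau$. In this presentation, the divergent and anomalous contributions are organized by the order of vanishing of the defining density $\bsigma$, with the $\log\varepsilon$ term arising precisely from the ``critical'' coefficient $\bsigma^{-1}$ in the asymptotic expansion.

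The next step is to rewrite the critical-order integrand holographically: using the Laplace--Robin operator $\D_\bsigma$, each factor of $\bsigma^{-1}$ in the volume asymptotics can be exchanged for one application of $\D_\bsigma$ acting on a conformal density of suitable weight. Applied to $\log \btau$, this maneuver produces exactly the weight shifts required by the canonical weight arithmetic of the singular Yamabe problem. Iterating the resulting identity $d-1$ times and performing integration by parts in the bulk collapses the bulk volume anomaly density to $\D_\bsigma^{d-1}\log\btau\big|_\Sigma$; this identifies $\bm Q_{\Sigma\hookrightarrow(M,\cc)}^{\btaus}$, and the prefactor $(-1)^{d-1}/((d-1)!(d-2)!)$ is tracked through the residues, combinatorial factors and the normalization $\Sc^{g^o} = -d(d-1) + \mathcal O(\bsigma^d)$.

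Each of the $d-1$ integration-by-parts steps generates a corner contribution supported on $\Lambda = \Sigma\cap \Xi = \partial\widetilde\Sigma$, which must itself be resummed. For these I would run a secondary iterative argument along $\Xi$, replacing the normal component of $\D_\bsigma$ at the corner by $\D^\prime_\bmu$ (available because $\bmu$ is the canonical minimal unit defining density for $\Xi$ and $H_\Xi^{g^o} = \mathcal O(\bsigma^{d-1})$), and its tangential component by $\D^{\!\rm T}_\bsigma$. The step at which the bulk has absorbed $d-j-3$ applications of $\D_\bsigma$ contributes a boundary term with $j$ subsequent tangential factors, producing the summand $(\D^{\!\rm T}_\bsigma)^j\, \D^\prime_\bmu \bm\D_\bsigma^{d-j-3}\log\btau\big|_\Lambda$; summing $j$ from $0$ to $d-3$ yields the transgression density $\bm T_{\Lambda\hookrightarrow\Sigma\hookrightarrow(M,\cc)}^{\btaus}$ with the factor $1/(d-3)!$.

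The hard part will be controlling the corner contributions at each stage: one must show that the cross-terms generated where $\bsigma$ and $\bmu$ meet at $\Lambda$ either vanish on the nose (because of the minimality condition on $\Xi$ to order $\bsigma^{d-1}$) or reassemble into the $(\D^{\!\rm T}_\bsigma)^j\D^\prime_\bmu$-structure of the sum, with no leftover anomalous pieces. Conformal invariance of $\mathcal V$ then follows from the standard fact that the $\log\varepsilon$ coefficient of a logarithmically divergent volume depends only on the conformal class and the induced boundary data, together with the canonical nature of $\bsigma$ and $\bmu$: any ambiguity in these defining densities lies at an order too high to affect the critical residue. The independence of the answer from extensions beyond the canonical orders reduces to verifying that the offending higher-order pieces of $\bsigma$ and $\bmu$ are annihilated under the requisite powers of $\D_\bsigma$ and $\D^\prime_\bmu$, which is ultimately a consequence of the defining properties recalled in Section~\ref{Sec:MinSurf}.
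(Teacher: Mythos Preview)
Your sketch has the right architecture—iterate Laplace--Robin operators, accumulate corner terms along $\Lambda$—but it skips the step that actually makes the argument run, and as written the iteration would stall immediately.

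The paper does not expand the volume form in powers of $\bsigma$; it works entirely in the distributional calculus, with the anomaly given by
\[
\mathcal V=\frac{(-1)^d}{(d-1)!}\int_M \theta(\bmu)\,\delta^{(d-1)}(\bsigma)\,.
\]
The recursion you have in mind is encoded by $\D_\bsigma\delta^{(k-1)}(\bsigma)=(d-k-1)\,\delta^{(k)}(\bsigma)$ together with formal self-adjointness of $\D_\bsigma$, and this is indeed how one trades a derivative on the delta for an application of $\D_\bsigma$ on the test density. But this identity \emph{vanishes at the top order}: $\D_\bsigma\delta^{(d-2)}(\bsigma)=0$, so you cannot start the iteration at $\delta^{(d-1)}(\bsigma)$. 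There is also no $\btau$ anywhere in the integral above, so the appearance of $\log\btau$ is not explained by your ``critical $\bsigma^{-1}$ coefficient'' picture. The paper resolves both issues simultaneously with a separate device, Proposition~\ref{logt} (imported from~\cite{GWvolII}), which rewrites $\int_M f\,\delta^{(d-1)}(\bsigma)$ as an integral against $\delta^{(d-2)}(\bsigma)$ at the cost of inserting $\D_\bsigma\log\btau$ and an auxiliary operator $\D_{\bsigma,\btau}$ acting on $f=\theta(\bmu)$. Lemma~\ref{Lst} then computes the latter piece: a $\delta(\bmu)\langle\bsigma,\bmu\rangle$ term (killed by minimality of $\bmu$) plus a $\delta(\bmu)\,\D_\bmu\log\btau$ term. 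Only after this reduction (Equation~\eqref{Vfinal}) does one have integrals of the form $\int_M\theta(\bmu)\delta^{(d-2)}(\bsigma)\,\D_\bsigma\log\btau$ and $\int_M\delta(\bmu)\delta^{(d-3)}(\bsigma)\,\D_\bmu\log\btau$ to which the iterated recursion of Proposition~\ref{cool} applies, producing the $\bm Q$ and $\bm T$ formul\ae. Your proposal omits precisely this critical-order obstruction and its resolution; without it there is no mechanism either to begin the iteration or to introduce $\log\btau$. (A minor point: the cut-off is $\bsigma/\btau\geq\varepsilon$, not $\bsigma\geq\varepsilon$, since $\bsigma$ is a weight-one density, not a function.)
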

In the above, the conformal densities 
$\bsigma$ and $\bmu$ obey a unit and minimal unit property. This means that
$\bsigma=\sigma \btau$ and $\bmu= \mu \btau$ with $\btau = [g;1]$, where $g$, $\sigma$ and $\mu$  
obey the singular Yamabe and minimal hypersurface conditions that we just defined.
The logarithm of a conformal density is defined in Sections~\ref{CONFG}. 
The operators  $\D_\bsigmas$, $\D_\bsigmas^{\!\rm T}$ and $\D_\bmus
^\prime$ are all variants of the Laplace--Robin operator; these are distinguished  Laplacian-type operators on the manifold~$M$ that are degenerate along boundaries $\Sigma$, $\Lambda$ or $\Xi$ respectively, where they become the conformally invariant Robin-type combination of Dirichlet and Neumann operators of Cherrier~\cite{cherrier}, see Sections~\ref{LapRob} and~\ref{superlaprob}.
Theorem~\ref{Vdiv} gives analogous results for the divergences in the regulated volume.


A nice feature of this set-up is that it encodes a second geometric problem of independent interest (see~\cite{GrahamWitten}). Namely, the corresponding ``area'' of the minimal hypersurface~$\Xi$
(meaning its volume with respect to the induced singular metric; throughout we use ``volume'' to refer  to volumes of codimension zero regions and ``area'' for  volumes of codimension one regions)
is also infinite, but again can be regulated using the same method as employed above for the volume problem. Here also the  family of regulating regions $\Xi_\varepsilon\subset \Xi$
are described in terms of the true scale $\btau$. The details of this construction may be found in Section~\ref{Sec:RenVA}.
The asymptotic behavior of their areas is then given by
\begin{equation*}
\Area_\varepsilon=\sum_{k=d-2}^1 \frac{a_k}{\varepsilon^k}
+{\mathcal A}\, \log\varepsilon+\Area_{\rm ren}
+{\mathcal O}(\varepsilon)
\, ,
\end{equation*}
where the coefficients of poles are regulator-dependent local integrals over $\Lambda=\Sigma\cap\Xi$;
we give universal holographic formul\ae\ for them in Theorem~\ref{Adiv}.   The constant term $\Area_{\rm ren}$ is the {\it renormalized area}. The coefficient~${\mathcal A}$ of the logarithm is the {\it area anomaly}. It also measures the response of the renormalized area to the choice of regulator and is itself conformally invariant. The integrand also generalizes Branson's $Q$-curvature to this structures to what we shall call a {\it submanifold $Q$-curvature}. Even for the case where $\Lambda$ is a four-manifold and $M$ is specialized to be Poincar\'e--Einstein, the classical treatment of this requires a computational {\it tour de force}~\cite{GrahamRiechert}. Our next theorem gives a universal holographic formula for the area anomaly.

\begin{theorem}\label{Aanomaly}
The anomaly in the regulated area~$\Area_\varepsilon$
is determined by the embeddings 
$\Lambda\hookrightarrow \Sigma\hookrightarrow (M,\cc)$
and 
is conformally invariant. 
For any regulator $\btau$, it is given~by
\begin{equation*}
\mathcal A= \frac{(-1)^{d-2}}{(d-2)!(d-3)!} \int_\Lambda \bm Q_{\Lambda\hookrightarrow \Sigma \hookrightarrow (M,\cc)}^{\btaus}  ~,
\end{equation*}
where the holographic formula for the local (density-valued), $\btau$-dependent, submanifold~$Q$-curvature~$\bm Q_{\Lambda\hookrightarrow \Sigma \hookrightarrow (M,\cc)}^{\btaus}$, is
\begin{equation*}
\bm Q_{\Lambda\hookrightarrow \Sigma \hookrightarrow (M,\cc)}^{\btaus} =(\D_\bsigma^{\!\rm T})^{d-2} \log\bm \t \big|_{\Lambda}\, ,
\end{equation*}
where $\bsigma$ and $\bmu$ are
unit and minimal unit conformal defining 
densities  for $\Sigma$ and $\Xi$, respectively.
\end{theorem}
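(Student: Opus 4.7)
The plan is to reduce the area problem to a volume problem one dimension lower, adapting the methodology that yields Theorem~\ref{Vanomaly} to the codimension-one setting where one integrates over the asymptotically minimal hypersurface $\Xi$ rather than a bulk region of $M$. Because $\Xi$ has dimension $d-1$ and carries the induced singular metric $g^o|_\Xi$, its area plays the r\^ole of a volume one dimension lower, with $\Lambda=\partial\Xi$ as the sole conformal infinity and no further boundary to generate a transgression term. This structural observation immediately predicts the single-integral form of the anomaly and the absence of a $\bm T$-contribution.

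First I would write the regulated area as
\[
\Area_\varepsilon = \int_{\Xi_\varepsilon} dV_{g^o|_\Xi}\, ,
\]
where $\Xi_\varepsilon\subset \Xi$ is the family of regulating regions determined by $\btau$ exactly as in the volume case. The asymptotic minimality condition $H^{g^o}_\Xi = \mathcal{O}(\sigma^{d-1})$ ensures that $\bsigma|_\Xi$ behaves as a \emph{unit} conformal defining density for the hypersurface $\Lambda\subset \Xi$ to the order needed for the asymptotic expansion to produce the logarithmic term universally; the minimal-unit density $\bmu$ supplies the transverse data to $\Xi$ inside $M$.

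Next I would expand the integrand in $\sigma$ using the obstruction-calculus / distributional boundary techniques developed earlier in the paper. This parallels the derivation of Theorem~\ref{Vanomaly} but with every occurrence of $d$ shifted to $d-1$: the log coefficient now arises at order $d-2$ rather than $d-1$, and the relevant Laplace--Robin-type operator is the tangential variant $\D_\bsigma^{\!\rm T}$, which degenerates along $\Lambda$ precisely where the asymptotic singularity of $g^o|_\Xi$ lives. Tracking the combinatorics gives the prefactor $\frac{(-1)^{d-2}}{(d-2)!(d-3)!}$ in place of $\frac{(-1)^{d-1}}{(d-1)!(d-2)!}$, and identifies the log integrand with $(\D_\bsigma^{\!\rm T})^{d-2}\log\btau\big|_\Lambda$.

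The main obstacle, mirroring the deepest step in Theorem~\ref{Vanomaly}, is showing that iterating $\D_\bsigma^{\!\rm T}$ on $\log\btau$ yields a well-defined density on $\Lambda$, independent of the many extension choices implicit in working holographically on $M$, and that this density faithfully reproduces the universal coefficient extracted from the asymptotic expansion of $\Area_\varepsilon$. The crux is the interplay between $\D_\bsigma^{\!\rm T}$ and the minimal-unit defining density $\bmu$: the minimality condition must be used to cancel exactly those terms that would otherwise spoil tangentiality along $\Xi$ or inject spurious normal-derivative contributions at $\Lambda$. The universal, distribution-based boundary calculus highlighted in the abstract provides the machinery to perform this cancellation cleanly in all dimensions $d\geq 3$, and once this identification is in hand the theorem follows.
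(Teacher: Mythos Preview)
Your strategy---treating the area as a $(d-1)$-dimensional renormalized volume problem intrinsic to $\Xi$---is one the paper explicitly acknowledges as viable but ``inefficient'' (see the remark following Equation~\nn{anumber} in Section~\ref{Sec:RenVA}). The paper instead works entirely on the bulk $M$: starting from the distributional integral ${\mathcal A}=\frac{(-1)^{d-1}}{(d-2)!}\int_M \delta(\bmu)\delta^{(d-2)}(\bsigma)$, it uses $\delta^{(d-2)}(\bsigma)=\frac{1}{1-d}\bsigma\delta^{(d-1)}(\bsigma)$, then applies Proposition~\ref{logt} with $f=\bsigma\delta(\bmu)$ to inject the log density, computes $\btau^{-1}\D_{\bsigmas,\btaus}(\bsigma\delta(\bmu))$ via Lemma~\ref{Lst}, and after some distributional algebra arrives at Equation~\nn{Afinal}. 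The tangential operator $\D_\bsigma^{\!\rm T}$ enters not by restriction to $\Xi$ but as the natural operator appearing in the recursion of Proposition~\ref{LST} for products $\delta(\bmu)\delta^{(k)}(\bsigma)$; Proposition~\ref{cool} then finishes the job.

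Your route has two concrete gaps. First, the claim that asymptotic minimality makes $\bsigma|_\Xi$ a \emph{unit} defining density for $\Lambda$ in $(\Xi,\cc_\Xi)$ is not justified: the intrinsic ${\mathcal S}$-curvature on $\Xi$ differs from $\bS_\bsigmas|_\Xi$ by Gau\ss-equation terms involving the full second fundamental form of $\Xi$, not just its trace, so minimality alone does not force $\bS^\Xi_{\bsigmas|_\Xi}=1+{\mathcal O}(\sigma^{d-1})$. Without this, the simple form of the holographic volume-anomaly formula you wish to import does not apply. Second, and more seriously, even if you succeed intrinsically you obtain the anomaly as iterates of the \emph{intrinsic} Laplace--Robin operator of $(\Xi,\cc_\Xi)$ acting on $\log\btau|_\Xi$. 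This is not the same as $(\D_\bsigma^{\!\rm T})^{d-2}\log\btau|_\Lambda$: the operator $\D_\bsigma^{\!\rm T}$ of Definition~\ref{LTdef} is a bulk operator built from $\bsigma$ and $\bmu$, and the proof of Theorem~\ref{Plambdaop} notes that along $\Xi$ it differs from the intrinsic Laplace--Robin by extrinsic-curvature corrections. Your proposal does not supply the argument that these corrections either vanish or conspire to give the same integral over $\Lambda$, so the stated holographic formula remains unproven by your sketch.
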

\noindent
The proofs of Theorems~\ref{Vanomaly} and~\ref{Aanomaly} are given in Section~\ref{Sec:Anomaly}.
Key to these proofs is a calculus, both integral and differential, for conformal geometries coupled to multiple scales; this is described in Section~\ref{Sec:Calculus}.
\medskip

A strong motivation for our work 
is that the volume and area problem not only leads to interesting global conformal invariants of the structure 
$\Lambda\hookrightarrow \Sigma \hookrightarrow (M,\cc)$, but also yields a rich local invariant theory surrounding the 
$\bm Q$  and $\bm T$ integrands. Of particular interest are conformally invariant differential operators  
that measure how 
$Q$ and~$T$ curvatures depend on the choice of metric in the conformal class. 
For the special case of Branson's $Q$-curvature, these are the (critical)  conformally invariant Laplacian powers of~\cite{GJMS}.
Their generalizations follow immediately 
from our construction upon remembering that the choice of regulator $\btau$ also amounts to a choice  of metric $g\in \cc$ and that $\log (e^f \btau) = f + \log\btau$ for any smooth function $f$. For example, the 
extrinsic $Q$-curvature of $\Sigma\hookrightarrow (M,\cc)$ obeys
$$
\bm Q_{\Sigma \hookrightarrow (M,\cc)}^{e^f\btaus}=
\bm Q_{\Sigma \hookrightarrow (M,\cc)}^{\btaus}+ \mathbf P_{\Sigma} f_{\Sigma}\, ,
$$
where $f_{\Sigma}=f|_\Sigma$ and the conformally invariant operator $\mathbf P_{\Sigma}:=\mathbf P^{(d-1)}_{\Sigma\hookrightarrow (M,\cc)}$ is the last member of  a sequence of conformally invariant extrinsically coupled Laplacian powers, as described  by simple holographic formul\ae\ in the next Theorem.
\begin{theorem}\label{oldiebutgoodie}
The operator 
defined by 
\begin{eqnarray*}
\mathbf{P}_{\Sigma\hookrightarrow (M,\cc)}^{(k)}:\Gamma\big(\ce \Sigma\big[\tfrac{k-d+1}2\big]\big)&\to& \Gamma\big(\ce \Sigma\big[\tfrac{-k-d+1}2\big]\big)\, ,
\nonumber\\[2mm]
\stackrel{\rotatebox{90}{$\in$}}
{{\bm f}^{\!\!\!\phantom A}}\quad\qquad &\mapsto&
\quad
\stackrel{\rotatebox{90}{$\in$}}{(\D^{k}_\bsigma {\bm f}_{\rm ext})\big|_\Sigma}\, ,
\end{eqnarray*}
 for $k\in\{1,\ldots, d-1\}$ and $\bm f_{\rm ext}$ any extension of $\bm f$ to $M$, is canonically determined by the embedding data $\Sigma\hookrightarrow (M,\cc)$. When $k$ is even, this has
 leading derivative term $\big((k-1)!!\big)^2\big(\Delta_\Sigma\big)^{\frac k2}$, where $\Delta_\Sigma$ is the intrinsic Laplacian of $\Sigma$ as determined by  a choice of  $g_\Sigma\in  \cc_\Sigma$.
\end{theorem}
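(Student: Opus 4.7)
I would prove the theorem by verifying, in order, the following four claims: (i) the source and target weights are as stated; (ii) the value $(\D^{k}_\bsigma \bm f_{\rm ext})|_\Sigma$ is independent of the extension $\bm f_{\rm ext}$; (iii) the result depends only on the conformal embedding data, not on the chosen unit conformal defining density $\bsigma$; (iv) the leading symbol for even $k$ is $((k-1)!!)^{2}(\Delta_\Sigma)^{k/2}$. Weight bookkeeping is immediate since $\D_\bsigma$ lowers conformal weight by one: iterating $k$ times and restricting produces the claimed target $\Gamma(\ce\Sigma[(-k-d+1)/2])$.

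The central task is extension-independence. Any two extensions of $\bm f$ differ by $\bsigma\bm h$ with $\bm h\in\Gamma(\ce M[(k-d-1)/2])$, so it suffices to show
\begin{equation*}
\D^{k}_\bsigma(\bsigma\bm h)\big|_\Sigma = 0 \qquad (1\leq k \leq d-1).
\end{equation*}
The strategy is to commute $\bsigma$ past each copy of $\D_\bsigma$ using a Heisenberg-type relation of the schematic form $[\D_\bsigma,\bsigma]\bm h = c(w_{\bm h})\,\bm h + \bsigma R\,\bm h$, valid for a unit conformal defining density (as recorded in Section~\ref{LapRob}), where $c(w_{\bm h})$ is a weight-dependent numerical constant and $R$ is a lower-order correction. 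Iterating produces a term proportional to $\bsigma$ (which vanishes on $\Sigma$) together with a sum of terms of the form $(\text{product of }c\text{-factors})\cdot \D^{k-j}_\bsigma \bm h$. The weight $(k-d-1)/2$ of $\bm h$ is precisely the value forcing at least one factor in every surviving product to vanish when $k\leq d-1$; this weight-obstruction identity parallels the classical GJMS construction of conformal Laplacian powers.

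Canonicity in $\bsigma$ follows similarly: the singular Yamabe problem determines $\bsigma$ up to the addition of $\bsigma^{d}\bm k$, and the commutator algebra shows that the discrepancy between $\D^{k}_\bsigma$ and $\D^{k}_{\bsigma'}$ applied to any extension is absorbed by powers of $\bsigma$ too large to survive restriction whenever $k\leq d-1$. For the leading symbol with $k$ even, I would fix $g\in\cc$ and a unit normal coordinate $\sigma$ for $\Sigma$; in such a frame the Laplace-Robin operator takes the form $\D_\bsigma = c_{w}\nabla_n + \bsigma\Delta_g + \mathrm{l.o.t.}$ on weighted densities. Iterating and retaining only those terms that produce purely tangential derivatives surviving restriction, the dominant contribution comes from orderings with $k/2$ copies of $\bsigma\Delta_g$ each cleared by a preceding normal derivative; the resulting Pochhammer product of the weight-shift constants $c_w$ simplifies to $((k-1)!!)^{2}$, giving the stated leading term $((k-1)!!)^{2}(\Delta_\Sigma)^{k/2}$.

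The principal obstacle is the combinatorial identity in step (ii): the weight-factor product generated by the commutator expansion must vanish exactly at weight $(k-d-1)/2$ for every $k\leq d-1$, and this requires the full unit property of $\bsigma$ rather than a mere defining-density condition. The fact that the same identity fails at $k=d$ is exactly the obstruction that produces the $Q$-curvature and drives the anomaly formulae of Theorems~\ref{Vanomaly} and~\ref{Aanomaly}, so the argument must also verify that this obstruction does not intrude prematurely in the allowed range.
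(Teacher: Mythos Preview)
Your proposal is correct and follows essentially the same approach as the paper: extension-independence (your step~(ii)) is exactly the tangentiality proved in Proposition~\ref{proptang} via the $\mathfrak{sl}(2)$ enveloping algebra identity $[y^{k},x]=-y^{k-1}k(h-k+1)$ of Equation~\nn{envelop}, which kills the commutator precisely at weight $(k-d-1)/2$, while canonicity and the leading-symbol computation are handled (here largely by reference to~\cite{GW,GW15}) just as you outline. The only refinement worth noting is that the commutator relation you sketch holds \emph{exactly} for $y=-\bS_\bsigmas^{-1}\D_\bsigma$ without any remainder $R$; the passage from $y^{k}$ to $\D_\bsigma^{k}$ then uses that $\bS_\bsigmas=1+\mathcal{O}(\bsigma^{d})$, so the discrepancy contributes only terms of order $\bsigma^{d}$ which cannot survive restriction for $k\leq d-1$---this is the precise mechanism behind both your steps~(ii) and~(iii).
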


Because the extrinsic ${Q}$-curvature 
pairs with a transgression ${\bm T}$ 
for manifolds with boundary, the critical extrinsic Laplacian power $\mathbf P_\Sigma$ pairs with a conformally invariant boundary operator $\mathbf U_{\Lambda\hookrightarrow \Sigma}$ that measures how the $T$-curvature responds to changing the choice of metric in the conformal class:
$$
\bm T_{\Lambda \hookrightarrow \Sigma\hookrightarrow (M,\cc)}^{e^f\btaus}=
\bm T_{\Lambda \hookrightarrow \Sigma\hookrightarrow (M,\cc)}^{\btaus}+ \mathbf U_{\Lambda\hookrightarrow\Sigma} f_{\Sigma}\, .
$$
The sum of the integral of the extrinsic $Q$-curvature~$\bm Q$ along $\widetilde\Sigma$ and 
$(d-2)$ times that of the transgression $\bf T$ along $\Lambda$ is conformally invariant; this implies an integral identity for the $(\mathbf P_\Sigma,\mathbf U_{\Lambda\hookrightarrow \Sigma})$ pair. 
This and the holographic formula for  $\mathbf U_{\Lambda\hookrightarrow \Sigma}$ are given in the next theorem.
\begin{theorem}\label{intPU}
The differential operator 
 defined by 
 \begin{eqnarray*} 
 \mathbf{U}_{\Lambda\hookrightarrow \Sigma}:\Gamma(\ce \Sigma[0])
&\to&
\qquad\quad
 \Gamma\big(\ce \Sigma[2-d]\big)\big|_\Lambda
\, ,
\\[2mm]
\stackrel{\rotatebox{90}{$\in$}}
{{ f}^{\!\!\!\phantom A}}\quad\quad &\mapsto&
\Big(
\sum_{j=0}^{d-3} (
\stackrel{\rotatebox{90}{$\in$}}{
\D^{\!\rm T}_\bsigma)^j\, 
\D^\prime_\bmu \bm \D_\bsigma^{d-j-3}
}\! {f}_{\rm ext}\Big)\Big|_\Lambda\, ,
\end{eqnarray*}
where $ f_{\rm ext}$ is any extension of $ f$ to $M$,
 is canonically determined by the embedding data $\Lambda\hookrightarrow\Sigma\hookrightarrow(M,\cc)$ and for any $f\in C^\infty \Sigma$ obeys
 $$
 \int_{\widetilde\Sigma} \mathbf {P}_\Sigma f + (d-2)
 \int_{\Lambda=\partial \widetilde\Sigma}
\mathbf{U}_{\Lambda\hookrightarrow \Sigma} f = 0\, .
 $$
%
Moreover, when $d$ is odd, $\mathbf{U}_{\Lambda\hookrightarrow \Sigma}$ has leading derivative term
$$
(d-2)!!\hh (d-4)!!
\nabla_{\!\hat m} \big(\Delta^g_\Sigma\big)^{\!\frac{d-3}2}
+{\rm LTOTs}\, ,
$$ 
for any $g\in \cc$,
where $\hat m$ is the unit outward normal to $\Lambda$  and ``{\rm LTOTs}'' denotes terms of lower transverse order, meaning lower order in $\nabla_{\hat m}$.\end{theorem}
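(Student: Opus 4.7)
The plan is to derive both the formula for $\mathbf{U}_{\Lambda\hookrightarrow\Sigma}$ and the integral identity from the conformal invariance of the volume anomaly $\mathcal V$ established in Theorem~\ref{Vanomaly}, using the key fact that the holographic formulae there for $\bm Q^{\btaus}$ and $\bm T^{\btaus}$ are linear in $\log\btau$. Replacing the regulator $\btau$ by $e^f\btau$ for $f\in C^\infty\Sigma$ (with $f_{\rm ext}$ any extension to $M$) therefore replaces $\log\btau$ by $f_{\rm ext}+\log\btau$, and linearity then isolates the difference operators $\mathbf{P}_\Sigma$ and $\mathbf{U}_{\Lambda\hookrightarrow\Sigma}$.

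First I would verify that
$$\Big(\sum_{j=0}^{d-3}(\D^{\!\rm T}_\bsigma)^j \D^\prime_\bmu \bm\D_\bsigma^{d-j-3} f_{\rm ext}\Big)\Big|_\Lambda$$
is independent of the extension $f_{\rm ext}$ and of the residual high-order ambiguities in the unit conformal defining densities $\bsigma$ and $\bmu$, so that $\mathbf{U}_{\Lambda\hookrightarrow\Sigma}$ is well-defined and canonically determined by the embedding data. This relies on the degeneracies of the Laplace--Robin operators along their respective hypersurfaces, as developed in Sections~\ref{LapRob} and~\ref{superlaprob}: changes in $f_{\rm ext}$ off $\Sigma$ pick up explicit $\bsigma$ factors, changes in $\bmu$ beyond leading order pick up explicit $\bmu$ factors, and the Robin behavior of the operators combined with restriction to $\Lambda = \Sigma \cap \Xi$ forces all such corrections to drop out at the orders appearing here.

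The integral identity follows by applying Theorem~\ref{Vanomaly} to both regulators $\btau$ and $e^f\btau$ and subtracting. By Theorem~\ref{oldiebutgoodie} and linearity,
$$\bm Q^{e^f\btaus}_{\Sigma\hookrightarrow(M,\cc)} - \bm Q^{\btaus}_{\Sigma\hookrightarrow(M,\cc)} = \D^{d-1}_\bsigma f_{\rm ext}\big|_\Sigma = \mathbf{P}_\Sigma f\, ,$$
and analogously the difference of the $\bm T$-curvatures equals $\mathbf{U}_{\Lambda\hookrightarrow\Sigma} f$. Since $\mathcal V[e^f\btau] = \mathcal V[\btau]$ by the stated conformal invariance, clearing the overall prefactor $(-1)^{d-1}/[(d-1)!(d-2)!]$ and using $(d-2)!/(d-3)!=d-2$ gives
$$\int_{\widetilde\Sigma}\mathbf{P}_\Sigma f + (d-2)\int_{\Lambda}\mathbf{U}_{\Lambda\hookrightarrow\Sigma} f = 0\, ,$$
as required.

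For the leading derivative term I would evaluate each summand $(\D^{\!\rm T}_\bsigma)^j \D^\prime_\bmu \bm\D_\bsigma^{d-j-3}$ modulo terms of lower order in $\nabla_{\hat m}$, where $\hat m$ is the outward unit normal to $\Lambda$ in $\Sigma$ with respect to $g\in\cc$. At top order, $\bm\D_\bsigma$ reduces on appropriately weighted densities to a conformal Laplacian whose tangential-to-$\Sigma$ part contributes a multiple of $\Delta_\Sigma^g$; $\D^{\!\rm T}_\bsigma$ likewise restricts to a multiple of $\Delta_\Sigma^g$; and $\D^\prime_\bmu$ supplies the single transverse factor $\nabla_{\hat m}$ normal to $\Lambda$ within $\Sigma$. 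Each iteration produces a weight-dependent integer prefactor of GJMS type, and summing these over $j=0,\ldots,d-3$ using standard double-factorial identities should yield the claimed coefficient $(d-2)!!\hh(d-4)!!$. This last step is the principal obstacle: the Laplace--Robin hierarchy generates different weight-dependent numerical factors at each stage, and extracting the correct top-symbol coefficient after the $j$-sum requires careful tracking of the running conformal weight together with a telescoping of the resulting product.
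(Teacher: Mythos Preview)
Your treatment of the canonical determination and the integral identity is essentially the paper's: both rely on tangentiality (Proposition~\ref{proptang}) together with the regulator-independence of~$\mathcal V$ from Theorem~\ref{Vanomaly} and the linearity $\log(e^f\btau)=f+\log\btau$.

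The leading-derivative argument is where you and the paper diverge. You propose a direct symbol computation: evaluate each summand $(\D^{\!\rm T}_\bsigma)^j\D'_\bmu\D_\bsigma^{d-j-3}$ at top order, track the weight-dependent prefactors, and telescope the $j$-sum. The paper sidesteps this entirely. Having already established the integral identity and knowing from Theorem~\ref{oldiebutgoodie} that $\mathbf P_\Sigma$ has leading term $\big((d-2)!!\big)^2\Delta_\Sigma^{(d-1)/2}$, the divergence theorem $\int_{\widetilde\Sigma}\Delta_\Sigma\tilde f=\int_\Lambda\nabla_{\hat m}\tilde f$ forces the leading term of $\mathbf U_{\Lambda\hookrightarrow\Sigma}$ to be $\frac{((d-2)!!)^2}{d-2}\nabla_{\hat m}\Delta_\Sigma^{(d-3)/2}=(d-2)!!\,(d-4)!!\,\nabla_{\hat m}\Delta_\Sigma^{(d-3)/2}$, modulo total-derivative ambiguities. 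Those residual ambiguities are then ruled out by a flat-model check ($M=\mathbb R^d$, $\Sigma$ and $\Lambda$ hyperplanes, $\Xi$ the orthogonal hyperplane). This buys you the coefficient without any telescoping.

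One imprecision in your direct approach: you write that $\D^{\!\rm T}_\bsigma$ ``likewise restricts to a multiple of $\Delta_\Sigma^g$''. In fact $\D^{\!\rm T}_\bsigma$ is tangential to $\Xi$, not to $\Sigma$, so at top order it is a Laplace--Robin operator intrinsic to $\Xi$ (cf.\ the proof of Theorem~\ref{Plambdaop}); along $\Lambda$ it contributes $\Delta_\Lambda$-type and $\nabla_{\hat n}$-type pieces, the latter of which must cancel by tangentiality to $\Sigma$ along $\Lambda$. Your route is workable in principle but this is exactly the bookkeeping you flag as the principal obstacle, and the paper's indirect argument avoids it altogether.
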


The dependence on the choice of metric $g\in \cc$ for  the submanifold $Q$-curvature  is also controlled by a conformally invariant extrinsically coupled Laplacian power $\mathbf P_{\Lambda}:=\mathbf P_{\Lambda\hookrightarrow \Sigma\hookrightarrow (M,\cc)}^{(d-2)}$ (see Equation~\nn{transform}).
This operator is also the last member of a sequence of invariant operators. These are described holographically in our next theorem:
\begin{theorem}\label{Plambdaop}
The differential operator 
defined by
\begin{eqnarray*}
\mathbf{P}^{(k)}_{\Lambda\hookrightarrow \Sigma \hookrightarrow (M,\cc)}:\Gamma\big(\ce \Lambda\big[\tfrac{k-d+2}2\big]\big)
&\to&
\Gamma\big(\ce \Lambda\big[\tfrac{-k-d+2}2\big]\big)\, ,
\nonumber\\[2mm]
\stackrel{\rotatebox{90}{$\in$}}
{{\bm f}^{\!\!\!\phantom A}}\quad\qquad &\mapsto&
\quad
\stackrel{\rotatebox{90}{$\in$}}{\big((
\D^{\!\rm T}_\bsigma\big)^{k} 
 {\bm f}_{\rm ext}\big)\big|_\Lambda}\, ,
 \end{eqnarray*}
 for $k\in\{1,\ldots, d-2\}$, is canonically determined  by the embedding data $\Lambda\hookrightarrow\Sigma\hookrightarrow (M,\cc)$. When~$k$ is even, this has
 leading derivative term $\big((k-1)!!\big)^2\big(\Delta_\Lambda^g\big)^{\frac k2}$ for any $g\in \cc$. Moreover, when $\partial \Lambda=\emptyset$, for any $f\in C^\infty \Lambda$,
 $$
 \int_\Lambda \mathbf P_\Lambda f = 0\, .
 $$
\end{theorem}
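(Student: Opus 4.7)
The plan is to derive Theorem~\ref{Plambdaop} by adapting the strategy of Theorem~\ref{oldiebutgoodie}, now one dimension down, with the bulk Laplace--Robin $\D_\bsigma$ replaced throughout by the tangential Laplace--Robin $\D^{\!\rm T}_\bsigma$ of Section~\ref{superlaprob}. In this reading $\D^{\!\rm T}_\bsigma$ plays the role of a conformally invariant Laplacian-type operator on $\Sigma$ that still remembers the ambient embedding into $(M,\cc)$, and the embedding $\Lambda\hookrightarrow \Sigma$ substitutes for $\Sigma\hookrightarrow M$, so the critical iterate $(\D^{\!\rm T}_\bsigma)^{d-2}$ plays on $\Lambda$ the role that $\D_\bsigma^{d-1}$ played on $\Sigma$.

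To establish the canonical (extension-independent) nature of the operator, two extensions of $\bm f\in \Gamma\big(\ce\Lambda\big[\tfrac{k-d+2}{2}\big]\big)$ to $M$ differ by a density of the form $\bsigma\,\bm h_1 + \bmu\,\bm h_2$. Tangentiality of $\D^{\!\rm T}_\bsigma$ to $\Sigma$ (from Section~\ref{superlaprob}) annihilates the $\bsigma\,\bm h_1$ contribution under iteration and restriction. For the $\bmu\,\bm h_2$ piece one commutes $\D^{\!\rm T}_\bsigma$ past $\bmu$ repeatedly; the multi-scale calculus of Section~\ref{Sec:Calculus} delivers a chain of weight-dependent coefficients that vanishes at the critical weight after $k$ iterations, for any $k\leq d-2$, yielding tangentiality to $\Lambda$. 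For the leading derivative term, $\D^{\!\rm T}_\bsigma$ has principal part the induced intrinsic Laplacian on $\Sigma$, so $k$-fold iteration produces $\big((k-1)!!\big)^2 (\Delta^g_\Sigma)^{k/2}$ with the standard double-factorial combinatorics arising from the shift of conformal weight at each application. Using the $\Lambda$-tangentiality just established, the $\Sigma$-normal components of $\Delta^g_\Sigma$ are absorbed into lower-transverse-order terms, leaving $\big((k-1)!!\big)^2 (\Delta^g_\Lambda)^{k/2}$ along $\Lambda$.

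The integral identity when $\partial\Lambda=\emptyset$ then follows from Theorem~\ref{Aanomaly}. The rescaling $\bm\tau\mapsto e^f\bm\tau$ corresponds to a conformal change of representative, and $\log(e^f\bm\tau)=f+\log\bm\tau$. By linearity of $(\D^{\!\rm T}_\bsigma)^{d-2}$ and the holographic formula for $\bm Q_\Lambda$,
\begin{equation*}
\bm Q_{\Lambda\hookrightarrow\Sigma\hookrightarrow(M,\cc)}^{e^f\btaus}-\bm Q_{\Lambda\hookrightarrow\Sigma\hookrightarrow(M,\cc)}^{\btaus}=\mathbf P_\Lambda f\,.
\end{equation*}
Since the area anomaly $\mathcal A$ is, by Theorem~\ref{Aanomaly}, a conformal invariant of the embedding data and hence independent of the regulator used to compute it, integrating this display over $\Lambda$ yields $\int_\Lambda \mathbf P_\Lambda f=0$.

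The principal technical obstacle will be the commutator bookkeeping needed to establish tangentiality to $\Lambda$: unlike $\Sigma$-tangentiality of $\D^{\!\rm T}_\bsigma$, which is a blanket property, $\Lambda$-tangentiality occurs only at the critical weight and relies on a delicate cancellation among iterated $[\D^{\!\rm T}_\bsigma,\bmu]$-commutators. This cancellation is most cleanly handled within the distributional boundary calculus of Section~\ref{Sec:Calculus}, where the weight-shift identities for $\D^{\!\rm T}_\bsigma$ acting on $\bmu$-multiples take a transparent form.
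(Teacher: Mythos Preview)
Your overall architecture is right --- adapt Theorem~\ref{oldiebutgoodie} one codimension down, use an $\mathfrak{sl}(2)$-type commutation for tangentiality at the critical weight, and deduce the integral identity from regulator-independence of the area anomaly in Theorem~\ref{Aanomaly}. This is exactly the paper's route. But you have the roles of $\Sigma$ and $\Xi$ swapped at the key step, and this breaks the argument as written.

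The operator $\D^{\!\rm T}_\bsigma$ is tangential to $\Xi={\mathcal Z}(\bmu)$, not to $\Sigma={\mathcal Z}(\bsigma)$; see Definition~\ref{LTdef} and the lemma immediately following Definition~\ref{logLT}. So the $\bmu\,\bm h_2$ piece is annihilated \emph{automatically} by the tangentiality of $\D^{\!\rm T}_\bsigma$, and no ``delicate cancellation among iterated $[\D^{\!\rm T}_\bsigma,\bmu]$-commutators'' is needed. It is the $\bsigma\,\bm h_1$ piece that requires work, and for this the paper uses Lemma~\ref{Iwasatheorem}: along $\Xi$ the induced operator ${\mathscr L}^\Xi_\bsigma$ obeys ${\mathscr L}^\Xi_\bsigma(\bsigma_{_{\!\Xi}}\cdot)-\bsigma_{_{\!\Xi}}{\mathscr L}^\Xi_\bsigma=(d+2w-1)+{\mathcal O}(\bsigma_{_{\!\Xi}}^{d-1})$, which is the $\mathfrak{sl}(2)$ analog (shifted by one in the weight, since $\dim\Xi=d-1$) of Equation~\nn{thesl2}. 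The enveloping-algebra identity~\nn{envelop} then kills $(\D^{\!\rm T}_\bsigma)^k(\bsigma\,\bm h_1)|_\Lambda$ at the weights stated, exactly as in Proposition~\ref{proptang}. Likewise, the principal part of $\D^{\!\rm T}_\bsigma$ along $\Xi$ is the Laplace--Robin operator intrinsic to $\Xi$ with defining density $\bsigma|_\Xi$ (plus extrinsic-curvature multiplications), not an operator on $\Sigma$; this is what the computation in the proof of Lemma~\ref{Iwasatheorem} shows. The correct analogy with Theorem~\ref{oldiebutgoodie} is therefore $(M,\bsigma,\D_\bsigma)\leftrightarrow(\Xi,\bsigma|_\Xi,{\mathscr L}^\Xi_\bsigma)$, with $\Lambda={\mathcal Z}(\bsigma|_\Xi)\hookrightarrow\Xi$ replacing $\Sigma\hookrightarrow M$. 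Once you make this swap, your outline goes through and matches the paper; your argument for the integral identity is already correct.
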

\noindent
The proofs of Theorems~\ref{oldiebutgoodie},~\ref
{intPU} and~\ref{Plambdaop} 
are given in Section~\ref{Sec:GJMS}.

\medskip

The simple holographic formul\ae\ for the $({\bm Q},{\bm  T})$-curvature pair of Theorems~\ref{Vanomaly} and~\ref{Aanomaly} explode in complexity when expressed in terms 
 of standard Riemannian 
invariants along~$\Sigma$ and $\Lambda$.
For example, even the intrinsic $Q$-curvature for 
a conformal eight-manifold takes easily a page when 
expressed this way~\cite{GoPetCMP}. 
In the situation when one is given explicit data for the sequence of hypersurface embeddings $\Lambda\hookrightarrow \Sigma \hookrightarrow (M,\cc)$ in higher dimensions but where Riemannian formul\ae\ are unwieldy, one can employ our holographic formul\ae\  and computer software to compute these curvatures and operators. Examples of how to set up this kind of computation are given in~\cite{GWvol,GWvolII}.
When the  bulk $M$ is a three or four-manifold, explicit Riemannian formul\ae\ are still relatively compact; these are given in following pair of theorems,
the first of which gives the volume expansion, its anomaly and related curvatures and invariant operators. 
Any new notations appearing in these two  theorems
as well as their proofs are given in Section~\ref{Sec:Examples}.

\begin{theorem}\label{exvol}
Given the sequence of hypersurface embeddings $\Lambda\hookrightarrow \Sigma \hookrightarrow (M^d,\cc)$ and $g\in \cc$ (which induces $g_\Sigma\in \cc_\Sigma$) 
the 
$(\bm Q^g_{\Sigma\hookrightarrow (M,\cc)}, \bm T^g_{\Lambda\hookrightarrow\Sigma\hookrightarrow (M,\cc)})$ curvature pair and associated 
$({\mathbf P}_{\Sigma\hookrightarrow (M,\cc)}, {\mathbf U}_{\Lambda\hookrightarrow\Sigma\hookrightarrow (M,\cc)})$ operator pair are given by
\begin{alignat*}{4}
&\bm Q^g_\Sigma &&= J^{g_\Sigma}-\frac12\,\bm\IIo_{ab}^\Sigma\,\bm \IIo_\Sigma^{ab}\, , \qquad
&&{\bm T}^g_{\Lambda}&&=-H^{g_\Sigma}_{\Lambda\hookrightarrow \Sigma}\,,\\[2mm]
&\mathbf P_\Sigma&&=\bm \Delta_\Sigma\, ,
&&\mathbf U_{\Lambda} &&= \bm\nabla_{\bm{ \hat m}}\,,
\end{alignat*}
when $d=3$, and
\begin{align*}
&\bm Q_{\Sigma
}^g = -4\nabla^a_\Sigma\nabla^b_\Sigma \,\bm{\IIo}_{ab}^\Sigma
-8\, \bm{\IIo}^{ab}_\Sigma\, \bm{\mathcal F}_{ab}^\Sigma \, ,
&&
\bm T_{\Lambda
}^g
=
-2 \bm {\hat m^a}
\nabla^b_\Sigma \hh\bm \IIo^\Sigma_{ab}
+2
\rm\updelta_{\rm R} ^{\sss\Lambda\hookrightarrow \Sigma} 
\, \bm \IIo^\Sigma_{\bm{\hat m}\bm {\hat m}}\!
-2\hh \hh\bm \IIo^\Sigma_{ab}\hh\hh\bm \IIo_{\Lambda\hookrightarrow \Sigma}^{ab}\, ,
\\[2mm]
&
\mathbf P_\Sigma=8\bm \nabla_a^\Sigma \circ\bm \IIo^{ab}_\Sigma \circ \bm \nabla^\Sigma_b\, ,
&&
\mathbf U_{\Lambda} = 4\bm {\hat m}^a\bm \IIo^\Sigma_{ab} \bm \nabla_\Sigma^b\, ,
\end{align*}
when $d=4$. 

Accordingly, the regulated volume expansion~\eqref{reg_vol} is given by
\begin{multline*}
\Vol_\varepsilon^g= \frac{1}{2\varepsilon^2}\int_{\widetilde\Sigma} \ext V_{g_\Sigma} +\frac{1}{\varepsilon}\int_{\widetilde\Sigma} \ext V_{g_\Sigma}  H^g_{\Sigma} +\log\varepsilon\left[
\pi \chi_{\widetilde\Sigma}
-\frac 14\int_{\widetilde\Sigma} \bm\IIo_{ab}^\Sigma\,\bm \IIo_\Sigma^{ab}\right]
+
\Vol_{\rm ren} + \varepsilon {\mathcal R}(\varepsilon)\, ,
\end{multline*}
when $d=3$, and
\begin{multline*}
\Vol_\varepsilon^g=
\frac{1}{3\varepsilon^3} \int_{\widetilde\Sigma} \ext V_{g_\Sigma}
+
\frac{1}{\varepsilon^2}
\int_{\widetilde\Sigma} \ext V_{g_\Sigma}
H_{\Sigma}^g
-\frac{1}{2\varepsilon}
\int_{\widetilde\Sigma} \ext V_{g_\Sigma}
J^{g_\Sigma}
+\frac{1}{2\varepsilon}\int_\Lambda  
\ext V_{g_\Lambda}H_{\Lambda\hookrightarrow \Sigma}^{g_\Sigma}
\\
+\frac 23
\hh\log\varepsilon\left[
 \int_{\widetilde\Sigma}\bm{\IIo}^{ab}_\Sigma\, \bm{\mathcal F}_{ab}^\Sigma
-
\frac12\int_\Lambda
\Big(
\bm \updelta_{\rm R}^{\sss\lambda\hookrightarrow \Sigma} 
\, \bm \IIo^\Sigma_{\bm{\hat m}\bm {\hat m}}\
-\hh \hh\bm \IIo^\Sigma_{ab}\hh\hh\bm \IIo_{\Lambda\hookrightarrow \Sigma}^{ab}\Big)
\right]
+
\Vol_{\rm ren} + \varepsilon {\mathcal R}(\varepsilon)\, .
\end{multline*}
when $d=4$.
\end{theorem}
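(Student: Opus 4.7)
The plan is to verify this theorem by explicit computation, applying the holographic formulae of Theorems \ref{Vanomaly}, \ref{Aanomaly}, \ref{oldiebutgoodie}, \ref{intPU} in the low dimensional cases $d=3,4$ where the Riemannian expressions remain tractable. The three inputs I will combine are: (i) the Levi--Civita presentation of the Laplace--Robin operators $\D_\bsigma$, $\D^{\!\rm T}_\bsigma$, $\D'_\bmu$ in terms of a chosen $g\in\cc$ (developed in Sections~\ref{LapRob} and \ref{superlaprob}); (ii) the singular Yamabe equation $\Sc^{g^o}=-d(d-1)+O(\sigma^d)$ satisfied by $\bsigma$, which controls the first subleading jet of $\sigma$ through the transverse Ricci equation; (iii) the asymptotic minimality $H_\Xi^{g^o}=O(\sigma^{d-1})$ satisfied by $\bmu$, which analogously pins down the first subleading jet of $\mu$.

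For the curvature pairs, I would first unpack $\bm Q_\Sigma=\D_\bsigma^{d-1}\log\btau|_\Sigma$ and $\bm T_\Lambda$ from Theorems~\ref{Vanomaly} and \ref{Aanomaly} with $\btau=[g;1]$ (so $\log\btau$ is the trivial log density). In $d=3$ this reduces to $\D_\bsigma^2 \log \btau|_\Sigma$, a standard computation that produces the conformal Laplacian of $\log\bsigma$; restriction and use of $\Sc^{g^o}=-6+O(\sigma^3)$ together with the Gauss equation relating $\Sc^M$, $\Sc^\Sigma$, $H^g$, and $\|\IIo^\Sigma\|^2$ yields precisely $J^{g_\Sigma}-\tfrac12 \bm\IIo^\Sigma_{ab}\bm\IIo^{ab}_\Sigma$. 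The $d=3$ transgression is the single-term sum $\D'_\bmu\log\btau|_\Lambda$, which by the boundary Cherrier--Robin reduction of $\D'_\bmu$ collapses to $-H^{g_\Sigma}_{\Lambda\hookrightarrow\Sigma}$. In $d=4$ I compute $\D_\bsigma^3\log\btau|_\Sigma$ by iterated application: the first action produces a conformal Laplacian, the second reaches critical order and generates the tractor divergence contributing the $\nabla^a\nabla^b\IIo^\Sigma_{ab}$ term, while commutator terms with the $\bsigma$-weighted curvature of the tractor connection produce the quadratic Fialkow-type term $\bm\IIo^{ab}_\Sigma\bm{\mathcal F}^\Sigma_{ab}$. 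For $\bm T^g_\Lambda$ the two summands $\D'_\bmu\D_\bsigma\log\btau|_\Lambda$ and $\D^{\!\rm T}_\bsigma\D'_\bmu\log\btau|_\Lambda$ contribute the normal-derivative piece $\bm{\hat m}^a\nabla^b_\Sigma\bm\IIo^\Sigma_{ab}$, a Robin-remnant $\rm\updelta_R\, \IIo^\Sigma_{\bm{\hat m}\bm{\hat m}}$ (from the boundary reduction of $\D'_\bmu$ colliding with tangential $\D^{\!\rm T}_\bsigma$), and the quadratic cross-term $\bm\IIo^\Sigma_{ab}\bm\IIo^{ab}_{\Lambda\hookrightarrow\Sigma}$ (from the Gauss relation coupling $\Xi$ and $\Sigma$ along $\Lambda$).

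The operator formulae $\mathbf P_\Sigma$ and $\mathbf U_\Lambda$ follow by the identical computation with $\log\btau$ replaced by an arbitrary extension $\bm f_{\rm ext}$ of a density $\bm f\in\Gamma(\ce\Sigma[w])$; all terms not containing a free $\nabla\bm f$ or $\bm f$-curvature coupling assemble into universal data controlled by Theorems~\ref{oldiebutgoodie} and~\ref{intPU}, and for $d=3,4$ only the leading and immediately subleading combinations survive the weight count. In $d=3$ one reads off $\mathbf P_\Sigma=\bm\Delta_\Sigma$ and $\mathbf U_\Lambda=\bm\nabla_{\bm{\hat m}}$ directly from the Cherrier--Robin form of $\D_\bsigma$ and $\D'_\bmu$. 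In $d=4$ the third-order $\D_\bsigma^3$ collapses to the conformally invariant, formally self-adjoint second-order operator $8\bm\nabla_a\circ\bm\IIo^{ab}_\Sigma\circ\bm\nabla_b$, by pairing the outer $\D_\bsigma$ with the inner one to generate $\bm\IIo^\Sigma$ via the tractor curvature and a divergence identity; $\mathbf U_\Lambda$ then follows from the analogous pairing of $\D'_\bmu$ and $\D^{\!\rm T}_\bsigma$. The integral identity $\int_{\widetilde\Sigma}\mathbf P_\Sigma f + (d-2)\int_\Lambda \mathbf U_\Lambda f=0$ of Theorem~\ref{intPU} can also be checked directly by integration by parts as a consistency check.

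Finally, for the regulated volume expansion, I would substitute the $\bm Q$ and $\bm T$ just computed into Theorem~\ref{Vanomaly} to obtain the $\log\varepsilon$ coefficient, and into the holographic formulae of Theorem~\ref{Vdiv} (stated earlier in the paper) to obtain the $\varepsilon^{-k}$ coefficients by expansion of $\D_\bsigma^{k}\log\btau|_\Sigma$ (for the $\widetilde\Sigma$ contributions) and of the analogous lower-order Robin boundary combinations on $\Lambda$. In $d=3$ the $\log\varepsilon$ coefficient emerges as $\tfrac12[\int_{\widetilde\Sigma}(J^{g_\Sigma}-\tfrac12\|\bm\IIo^\Sigma\|^2)-\int_\Lambda H^{g_\Sigma}_{\Lambda\hookrightarrow\Sigma}]$; applying Gauss--Bonnet with boundary to the $J^{g_\Sigma}=K^{g_\Sigma}$ and geodesic curvature terms produces exactly $\pi\chi_{\widetilde\Sigma}-\tfrac14\int\|\bm\IIo^\Sigma\|^2$. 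The main obstacle will be the $d=4$ bookkeeping: tracking all commutator, curvature, and Gauss--Codazzi--Ricci contributions in $\D_\bsigma^3$, $\D'_\bmu\D_\bsigma$, and $\D^{\!\rm T}_\bsigma\D'_\bmu$ to the required subleading order, while disentangling the two intersecting defining densities $\bsigma$ and $\bmu$ at $\Lambda$; the systematic distribution-based boundary calculus of Section~\ref{Sec:Calculus} is exactly what makes this bookkeeping manageable.
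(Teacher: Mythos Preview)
Your strategy is the same as the paper's: evaluate the holographic formul\ae\ of Theorems~\ref{Vanomaly}, \ref{Vdiv}, \ref{oldiebutgoodie}, \ref{intPU} in the scale $\btau=[g;1]$, and for the $d=3$ anomaly invoke Gau\ss--Bonnet with boundary. Two points, however, deserve correction.

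First, the divergence integrands in Theorem~\ref{Vdiv} are $\bm v_k=\D_\bsigma^{\,d-1-k}\btau^{-k}\big|_{\widetilde\Sigma}$ and $\bm v'_k$ is built similarly from $\btau^{-k}$; they are \emph{not} $\D_\bsigma^k\log\btau$. Only the anomaly coefficient involves $\log\btau$. In practice this means you compute, e.g.\ in $d=3$, $\bm v_1=\D_\bsigma\btau^{-1}|_\Sigma=-H_\Sigma$, and in $d=4$ the boundary piece $\bm v'_1=\D'_\bmu\btau^{-1}|_\Lambda$ at the critical weight via $\widetilde\D_\bmu$.

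Second, and more substantively, the Laplace--Robin computations on the $\bmu$-side naturally produce extrinsic data of the \emph{auxiliary} hypersurface $\Xi$, not of $\Lambda\hookrightarrow\Sigma$. For instance $\D'_\bmu\log\btau|_\Lambda=\rho_\mu|_\Lambda=-H_\Xi|_\Lambda$, and in $d=4$ the raw output of $\widetilde\D_\bmu\D_\bsigma\log\btau$ and $\D^{\!\rm T}_\bsigma\widehat\D_\bmu\log\btau$ contains $\hat n^a\nabla_\Xi^b\IIo^\Xi_{ab}$, $\IIo^\Sigma_{ab}\IIo^{ab}_\Xi$, and $H_\Xi$ terms. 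To reach the statement of the theorem, which is phrased purely in terms of the embedding sequence $\Lambda\hookrightarrow\Sigma\hookrightarrow(M,\cc)$, you must eliminate all $\Xi$-data. The paper does this via Lemmas~\ref{IIonn} and~\ref{H2H} (giving $\IIo^\Xi_{\hat n\hat n}|_\Lambda=0$ and hence $H_\Xi|_\Lambda=H_{\Lambda\hookrightarrow\Sigma}$) together with the orthogonality relation $\hat n^a\IIo^\Xi_{ab}+\hat m^a\IIo^\Sigma_{ac}I^c_b=0$ along $\Lambda$ (Equation~\eqref{Rodsequation}), which in particular converts $\IIo^\Sigma_{ab}\IIo^{ab}_\Xi$ into $\IIo^\Sigma_{ab}\IIo^{ab}_{\Lambda\hookrightarrow\Sigma}$. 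Your phrase ``Cherrier--Robin reduction collapses to $-H_{\Lambda\hookrightarrow\Sigma}$'' hides exactly this translation step; without it the $d=4$ transgression computation will not close.
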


%
%
%


The area expansion, its anomaly and related curvatures and invariant operators  are given in our final theorem.

\begin{theorem}\label{exarea}
Given the sequence of hypersurface embeddings $\Lambda\hookrightarrow \Sigma \hookrightarrow (M^d,\cc)$ and $g\in \cc$ (which induces $g_\Lambda$ along $\Lambda$), the 
$\bm Q_{\Lambda\hookrightarrow\Sigma\hookrightarrow (M,\cc)}$ curvature and associated 
${\mathbf P}_{\Lambda\hookrightarrow\Sigma\hookrightarrow (M,\cc)}$ operator  are given by
$$
\bm Q_{\Lambda\hookrightarrow \Sigma}
\stackrel\Lambda
=\bm {\mathcal C}:=
-\hh
\bm\IIo^\Sigma_{\bm{ \hat\bm  m}   {\bm{\hat  m}}}\, ,
\qquad
\mathbf P_{\Lambda}=0\, ,
$$
when $d=3$ and
$$
{\bm Q}^{g}_{\Lambda\hookrightarrow \Sigma}\stackrel\Lambda=
J^{g_\Lambda}
+ \frac12 \bm K_{\Lambda\hookrightarrow \Sigma}
+\frac14 \bm K_\Sigma
- 2\bm {\mathcal F}^\Sigma_{ \bm {\hat m}\bm {\hat m}}
 -2\hh \bm\IIo^\Sigma_{\bm{\hat m}a}\bm g_\Lambda^{ab}\bm \IIo^\Sigma_{\bm{\hat m}b}
 -\bm {\mathcal C}^2\, ,\quad
 \mathbf P_{\Lambda}=\bm \Delta_\Lambda\, ,
$$
when $d=4$.

Accordingly, the regulated area expansion~\eqref{AREA} is given by
$$
\Area_\varepsilon^g=
\frac1\varepsilon
\int_\Lambda \ext V_{g_\Lambda}
+\log\varepsilon \int_\Lambda \bm\IIo^\Sigma_{\bm{ \hat\bm  m}   {\bm{\hat  m}}}+\Area_{\rm ren}
+\varepsilon\, {\mathcal R}(\varepsilon)\, , 
$$
when $d=3$ and
\begin{multline*}
\Area_\varepsilon^g=
\frac1{2\varepsilon^2}
\int_\Lambda \ext V_{g_\Lambda}
+\frac{1}{\varepsilon}\int_\Lambda \ext V_{g_\Lambda}
\big(H_{\Sigma}-\bm\IIo^\Sigma_{\bm{ \hat\bm  m}   {\bm{\hat  m}}} \big)
\\[1mm]
+ \log\varepsilon
\left[\pi \chi_\Lambda + 
 \frac14 
 \int_\Lambda 
 \Big(
 \bm K_{\Lambda\hookrightarrow \Sigma}
   - 4\bm {\mathcal F}^\Sigma_{\bm  {\hat m}\bm {\hat m}}
 -4\hh \bm \IIo^\Sigma_{\bm{\hat m}a}\bm g^{ab}_\Lambda \bm \IIo^\Sigma_{\bm{\hat m}b}
  -2\bm{\mathcal C}^2
\Big)\right]
\\[1mm]
+\Area_{\rm ren}
+\varepsilon\, {\mathcal R}(\varepsilon)\, , 
\end{multline*}
when $d=4$.
\end{theorem}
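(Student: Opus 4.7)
The plan is to unpack the universal holographic formulas of Theorems~\ref{Aanomaly} and~\ref{Plambdaop} by evaluating iterated tangential Laplace--Robin derivatives $(\D_\bsigma^{\rm T})^k\log\bm\tau$ in terms of the Levi-Civita data of a chosen $g\in\cc$ and then restricting to $\Lambda$. The pole coefficients of the area expansion come from Theorem~\ref{Adiv}, while the logarithmic coefficient is supplied by Theorem~\ref{Aanomaly} once $\bm Q_{\Lambda\hookrightarrow\Sigma}$ is in hand.

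For $d=3$ the work is essentially a single application of the tangential Laplace--Robin operator. Fixing $g\in\cc$ and $\bm\tau=[g;1]$, a direct computation of $\D_\bsigma^{\rm T}\log\bm\tau$ at the scale $g$ followed by restriction to $\Lambda$ leaves only the normal-normal component of the trace-free second fundamental form, giving $\bm Q_{\Lambda\hookrightarrow\Sigma}\stackrel{\Lambda}{=}-\bm\IIo^\Sigma_{\bm{\hat m}\bm{\hat m}}=\bm{\mathcal C}$. The operator $\mathbf P_\Lambda$ is first-order, mapping weight-$0$ densities to weight-$(-1)$ densities on a one-dimensional manifold; the only conformally invariant such operator is zero, so $\mathbf P_\Lambda=0$. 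Inserting $\bm{\mathcal C}$ into Theorem~\ref{Aanomaly} with the overall factor $(-1)/(1!\cdot 0!)=-1$ yields the stated log coefficient $\int_\Lambda\bm\IIo^\Sigma_{\bm{\hat m}\bm{\hat m}}$, while the $1/\varepsilon$ pole is read off from the leading term of the asymptotic area of $\Xi_\varepsilon$ through Theorem~\ref{Adiv}, which is simply the induced area of $\Lambda$ with respect to $g_\Lambda$.

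For $d=4$ the core computation is $(\D_\bsigma^{\rm T})^2\log\bm\tau|_\Lambda$. Using the distributional boundary calculus of Section~\ref{Sec:Calculus} I would commute one $\D_\bsigma^{\rm T}$ past the other: the commutator produces the intrinsic Schouten trace $J^{g_\Lambda}$ and the rigidity combinations $\tfrac12\bm K_{\Lambda\hookrightarrow\Sigma}+\tfrac14\bm K_\Sigma$, while the compound action on $\log\bm\tau$ contributes the Fialkow-type term $-2\bm{\mathcal F}^\Sigma_{\bm{\hat m}\bm{\hat m}}$, the contraction $-2\bm\IIo^\Sigma_{\bm{\hat m}a}\bm g_\Lambda^{ab}\bm\IIo^\Sigma_{\bm{\hat m}b}$, and the quadratic $-\bm{\mathcal C}^2$. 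The statement $\mathbf P_\Lambda=\bm\Delta_\Lambda$ then follows from Theorem~\ref{Plambdaop} at $k=2$ with leading coefficient $((2-1)!!)^2=1$; there is no lower-order correction because the domain weight vanishes. The area expansion assembles as follows: the two poles arise from expanding the singular area form on $\Xi$ in powers of the approximating scale, yielding $\tfrac1{2\varepsilon^2}\int_\Lambda\ext V_{g_\Lambda}$ and the $1/\varepsilon$ coefficient $H_\Sigma-\bm\IIo^\Sigma_{\bm{\hat m}\bm{\hat m}}$; the logarithmic coefficient follows from Theorem~\ref{Aanomaly} with prefactor $1/(2!\cdot 1!)=\tfrac12$ applied to the $\bm Q$-curvature computed above, and the appearance of $\pi\chi_\Lambda$ comes from applying two-dimensional Gauss--Bonnet to the intrinsic piece $\int_\Lambda J^{g_\Lambda}=2\pi\chi_\Lambda$.

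The main obstacle is the $d=4$ computation of $(\D_\bsigma^{\rm T})^2\log\bm\tau|_\Lambda$. Successive tangential Laplace--Robin operators do not commute, and each reordering generates extrinsic curvature contributions that must be carefully tracked; the singular-Yamabe condition on $\bsigma$ must be invoked at the correct jet order to eliminate higher $\sigma$-derivatives, and the minimality condition on $\bmu$ must be used to arrange that all remaining terms are intrinsic-plus-extrinsic curvature invariants of the embedding $\Lambda\hookrightarrow\Sigma\hookrightarrow(M,\cc)$. The distributional boundary calculus of Section~\ref{Sec:Calculus} is what makes this bookkeeping tractable, and the final conversion of the resulting tractor expressions into the Riemannian quantities $J^{g_\Lambda}$, $\bm K_{\Lambda\hookrightarrow\Sigma}$, $\bm K_\Sigma$, $\bm{\mathcal F}^\Sigma$, $\bm\IIo^\Sigma$ and $\bm{\mathcal C}$ completes the proof.
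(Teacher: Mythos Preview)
Your overall plan is right—the paper also evaluates the holographic formul\ae\ of Theorems~\ref{Aanomaly}, \ref{Plambdaop} and \ref{Adiv} by computing $(\D_\bsigma^{\!\rm T})^k\log\bm\tau$ in a chosen scale and restricting to $\Lambda$. But two of your shortcuts for $\mathbf P_\Lambda$ do not go through, and your description of the $d=4$ curvature computation is not yet a proof.

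\medskip
\textbf{The $\mathbf P_\Lambda$ arguments.} In $d=3$ you argue that $\mathbf P_\Lambda=0$ because ``the only conformally invariant first-order operator from weight $0$ to weight $-1$ on a one-manifold is zero.'' This would be fine for an \emph{intrinsic} operator, but $\mathbf P_\Lambda$ is extrinsically coupled: it may depend on the embedding data $\Lambda\hookrightarrow\Sigma\hookrightarrow(M,\cc)$. Multiplication by the invariant $\bm{\mathcal C}=\bm\IIo^\Sigma_{\hat m\hat m}\in\Gamma(\ce\Lambda[-1])$ is a perfectly good conformally invariant map $\Gamma(\ce\Lambda[0])\to\Gamma(\ce\Lambda[-1])$, so your classification argument does not rule it out. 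The paper instead computes $\D_\bsigma^{\!\rm T}f$ directly from Definition~\ref{LTdef} at $d=3$, $w=0$: the coefficient $(d-3)/(d-2)$ kills the $\D_\bsigma$ term and what remains is $-\bsigma\D_\bmu^2 f$, which vanishes on $\Lambda$.

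The same issue recurs in $d=4$. Theorem~\ref{Plambdaop} only asserts that the \emph{leading} derivative term of $\mathbf P_\Lambda^{(2)}$ is $\Delta_\Lambda$; it says nothing about lower-order extrinsic corrections, and ``domain weight zero'' does not preclude them (compare $\mathbf P_\Sigma$ in $d=4$, which is $8\nabla^a_\Sigma\IIo^\Sigma_{ab}\nabla^b_\Sigma$ rather than a bare Laplacian). The paper computes $(\D_\bsigma^{\!\rm T})^2$ on weight-zero densities using Equations~\eqref{LTcrit1} and~\eqref{LTcrit2}, writes this out in a scale as $\Delta-\nabla_n^2-\nabla_m^2+(\mathcal C-H_\Sigma)\nabla_n-H_\Xi\nabla_m$, and then matches it against an explicit holographic formula for $\Delta_\Lambda$; the cancellation of the normal-derivative terms uses the identities $m^a\nabla_m n_a\stackrel\Lambda=\mathcal C+H_\Sigma$ and $n^a\nabla_n m_a\stackrel\Lambda=H_\Xi$, which come from the minimal-surface analysis of Section~\ref{Sec:MinSurf}.

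\medskip
\textbf{The $d=4$ curvature computation.} The paper does not ``commute one $\D_\bsigma^{\!\rm T}$ past the other''; there is no ready commutator identity at the critical weights involved. Instead it expands $(\D_\bsigma^{\!\rm T})^2\log\bm\tau$ using the log-density Definition~\ref{logLT} for the inner operator (yielding $\widehat\D_\bsigma\log\bm\tau-\widehat\D_\bmu\langle\bsigma,\bmu\rangle+\bsigma\widetilde\D_\bmu\widehat\D_\bmu\log\bm\tau$) and the critical-weight formula~\eqref{LTcrit1} for the outer one, then evaluates each of four resulting terms in the scale $g$. The Fialkow and $\bm K$ contributions do not arise from a commutator but from the Fialkow--Gau\ss\ equation applied to $P_{ab}g_\Lambda^{ab}$, together with the explicit evaluation of $\nabla_n(m^a\nabla_m n_a)$ using the singular-Yamabe identities of Equation~\eqref{singyamIIo} and Lemma~\ref{C}. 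Your outline does not yet identify these ingredients; the ``distributional calculus'' is not used here—Section~\ref{Sec:Examples} is a direct Riemannian computation in the chosen scale.
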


Many of our constructions and results have origins in tractor calculus techniques for hypersurfaces, see in particular~\cite{GW15,GW161}. 
They allow the Laplace--Robin technology to be extended to any conformally compact structure as well as to higher rank tensors.
Moreover, these techniques suggest a natural  generalization of our method to problems involving higher codimension embeddings; the key is to develop suitable analogs of the tangential Laplace--Robin operator~$\D_\bsigmas^{\!\rm T}$. 
We plan to present the details of a higher codimension tractor calculus for conformally embedded submanifolds and related volume and area problems in a separate manuscript.

%

%
%
%
%
%
%
%

\medskip


%


\subsection{Conventions}
We employ the notation $M^d$ to indicate the dimension $d$  of a smooth manifold $M$ and drop the superscript when this is clear from context. The Euler characteristic of a 
manifold $M$ is denoted by $\chi_M$, and the exterior derivative is denoted by $\ext$.
When $M$ is equipped with a metric $g$, the corresponding  volume element  is denoted $\ext V_g$.
We will often employ an abstract index notation for tensors on $M$, where for example, $v^a$ denotes a section of $TM$ but no choice of coordinates is implied (see, for example~\cite{ot}). Canonical operations such as contraction of a vector field $v^a$ and a one-form $\omega_a\in \Gamma(T^*M)$, are then given by expressions such as~$v^a \omega_a$. In this notation the squared length $|v|_g^2$ of a vector~$v^a$ with respect to a metric~$g$ is~$v^a g_{ab} v^b$, but we will often abbreviate this quantity by $v^2$. In the same vein, $u.v$ denotes $g_{ab} u^a v^b=g(u,v)$.  Throughout we work with Euclidean signature metrics, but many of our results generalize directly to a pseudo-Riemannian setting. 
The Levi--Civita connection of a metric $g$ is denoted $\nabla^g$ 
(again the superscript $g$ will be dropped when context makes this clear). 
The Riemann tensor $R$ of~$\nabla$ is defined by $R(u,v)w=\nabla_u \nabla_v w -\nabla_v \nabla_u w -\nabla_{[u,v]}$, where $[\pdot,\pdot]$ is the Lie bracket and $u,v,w$ are arbitrary vector fields. In the abstract index notation~$R$ is denoted by $R_{ab}{}^c{}_d$ and the Ricci tensor is $R_{ab}{}^a{}_d=:\Ric_{bd}$.
In  $d\geq 3$ dimensions, this is related to the Schouten tensor $\Rho$  by the trace-adjustment 
$\Ric_{ab}=(d-2)\Rho_{ab}+g_{ab} J$, where $J:=\Rho_a^a$ and the scalar curvature $\Sc=\Ric_a^a=2(d-1) J$.
When $d=2$, we define~$J:=\frac12 \Sc$. In dimensions $d\geq 4$, the Weyl tensor is defined by $W_{abcd}=R_{abcd}-P_{ac} g_{bd}+P_{bc} g_{ad}-P_{bd} g_{ac}+P_{ad} g_{bc}$. 
Moreover $W^{\sss \Omega^2 g}{\!\!}_{ab}{}^c{}_d=W^{\sss g}{\!}_{ab}{}^c{}_d$ for any smooth function $\Omega$. As for the quantity zero, we denote $0!=1$ and any operator~$A$ raised to the zeroth power is the identity map: $A^0={\rm Id}$.

\section{The Calculus of Scale}
\label{The Calculus of Scale}
An effective method for handling  geometric problems holographically  is to treat the  bulk as a conformal manifold. In this approach, asymptotically hyperbolic (or 
anti de Sitter) metrics are treated as a coupling of conformal geometry to a singular scale. This allows us to utilize potent hidden symmetries
  because the bulk conformal structure extends smoothly to the boundary even when the hyperbolic metric does not.

\subsection{Conformal geometry}\label{CONFG}

A {\it conformal manifold} $(M^d,\bm c)$ is a manifold equipped with a conformal class of metrics
$$
{\bm c}
:=[\hh g]=[\Omega^2 g]\, ,
$$
where $\Omega
$ is any smooth, strictly-positive function. 
Conformal densities are  fundamental objects on conformal manifolds:
A {\it (conformal) density}  of weight $w\in{\mathbb R}$ is 
a double
equivalence class of metrics and smooth functions  defined by 
$${\bm \nu}:=[\hh g ; \nu]=[\Omega^2g; \Omega^w \nu]\, .$$
Conformal densities may also be treated as sections of the line bundle ${\mathcal E}M[w]$ induced from the corresponding ${\mathbb R}_+$ representation where one  views the conformal structure as a ray subbundle of $\odot^2 T^*M$ and  in turn as an  ${\mathbb R}_+$ principal bundle over $M$~\cite{GoPetCMP,CapGoamb}. Weight zero densities $\bm f=[g;f]=[\Omega^2g;f]$ may be treated as functions and thus we often use  the notation~$f$ for these.
 We will also be interested in operators between sections of line bundles
$\bm {\mathcal A}:\Gamma({\mathcal E}M[w_1])\to \Gamma({\mathcal E}M[w_2])$. In this case we will refer to the difference $w_2-w_1=:w(\bm { \mathcal A})$ as the {\it operator weight}  of $\bm {\mathcal A}$ and also use the terminology ``$\bm {\mathcal A}$~is an operator of weight $w_2-w_1$''.
We also need the notion of a {\it log density}.
These are (see~\cite{GW})   equivalence classes of functions defined by an additive ${\mathbb R}_+$ representation
$$
\bm \lambda =[g;\lambda]=[\Omega^2g;\lambda + \ell \log \Omega]\, .
$$ 
The number $\ell$ is called the {\it (log)-weight} of $\bm \lambda$. A log density is a  section of  a log-density bundle ${\mathcal F}M[\ell]$ for some $\ell\in {\mathbb R}$. A more detailed description of the algebra and calculus of conformal densities and their tensor analogs  may be found in~\cite{GW,GWvol}. For example, $\bm v^a=[g;v^a]\in \Gamma(TM[w])$ denotes a vector-valued density of  weight~$w$ and its corresponding section space. 

When  $\bm \tau=[g;\tau]$  is a weight $w=1$ density and the smooth function $\tau$ is strictly positive, we call ${\bm \tau}$ a {\it true scale}. 
Given this, we can define a weight $\ell=1$ log-density $\log \bm \tau:=[g;\log\tau]$ and any weight $\ell=1$ log density can be written this way for some true scale.


A true scale canonically determines a Riemannian geometry~$(M,g_{\btaus})$ via the  equivalence class representative~$
{\bm \tau}=[g_{\bm \tau}; 1]$. Conversely, given a true scale ${\bm \tau}$ and a density ${\bm f}$, this canonically determines a function $f$ by expressing ${\bm f}=[g_{\btaus},f]$. Thus one may also regard a true scale as a choice of metric $g_{\btaus}\in \bm c$.
Alternatively, defining (tautologically) the symmetric cotensor-valued {\it conformal metric}
$
\bm g_{ab}=[g;g_{ab}]\in \Gamma(\odot^2 T^*M[2])
$
(which we denote by~$\bm g$ in an index-free notation), the metric given by the choice of scale $\btau$ is
$$
g_{\tau}=\frac{\bm g}{\btau^2}\, .
$$
It is often propitious to perform computations involving densities
by making such a choice, we shall 
 term this operation {\it working in a scale}
 and label it either by a choice of metric or a true scale, and
  use unbolded symbols for the corresponding equivalence class representatives for densities. Similarly, an unbolded notation will be used for operators acting on densities when a choice of scale has been made.
  Once a scale been chosen we may (or may not according to convenience) use this to trivialize density bundles.

 For submanifolds embedded in a conformal manifold, density bundles of the induced conformal structure agree with the restriction of ambient density bundles of the same weight. We shall use this frequently without further mention.


\subsection{Defining densities}\label{DefDs}
The data of a conformal manifold and a true scale is equivalent to a Riemannian geometry. The situation becomes far more interesting when one admits scales with a non-trivial zero locus.

\medskip 
Given an embedded  hypersurface $\Sigma\hookrightarrow M$, a {\it defining density} $\bm \sigma$ is a  weight $w=1$ density ${\bm \sigma}=[\hh g ; \sigma\, ]$ with zero locus $${\mathcal Z}({\bm \sigma}):=\{p\in M\, |\, \sigma(p)=0\}=\Sigma\, ,$$ and such that $\ext\sigma (p)\neq 0$, $\forall p\in \Sigma$ (so the function $\sigma$ is a {\it defining function} for $\Sigma$). For a given hypersurface a defining density  always exists locally.
 
 A defining density defines a  Riemannian metric $g^o$ via $\bsigma=[g^o;1]$ 
  away from the zero locus $\Sigma$  where $g^o$ is singular. The scalar curvature of this singular metric 
  is determined by 
   the conformal metric~${\bm g}$
  and the defining density~${\bm \sigma}=[g;\sigma]$
  through the weight zero density
  \begin{equation}\label{Scurvy}
{\bm {\mathcal S}}_\bsigmas:=\Big[ g ; 
|\ext \sigma|_g^2-\frac{2\sigma}d\,\big(\Delta^g \sigma +\sigma \J\, \big)\Big]\, , 
\end{equation}
termed the  {\it ${ {\mathcal S}}$-curvature}. Here $\Delta^g$ denotes the (negative energy) Laplacian of $g$ and  $J$ is the multiple of scalar curvature given by $J:=Sc^g/\big(2(d-1)\big)$ (here and elsewhere we will drop super and subscripts~$g$ and $\bsigma$ when these are clear from context). 
Away from~$\Sigma$ (and up to a negative multiplicative constant) the 
${ {\mathcal S}}$-curvature
is the  of 
$g^o$,
$$Sc^{g^o}=-d(d-1) {\bm {\mathcal S}}_\bsigmas.$$
This shows that the scalar curvature  $Sc^{g^o}$  of the metric $g^o$ extends smoothly to~$\Sigma$, where~$g^o$ is 
singular.

\medskip 
When working  in a scale $g$, we will denote $n:=\ext \sigma$ and
$$\rho_\sigma:=-\frac{1}{d}(\Delta^g + \J)\sigma\, .$$ 
The   
${ {\mathcal S}}$-curvature $
[g;{\mathcal S}]$ is then given by the function
\begin{equation}\label{Snrs}
{\mathcal S}=
n^2+2\rho\sigma\, .
\end{equation}
Hence, along the hypersurface~$\Sigma$, the ${ {\mathcal S}}$-curvature determines the length  of the conormal~$n$ and ${\bm {\mathcal S}}_\bsigmas$ is necessarily non-vanishing in a neighborhood of $\Sigma$. Since our considerations concern such a neighborhood, throughout and without loss of generality  we assume that~${\bm {\mathcal S}}_\bsigmas$
is everywhere non-vanishing.

\subsection{The singular Yamabe problem}~\label{SINGYAM}
Every Riemannian metric $g$ on a closed manifold~$M$ can be conformally rescaled to one of constant scalar curvature~\cite{Trudinger,Schoen,Aubin}, and the problem of finding $\Omega\in C^\infty M$ such that $\Sc^{\Omega^2g}$ is constant is termed the {\it Yamabe problem}~\cite{Yamabe}.
Compact manifolds with boundary and a singular metric $g^o=g/\sigma^2$, where the boundary and singularity of $g^o$ of $M$ are given by the zero locus  of the  defining function $\sigma\in C^\infty M$, are termed {\it conformally compact}. 
A Yamabe-type problem for the metric $g^o$ was  considered in~\cite{ACF}. Such and related problems   were  called the {\it singular Yamabe problem} in~\cite{MazzeoC}. When the metric~$g^o$ is in addition Einstein on~$M\backslash \partial M$, we say that $(M,g^o)$ is {\it Poincar\'e--Einstein}.

Actually it will be convenient to 
consider 
 conformal manifolds $(M,\cc)$ with defining density $\bsigma$ for an embedded,  oriented, separating hypersurface $\Sigma$. On each side of $\Sigma$, the defining density gives a canonical singular metric $g^o$ via $\bsigma=[g^o;1]$; or equivalently 
$$
g^o=\frac{\bm g}{\bm \sigma^2}\, .
$$
The hypersurface $\Sigma$ is  termed a {\it conformal infinity} for the metric $g^o$ and we may again ask whether there exists a defining function for $\Sigma$ such that its singular metric has constant scalar curvature. We shall also use the singular Yamabe problem moniker for  this problem. 

Whether $\Sigma$ is a boundary component or hypersurface, the requirement that $\Sc^{g^o}$ is constant may be phrased in terms of the ${ {\mathcal S}}$-curvature. In particular, for negative constant scalar  curvature equaling $-d(d-1)$ we must solve
\begin{equation}\label{singingYam}
\bS_\bsigmas = 1\, ,
\end{equation}
for $\bsigma$ given $\Sigma\hookrightarrow(M,\cc)$.
Of relevance to us here is that 
this problem is intimately related to the study of invariants of the conformal embedding~$\Sigma\hookrightarrow(M,\cc)$ (see~\cite{CRMouncementCRM,GW15,GW161}). Indeed, as demonstrated in~\cite{ACF} (see also~\cite{GW15}) the singular Yamabe problem cannot in general be solved smoothly, but rather is obstructed. However there does exist an asymptotic solution for~$\bsigma$ such that
\begin{equation}\label{asymWill}
\bS_\bsigma = 1 + \bsigma^d \bm {\mathcal F}\, ,
\end{equation}
where $\bm {\mathcal  F}$ is a smooth, weight $-d$ density  known as the {\it obstruction density}. The quantity $\bm F_\Sigma=\bm {\mathcal F}|_\Sigma$ is a non-trivial  invariant of $\Sigma\hookrightarrow(M,\cc)$.
When $M$ is a conformal three-manifold this gives the {\it Willmore invariant} which is the functional gradient of the Willmore energy functional~\cite{CRMouncementCRM}. A defining density solving 
Equation~\nn{asymWill} is termed a {\it (conformal) unit defining density}. 
Unit  defining densities always exist and are unique up to addition of terms $\bsigma^{d+1}\bm A=:{\mathcal O}(\bsigma^{d+1})$ where $\bm A$ is any smooth weight $-d$ density.
In general, given any weight one density $\bnu$, we use the notation~${\mathcal O}(\bnu^k)$ to denote $\bnu^k \bm A$ where $\bm A$ is any smooth density of the appropriate weight and tensor type determined by context.

\subsection{The Laplace--Robin operator}\label{LapRob}
Natural Laplace-type equations 
on conformal geometries coupled to scale 
enjoy a
hidden solution generating algebra~\cite{GW}. To uncover this, inspired by the tractor calculus based approach of~\cite{GoSigma,GoverS,GW}, we introduce  an operator that combines a bulk Laplace operator and a conformally invariant boundary operator:

Let $\bm \sigma=[g ; \sigma]$ be a weight~1 density. Then
the corresponding {\it Laplace--Robin operator}~
$$\D_{\bsigmas}:\Gamma({\mathcal E}M[w])\longrightarrow \Gamma({\mathcal E}M[w-1])\, ,$$ maps weight $w$   densities to 
weight $w-1$~densities $\bm f=[g;f]$ according to 
\begin{equation}\label{Ldef}
\D_{\bsigmas}{\bm f}:=\big[g ; (d+2w-2)(\nabla_n+w\rho)f-\sigma
(\Delta+w J)f
\big]\, .
\end{equation}
The Laplace--Robin operator  also maps  weight~$\ell$ log-densities~$\bm \lambda$ to weight $-1$ densities:
\begin{equation}\label{Llog1}
\D_\bsigmas\bm \lambda:=
\big[g ; (d-2)(\nabla_n \lambda+\ell\rho) 
 -\sigma (\Delta \lambda+\ell J)\big]\, .
\end{equation} 
It is not difficult to verify that if the log density $\bm \lambda=\log (e^f \btau)$ where $\btau>0$ is a density of any non-vanishing weight, and $f\in C^\infty M$, then
$$
\D_{\bsigmas} \log(e^f \btau)= \D_{\bsigmas} \log \btau + \D_{\bsigmas} f\, ,
$$
where the right hand side is computed using the definitions given in Equations~\nn{Ldef}
and~\nn{Llog1}.

\medskip

The operator $\D_\bsigmas$ is a Laplacian-type operator that is degenerate along the zero locus of~$\bm \sigma$. 
It arises naturally in a tractor calculus description of (pseudo)Riemannian geometries and related physical wave equations from a conformal perspective~\cite{GoSigma,GoverS}. 
In the case that $\bm \sigma$ is a defining density for a hypersurface~$\Sigma$, this operator restricted to~$\Sigma$ is proportional  to a conformally invariant Robin-type ({\it i.e.}, Dirichlet plus Neumann) operator along the corresponding hypersurface~$\Sigma$. When the defining density additionally obeys $\bS_\bsigma = 1 + {\mathcal O}\big(\bsigma^2\big)$
then, on weight $w\neq 1-\frac d2$ densities $\bm f=[g;f]$, along $\Sigma$ one has \begin{equation}\label{ROBIN}
\frac{1}{d+2w-2}\D_\bsigmas \bm f\Big|_\Sigma=
[g; (\nabla_{\hat n} - w H^g)f]\big|_\Sigma
=:\bm \updelta_{\rm R}
^{\sss\Sigma} \bm f
\, .
\end{equation}
The operator $\bm \updelta_{\rm R}
^{\sss\Sigma}$ is well-defined by the above display for all weights $w$, and is termed the {\it conformal  Robin operator}~\cite{cherrier}. 
%
%
%
%
%
%
%
%
%
Here $\hat n$ is the unit conormal to $\Sigma$, and $H^g$ is the mean curvature of $\Sigma$ with respect to the metric $g$. The quantity $\bm \updelta_{\rm R}
^{\sss\Sigma} \bm f$ is an example of a conformal hypersurface invariant.
These are invariants of the conformal embedding~$\Sigma\hookrightarrow (M,\cc)$. The precise definition is given in~\cite[Section 6.1]{GW15}. 
We use a boldface notation for these invariants, other examples include the 
trace-free second fundamental form $\bm \IIo_{ab}^\Sigma\in \Gamma(\odot^2T^* \Sigma[1])$ of  the hypersurface $\Sigma$ and its unit conormal $\bm {\hat n}_a\in \Gamma(T^* \Sigma[1])$.

Away from the hypersurface $\Sigma$, when computed in the scale $g^o$ corresponding to the  scale $\bsigma$, the Laplace--Robin operator gives the 
Laplacian-type operator $$-\Delta^{\!o}-\frac{2J^o}{d}\, w(d+w-1)\, ,$$
where $\Delta^o$ is the Laplacian of the singular metric $g^o$ and similarly for $J^o$.
%
Moreover, acting on a weight $w=1-\tfrac d2$ density $\bm f$, the Laplace--Robin operator obeys
$$
\D_\bsigmas \bm f =- \bm \sigma\,  \square \bm f\, ,
$$
where $\square$ is the conformally invariant {\it Yamabe operator}
\begin{equation}\label{Yamop}
\begin{array}{rcccc}
\square&\!\!\!\!\!\!\!:\!\!\!\!\!\!\!&
\Gamma({\mathcal E}M[1-\tfrac d2])&\!\!\longrightarrow\!\!&\!\!
\Gamma({\mathcal E}M[-1-\tfrac d2])\\
&&\rotatebox{90}{$\in$}&&\rotatebox{90}{$\in$}\\
&&[g;f]&\longmapsto&\!\big[g;\Delta f+(1-\tfrac d2) Jf\big]\, .
\end{array}
\end{equation}
We will refer to the distinguished weight $w=1-\frac d2$ as the {\it Yamabe weight}.

The utility of the Laplace--Robin operator is based on the following solution generating~$\mathfrak{sl}(2)$ triple (see~\cite{GW}): Define the operators 
$$
\begin{array}{rcccc}
x&\!\!\!\!\!\!\!:\!\!\!\!\!\!\!&
\Gamma({\mathcal E}M[w])&\!\!\longrightarrow\!\!&\!\!
\Gamma({\mathcal E}M[w+1])\\
&&\rotatebox{90}{$\in$}&&\rotatebox{90}{$\in$}\\
&&[g;f]&\longmapsto&\!\![g;\sigma f]\, ,
\end{array}
$$
$$
\begin{array}{rcccc}
h&\!\!\!\!\!\!\!:\!\!\!\!\!\!\!&
\Gamma({\mathcal E}M[w])&\!\!\longrightarrow\!\!&\!\!
\Gamma({\mathcal E}M[w])\\
&&\rotatebox{90}{$\in$}&&\rotatebox{90}{$\in$}\\
&&[g;f]&\longmapsto&\!\![g;(d+2w) f]\, ,
\end{array}
$$
and, supposing the ${ {\mathcal S}}$-curvature is nowhere vanishing, 
$$
\begin{array}{rcccc}
y&\!\!\!\!\!\!\!:\!\!\!\!\!\!\!&
\Gamma({\mathcal E}M[w])&\!\!\longrightarrow\!\!&\!\!
\Gamma({\mathcal E}M[w+1])\\
&&\rotatebox{90}{$\in$}&&\rotatebox{90}{$\in$}\\
&&[g;f]&\longmapsto&\!\![g;-{\rm L}_\sigma f/{\mathcal S}]\, ,
\end{array}
$$
where ${\rm L}_\sigma f:=(d+2w-2)(\nabla_n+w\rho)f-\sigma
(\Delta+w J)f$. Note that the operator $y=-(1/{\bm {\mathcal S}}_\bsigmas) \D_{\bsigmas}$ and $x$ is multiplication by $\bsigma$.
Then, a key result of~\cite{GW} is that for any conformal class of metrics $\bm c$, the operator triple $(x,h,y)$ obeys the standard $\mathfrak{sl}(2)$ commutator relations
\begin{equation}\label{thesl2}
[x,y]=h\, ,\quad [h,x]=2x\, ,\quad [y,h]=2y\, .
\end{equation}
In particular, we will rely heavily on the ${\mathcal U}\big(\mathfrak{sl}(2)\big)$  enveloping algebra identities
\begin{equation}\label{envelop}
[y,x^k]=-x^{k-1}k(h+k-1)\, ,\quad
[y^k,x]=-y^{k-1}k(h-k+1)\, .
\end{equation}
Observe that acting on densities of weight $w=\frac{1}2\, (k-d-1)$, the right hand side of the second identity above vanishes, which implies that the operator $y^k$ acting on densities $\bm f$ of weight $\frac{1}2\, (k-d+1)$ has the special property
$$
y^k (\bm f +\bsigma \bm f')|_\Sigma = y^k \bm f|_\Sigma\, ,
$$
where here $\bm f'$ is any smooth density of weight $\frac{1}2\, (k-d-1)$. So along $\Sigma$, the action of~$y^k$ is independent of the  choice of extension $\bm f$ of the boundary data $\bm f|_\Sigma$. We encode this notion in a general definition:

\begin{definition}\label{TangDef}
Let $\bm{\mathcal A}:\Gamma(\ce M[w])\to
\Gamma(\ce M[w'])$. Then we call the operator $\bm{\mathcal A}$ {\it tangential to}  $\Sigma={\mathcal Z}(\bsigma)$ if it obeys
\begin{equation}\label{tangdef}
\bm{\mathcal A} (\bm f +\bsigma \bm f')|_\Sigma = \bm{\mathcal A} \bm f|_\Sigma\, ,
\end{equation}
for any $\bm f \in \Gamma(\ce M[w])$ and $\bm f' \in \Gamma(\ce M[w-1])$. 

For operators acting on log-densities, we say 
$\bm{\mathcal A}:\Gamma({\mathcal F} M[1])\to
\Gamma(\ce M[w])$ is {\it tangential to}  $\Sigma$ when for any true scale $\bm \tau$,
$$
\bm{\mathcal A} \big(\log(f\btau)\big) |_\Sigma = 
\bm {\mathcal A} \big(\log(\btau)\big) |_\Sigma\, ,
$$
for all $0<f\in C^\infty M$ such that $f|_\Sigma = 1$.

\end{definition}

\begin{remark}\label{moretang}
Since $\bm f$ and $\bm f'$ are smooth and $\bm {\mathcal A}$ is linear, the requirement in Equation~\nn{tangdef}
can be restated as
 $$
\bm{\mathcal A} (\bsigma \bm f' )= \bsigma \bm h\, ,
$$
for some smooth $\bm h\in \Gamma(\ce M[w'-1])$, and analogously for operators on log densities.
Tangential operators are  useful for expressing operators along~$\Sigma$ holographically because, as a consequence of the above definition,
a tangential operator~$\bm {\mathcal A}$ defines an operator 
$$\mathbf { A}:\Gamma(\ce \Sigma[w])\to
\Gamma(\ce \Sigma[w'])$$
according to 
$$
\mathbf { A}
 {\bm f}:= \bm{\mathcal A} {\bm f}_{\rm ext}\big|_\Sigma\, ,
$$
where $\bm f_{\rm ext}$ is any smooth extension of $  {\bm f}\in\Gamma(\ce \Sigma[w]) $ to $M$.
\end{remark}

Definition~\ref{TangDef} and its consequences remarked upon above,  extend naturally to higher codimension embedded submanifolds. In particular for a codimension  two embedded submanifold $\Lambda$ defined as the zero locus 
of a pair of defining densities $\bsigma$ and $\bmu$, we require
$$
\bm {\mathcal A} (\bm f +\bsigma \bm f'+\bmu \bm f'')|_\Lambda = \bm{\mathcal A} \bm f|_\Lambda\, ,
$$
for any $\bm f',\bm f'' \in \Gamma(\ce M[w-1])$. We call such operators {\it tangential to} $\Lambda={\mathcal Z}(\bsigma,\bmu)$; they define operators $\Gamma(\ce \Lambda[w])\to
\Gamma(\ce \Lambda[w'])$.
We will also encounter the intermediate case where 
 tangentiality of an operator only holds along a hypersurface $\Lambda\hookrightarrow \Sigma$
  with respect to a single defining function $\bm \sigma$, so that
$$
\bm {\mathcal A} (\bm f +\bsigma \bm f')|_\Lambda = \bm{\mathcal A} \bm f|_\Lambda\, ,
$$
for any $\bm f' \in \Gamma(\ce M[w-1])$.
Here we will say that $\bm {\mathcal A}$ is {\it tangential to~$\Sigma$ along~$\Lambda$}. This defines a conformally invariant operator along $\Lambda$ that may take derivatives in directions normal to the embedding of $\Lambda$  in~$\Sigma$.
When $\Lambda$ is the intersection of two hypersurfaces~$\Sigma$ and~$\Xi$, tangentiality to~$\Sigma$ along $\Lambda$ {\it and}  
tangentiality to $\Xi$ along $\Lambda$ together imply tangentiality to $\Lambda$.

\subsection{Leibniz rules}\label{LRs}
Because it is a scale-coupled conformal analog of the Laplace operator, the Laplace--Robin operator does not obey the Leibniz rule when acting on products of densities. 
To handle this feature we proceed as follows.  For weight $w\neq 1-\tfrac d2$ densities~$\bm f$ and weight $\ell$ log densities $\bm \lambda$ we define
\begin{equation}\label{LODZ}
\left\{
\begin{array}{l}
\Lodz_\bsigmas \bm f:=
\frac{1}{d+2w-2}
\D_\bsigmas \bm f\, , \\[2mm]
\Lodz_\bsigmas \bm\lambda:=\frac1{d-2}\, \D_\bsigmas\bm \lambda\, .
\end{array}
\right.
\end{equation}
The first of these operators obeys a 
 generalized Leibniz rule.
\begin{equation}\label{Leibniz}
\Lodz_\bsigmas  \mbox{$\L$}(\bm f \bm f')=
(\Lodz_\bsigmas \bm f) \bm f' + \bm f \, \Lodz_\bsigmas \bm f' -
\frac{2\bm \sigma}{d+2 (w+w')-2}\,  \Langle\bm f, \bm f'\Rangle \, .
\end{equation}
In the above $\bm f$ and $\bm f'$ are densities of weight $w\neq 1-\tfrac d2\neq w'$ and $w+w'\neq 1-\tfrac d2$. The  bracket $\Langle\cdot,\cdot\Rangle$ is defined by
\begin{equation}
\begin{split}
\Langle \bm f,\bm f'\Rangle&=
\Big[g;(\nabla f).\nabla f'
-\tfrac{w}{d+2w'-2}f\Delta f' -
\tfrac{w'}{d+2w-2}f'\Delta f 
-\tfrac{2ww'(d+w+w'-2)}{(d+2w-2)(d+2w'-2)}Jff'\Big]\\[2mm]&
\in\,  \Gamma({\mathcal E}M[w+w'-2])\, .\label{thebracket}
\end{split}
\end{equation}

\noindent
In the case that $w=w'=1$ we will often use an unbolded notation for the resulting  function-valued symmetric bracket. Indeed, when $\bm \sigma$ and $\bm \mu$ are defining densities for hypersurfaces~$\Sigma$ and~$\Xi$ intersecting along a codimension two submanifold  $\Sigma\cap\Xi$, then
$$
\left.
\frac{\langle\bm \sigma,\bm \mu\rangle}{\sqrt{\langle\bm \sigma,\bm \sigma\rangle\langle\bm \mu,\bm \mu\rangle}}\, \right|_{\Sigma\cap\Xi}
$$ 
computes the cosine of the angle between the respective conormals to $\Sigma$ and $\Xi$. Note also that $$\langle\bm \sigma,\bm \sigma\rangle=\bm {\mathcal S}_\bsigmas\, .$$ 
Moreover, when $\bm f$ has weight $w\neq 1-\tfrac d2$, it follows that 
\begin{equation}\label{lapland}\Langle \bsigma,\bm f\Rangle=\Lodz_\bsigma \bm f\, .
\end{equation}

We will also need analogs of the generalized Leibniz rule~\nn{Leibniz} at certain critical weights. First, when $w+w'=1-\tfrac d2$ but $w\neq 1-\tfrac d2\neq w'$ we have 
\begin{equation}\label{critLbox}
\D_\bsigmas (\bm f\bm f')=-2\bsigma \Langle \bm f,\bm f'\Rangle\, .
\end{equation}
The case when $w=1-\tfrac d2\neq w',w+w'$ is more delicate. For that we focus on the case that $\bsigma$ is a defining density for a hypersurface~$\Sigma$. Then we observe that the space of equivalence classes with respect to the equivalence $
\bm f\sim \bm f+\bsigma \bm f'$
for any $\bm f'\in
\Gamma({\mathcal E}M[w-1])$,
denoted by
\begin{equation}\label{funnycod}
\Gamma_\Sigma({\mathcal E}M[w]):=\big\{[\bm f]
:\, 
\bm f\in\Gamma({\mathcal E}M[w]) \big\}\, ,
\end{equation}
is congruent to the space $\Gamma({\mathcal E}M[w])$ of conformal densities along the hypersurface with conformal class of metrics $\bm c_\Sigma$ induced by $\bm c$.
Observe that operators that are tangential along $\Sigma$ are canonically well-defined on the space $\Gamma_\Sigma({\mathcal E}M[w])$.

Now we note that along the hypersurface $\Sigma$,   the operator $\nabla_n + (1-\frac d2)\rho_\sigma$ is the conformal Robin operator $\bm \updelta_{\rm R}^{\sss\Sigma}$ at the weight $1-\frac d2$.
Therefore we have the following well-defined 
operator
\begin{equation}\label{Lodzhat}
\begin{array}{rcccc}
\widetilde{\D}_\bsigmas\!\!&\!\!\!\!\!\!\!\!\!\!:\!\!\!\!\!\!\!\!\!\!&
\!\!\Gamma({\mathcal E}M[1-\tfrac d2])&\!\!\!\!\longrightarrow\!\!\!\!&\!\!
\Gamma_\Sigma({\mathcal E}M[-\tfrac d2])\\
&&\rotatebox{90}{$\in$}&&\rotatebox{90}{$\in$}\\
&&[g;f]&\!\!\longmapsto\!\!&\!\big[g;\nabla_n f + (1-\tfrac d2) \rho_\sigma f\big]\, .
\end{array}
\end{equation}
In the last expression in the above display we have used the square brackets to indicate equivalence classes with respect to both metric rescalings and the addition of smooth terms proportional to $\sigma$.

Given a  defining density 
$\bmu=[g;\mu]$ for a second hypersurface  $\Xi$,
it is possible to build invariant operators along $\Xi$ from 
combinations and compositions of representatives of the  operators $\widetilde\D_\bsigma$ 
and $\widetilde{\D}_\bmu$, as well as  analogs for the bracket of Equation~\nn{thebracket}.
%
A key example  is the combination (which by a slight abuse of notation will be) denoted by 
$
\widetilde \D_\bsigmas
-\bsigma\widehat \D_\bmu \widetilde\D_\bmu
$ with  domain~$\Gamma({\mathcal E}M[1-\tfrac d2])$.
 This can be made well-defined with codomain $\Gamma_\Sigma({\mathcal E}M[-\tfrac d2])$ by suitably interpreting the second term, which in any case does not contribute.
 In fact, one can also make it well-defined with codomain $\Gamma_\Xi({\mathcal E}M[-\tfrac d2])$
 via a continuation argument in the weight $w$ by simultaneously using the corresponding representatives of the two tilded operators.
 Indeed there is a pole $1/(d+2w-2)$ when   extending the operator~$\widehat \D_{\hh_{\textstyle\boldsymbol \cdot}}$ defined in  Equation~\nn{LODZ} to the critical weight $w=1-\frac d2$, but this then cancels for the particular combination~$
\widetilde \D_\bsigmas
-\bsigma\widehat \D_\bmu \widetilde\D_\bmu
$. Hence, along $\Xi$, we may invariantly define
\begin{equation}
\label{funnycombination}
\begin{array}{rcccc}
\widetilde \D_{\hh\bsigmas\!}
-\!\bsigma\widehat \D_\bmu \widetilde\D_\bmu\!
\!\!&\!\!\!\!\!\!\!\!\!\!:\!\!\!\!\!\!\!\!\!\!&
\!\!\!\!\Gamma({\mathcal E}M[1-\tfrac d2])&\!\!\!\!\!\!\!\longrightarrow\!\!\!\!&\!\!
\Gamma_\Xi({\mathcal E}M[-\tfrac d2])\\
&&\rotatebox{90}{$\in$}&&
\!\!\!\!\!\!\!\!\!\rotatebox{90}{$\in$}\\
&&[g;f]&\!\!\!\!\!\longmapsto\!\!&\!\big[g;\big(\nabla_n \! + \!(1-\tfrac d2) \rho_\sigma\!-\!\sigma 
(\nabla_m \!-\!\tfrac d2 \rho_\mu)
(\nabla_m \!+\!(1-\tfrac d2) \rho_\mu)\big)f
 \big] ,
\end{array}
\end{equation}
where 
$m:=\ext \mu$. 
 \medskip

Along similar lines, we define a modified ``bracket'', whose codomain depends on its second argument:
\begin{equation}\label{takeoff}
\Langle\hspace{-3.4mm}- \hspace{-.1mm}
\!\bm f,\bm f'\Rangle:=\Big[g;
(\nabla f).\nabla f' 
+\tfrac{d-2}{2(d+2w'-2)}\, f\big(\Delta+w' J \big)f'\Big]
\in
 \coker \bm f'\, .
\end{equation}
In the above, $\coker \bm f'$ is the cokernel of $\bm f'$ viewed as the linear operator mapping $\Gamma({\mathcal E}M[-1-\tfrac d2])\to \Gamma({\mathcal E}M[-1-\tfrac d2+w'])$ that
acts by multiplication. 
%
%
%
%
%
%
In these terms, we then have the Leibniz-type rule for the critical case $w=1-\tfrac d2\neq w',w+w'$:
\begin{equation}\label{Leibniz'}
\Lodz_\bsigmas(\bm f \bm f')=
\big((\widetilde\D_\bsigmas+\frac{1}{2w'} \D_\bsigmas) \bm f\big) \bm f' + \bm f \, \Lodz_\bsigmas \bm f' -
\frac{\bm \sigma}{w'}\,  \Langle\hspace{-3.25mm}- \hspace{-1mm}\bm f, \bm f'\Rangle\in \Gamma({\mathcal E}M[-\tfrac d2+w']) \, .
\end{equation}
It is not difficult to use a weight continuation  argument similar to that discussed above (and employing a similar abuse of notation)  that the operator $\widetilde\D_{\hh_{\textstyle\boldsymbol \cdot}}$ and the modified bracket $\Langle\hspace{-3.25mm}- \hspace{-1mm}\cdot,\cdot\Rangle$ combine in this formula to produce a density-valued result.

In the doubly critical case, $w=1-\frac d2= w'$ one has
\begin{align}\label{Leibniz''}
\Lodz_\bsigmas(\bm f \bm f')
&=
\Big(\widetilde\D_\bsigmas
\bm f-\tfrac1{d-2}\hh\D_\bsigmas \bm f\Big) \bm f' + \bm f \, \Big(\widetilde\D_\bsigmas\bm f'-\tfrac1{d-2}\hh\D_\bsigmas \bm f' \Big)+\tfrac{2}{d-2}\hh \
 \bsigma\,  \Langle\hspace{-3.25mm}- \hspace{-1mm}\bm f, \bm f'\hspace{-1mm}- \hspace{-3.25mm}\Rangle
 \\[1mm]
 &\in \Gamma({\mathcal E}M[1-d]) \, ,\nonumber
\end{align}
where the doubly-modified ``bracket'' is defined similarly to above by
$$
  \Langle\hspace{-3.25mm}- \hspace{-1mm}\bm f, \bm f'\hspace{-1mm}- \hspace{-3.25mm}\Rangle=
\big[g;
(\nabla f).\nabla f' 
\big]
\in
\coker(\bm f , \bm f')\, ,
$$
where the $\coker$ notation above means we quotient by the linear span of the images of   $\bm f$ and~$\bm f'$.

Given a pair of scales $\bsigma$ and $\btau$ we may also form the invariant differential operator 
$
\D_{\bsigmas,\btaus}:\Gamma(\ce M[w])\to \Gamma(\ce M[w])
$
defined by
\begin{equation}\label{DD}
\D_{\bsigmas,\btaus}\bm f = \big[g;
\tau(\nabla_n+w\rho_\sigma) f -\sigma (\nabla_k+ w \rho_\tau)f\big]\, ,
\end{equation}
where $k:=\ext \tau$. At weight $w=0$, in the $\btau$ scale this operator maps functions $f$ to $\nabla_n f$, and thus was denoted by $\nabla_{\bm n^{\btaus}}$ 
and dubbed the coupled conformal gradient operator in~\cite{GWvolII}.
Note also that at weight $w\neq 1-\frac d2$ one has
$$
\D_{\bsigma, \btau}= \btau \widehat\D_\bsigma - \bsigma \widehat\D_\btaus\, .
$$
At the critical weight $w=1-\frac d2$,
the above identity still holds (abusing notation as above) upon  replacing $\widehat\D_{\hh_{\textstyle\boldsymbol \cdot}}$ with~$\widetilde\D_{\hh_{\textstyle\boldsymbol \cdot}}$.

\subsection{Distributions and integral theory}

A weight $w=-d$ density $\bm f=[g;f]$ can be invariantly integrated over a conformal $d$-manifold (or some region $D\subset M$ thereof) since the volume element $dV^g$ of $g\in\bm c$ defines a weight $d$ measure-valued density $[g;dV^g]$. Thus we define the conformally invariant integral of $\bm f$ over $D$ by
$$
\int_D\bm f :=\int_D dV^g f\, .
$$

Given a hypersurface $\Sigma\hookrightarrow M$ and a function $f_\Sigma\in C^\infty \Sigma$,  it is propitious to treat the integral of $f_\Sigma$ over $\Sigma$ 
in terms of a defining function $\sigma$ for $\Sigma$. In particular, given $g\in \bm c$, we have (see, for example,~\cite{Osher} or~\cite{GGHW15})
\begin{equation}\label{anumber}
\int_\Sigma dV^{g_\Sigma} f_\Sigma=\int_M dV^{g} |\ext \sigma|_g\, \delta(\sigma) f\, .
\end{equation}
In the above display, $g_\Sigma$ is the metric along $\Sigma$ induced by $g$, $f$ denotes any (smooth) extension of $f_\Sigma$ to $M$ and $\delta(\sigma)$ is the Dirac-delta distribution.

The distributional identity $\delta(\Omega \sigma)=\Omega^{-1}\delta(\sigma)$ (valid for any $0<\Omega\in C^\infty M$) implies that if $\bsigma=[g;\sigma]$ is any weight $w=1$ density, then
$$
\delta(\bsigma):=[g;\delta(\sigma)]
$$
is a weight $w=-1$ distribution-valued conformal density (see~\cite{GWvol} for details). Thus $\delta(\bsigma-\varepsilon \btau)$ where $\varepsilon\in{\mathbb R}$ and $\btau\in\Gamma(\ce M[1])$ gives a one parameter family of weight $w=-1$ densities. Successively differentiating this $k$ times with respect to $\varepsilon$ and subsequently setting $\varepsilon$ to zero establishes that 
$\delta^{(k)}(\bsigma)=[g;\delta^{(k)}(\sigma)]$ is a weight $w=-k-1$ distribution-valued density.
Moreover, the conformally invariant integral of a weight $1-d$ density ${\bm f}_\Sigma=[ g_\Sigma; f|_\Sigma]$ along~$\Sigma$ may be expressed in terms of any smooth extension $\bm f$ of this density via
\begin{equation}\label{deltasurface}
\int_\Sigma\bm f_\Sigma = \int_M \delta(\bm \sigma) \sqrt{\bm{\mathcal S}} \bm f\, .
\end{equation}
This relation reduces to Equation~\nn{anumber} upon expressing it in a choice of metric.


We will  employ 
standard distributional identities (on ${\mathbb R}$) for the Dirac delta $\delta(x)$ and Heaviside step function $\theta(x)$ such as
$$
\theta'(x)=\delta(x)\, ,\quad
x\delta(x) = 0\, ,\quad
x\delta'(x)=-\delta(x)\, ,$$
and
$$
x\delta^{(n)}(x):=x\tfrac{d^n\delta(x)}{dx^n}
=-n\delta^{(n-1)}(x)\, ,\quad n\in {\mathbb Z}_{\geq 1}\, ,
$$
where $n\in {\mathbb Z}_{\geq 1}$.
These are valid when integrating against suitable test functions. In particular we will need to consider the situation where the coordinate $x$ is replaced by a defining function~$\sigma$. Again, this is discussed in detail in~\cite{GWvol,GWvolII}, the key maneuvre is to assume that in a neighborhood of $\Sigma$, the bulk manifold $M$ can be treated as a product $\Sigma\times I$ where $I$ is an open interval about $0$ and the defining function $\sigma$ pulls back to a coordinate $x$ on $I$. Thus, integrals over such neighborhoods can be handled using Fubini's theorem.

\begin{remark}\label{Robinrem}
The distributional calculus is also well adapted hypersurface computations. For example, when $\bsigma$ is a defining density for a hypersurface~$\Sigma$, we have the operator  identity relating the Laplace--Robin and Robin operators 
$$
\delta(\bsigma)\hh \Lodz_\bsigmas = \delta(\bsigma) \bm \updelta^{\sss \Sigma}_{\rm R}\, ,
$$
valid acting on any density of weight $w\neq 1-\frac d2$. At the critical weight we may use the operator $\widetilde \D_\bsigmas$
of Equation~\nn{Lodzhat} to write the identity
$$
\delta(\bsigma)\hh \widetilde \D_\bsigmas= \delta(\bsigma) \hh\bm \updelta^{\sss\Sigma}_{\rm R}\, ,
$$
because $\delta(\bsigma) \Gamma({\mathcal E}M[w])=\delta(\bsigma) \Gamma_\Sigma({\mathcal E}M[w])$.
\end{remark}

One  integral result will play a key role, namely that the Laplace--Robin operator is formally self-adjoint~\cite{GWvol}. Hence, if $M$ is a closed conformal manifold, $\bm f$ a weight $1-d-w$ density and $\bm g$ a weight $w$ density, then
\begin{equation}\label{FSA}
\int_M \bm f\D_\bsigmas \bm g=
\int_M \bm g \D_\bsigmas\! \bm f\, .
\end{equation}
The same conclusion holds if $\bm f$ or $\bm g$ have compact support.
We will use the notation~$\dagger$ for the formal adjoint along $M$, which ignores boundary terms, so that the above equation reads
$$
\D_\bsigmas=\D_\bsigmas^\dagger\, .
$$
The boundary terms are given in~\cite[Theorem 4.3]{GW161}.


\section{Minimal Hypersurfaces for Singular Metrics}
\label{Sec:MinSurf}
Here we begin  with  the data of a sequence of conformal hypersurface embeddings
\begin{equation}\label{embedseq}
\Lambda\hookrightarrow
\Sigma\hookrightarrow (M,\bm c)\, ,
\end{equation}
meaning that $\Lambda$ is a hypersurface in $\Sigma$ and in turn  $\Sigma$ is a hypersurface in $M$.
Then, given a choice of defining density $\bm \sigma$ for $\Sigma$ and thus  a metric $g^o$ that is singular along~$\Sigma$, we consider the problem of determining, at least asymptotically, an oriented hypersurface~$\Xi$ (with boundary)
that meets~$\Sigma$ transversely with intersection $\Lambda=\partial\Xi$,
and such that~$\Xi$ is minimal with respect to $g^o$. 
We will often refer to $\Lambda$ as the anchoring hypersurface/submanifold. This situation is depicted below:

\begin{center}
\begin{tikzpicture}[scale=0.6, ultra thick]
\coordinate (LD) at (-4,-4);
\coordinate (RD) at (2,-3);
\coordinate (LU) at (-4,3);
\coordinate (RU) at (2,4);
\coordinate (CU)at (-1,2);
\coordinate (CD)at (-1,-2);
\coordinate(B) at (3,0);
\coordinate(IU) at (1.4,1.7);
\coordinate(ID) at (1.5,-1.65);
\draw (CU) to[out=-160,in=165] (CD);
\draw [dashed] (CD) to[out=15,in=0] (CU);
\draw (CU) to[out=-5,in=90] (B);
\draw (CD) to[out=5,in=-90] (B);
\draw (LU) to[out=-15,in=-145] (RU);
\draw (LD) to[out=45,in=165] (RD);
\draw (RU) to[out=-110,in=90] (IU);
\draw (RD) to[out=110,in=-90] (ID);
\draw (LU) to[out=-60,in=90] (LD);
\node at (5,3.5) {$(M, \cc)$};
\node at (-4.5,3) {$\Sigma$};
\node at (-2.6,0) {$\Lambda$};
\node at (3.3,1) {$\Xi$};
\end{tikzpicture}
\end{center}
As discussed earlier, 
in the case that the singular metric $g^o$ is Poincar\'e--Einstein, this problem has been studied by Graham and Witten~\cite{GrahamWitten} (see also~\cite{GrahamRiechert})
using a different approach.

A minimal hypersurface $\Xi$ is characterized by the vanishing of its mean curvature~$H_\Xi^g$ with respect to the ambient metric $g$. Our treatment of minimal surfaces relies on formulating this condition in terms of defining densities.
This is achieved by the next proposition. In what follows we use the notation $\bm A\stackrel{\bm \mu}\sim \bm B$ when smooth densities $\bm A$, $\bm B$ and~$\bmu$ obey $\bm A=\bm B+\bmu \bm C$ for some smooth density $\bm C$.
\begin{proposition}\label{MEAN}
Let $\bmu$ and $\bsigma$ be defining densities for  embedded hypersurfaces~$\Xi$ and~$\Sigma$, respectively, 
Then, away from $\Lambda$, the mean curvature of $\Xi$ 
with respect to the 
metric
$g^o=\bm g/\bsigma^2$ is given~by
\begin{equation}
\label{HXI}
H_\Xi^{g^o}=- \left. \frac{\bM^\bsigmas_\bmus}{\sqrt{\bS_\bmus}}\, \right|_\Xi\, ,
\end{equation}
where
\begin{equation}\label{minimalM}
\bM^\bsigmas_\bmus=\langle\bsigma,\bmu\rangle +
\frac{1}{2(d-1)(d-2)}\,\bsigma \D_\bmus \log\bS_\bmus\, ,
\end{equation}
and hence $H_\Xi^{g^o}$ is extended smoothly to  $\Lambda$ by the right hand side of Equation~\nn{HXI}.
Moreover, if $f$ is any smooth, strictly positive 
function, 
then
$$
\bM^{\hh\hh\bsigmas}_{f\!\hh\bmus} \stackrel\bmuss\sim  f\bM^\bsigmas_\bmus\:
\mbox{ and }\:
\bS_{f\!\hh\bmus}
\stackrel\bmuss\sim
f^2 \bS_{\bmus}\, .
$$
\end{proposition}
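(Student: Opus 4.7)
My plan is to verify \eqref{HXI} by direct computation in a chosen representative metric $g\in \cc$ and then to deduce the rescaling statements from this identity. Away from $\Lambda$ the metric $g^o = g/\sigma^2$ is a bona fide Riemannian metric, and the standard conformal transformation law for the mean curvature applied to $\tilde g = \Omega^2 g$ with $\Omega = \sigma^{-1}$ gives
$$
H^{g^o}_\Xi = \big(\sigma\, H^g_\Xi - \hat m \cdot n\big)\big|_\Xi\, ,
$$
where $n = \ext\sigma$, $m = \ext \mu$ and $\hat m = m/|m|_g$. Both summands on the right are manifestly smooth through $\Lambda$, which will yield the claimed smooth extension once \eqref{HXI} is established.

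Next I expand each ingredient of $\bM^\bsigmas_\bmus$ in the scale $g$: from \eqref{thebracket} with $w = w' = 1$, $\langle \bsigma, \bmu\rangle = [g;\, n\cdot m + \sigma\rho_\mu + \mu\rho_\sigma]$, which on $\Xi$ reduces to $n\cdot m + \sigma\rho_\mu$; from \eqref{Snrs}, $\bm{\mathcal S}_\bmu = [g;\, |m|_g^2 + 2\mu\rho_\mu]$, so $\sqrt{\bm{\mathcal S}_\bmu}\,\big|_\Xi = |m|_g\big|_\Xi$ is nowhere vanishing (this is what makes the right-hand side of \eqref{HXI} smooth across $\Lambda$); and since $\bm{\mathcal S}_\bmu$ has weight zero, the log-density $\log \bm{\mathcal S}_\bmu$ has log-weight $\ell = 0$, so \eqref{Llog1} yields
$$
\D_\bmu \log \bm{\mathcal S}_\bmu\, \big|_\Xi = (d-2)\, \nabla_m \log {\mathcal S}_\bmu\big|_\Xi = (d-2)\left(\frac{\nabla_m |m|_g^2}{|m|_g^2} + 2\rho_\mu\right)\bigg|_\Xi\, .
$$
Substituting these three expansions into $-\bM^\bsigma_\bmu/\sqrt{\bm{\mathcal S}_\bmu}\,\big|_\Xi$, using $\rho_\mu = -(\Delta\mu + \mu J)/d$ and the classical identity $\nabla_a \hat m^a = \Delta\mu/|m|_g - m^a\nabla_a |m|_g^2/(2|m|_g^3)$ for the divergence of the unit conormal, the $\rho_\mu$ contributions collect (with the prefactor $\tfrac{1}{2(d-1)(d-2)}$ precisely balancing the $(d-2)$ produced by the Laplace--Robin operator) into $\sigma\,H^g_\Xi$, while the remaining $n\cdot m/|m|_g$ reproduces $\hat m \cdot n$.

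For the rescaling, direct substitution into \eqref{Scurvy} with $\ext(f\mu) = f\,m + \mu\,\ext f$ and $\Delta(f\mu) = f\Delta\mu + 2\,\ext f\cdot m + \mu\Delta f$ shows that the $\mu^0$ contributions assemble into $f^2\, {\mathcal S}_\bmu$, yielding $\bm{\mathcal S}_{f\bmu}\stackrel\bmuss\sim f^2\,\bm{\mathcal S}_\bmu$. For $\bM^\bsigma_{f\bmu}$, the cleanest argument is to note that by \eqref{HXI} the quotient $\bM^\bsigma_\bmu/\sqrt{\bm{\mathcal S}_\bmu}\,\big|_\Xi = -H^{g^o}_\Xi$ is an intrinsic invariant of $\Xi$ (independent of the choice of defining density representing $\Xi$), whence the established scaling of $\bm{\mathcal S}$ forces $\bM^\bsigma_{f\bmu}\stackrel\bmuss\sim f\,\bM^\bsigma_\bmu$. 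The main obstacle throughout is bookkeeping in the matching step: tracking signs and the interplay of the $(d-1)$, $(d-2)$, and $d$ factors among the bracket $\langle\cdot,\cdot\rangle$, the Laplace--Robin operator on $\log \bm{\mathcal S}_\bmu$, and the classical divergence-of-normal formula is exactly what pins down the coefficient $\tfrac{1}{2(d-1)(d-2)}$ appearing in \eqref{minimalM}.
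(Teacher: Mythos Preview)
Your proof is correct and follows essentially the same route as the paper's. For the first claim both arguments compute $H^{g^o}_\Xi$ in a chosen scale $g$ and match to the invariant expression \eqref{minimalM}; you invoke the conformal transformation law $H^{g^o}_\Xi = \sigma H^g_\Xi - \hat m\cdot n$ directly, whereas the paper rederives it from the divergence formula $H = \tfrac{1}{d-1}\nabla\cdot\hat m$ and the change of Levi--Civita connection under $g\mapsto g/\sigma^2$. For the rescaling claim you take the shortcut the paper itself explicitly acknowledges as valid (``we could rely on the fact that Equation~\nn{mean} gives the mean curvature for \emph{any} defining function $\mu$ and only compute the homogeneity of the $\mathcal S$-curvature''), while the paper instead performs the full Leibniz-rule computation of $\langle\bsigma,f\bmu\rangle$, $\bS_{f\bmu}$, and $\D_{f\bmu}\log\bS_{f\bmu}$ in order to develop the multiscale calculus used later.
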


\begin{proof}
Given any defining function $\mu$ and metric $g^o$, the mean curvature of $\Xi\hookrightarrow (M,g^o)$ is given by 
\begin{equation}\label{mean}
H^{g^o}_\Xi=\frac{\: \,  \nabla^{g^o}\!\!.\, \hat m\hh \big|_\Xi\: }{d-1}\,   ,
\end{equation}
where $\hat m$ is the extension of the unit conormal to $\Xi$  given by
$$
\hat m=
\frac{\ext \mu\,\: }{\, |\ext \mu|_{g^o}}\, .
$$
Now, the divergence of $\hat m$ with respect to the Levi--Civita connection of $g^o=g/\sigma^2$ is related to that of the metric $g$ via 
$$
g_o^{ab}\nabla^{g^o}_a\hat m_b^{\phantom{G}}=\sigma^2\, g^{ab}\nabla_a\hat m_b -(d-2)\sigma\,  g^{ab}n_a\hat m_b \, .
$$
Calling $m=\ext \mu$, we have $\hat m=\sigma^{-1} m/|m|_g$ so that
$$
g_o^{ab}\nabla^{g^o}_a\hat m_b^{\phantom{G}}=\frac1{\, |m|_g}\, \Big(\sigma g^{ab} \nabla_a m_b - (d-1)g^{ab} n_a m_b
-\frac12\, \sigma g^{ab} m_a  \nabla_b \log|m|^2_g\Big)\, .
$$
Hence we have (using an index-free notation now that all explicit metric dependence is through $g$)
$$
H^{g^o}_\Xi\, \stackrel\mu\sim\, -\, \frac{\, m.n+\sigma\rho_\mu+\mu\rho_\sigma
+\frac1{2(d-1)(d-2)}\, \sigma \big((d-2)\nabla_m-\mu  \Delta\big) \log\big(|m|^2_g+2\rho_\mu\mu\big)\, 
}{\sqrt{|m|_g^2+2\rho_\mu \mu\, }}\, .
$$
 Comparing the above display with the definition of the bracket $\langle\cdot,\cdot\rangle$ in Equation~\nn{thebracket},
the ${ {\mathcal S}}$-curvature in Equation~\nn{Scurvy} and the Laplace--Robin operator in Equation~\nn{Ldef},
gives the result claimed in the first two displays of the proposition.

To prove  the second claim, we could  rely on the fact that  Equation~\nn{mean} gives the mean curvature for {\it any} defining function $\mu$ and only compute the homogeneity of the ${ {\mathcal S}}$-curvature. Here, we give a  detailed proof to further develop our hypersurface calculus. First we note that
for any weight $w=1$ densities $\bm \mu$ and $\bm \sigma$, given a smooth function $f$, it is not difficult (using that, away from critical weights, $\Langle \bsigma,\cdot\hh\Rangle=\Lodz_\bsigma\cdot$ and the Leibniz rule~\nn{Leibniz})  to verify that
$$
\langle\bsigma ,f\bmu\rangle=
f\langle\bsigma ,\bmu\rangle
+\bmu \Langle \bsigma,f\Rangle
-\tfrac{2}{d}\, \bsigma\Langle f,\bmu\Rangle
\stackrel{\bmuss}\sim
f\big(\langle\bsigma ,\bmu\rangle-\tfrac2d \bsigma\,  \Lodz_\bmus \!\log f\big)
\, .
$$
Thus,
since $\bm{\mathcal S}_\bmus=\langle\bmu,\bmu\rangle$ it follows, applying the above display twice, that
$$\bS_{f\!\hh\bmus}:=
\langle f\bmu ,f\bmu\rangle
=
f^2 \langle\bmu ,\bmu\rangle
+\tfrac{2(d-2)}d \bmu f
\Langle \bmu, f\Rangle
+{\mathcal O}(\bmu^2)
\stackrel\bmuss\sim
f^2 \bS_{\bmus}\, ,$$
which gives the claimed result for the ${ {\mathcal S}}$-curvature. 
The above display and the additive property for logarithms of products of densities allows us to compute
\begin{equation*}
\begin{split}
\D_{f\bmus} \log \bS_{f\!\hh\bmus}&\stackrel{\bmuss}\sim
f\D_\bmus \Big[\log \bS_\bmus+2\log f+\log\Big(1+
\tfrac{2(d-2)}{d} \, \tfrac{\bmu}{\bS_\bmus} \Lodz_\bmus\log f\Big)\Big]\\
&\stackrel{\bmuss}\sim
f\D_\bmus\log  \bS_\bmus +\tfrac{4(d-1)(d-2)}d\, f\Lodz_\bmu \log f\, .
\end{split}
\end{equation*}
Here we have  used that $\log \bS_{f\!\hh\bmus}$ is a weight zero density and that $\D_\bmus$ is a derivation along the zero locus of $\bmu$.
Combining the last and next to last displays gives the required result.
\end{proof}

Given a hypersurface $\Sigma\hookrightarrow (M,\bm c)$, we may always find a conformal unit defining density $\bmu$ solving the singular Yamabe problem $\bS_\bmus=1+{\mathcal O}(\bmu^d)$. 
In that case the minimal surface condition 
\begin{equation}
\label{minimal}\bM^\bsigmas_\bmus\stackrel\bmuss\sim0
\end{equation} 
simplifies to 
$$\langle\bsigma,\bmu\rangle\stackrel\bmuss\sim\ 0\, .
$$
This implies that if a minimal hypersurface $\Xi$ for a singular metric $g^o=\bm g/\bsigma^2$ intersects the zero locus $\Sigma$  of $\bsigma=[g;\sigma]$, it does so at right angles. 

\medskip

In general the minimal surface condition~\nn{minimal} for a singular metric $g^o$ determined by~$\bsigma$ cannot be solved smoothly, so instead we solve this problem asymptotically, in the following sense:

\begin{problem}\label{minprob}
Let $\bsigma$ be a defining function for $\Sigma={\mathcal Z}(\bsigma)\hookrightarrow (M,\bm c)$ and let  
$\Lambda\hookrightarrow \Sigma$ be an embedded hypersurface. 
Find a 
hypersurface $\Xi$ such that
$$
H_\Xi^{g^o}=\bsigma^k \bm A|_\Xi\, ,
$$
where $g^o$ is the singular metric determined by $\bsigma$, 
$\Lambda=\Xi\cap \Sigma$, the density $\bm A$ is smooth, and the order $k\in \mathbb Z_{\geq 1}$
is as high as possible.
\end{problem}


\noindent
The key to solving this problem is the following lemma:
\begin{lemma}\label{keylemma}
Let $k\in \mathbb Z_{\geq 1}$ and
$\bmu'=\bmu+\bsigma^{k+1}\bmu_{k+1}$, for any
$\bmu_{k+1}\in \Gamma(\ce M[-k])$.
Then
$$
\bM^\bsigmas_{\bmus'}
\stackrel{\bmuss'}\sim
\bM^\bsigmas_\bmus \, \big(1+{\mathcal O}(\bsigma^{k})\big)
+\frac{(k+1)(d-1-k)}{d-1}\,
\bS_\bsigmas\hh \bsigma^k\hh  \bmu_{k+1}+{\mathcal O}(\bsigma^{k+1})\, .
$$
\end{lemma}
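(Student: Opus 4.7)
The plan is to directly expand $\bM^\bsigmas_{\bmus'}$ using the defining formula~\nn{minimalM} together with bilinearity of $\langle\pdot,\pdot\rangle$ and of $\bS_\bmus=\langle\bmu,\bmu\rangle$. Setting $A:=\bsigma^{k+1}\bmu_{k+1}$ so that $\bmu'=\bmu+A$, the first term in~\nn{minimalM} splits as $\langle\bsigma,\bmu'\rangle=\langle\bsigma,\bmu\rangle+\langle\bsigma,A\rangle$; at weight one, $\langle\bsigma,A\rangle=\Lodz_\bsigmas A=\D_\bsigmas A/d$, and applying the enveloping algebra identity~\nn{envelop} to the weight $-k$ density $\bmu_{k+1}$ immediately yields $\D_\bsigmas A=(k+1)(d-k)\bS_\bsigmas\bsigma^k\bmu_{k+1}+\bsigma^{k+1}\D_\bsigmas\bmu_{k+1}$, giving the first term to the required precision.

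The second term $\tfrac{\bsigma}{2(d-1)(d-2)}\D_{\bmus'}\log\bS_{\bmus'}$ requires more care. Modulo $\bmu'$, the Laplace--Robin operator $\D_{\bmus'}$ acting on weight zero densities reduces to $(d-2)\nabla_{\!m'}$, so the task becomes to compute $\tfrac{\bsigma}{2(d-1)}\nabla_{\!m'}\log\bS_{\bmus'}$. By bilinearity, $\bS_{\bmus'}=\bS_\bmus+2\langle\bmu,A\rangle+\bS_A$, with $\bS_A={\mathcal O}(\bsigma^{2k})$ absorbed into ${\mathcal O}(\bsigma^{k+1})$ for $k\geq 1$. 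The key subtlety, and main obstacle, is that $\nabla_{\!m'}$ has a component transverse to $\Xi'={\mathcal Z}(\bmu')$ since $\nabla_{\!m'}\mu'=|m'|^2\neq 0$; hence one must differentiate the full function $\log\bS_{\bmus'}$ on $M$ \emph{before} restricting modulo $\bmu'$, and naively evaluating the logarithm on $\Xi'$ first would miss the decisive contribution. In particular, writing $\langle\bmu,A\rangle=m.\ext A+\mu\rho_A+A\rho_\mu$, the piece $\mu\rho_A$ cannot be dropped prematurely: its $\nabla_{\!m}$-derivative is $m^2\rho_A+\mu\nabla_{\!m}\rho_A$, and since $\rho_A\sim-\tfrac{k(k+1)}{d}\bsigma^{k-1}n^2\bmu_{k+1}$ at leading order (coming from $\Delta\bsigma^{k+1}$), the term $m^2\rho_A$ contributes at the same order $\bsigma^{k-1}$ as the more obvious derivative of $m.\ext A/\bS_\bmus$. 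Using $m^2\equiv\bS_\bmus$ modulo $\bmu'$ (from $m^2=\bS_\bmus-2\mu\rho_\mu$ and $\mu\equiv-A$ modulo $\bmu'$), these two contributions combine to give a leading piece of $\nabla_{\!m'}\log\bS_{\bmus'}$ equal to $2k(k+1)\bsigma^{k-1}\bmu_{k+1}\bigl[(n.m)^2/\bS_\bmus-n^2/d\bigr]$ modulo $\bmu'$ and higher powers of $\bsigma$.

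Summing both terms, the coefficient of $n^2\bsigma^k\bmu_{k+1}$ becomes $\tfrac{(k+1)(d-k)}{d}-\tfrac{k(k+1)}{d(d-1)}$, which collapses algebraically to $\tfrac{(k+1)(d-1-k)}{d-1}$; since $n^2\equiv\bS_\bsigmas$ modulo $\bsigma$, this reproduces exactly the claimed main term $\tfrac{(k+1)(d-1-k)}{d-1}\bS_\bsigmas\bsigma^k\bmu_{k+1}$. The residual $\tfrac{k(k+1)\bsigma^k(n.m)^2\bmu_{k+1}}{(d-1)\bS_\bmus}$ is reabsorbed into the $\bM^\bsigmas_\bmus\cdot{\mathcal O}(\bsigma^k)$ factor using $\bM^\bsigmas_\bmus=\langle\bsigma,\bmu\rangle+{\mathcal O}(\bsigma)=n.m+{\mathcal O}(\bsigma)+{\mathcal O}(\bmu)$, noting that any ${\mathcal O}(\bmu)$ error becomes ${\mathcal O}(\bsigma^{k+1})$ modulo $\bmu'$ because $\bmu\equiv-\bsigma^{k+1}\bmu_{k+1}$ there. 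This yields the stated formula; throughout, the hard part is disciplined bookkeeping---not restricting modulo $\bmu'$ before applying $\nabla_{\!m'}$, and correctly tracking the subleading $\mu\rho_A$ contribution without which one would obtain the incorrect coefficient $\tfrac{(k+1)(d-k)}{d}$.
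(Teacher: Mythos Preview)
Your proof is correct and follows the same overall architecture as the paper's: split $\bM^\bsigmas_{\bmus'}$ into the bracket term and the log term, handle $\langle\bsigma,A\rangle$ via the $\mathfrak{sl}(2)$ identity~\nn{envelop} (yielding the coefficient $\tfrac{(k+1)(d-k)}{d}$), and then identify the leading $\bmu$-proportional piece of $\bS_{\bmus'}-\bS_\bmus$ whose normal derivative supplies the correction $-\tfrac{k(k+1)}{d(d-1)}$. Your reabsorption of the $(n.m)^2$ remainder into $\bM^\bsigmas_\bmus\cdot{\mathcal O}(\bsigma^k)$ matches the paper's $\bM^\bsigmas_\bmus\,{\mathcal O}(\bsigma^k)$ term arising from $2(k+1)\bsigma^k\langle\bsigma,\bmu\rangle\,\bmu_{k+1}$ in their Equation~\nn{S'}.

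The genuine difference is in execution rather than strategy. The paper stays entirely within the invariant calculus: it computes $\bS_{\bmus'}-\bS_\bmus$ via the Leibniz rule~\nn{Leibniz} and the auxiliary identities for $\Lodz_\bmu\bsigma^{k+1}$ and $\Langle\bsigma^{k+1},\bmu_{k+1}\Rangle$, then evaluates $\D_\bmus$ of the $\bmu$-factored piece using the operator identity $\D_\bmus\circ\bmu\stackrel{\bmuss'}\sim(d+2w)\bS_\bmus+{\mathcal O}(\bsigma^{k+1})$. You instead fix a scale $g$, reduce $\D_{\bmus'}$ to $(d-2)\nabla_{m'}$ modulo $\bmu'$, and compute directly with $n.m$, $n^2$, and $\rho_A$. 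Your approach is more elementary and makes the ``do not restrict too early'' subtlety very explicit; the paper's approach keeps conformal invariance manifest at every step and packages the same subtlety into the $\D_\bmus\circ\bmu$ identity. Both routes are equally valid and the bookkeeping load is comparable.
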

\begin{proof}
The proof is an elementary application of the Leibniz rules developed in Section~\ref{LRs}. The details are as follows: First we use the 
$\mathfrak{sl}(2)$ identity obeyed by the Laplace--Robin operator in~\nn{envelop}
to compute
\begin{equation}\label{newangle}
\langle\bsigma,\bmu'\rangle-\langle\bsigma,\bmu\rangle=
\tfrac1d\D_\bsigma (\bsigma^{k+1}\!\bmu_{k+1})
=\tfrac{(k+1)(d-k)}d\, \bsigma^k\bS_\bsigmas \bmu_{k+1}
+{\mathcal O}(\bsigma^{k+1})
\, .\end{equation}
Then we note that acting on a weight $w=0$ 
density $\bm f$, we have
$$
\D_{\bmus'}\bm f
-\D_\bmus \bm f=
(k+1)\, \bsigma^k\bmu_{k+1} \D_\bsigmas
 \bm f
 +{\mathcal O}(\bsigma^{k+1})
\, .
$$
This identity is easily established by examining the definition of the Laplace--Robin operator in Equation~\nn{Ldef}.
Thus, because $\bsigma \D_{\bmus} \log \bS_{\bmu}$ is itself a weight $w=0$ density,
\begin{equation}\label{Llog}
\bsigma \D_{\bmus'} \log \bS_{\bmu'}
\!-\bsigma \D_{\bmus} \log \bS_{\bmu}
=
\bsigma \D_{\bmus} \log\Big( \frac{\bS_{\bmu'}}{\bS_\bmu}\Big)
+{\mathcal O}(\bsigma^{k+1})\, .
\end{equation}
Moreover, using
$$
\Lodz_\bmu \bsigma^{k+1}=(k+1)\,  \bsigma^k \langle \bsigma,\bmu\rangle-\tfrac{k(k+1)}{d+2k} \,  \bsigma^{k-1}\bS_\bsigmas\,  \bmu\, ,
$$
and
$$
\Langle
\bsigma^{k+1},\bmu_{k+1}\Rangle
=(k+1)\, \bsigma^k \, \Lodz_\bsigmas \bmu_{k+1}
+\tfrac{k^2(k+1)}{d+2k}\, \bsigma^{k-1}  
\bS_\bsigma\, \bmu_{k+1}\, ,
$$
we have
\begin{equation}
\begin{split}
\bS_{\bmus'}-\bS_\bmu&=
2\Langle\bmu,\bsigma^{k+1}\bmu_{k+1}\Rangle
+
\langle\bsigma^{k+1}\bmu_{k+1},\bsigma^{k+1}\bmu_{k+1}\rangle
\\[1mm]
&=
2\bmu_{k+1}
\big((k+1)\,  \bsigma^k \langle \bsigma,\bmu\rangle-\tfrac{k(k+1)}{d+2k} \,  \bsigma^{k-1}\bS_\bsigmas\,  \bmu\big)
+
2\bsigma^{k+1}\Lodz_\bmu \bmu_{k+1}
\\&
-\tfrac4d\, 
\bmu
\big(
(k+1)\, \bsigma^k \, \Lodz_\bsigmas \bmu_{k+1}
+\tfrac{k^2(k+1)}{d+2k}\, \bsigma^{k-1}\, 
\bS_\bsigma\,   \bmu_{k+1}
\big)
+{\mathcal O}(\bsigma^{2k})
\\[1mm]
&=
-\bmu\big(\tfrac{2k(k+1)}{d} \,  \bsigma^{k-1}\bS_\bsigmas\, 
\bmu_{k+1} + {\mathcal O}(\bsigma^k)\big)
+\bM^\bsigmas_\bmus \, {\mathcal O}(\bsigma^{k})
+{\mathcal O}(\bsigma^{k+1})\, .
\end{split}
\label{S'}
\end{equation}
Now, acting on weight $w$ densities---again thanks to the Laplace--Robin $\mathfrak{sl}(2)$ algebra---we have 
the operator identity 
$$\D_\bmus \circ\,  \bmu \stackrel{\bmuss'}\sim (d+2w)\, \bS_\bmus
+{\mathcal O}(\bsigma^{k+1})\, ,$$
so that Equations~\nn{Llog} and~\nn{S'} imply
$$
\bsigma \D_{\bmu'}\log \bS_{\bmu'}
\stackrel{\bmuss'}\sim
\bsigma \D_{\bmu}\log \bS_{\bmu}
-\tfrac{2k(k+1)(d-2)}{d}\, \bsigma^k \bS_\bsigma\,  
\bmu_{k+1} 
 +\bM^\bsigmas_\bmus \, {\mathcal O}(\bsigma^{k})
 +{\mathcal O}(\bsigma^{k+1})\, .
$$
Here we used that $\D_\bmu\circ \, \bsigma^k
\stackrel{\bmuss'}\sim {\mathcal O}(\bsigma^{k-1})$ and the identity
$\log (A/B)=\log(1+\frac{A-B}B)$.
Employing Equation~\nn{newangle} and the above display  to compute $\bM^\bsigmas_{\bmus'}$ as defined by Equation~\nn{minimalM} gives the quoted result.
\end{proof}

The above lemma is the basis for an iterative solution to the minimal surface condition~\nn{minimal}.
First consider a defining density for a hypersurface that meets $\Sigma$ transversely along $\Lambda$. Working locally, it is straightforward to improve this to 
a 
defining function~$\bmu_0$ for a hypersurface~$\Xi_0$
such that $\Sigma$ and $\Xi_0$ 
intersect along $\Lambda$ at right angles. Moreover, without loss of generality, assume  that~$\bmu_0$ is a conformal unit defining density, so that $\bS_{\bmus_0}=1+{\mathcal O}(\bmu_0^d)$. Then
$$
\bM^\bsigmas_{\bmu_{_0}}
\stackrel{\bmuss_{0}}\sim 
\langle\bsigma,\bmu_0\rangle=\tfrac1d \D_\bsigmas \bmu_0
\stackrel{\bmuss_{0}}\sim {\mathcal O}(\bsigma)\, ,
$$
since $\langle\bsigma,\bmu_0\rangle$ vanishes along $\Lambda$. Hence, we consider an improved defining density
$$
\bmu=\bmu_0+\bsigma^2
\bmu_2 \, .
$$
 By the above lemma 
 we should choose $\bmu_2$ that solves 
$$
\tfrac 1d \D_\bsigmas \bmu_0+\tfrac{2(d-2)}{d-1} \bS_\bsigmas \bsigma \bmu_2
\stackrel{\bmuss}\sim {\mathcal O}(\bsigma^2)\, .
$$
It is not difficult to verify that $\D_\bsigmas (\bmu \bm f)\stackrel{\bmuss}\sim {\mathcal O}(\bsigma)$ for any density $\bm f$. Thus, dividing the above display by $\bS_\bsigma$
(as remarked earlier this is   well defined, at least in some neighborhood of $\Sigma$) and then acting with $\D_\bsigmas$, with the help of the $\mathfrak{sl}(2)$ algebra (see Section~\ref{LapRob}) we find
$$
\tfrac 1d\hh
\D_\bsigma  \circ \, \bS_\bsigma^{-1}\circ \D_\bsigma \bmu_0+\tfrac{2(d-2)^2}{d-1}\, \bS_\bsigmas
\bmu_2
\stackrel\bmuss\sim {\mathcal O}(\bsigma)\, .
$$
Hence we have proved the following Lemma:
\begin{lemma}\label{rightangs}
Let $\bmu_0$ be a defining density for a hypersurface $\Xi_0$ that intersects $\Sigma={\mathcal Z}(\bsigma)$ at right angles. Then the density 
$$
\bmu=\bmu_0-\tfrac{d-1}{2d(d-2)^2}\, \bsigma^2\, (\bS_\bsigma^{-1}\circ \D_\bsigma)^2 \bmu_0\, 
$$
obeys
$$
\bM^\bsigmas_{\bmu}
\stackrel{\bmuss}\sim {\mathcal O}(\bsigma^2)\, .
$$
\end{lemma}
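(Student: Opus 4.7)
The plan is to implement the iterative improvement strategy sketched in the paragraphs preceding the lemma, making a perturbative ansatz $\bmu=\bmu_0+\bsigma^2\bmu_2$ for an unknown $\bmu_2\in\Gamma(\ce M[-1])$ and solving for it using Lemma~\ref{keylemma}. First I would note that, by Proposition~\ref{MEAN}, rescaling $\bmu_0\mapsto f\bmu_0$ by any smooth positive~$f$ preserves~$\Xi_0$ and its right-angle intersection with $\Sigma$ while rescaling $\bS_{\bmus_0}$ multiplicatively, so without loss of generality $\bmu_0$ may be taken to be a conformal unit defining density, $\bS_{\bmus_0}\stackrel{\bmuss_0}\sim 1+\mathcal O(\bmu_0^d)$. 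Under this assumption the second term in~\eqref{minimalM} is of order $\bsigma\hh\bmu_0^{d-1}$ and so negligible mod~$\bmu_0$; combined with the right-angle hypothesis $\langle\bsigma,\bmu_0\rangle|_\Lambda=0$ and identity~\eqref{lapland}, this gives
\begin{equation*}
\bM^\bsigmas_{\bmu_0}\stackrel{\bmuss_0}\sim\tfrac1d\,\D_\bsigmas\bmu_0\stackrel{\bmuss_0}\sim\mathcal O(\bsigma)\, .
\end{equation*}

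Next, applying Lemma~\ref{keylemma} with $k=1$ to $\bmu=\bmu_0+\bsigma^2\bmu_2$, the prefactor $\tfrac{(k+1)(d-1-k)}{d-1}$ collapses to $\tfrac{2(d-2)}{d-1}$, and the requirement $\bM^\bsigmas_\bmu\stackrel\bmuss\sim\mathcal O(\bsigma^2)$ reduces to the linear equation
\begin{equation*}
\tfrac1d\,\D_\bsigmas\bmu_0+\tfrac{2(d-2)}{d-1}\,\bS_\bsigmas\,\bsigma\,\bmu_2\stackrel\bmuss\sim\mathcal O(\bsigma^2)\, .
\end{equation*}
Since $\bS_\bsigmas$ is nowhere vanishing near~$\Sigma$, I divide through by $\bS_\bsigmas$ and then apply $\D_\bsigmas$ a second time. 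On the $\bsigma\bmu_2$ term the enveloping-algebra identity $\D_\bsigmas\circ\bsigma\,f\stackrel\bsigmas\sim(d+2w)\bS_\bsigmas f$ acting on weight-$w$ densities $f$ (a direct consequence of $[x,y]=h$ from~\eqref{envelop}), evaluated at $w=-1$, contributes a factor of~$d-2$; the right-hand side error remains of order $\mathcal O(\bsigma)$ because the same identity forces $\D_\bsigmas(\mathcal O(\bsigma^2))=\mathcal O(\bsigma)$. The result is the algebraic equation
\begin{equation*}
\tfrac1d\,\D_\bsigmas\!\circ\bS_\bsigmas^{-1}\!\circ\D_\bsigmas\bmu_0+\tfrac{2(d-2)^2}{d-1}\,\bS_\bsigmas\,\bmu_2\stackrel\bmuss\sim\mathcal O(\bsigma)\, ,
\end{equation*}
which is immediately solved by $\bmu_2=-\tfrac{d-1}{2d(d-2)^2}\bigl(\bS_\bsigmas^{-1}\!\circ\D_\bsigmas\bigr)^2\bmu_0$ modulo $\mathcal O(\bsigma)$. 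Substituting back into the ansatz reproduces the formula for $\bmu$ stated in the lemma.

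The main obstacle is the concurrent bookkeeping of the two order filtrations -- modulo $\bsigma$ (intrinsic to Lemma~\ref{keylemma}) and modulo $\bmu$ (intrinsic to asymptotic minimality) -- and in particular verifying that an $\mathcal O(\bsigma)$ ambiguity in $\bmu_2$ translates into an $\mathcal O(\bsigma^3)$ ambiguity in $\bmu$, which does not disturb the $\mathcal O(\bsigma^2)$ vanishing of $\bM^\bsigmas_\bmu$ guaranteed by Lemma~\ref{keylemma}. A further consistency check is that neither $\bmu_0$ (weight~$1$) nor $\bmu_2$ (weight~$-1$) sits at the Yamabe weight $1-\tfrac d2$, so the standard Laplace--Robin Leibniz and envelope identities are applicable without recourse to the modified operators $\widetilde\D_\bsigmas$ or $\widehat\D_\bsigmas$ of Section~\ref{LRs}.
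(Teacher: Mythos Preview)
Your proposal is correct and follows essentially the same route as the paper: assume $\bmu_0$ is unit defining, use the right-angle hypothesis to get $\bM^\bsigmas_{\bmu_0}\stackrel{\bmuss_0}\sim\mathcal O(\bsigma)$, apply Lemma~\ref{keylemma} with $k=1$, then divide by $\bS_\bsigmas$ and hit with $\D_\bsigmas$ using the $\mathfrak{sl}(2)$ relation to solve for $\bmu_2$. Your added remarks on the double order filtration and the non-criticality of the weights are extra care beyond what the paper spells out, but the argument is the same.
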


%

%

The preceding two lemmas are the induction and base steps that establish the following theorem:

\begin{theorem}\label{MINSOL}
Given the conformal embedding data $\Lambda\hookrightarrow \Sigma\hookrightarrow (M,\cc)$ and a  defining density $\bsigma$ for~$\Sigma$, there exists a conformal unit defining density~$\bmu$ such that  
\begin{equation}\label{minimu}
\bM^\bsigmas_{\bmus}
\stackrel{\bmuss}\sim \bsigma^{d-1} \bB\, .
\end{equation}
Moreover, the weight $w=1-d$ density along  
$$
{\bm B}_\Lambda:=\bB|_\Lambda
$$
is uniquely determined by the above data.
\end{theorem}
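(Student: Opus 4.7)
My plan is to prove existence by induction---with Lemma~\ref{rightangs} as the base case and Lemma~\ref{keylemma} as the inductive step---and then to extract uniqueness of $\bm B_\Lambda$ from Lemma~\ref{keylemma} at the critical index where its obstruction coefficient vanishes.

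For existence, I would begin with any local defining density $\bmu_0$ for an oriented hypersurface $\Xi_0$ meeting $\Sigma$ transversely along $\Lambda$, normalized to be a conformal unit defining density via the singular Yamabe construction of Section~\ref{SINGYAM}. Lemma~\ref{rightangs} then shifts this by an explicit $\bsigma^2$-term to a density $\bmu_{(2)}$ with $\bM^\bsigma_{\bmu_{(2)}}\stackrel{\bmu_{(2)}}\sim\mathcal{O}(\bsigma^2)$. Inductively, given $\bmu_{(k)}$ with $\bM^\bsigma_{\bmu_{(k)}}\stackrel{\bmu_{(k)}}\sim\bsigma^k\bm C_k+\mathcal{O}(\bsigma^{k+1})$ for some $2\leq k\leq d-2$, put
$$\bmu_{(k+1)}:=\bmu_{(k)}+\bsigma^{k+1}\bm\eta_{k+1},\qquad \bm\eta_{k+1}\big|_\Sigma:=-\frac{d-1}{(k+1)(d-1-k)\,\bS_\bsigma}\,\bm C_k\big|_\Sigma,$$
with $\bm\eta_{k+1}$ extended smoothly off $\Sigma$. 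Lemma~\ref{keylemma} then yields $\bM^\bsigma_{\bmu_{(k+1)}}\stackrel{\bmu_{(k+1)}}\sim\mathcal{O}(\bsigma^{k+1})$, since $(k+1)(d-1-k)\neq 0$ in this range and $\bS_\bsigma$ is everywhere non-vanishing. After $d-3$ steps, $\bmu:=\bmu_{(d-1)}$ realizes Equation~\eqref{minimu}. The conformal unit condition is preserved by interleaving each additive step with a multiplicative rescaling $\bmu\mapsto f\bmu$, $f=1+\mathcal{O}(\bmu)$, tuned to restore $\bS_\bmu=1+\mathcal{O}(\bmu^d)$; the homogeneities $\bM^\bsigma_{f\bmu}\stackrel{\bmu}\sim f\bM^\bsigma_\bmu$ and $\bS_{f\bmu}\stackrel{\bmu}\sim f^2\bS_\bmu$ of Proposition~\ref{MEAN} guarantee this does not spoil the $\bsigma$-order already achieved.

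For uniqueness of $\bm B_\Lambda$, observe that any two valid $\bmu,\bmu'$ must agree to first order at $\Lambda$ by orthogonality and the Yamabe condition, so $\bmu'-\bmu=\bsigma^j\bm\eta$ for some $j\geq 2$ with $\bm\eta|_\Sigma\neq 0$ (increasing $j$ as necessary). Lemma~\ref{keylemma} with index $k=j-1$ gives
$$\bM^\bsigma_{\bmu'}\stackrel{\bmu'}\sim\bM^\bsigma_\bmu+\frac{j(d-j)}{d-1}\,\bS_\bsigma\,\bsigma^{j-1}\,\bm\eta+\mathcal{O}(\bsigma^j).$$
Both sides lying in $\mathcal{O}(\bsigma^{d-1})$, $\bS_\bsigma$ being non-vanishing, and $j(d-j)\neq 0$ for $2\leq j\leq d-1$ together force $j\geq d$. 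Applying Lemma~\ref{keylemma} once more at the critical index $k=d-1$, where $(k+1)(d-1-k)$ vanishes, yields $\bM^\bsigma_{\bmu'}\stackrel{\bmu'}\sim\bM^\bsigma_\bmu+\mathcal{O}(\bsigma^d)$, so $\bm B|_\Lambda=\bm B'|_\Lambda$. The principal technical delicacy---coordinating the singular Yamabe normalization with the additive $\bsigma$-iteration---is handled precisely by the multiplicative homogeneity of Proposition~\ref{MEAN}: the Yamabe correction is multiplicative in powers of $\bmu$ whereas the minimal-surface iteration is additive in powers of $\bsigma$, so the two procedures operate in transverse directions and decouple at leading order.
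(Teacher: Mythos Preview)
Your existence argument is correct and matches the paper's: induction with Lemma~\ref{rightangs} as base and Lemma~\ref{keylemma} as step. One small point: Lemma~\ref{rightangs} assumes $\Xi_0$ already meets $\Sigma$ orthogonally, not merely transversely; the paper inserts a preliminary adjustment for this just before that lemma.

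Your uniqueness argument has a genuine gap. From ``$\bmu$ and $\bmu'$ agree to first order along $\Lambda$'' you conclude $\bmu'-\bmu=\bsigma^{j}\bm\eta$ with $j\geq 2$. But $\Lambda$ has codimension two, so second-order vanishing along $\Lambda$ only places $\bmu'-\bmu$ in the ideal generated by $\bsigma^{2},\,\bsigma\bmu,\,\bmu^{2}$; in particular $(\bmu'-\bmu)|_\Sigma$ need not vanish (both $\bmu|_\Sigma$ and $\bmu'|_\Sigma$ are unit defining functions for $\Lambda$ in $\Sigma$, but there is no reason they coincide beyond first order). Concretely, if $\bmu$ solves the problem then so does any $\bmu'=\bmu+\bmu^{d+1}\bm A$, and here $\bmu'-\bmu$ is not of the form $\bsigma^{j}\bm\eta$ at all. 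Since Lemma~\ref{keylemma} treats only perturbations of the form $\bsigma^{k+1}\bmu_{k+1}$, your inductive step does not apply to the general difference.

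The fix is to use the multiplicative homogeneity of Proposition~\ref{MEAN} (which you already invoked for existence) in the uniqueness argument as well. Write $\bmu'=f\bmu+\bsigma^{2}\bm g$ with $f|_\Lambda=1$ (this follows from first-order agreement at $\Lambda$ together with $m.n|_\Lambda=0$). Since $\bM^{\bsigma}_{f\bmu}\stackrel{\bmu}\sim f\,\bM^{\bsigma}_{\bmu}$ and $f|_\Lambda=1$, replacing $\bmu$ by $f\bmu$ does not change the obstruction along $\Lambda$; after this replacement $\bmu'-\bmu=\bsigma^{2}\bm g$, and \emph{now} your iteration of Lemma~\ref{keylemma} goes through.
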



The quantity $
{\bm B}_\Lambda$ obstructs smooth solutions to the singular minimal hypersurface problem and is therefore termed the {\it minimal obstruction density}. It is an invariant of the conformal embedding data $\Lambda\hookrightarrow \Sigma\hookrightarrow (M,\cc)$ and the defining density $\bsigma$. 
Then if~$\bsigma$ 
is determined,  to sufficient order, in terms of these embeddings via a suitable problem it follows that ${\bm B}_\Lambda$ is determined in terms of the  conformal embeddings.
For example, the singular Yamabe problem determines $\bsigma$ 
modulo terms of order $\bsigma^{d+1}$, and so achieves this.
In the special case that $(M,g^o)$ is Poincar\'e--Einstein,~$
{\bm B}_\Lambda$ is an invariant of $\Lambda\hookrightarrow (\Sigma,\cc_\Sigma)$. This is another hierarchy in the same vein as that discussed in the introduction. Moreover, for Poincar\'e--Einstein structures, it is known that ${\bm B}_\Lambda$  is the functional gradient of the area anomaly for the minimal surface~$\Xi$~\cite{GrahamRiechert}.


\begin{remark}
A  useful tool for computations with Poincar\'e--Einstein structures is to choose a canonical metric $g\in \cc$. 
Writing $\sigma$ to denote the function in $\bsigma$ determined by $g$, which may be  taken to equal the arc length $x$ for a suitable geodesic shooting problem, one obtains the Graham--Lee normal form~\cite{GL} for the singular metric
$$
g^o=\frac{dx^2 + h(x)}{x^2}\, .
$$
The Poincar\'e--Einstein condition then gives that $h(x)$ has an even expansion in $x$. This facilitates a simple proof of vanishing theorems for anomalies and obstructions in appropriate dimension parities. For example, this implies the area anomaly and hence the minimal obstruction density $
{\bm B}_\Lambda$ vanishes in this case when $d$ is odd~\cite{GrahamWitten,GrahamRiechert}. Noting that the Laplace--Robin operator is odd under the interchange $x\leftrightarrow -x$, when expressed in the Graham-Lee normal form, it is not difficult to check that  Lemmas~\ref{keylemma} and~\ref{rightangs} also lead to an even expansion in $x$ for the function  $\mu$ corresponding to $\bmu$, and in turn vanishing obstruction for $d$ odd.
\end{remark}

We will employ the term {\it minimal defining density}  for densities $\bmu$ obeying the minimal condition~\nn{minimu} given the data
 $\Lambda\hookrightarrow \Sigma\hookrightarrow (M,g^o)
 $ where the singular metric $g^o$ is determined by the unit defining density $\bsigma$ (given the conformal class). When we need to emphasize  the choice of $\bsigma$, we will use the term {\it $\bsigma$-minimal}.
 This condition  restricts the zero locus~${\mathcal Z}(\bmu)$
 to be a hypersurface~$\Xi$ that solves Problem~\ref{minprob} to order $k=d-1$. 
 We will term such a hypersurface an {\it asymptotically minimal hypersurface}. 
 When~$\bmu$ is also chosen to further obey the singular Yamabe condition
$\bS_\bmu=1+{\mathcal O}(\bmu^d)$, we term~$\bmu$ a {\it minimal unit defining density}. A minimal defining density may always be improved to a minimal unit one (while keeping~$\Xi$ in the  zero locus) with the same zero locus~$\Xi$ (see Section~\ref{SINGYAM} and~\cite{GW15,GW161}).  

Equation~\nn{minimu} implies that (generically) the failure of $\bM^\bsigmas_{\bmus}$ to vanish along $\Sigma$ is proportional to the minimal defining density~$\bmu$;
%
%
%
 this leads to another invariant of the embedding data~$\Lambda\hookrightarrow \Sigma\hookrightarrow (M,\cc)$, which we record in the following lemma. 
 
\begin{lemma}\label{C}
Let  the conformal data $\Lambda\hookrightarrow \Sigma\hookrightarrow (M,\cc)$ be given and let  $\bsigma$ be a corresponding unit defining density for $\Sigma$.
Moreover let $\bmu$ be a $\bm \sigma$-minimal unit defining density for $\Xi$  with~$\partial \Xi =\Lambda$. Then 
$$
\bM^\bsigmas_{\bmus}
= \bmu \bC + \bsigma^{d-1}  \bB
$$
for some $\bC\in \Gamma(\ce M[-1])$,
and $\bm C_\Lambda:=\bC|_\Lambda$ is a uniquely defined invariant of the embedding data $\Lambda\hookrightarrow \Sigma\hookrightarrow (M,\cc)$ given by
$$
\bm C_\Lambda= \, \bm\IIo^\Sigma_{\bm{ \hat\bm  m}   {\bm{\hat  m}}}|_\Lambda\, , 
$$
where ${\bm{ \hat m}}$ is the unit conormal for the hypersurface embedding $\Lambda \hookrightarrow \Sigma$. 
%
%

\end{lemma}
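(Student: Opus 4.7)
The decomposition $\bM^\bsigmas_\bmus = \bmu\bC + \bsigma^{d-1}\bB$ is essentially a restatement of Theorem~\ref{MINSOL}: the notation $\bM^\bsigmas_\bmus \stackrel{\bmuss}\sim \bsigma^{d-1}\bB$ means exactly that the difference is $\bmu$ times a smooth density $\bC$ of weight $-1$. The residual ambiguity $(\bC,\bB)\mapsto(\bC+\bsigma^{d-1}\bm E,\bB-\bmu\bm E)$ for $\bm E\in\Gamma(\ce M[-d])$ shifts $\bC$ only by a multiple of $\bsigma^{d-1}$, which vanishes on $\Lambda$; hence $\bm C_\Lambda:=\bC|_\Lambda$ is a well-defined invariant of the embedding data.

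To extract $\bm C_\Lambda$, I restrict the decomposition to $\Sigma$: since $d\geq 2$, $\bsigma^{d-1}\bB|_\Sigma=0$, so $\bM^\bsigmas_\bmus|_\Sigma=\bmu|_\Sigma\,\bC|_\Sigma$. The minimal condition~\nn{minimal} applied to the unit density $\bmu$ implies $\Xi\perp\Sigma$ along $\Lambda$, and combined with $|\nabla\mu|_g|_\Xi=1$ this gives $\nabla_{\hat m}\mu|_\Lambda=1$ in any scale, where $\hat m$ is the unit conormal to $\Lambda$ in $\Sigma$. Therefore $\mu|_\Sigma$ is a first-order defining function for $\Lambda\hookrightarrow\Sigma$, and $\bm C_\Lambda = \nabla_{\hat m}\bigl(\bM^\bsigmas_\bmus|_\Sigma\bigr)\big|_\Lambda$.

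Inserting the formula from Proposition~\ref{MEAN}, the logarithmic term $\bsigma\,\D_\bmus\log\bS_\bmus$ vanishes on $\Sigma$, so $\bM^\bsigmas_\bmus|_\Sigma=(\nabla\sigma\cdot\nabla\mu + \mu\rho_\sigma)|_\Sigma$ in a scale $g$. Differentiating along $\hat m$ at $\Lambda$: the $\mu\rho_\sigma$ piece produces $\rho_\sigma|_\Lambda$ immediately. For the first piece, I expand $\hat m^a\nabla_a(\nabla^b\sigma\,\nabla_b\mu)|_\Lambda$ using $\nabla^a\sigma|_\Sigma=\hat n^a$, $\nabla^a\mu|_\Lambda=\hat m^a$, and $\hat m\cdot\hat n|_\Lambda=0$; the cross term $\hat n^a\hat m^b\nabla_a\nabla_b\mu|_\Lambda$ is killed because the unit condition on $\bmu$ forces $\nabla_a|\nabla\mu|^2=\mathcal{O}(\mu^{d-1})$, leaving $\hat m^a\hat m^b\nabla_a\nabla_b\sigma|_\Lambda=\II^\Sigma_{\hat m\hat m}|_\Lambda$.

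The key step, and the one that requires care, is the self-consistent evaluation of $\rho_\sigma|_\Sigma$ using the singular Yamabe condition $\bS_\bsigmas=1+\mathcal{O}(\bsigma^d)$. Decomposing $g^{ab}=\hat n^a\hat n^b+g_\Sigma^{ab}$, the tangential piece of $\Delta^g\sigma|_\Sigma$ gives $(d-1)H^g_\Sigma$ while the normal piece gives $-\rho_\sigma|_\Sigma$, via the first-order identity $\nabla_a|\nabla\sigma|^2=-2\rho_\sigma\hat n_a$ on $\Sigma$ obtained by differentiating $\bS_\bsigmas=1+\mathcal{O}(\bsigma^d)$; solving $\rho_\sigma|_\Sigma=-\tfrac1d\Delta^g\sigma|_\Sigma$ yields the clean identity $\rho_\sigma|_\Sigma=-H^g_\Sigma$. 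Combining then gives $\bm C_\Lambda = \II^\Sigma_{\hat m\hat m}|_\Lambda - H^g_\Sigma|_\Lambda = \bm\IIo^\Sigma_{\hat m\hat m}|_\Lambda$. Without the Yamabe-induced cancellation one would land on the non-invariant combination $\II^\Sigma_{\hat m\hat m}-\tfrac{d-1}{d}H^g_\Sigma$, so the main obstacle is recognising that the unit hypothesis on $\bsigma$ precisely produces the trace-free combination.
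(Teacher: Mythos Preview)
Your approach is essentially the paper's: both differentiate $\langle\bsigma,\bmu\rangle$ along the $m$-direction at $\Lambda$ and invoke the singular-Yamabe identity $\rho_\sigma|_\Sigma=-H_\Sigma$ to land on the trace-free combination. One imprecision to fix: the claim $\nabla_a|\nabla\mu|^2=\mathcal O(\mu^{d-1})$ is false, since $|m|^2=1-2\mu\rho_\mu+\mathcal O(\mu^d)$ gives $\nabla_a|m|^2|_\Lambda=-2\rho_\mu\hat m_a\neq 0$; the cross term $\hat n^a\hat m^b\nabla_a m_b|_\Lambda=\tfrac12\hat n^a\nabla_a|m|^2|_\Lambda=-\rho_\mu(\hat n\cdot\hat m)|_\Lambda$ vanishes by orthogonality, not by a higher-order vanishing of the gradient.
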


\begin{proof}
Together, the minimal unit and unit defining density definitions for $\bmu$ and $\bsigma$ imply
\begin{equation}\label{thebr}
\bM^\bsigmas_{\bmus}
= \langle\bsigma,\bmu\rangle +\bm \sigma \bmu^{d-1} \bC^\prime
=
\bmu \bC + \bsigma^{d-1}  \bB
\, ,
\end{equation} 
for some smooth, weight $w=-1$ density $\bC$ and some weight $-d$ density $\bC'$ (coming from the log term in Equation~\nn{minimalM}). 
Uniqueness of $\bm C_\Lambda=\bm {\mathcal C}|_\Lambda$ is guaranteed by Theorem~\ref{MINSOL} and the uniqueness property of  unit defining density solutions to the singular Yamabe problem. In particular this determines $\bmu$ uniquely to ${\mathcal O}(\bmu^d)$, which suffices for uniqueness of $\bC|_\Lambda$.

Now let us denote   $\bmu=[g;\mu]$, $\bsigma=[g;\sigma]$, $m:=\ext \mu$, $n:=\ext \sigma$. From the minimal unit and unit defining density definitions (see Equations~\nn{Scurvy},~\nn{thebracket} and~\nn{minimalM})
we then have
\begin{equation}\label{ones}
m^2+2\mu \rho_\mu = 1 + {\mathcal O}(\mu^d)\, ,\quad
n^2+2\sigma \rho_\sigma = 1+ {\mathcal O}(\sigma^d)\, ,
\end{equation}
and
\begin{equation}\label{mn}
m.n+\mu \rho_\sigma + \rho_\mu \sigma =\mu\hh  {\mathcal C} +{\mathcal O}(\sigma\mu^{d-1}) 
+{\mathcal O}(\sigma^{d-1})\, .
\end{equation}
Here ${\mathcal C}$ 
denotes  $\bC$ 
 in the scale $g$. 
Differentiating the last display along the conormal $m$ and restricting to $\Lambda$ (which we indicate by a superscript $\Lambda$ above the equals sign) we have
$$
\nabla_m(m.n) + \rho_\sigma \stackrel\Lambda= {\mathcal C}\,  .
$$
Here we used $\nabla_m \mu = m^2 \stackrel\Lambda = 1$
and $\nabla_m \sigma = m.n \stackrel\Lambda = 0$.
Now in general for unit defining densities (see~\cite[Lemma 3.3]{GW161}) one has
\begin{equation}
\label{singyamIIo}
\rho_\sigma \stackrel\Sigma= -H^\Sigma\, ,\qquad \nabla_a n_b \stackrel\Sigma= \IIo^\Sigma_{ab} + H^\Sigma g_{ab}
\, ,
\end{equation}
and thus $\nabla_m m_a\stackrel\Xi= m_a H^\Xi$.
Hence, since $m.n\stackrel\Lambda=0$ we have
$$
{\mathcal C}\stackrel\Lambda = \IIo_{mm}^\Sigma\, .
$$
Along $\Lambda$ we have that $m=\hat m$; this gives the quoted result for ${\mathcal C}|_\Lambda$.
\end{proof}

There are various conformally invariant relations obeyed by extrinsic quantities associated to an asymptotically minimal hypersurface~$\Xi$ along its boundary $\Lambda$. The first of these was discussed above, namely that unit conormals $\bm{\hat m}$ and $\bm {\hat n}$ of $\Xi$ and $\Sigma$ obey~$ \bm{\hat m}.\bm {\hat n}\stackrel\Lambda=0$. Another example is given in the following Lemma:

\begin{lemma}\label{IIonn}
Let $\Xi$ be an asymptotically minimal hypersurface determined by the sequence of embeddings~\nn{embedseq} where $\hat {\bm n}$  is the unit conormal to $\Sigma$. Then  the trace-free second fundamental form of $\Xi$ obeys 
\begin{equation}\label{aftervector}
\bm\IIo^\Xi_{\bm{ \hat\bm  n}   {\bm{\hat  n}}}\big|_\Lambda=0\, .
\end{equation}

\end{lemma}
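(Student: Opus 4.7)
The plan is to differentiate the minimal condition $\bM^\bsigma_\bmu = \bmu\bC + \bsigma^{d-1}\bB$ of Lemma~\ref{C} along the conormal direction to $\Sigma$ and restrict to $\Lambda$, using the unit properties of $\bsigma$ and $\bmu$ to eliminate curvature terms. Working in a scale $g\in\bm c$ with $\bsigma=[g;\sigma]$, $\bmu=[g;\mu]$, $n:=\ext\sigma$, $m:=\ext\mu$, the minimal condition reads (cf.\ Equation~\eqref{mn})
$$
m.n+\mu\rho_\sigma+\sigma\rho_\mu\;=\;\mu\,{\mathcal C}+{\mathcal O}(\sigma\mu^{d-1})+{\mathcal O}(\sigma^{d-1}).
$$

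First I would apply $n^a\nabla_a$ to this identity and evaluate on $\Lambda$, where $\sigma=\mu=0$ and $m.n=0$ (Lemma~\ref{C}). For $d\geq 3$, each term on the right-hand side produces either an explicit factor of $\sigma$ or $\mu$, or (via the Leibniz rule) a factor of $m.n$; all such contributions vanish along $\Lambda$. Similarly, $\nabla_n(\mu\rho_\sigma)|_\Lambda = (m.n)\rho_\sigma|_\Lambda = 0$, while $\nabla_n(\sigma\rho_\mu)|_\Lambda = n^2\rho_\mu|_\Lambda = \rho_\mu|_\Lambda$ using $n^2|_\Lambda = 1$ from the unit condition on $\bsigma$ (Equation~\eqref{ones}). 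This leaves
$$
n^a\nabla_a(m.n)\big|_\Lambda + \rho_\mu\big|_\Lambda \;=\; 0.
$$

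Second, I would expand $n^a\nabla_a(m.n) = n^an^b\nabla_a m_b + m^b\, n^a\nabla_a n_b$. Since both $m$ and $n$ are exact, $\nabla_a n_b$ is symmetric, so $n^a\nabla_a n_b = \tfrac{1}{2}\nabla_b(n^2)$. The unit condition $n^2 = 1+{\mathcal O}(\sigma^d)$ gives $\tfrac{1}{2}\nabla_b(n^2) = -\rho_\sigma n_b + {\mathcal O}(\sigma)$, so $m^b n^a\nabla_a n_b|_\Lambda = -(m.n)\rho_\sigma|_\Lambda = 0$. Combined with the singular Yamabe identity $\rho_\mu\stackrel\Xi=-H^\Xi$ from Equation~\eqref{singyamIIo} applied to the unit defining density $\bmu$, I obtain
$$
n^a n^b\nabla_a m_b\big|_\Lambda \;=\; -\rho_\mu\big|_\Lambda \;=\; H^\Xi\big|_\Lambda.
$$

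Finally, applying the second identity of Equation~\eqref{singyamIIo} for $\bmu$, namely $\nabla_a m_b|_\Xi = \IIo^\Xi_{ab} + H^\Xi g_{ab}$, and again using $n^a n^b g_{ab}|_\Lambda = n^2|_\Lambda = 1$, the previous display gives $\IIo^\Xi_{ab} n^a n^b|_\Lambda = H^\Xi|_\Lambda - H^\Xi|_\Lambda = 0$, which at the density level is the claimed statement~\eqref{aftervector}, since along $\Lambda$ the conformal density $\bm{\hat n}$ is represented by $n$. The main obstacle is keeping careful track of the Leibniz error terms when applying $\nabla_n$: one must verify for each term that the combination of $\sigma$, $\mu$, and $m.n$ factors indeed vanishes on $\Lambda$ in dimension $d\geq 3$, and use both unit conditions to kill the terms involving derivatives of $\rho_\sigma$ and $\rho_\mu$.
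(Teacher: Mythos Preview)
Your proof is correct and follows essentially the same route as the paper's direct argument: differentiate the minimal relation~\eqref{mn} along $n$, use $m.n|_\Lambda=0$ together with the unit conditions to kill the cross terms, and read off $\IIo^\Xi_{\hat n\hat n}|_\Lambda=0$ from Equation~\eqref{singyamIIo}. One small slip: the unit condition is $n^2+2\sigma\rho_\sigma=1+{\mathcal O}(\sigma^d)$, not $n^2=1+{\mathcal O}(\sigma^d)$; you evidently used the correct version since you obtain $\tfrac12\nabla_b(n^2)=-\rho_\sigma n_b+{\mathcal O}(\sigma)$, and in any case the conclusion $m^b n^a\nabla_a n_b|_\Lambda=0$ follows either way from $m.n|_\Lambda=0$.
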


\begin{proof}
This is a corollary of Equation~\nn{Rodsequation} below, but a direct proof can also be given:
Without loss of generality we can take $\Xi$ to be the zero locus of a minimal unit defining density $\bmu$. This means that we can use Equations~\nn{ones} and~\nn{mn} of the previous proof. So now 
we compute
$$
n^a n^b (\nabla_a m_b + \rho_\mu g_{ab})
=\nabla_n(m.n) -n^a m^b \nabla_a n_b + n^2 \rho_\mu\, .
$$
Along $\Lambda$, the left hand side above equals $\IIo^\Xi_{\hat n\hat n}$ while for the right hand side we find
$$
\nabla_n\big (-\sigma \rho_\mu - \mu \rho_\sigma + \mu {\mathcal C} +
{\mathcal O}(\sigma\mu^{d-1})+
{\mathcal O}(\sigma^{d-1})\big)+\rho_\mu
\stackrel\Lambda= 0\, .
$$
Here we have used Equation~\nn{singyamIIo}
and $m.n|_\Lambda=0$ to show that $n^a m^b \nabla_a n_b |_\Lambda=0$. Similarly $\nabla_n \mu|_\Lambda=0$. This completes the proof.
\end{proof}

\begin{remark}
When the singular metric $g^o=\bm g/\bsigma^2$ is Poincar\'e--Einstein, the hypersurface~$\Sigma$ is necessarily umbilic~\cite{LeB-Heaven,Goal}, so the invariant $ \bC|_\Lambda$ then vanishes.
Also note that Equation~\ref{aftervector} implies that $\bm{\hat n}^a\bm\IIo^\Xi_{ab}|_\Lambda$ is a covector tangent  to $\Lambda$.
\end{remark}

The following Lemma explains the  significance of the 
result in Equation~\nn{aftervector}, and in particular demonstrates that
along $\Lambda$, the mean curvatures of $\Xi$  equals that of $\Lambda$ when~$\Xi$ is an asymptotically minimal hypersurface determined by the embedding sequence~\nn{embedseq}.

\begin{lemma}\label{H2H}
Let $\Xi$ and $\Sigma$ be hypersurfaces in a Riemannian $d$-manifold $(M,g)$ that intersect at right angles along a submanifold $\Lambda$. Then the following relation on mean curvatures holds along $\Lambda$, 
$$
H_{\Lambda\hookrightarrow (\Sigma,g_\Sigma)}=H_{\Xi\hookrightarrow(M,g)}-\tfrac1{d-2}\, \IIo^{\Xi\hookrightarrow(M,g)}_{\hat n \hat n}\, ,
$$
where $g_\Sigma$ is the induced metric on $\Sigma$, and $\hat n$ is the unit conormal to $\Sigma$.
\end{lemma}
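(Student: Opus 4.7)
My proof plan is to reduce both mean curvatures to traces of $\nabla\hat m$ over adapted orthonormal frames, and exploit the orthogonality $\hat m\cdot\hat n=0$ along $\Lambda$ to account for the single ``missing'' direction.

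\textbf{Step 1 (Adapted frame).} At a point $p\in\Lambda$, choose an orthonormal frame $\{e_1,\ldots,e_{d-2},\hat n,\hat m\}$ of $T_pM$ with $e_i\in T_p\Lambda$. By hypothesis $\hat m\cdot\hat n=0$ at $p$, so this is genuinely orthonormal. Then $\{e_1,\ldots,e_{d-2},\hat n\}$ spans $T_p\Xi$ and $\{e_1,\ldots,e_{d-2},\hat m\}$ spans $T_p\Sigma$.

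\textbf{Step 2 (Expand $H_\Xi$).} Let $\hat m$ be extended to a unit normal field for $\Xi$ on a neighborhood. Since $II^{\Xi}_{ab}=\langle\nabla_a\hat m,\partial_b\rangle$ on $T\Xi$ and its trace equals $(d-1)H^{\Xi}$,
\[
(d-1)H^{\Xi}=\sum_{i=1}^{d-2}\langle\nabla_{e_i}\hat m,e_i\rangle+\langle\nabla_{\hat n}\hat m,\hat n\rangle
=\sum_{i=1}^{d-2}\langle\nabla_{e_i}\hat m,e_i\rangle+II^{\Xi}_{\hat n\hat n}.
\]

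\textbf{Step 3 (Reduce $H_{\Lambda\hookrightarrow\Sigma}$ to an ambient trace).} The unit conormal to $\Lambda$ in $\Sigma$ is $\hat m|_\Lambda$. Let $\tilde m:=\hat m-(\hat m\cdot\hat n)\,\hat n$, which is tangent to $\Sigma$ on $\Sigma$ and agrees with $\hat m$ on $\Lambda$ (since $\hat m\cdot\hat n=0$ there). By the Gauss formula for $\Sigma\hookrightarrow(M,g)$,
\[
(d-2)H_{\Lambda\hookrightarrow\Sigma}=\sum_{i=1}^{d-2}\langle\nabla^{\Sigma}_{e_i}\tilde m,e_i\rangle=\sum_{i=1}^{d-2}\langle\nabla_{e_i}\tilde m,e_i\rangle,
\]
because the normal-to-$\Sigma$ correction of the Gauss formula is orthogonal to $e_i\in T\Sigma$. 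Writing $\alpha:=\hat m\cdot\hat n$ (so $\alpha|_\Lambda=0$) and using $e_i\cdot\hat n=0$ at $p$,
\[
\langle\nabla_{e_i}\hat m,e_i\rangle=\langle\nabla_{e_i}\tilde m,e_i\rangle+(e_i\alpha)\langle\hat n,e_i\rangle+\alpha\langle\nabla_{e_i}\hat n,e_i\rangle=\langle\nabla_{e_i}\tilde m,e_i\rangle
\]
at $p\in\Lambda$. Hence $(d-2)H_{\Lambda\hookrightarrow\Sigma}=\sum_{i=1}^{d-2}\langle\nabla_{e_i}\hat m,e_i\rangle$ at $p$.

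\textbf{Step 4 (Combine and extract the trace-free piece).} Subtracting the two expressions,
\[
(d-1)H^{\Xi}-(d-2)H_{\Lambda\hookrightarrow\Sigma}=II^{\Xi}_{\hat n\hat n}=\IIo^{\Xi}_{\hat n\hat n}+H^{\Xi},
\]
where in the last step I used $II^{\Xi}_{ab}=\IIo^{\Xi}_{ab}+H^{\Xi}g_{ab}$ together with $g_{\hat n\hat n}=1$. Rearranging produces the stated identity. The only real subtlety is the orthogonality-based cancellation in Step~3, which is exactly what makes the proof pointwise on $\Lambda$ rather than requiring control of $\hat m$ away from $\Lambda$; the rest is bookkeeping with Gauss's equation.
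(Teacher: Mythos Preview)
Your proof is correct and follows essentially the same approach as the paper's. Both arguments compute the trace of $\nabla\hat m$ over $T\Xi$ and split off the $\hat n\hat n$ component to relate $(d-1)H_\Xi$ to $(d-2)H_{\Lambda\hookrightarrow\Sigma}+\II^\Xi_{\hat n\hat n}$, then decompose $\II^\Xi$ into its trace-free and trace parts. The only cosmetic difference is formalism: the paper uses a unit defining function for $\Xi$ so that $\nabla_a m_b|_\Xi=\II^\Xi_{ab}$ and writes the $\Sigma$-divergence as $(\nabla_a-n_a\nabla_n)m^a$, whereas you work pointwise with an adapted orthonormal frame and the Gauss formula; the computations line up term for term.
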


\begin{proof}
Let $m_a$ be any extension to $M$ of the unit conormal $\hat m_a$ of $\Xi$. Then the mean curvature of $\Lambda\hookrightarrow (\Sigma,g_\Sigma)$ is given along by
$$
\frac{1}{d-2}\, \nabla^\Sigma_a \Big(\frac{m^a}{|m|}\Big|_\Sigma\Big)=\frac{1}{d-2}\left.\left((\nabla_a - n_a\nabla_n)
\Big(\frac{m^a}{|m|}\Big)\right)\right|_\Sigma\, ,
$$
where $n$ is any extension of the unit conormal of $\Sigma$. Without loss of generality (see~\cite[Proposition 2.5]{GW15}) we may choose $m=d\mu$ where $|m|_g=1$ in $M$
({\it i.e.}, $\mu$ is a {\it unit defining  function} for $\Xi$). This further implies that $\II^\Xi_{ab}$ equals $\nabla_a m_b$ along $\Xi$. 
Hence the above display becomes
$$
\frac{1}{d-2}\Big((d-1)H_{\Xi\hookrightarrow(M,g)}
-\II_{\hat n\hat n}^{\Xi\hookrightarrow(M,g)}\Big)\, .
$$
The proof is completed by using that $\II^{\Xi\hookrightarrow(M,g)}_{ab}=\IIo^{\Xi\hookrightarrow(M,g)}_{ab}+g^\Xi_{ab}H_{\Xi\hookrightarrow(M,g)}$.

\end{proof}

\medskip

We now turn to the problem of computing 
the minimal obstruction density. 
This may be achieved using a variant the idea  behind the recursion developed in~\cite{GW15} for calculating the obstruction to smoothly solving the singular Yamabe problem. 
The key is to compute $d-1$ derivatives normal to $\Sigma$ of the canonical extension $\bM_\bmus^\bsigmas$ of the 
mean curvature of the singular metric defined in Equation~\nn{HXI}.
When $\bmu$ is chosen to solve the singular Yamabe problem we may instead study $\langle \bsigma,\bmu\rangle$ (see Equation~\nn{thebr}).
The following result is underlies a recursion for that computation.

\begin{lemma}\label{recurse}
Let $\bsigma=[g;\sigma]$ be a unit defining density, $\bmu=[g;\mu]$ be a $\bsigma$-minimal unit defining density, and suppose $k\in \{1,\ldots,d-1\}$. Then for a given $g\in \cc$ and $k>1$,
$$
(\nabla_n^\top)^k\langle \bsigma,\bmu\rangle + (d-k-1) (\nabla_n^\top)^{k-1}\rho_\mu \stackrel\Lambda = -(\nabla_n^\top)^{k-1} \big(I^{ab}\nabla_a m_b\big)-(k-1)(\nabla_n^\top)^{k-2}A+ {\rm LOT} ,
$$
where $m=\ext \mu$, $n=\ext \sigma$, $I_{ab}:=g_{ab}-m_a m_b-n_a n_b$ smoothly extends the first fundamental form $I^{\Lambda}_{ab}=g_{ab}^\Lambda$ to $M$, $\nabla^\top = \nabla - m \nabla_m$, 
$A=\nabla_m \rho_\sigma-\rho_\mu\rho_\sigma
- \rho_\mu m^a\nabla_m n_a$, and ``${\rm LOT}$'' 
denotes terms involving at most $k-2$ powers of the operator $\nabla_n^\top$ acting on $\rho_\mu$.
When $k=1$, the result is
$
(d-2) \rho_\mu \stackrel\Lambda= -I^{ab}\nabla_a m_b
$.
%
%
 \end{lemma}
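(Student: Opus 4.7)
The plan is to derive a single neighborhood-valid identity for $\nabla_n f$, where $f = m.n + \sigma\rho_\mu + \mu\rho_\sigma$ represents $\langle\bsigma,\bmu\rangle$ in the scale $g$ (cf.~\nn{thebracket}), and then to obtain both the base case and the inductive step by applying $(\nabla_n^\top)^{k-1}$ and restricting to $\Lambda$. In expanding $\nabla_n f$, two cancellations drive the shape of the answer: the terms $\pm(m.n)\rho_\sigma$ cancel thanks to the unit-density relation $n^b\nabla_a n_b = -\rho_\sigma n_a - \sigma\nabla_a\rho_\sigma + \mathcal O(\sigma^{d-1})$, while the minimal-unit relation $m^b\nabla_a m_b = -\rho_\mu m_a + \mathcal O(\mu^{d-1})$, combined with $\Delta\mu = -d\rho_\mu - J\mu$ and the decomposition $I^{ab} = g^{ab} - m^am^b - n^an^b$, lets me trade $n^an^b\nabla_a m_b$ for $-(d-1)\rho_\mu - I^{ab}\nabla_a m_b$ modulo $\mu$. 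The result is
\[
\nabla_n f = -(d-2)\rho_\mu - I^{ab}\nabla_a m_b + \sigma R_1 + \mu R_2 + \mathcal O(\sigma^{d-1}) + \mathcal O(\mu^{d-1}),
\]
with $R_1 = \nabla_n\rho_\mu - \nabla_m\rho_\sigma - 2\rho_\sigma\rho_\mu$ and $R_2$ smooth.

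The base case $k=1$ is then immediate. On $\Lambda$ one has $\sigma = \mu = m.n = 0$, so $\nabla_n^\top f|_\Lambda = \nabla_n f|_\Lambda = -(d-2)\rho_\mu - I^{ab}\nabla_a m_b$. Proposition~\ref{MEAN} and Lemma~\ref{C} together give $\langle\bsigma,\bmu\rangle = \bmu\bC + \bsigma^{d-1}\bB + \mathcal O(\bsigma\bmu^{d-1})$; since $d \geq 3$, a single $\nabla_n^\top$ restricted to $\Lambda$ annihilates each summand (using $\sigma|_\Lambda = 0$ and $\nabla_n^\top\mu|_\Lambda = 0$), so $\nabla_n^\top f|_\Lambda = 0$, forcing $(d-2)\rho_\mu \stackrel\Lambda= -I^{ab}\nabla_a m_b$.

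For the inductive step $k > 1$ I apply $(\nabla_n^\top)^{k-1}$ to the displayed neighborhood identity. Since $\nabla_n^\top$ is a derivation, Leibniz combined with $\nabla_n^\top\sigma|_\Lambda = 1$, $\nabla_n^\top\mu|_\Lambda = 0$ and $(m.n)|_\Lambda = 0$ isolates one leading contribution,
\[
(\nabla_n^\top)^k f \stackrel\Lambda= -(d-2)(\nabla_n^\top)^{k-1}\rho_\mu - (\nabla_n^\top)^{k-1}(I^{ab}\nabla_a m_b) + (k-1)(\nabla_n^\top)^{k-2}R_1 + \mathrm{LOT},
\]
since the $\mu R_2$ and $\mathcal O(\sigma^{d-1})$, $\mathcal O(\mu^{d-1})$ remainders (as well as the $(m.n)\nabla_m f$ contribution arising from the tangential correction in $\nabla_n^\top = \nabla_n - (m.n)\nabla_m$) each acquire at most $k-2$ powers of $\nabla_n^\top$ on any single $\rho_\mu$ factor for $k \leq d-1$. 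The crucial algebraic move is to rewrite
\[
R_1 = -A + \nabla_n\rho_\mu - 3\rho_\sigma\rho_\mu - \rho_\mu\, m^a\nabla_m n_a,
\]
by substituting the definition of $A$; applying $(\nabla_n^\top)^{k-2}$ and restricting, $\nabla_n\rho_\mu = \nabla_n^\top\rho_\mu + (m.n)\nabla_m\rho_\mu$ yields $(\nabla_n^\top)^{k-1}\rho_\mu + \mathrm{LOT}$, $-A$ yields $-(\nabla_n^\top)^{k-2}A$, and the last two summands carry an undifferentiated $\rho_\mu$ and so contribute only LOT. Combining the resulting $(k-1)(\nabla_n^\top)^{k-1}\rho_\mu$ with $-(d-2)(\nabla_n^\top)^{k-1}\rho_\mu$ produces the coefficient $-(d-k-1)$, giving the claim upon rearrangement.

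The principal obstacle is the LOT bookkeeping. Beyond the leading Leibniz contributions, curvature commutators $[\nabla_n^\top,\nabla_a]$ and the higher tangential derivatives $(\nabla_n^\top)^j\sigma|_\Lambda$, $(\nabla_n^\top)^j\mu|_\Lambda$ and $(\nabla_n^\top)^j(m.n)|_\Lambda$ for $j \geq 2$ each spawn auxiliary terms, and for every such term one must verify that no individual $\rho_\mu$ factor carries more than $k-2$ applications of $\nabla_n^\top$. This is cleanest to organize by an induction on the depth of the Leibniz expansion, tracking the ``$\rho_\mu$-weight'' of each intermediate.
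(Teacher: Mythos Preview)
Your approach is correct and essentially the same as the paper's. Both compute one normal derivative of $\langle\bsigma,\bmu\rangle$ using the unit/minimal--unit relations together with the decomposition $g^{ab}=I^{ab}+m^am^b+n^an^b$, then apply $(\nabla_n^\top)^{k-1}$ and track $\rho_\mu$-order via Leibniz; the only organizational difference is that the paper applies $\nabla_n^\top$ directly to $m.n$ and treats $\sigma\rho_\mu$ separately (so the coefficient $-(d-k-1)$ arises as $-(d-1)+k$), whereas you apply $\nabla_n$ to the full $f$ and then correct (so it arises as $-(d-2)+(k-1)$ after extracting the $\nabla_n\rho_\mu$ piece of your $R_1$).
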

 
 \begin{proof}
 The weight zero density $\langle \bsigma,\bmu\rangle$ is given in any
 scale $g\in \cc$ by
 $$
 m.n +\sigma \rho_\mu + \mu \rho_\sigma\, .
 $$
 However, along $\Xi$ we have 
 $
 \nabla_n^\top \mu |_\Xi = 0$.
 Hence we only need study powers of $\nabla^\top_n$ acting on $m.n+\sigma \rho_\mu$. 
 Moreover, by assumption, Equations~\nn{ones} and~\nn{mn} hold.
 It therefore follows that
 \begin{multline}
\nabla_n^\top (m.n) - n^a n^b \nabla_a^\top m_b = m^a \nabla_n^\top n_a
\stackrel \mu \sim
\frac 12 \nabla_m n^2
+\sigma \rho_\mu m^a\nabla_m n_a
+{\mathcal O}(\sigma^{d-1})
\\[1mm]
\stackrel \mu \sim
-\sigma (\nabla_m \rho_\sigma
-\rho_\mu\rho_\sigma
- \rho_\mu m^a\nabla_m n_a)
+{\mathcal O}(\sigma^{d-1})
\, .
\end{multline}
The second term on the left hand side of the previous display gives
$$
-n^a n^b \nabla_a^\top m_b =  I^{ab} \nabla_a^\top m_b
- \nabla_a^\top m^a + m^a m^b \nabla_a^\top m_b\, .
$$
Observe that
$
\nabla_a^\top m^a = \nabla_a m^a - \frac 12 \nabla_m m^2
\stackrel\mu\sim-(d-1)\rho_\mu
\mbox{ 
and }
m^a m^b \nabla_a^\top m_b \stackrel\mu\sim 0\, ,
$
 because $m^a \nabla_a^\top = \nabla_m - m^2 \nabla_m$ and $m^2=1-2\mu\rho_\mu + {\mathcal O}(\mu^d) \stackrel\mu\sim 1$. Thus we have
 $$
 \nabla_n^\top (m.n)
 \stackrel\mu\sim
- I^{ab} \nabla_a^\top m_b
 -(d-1)\rho_\mu-\sigma 
 (\nabla_m \rho_\sigma
-\rho_\mu\rho_\sigma
- \rho_\mu m^a\nabla_m n_a)
+{\mathcal O}(\sigma^{d-1})
\, .
 $$
 So far we have found that
 \begin{equation}\label{sofarsogood}
 (\nabla_n^\top)^k\langle \bsigma,\bmu\rangle\stackrel\Lambda=
  (\nabla_n^\top)^{k-1}
  \big(- I^{ab} \nabla_a^\top m_b
 -(d-1)\rho_\mu-\sigma A
  \big)+
(\nabla_n^\top)^k(\sigma\rho_\mu)
  \, .
 \end{equation}

Now $\nabla_n^\top \sigma=
n^2 -(m.n)^2\stackrel\mu\sim1-2\sigma\rho_\sigma-\sigma^2 \rho_\mu^2
+{\mathcal O}(\sigma^d)$.
 Hence along $\Lambda$,
$(\nabla_n^\top)^{k-1}(\sigma A)$
can be replaced with $ (k-1)(\nabla_n^\top)^{k-2} A$ modulo terms involving fewer than $k-2$ powers of~$\nabla_n^\top$ acting on $\rho_\mu$. Similarly, $(\nabla_n^\top)^k (\sigma \rho_\mu)$  can be replaced with
$k (\nabla_n^\top)^{k-1} \rho_\mu$  modulo terms involving at most $k-2$ powers of $\nabla_n^\top$ acting on $\rho_\mu$. 
The $k=1$ case follows directly from the above display and explanation.
 \end{proof}

\begin{remark}
Observe, that when $k<d-1$, the first term displayed in the above lemma $(\nabla_n^\top)^k\langle \bsigma,\bmu\rangle\stackrel\Lambda = 0$, because $\bmu$ is a minimal unit defining density. Hence, for these $k$,  the lemma  determines then $(\nabla_n^\top)^{k-1}\rho_\mu $ in terms of terms involving lower powers of $\nabla_n^\top$ acting on $\rho_\mu$. When $k=d-1$, we have
$$
(\nabla_n^\top)^k\langle \bsigma,\bmu\rangle\stackrel\Lambda = (d-1)! \, \bm B_\Lambda\, ,$$ so the lemma instead determines the minimal obstruction density.
\end{remark}

The above recursion can be used to compute  the minimal obstruction density for three-manifolds, as given in the following proposition.

\begin{proposition}\label{B3}
Given the data $\Lambda\hookrightarrow \Sigma\hookrightarrow (M,\cc)$, where $M$ is a three-manifold,  
%
%
the minimal obstruction density
${\bm B}_\Lambda$ is given by 
$$
 {\bm B}_\Lambda=
\left[g; \left(\nabla^{b \Lambda}\big(\hat m^a\IIo^{\Sigma}_{ac}I^c_b\big)
+\tfrac12 \big(\hat m^a \hat m^b\nabla_{\hat m}^\Sigma \, \IIo^{\Sigma}_{ab}
+H_{\Lambda\hookrightarrow \Sigma}\hh\hh \IIo^\Sigma_{\hat m\hat m} \big)
\right)\!\Big|_\Lambda\, \right]
\, .
$$
\end{proposition}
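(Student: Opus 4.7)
The plan is to specialize the recursion of Lemma~\ref{recurse} to dimension $d=3$ at top order $k=d-1=2$. In this case the coefficient $d-k-1$ of $(\nabla_n^\top)^{k-1}\rho_\mu$ vanishes, and from the remark following Lemma~\ref{recurse} one has $(\nabla_n^\top)^{d-1}\langle\bsigma,\bmu\rangle\stackrel\Lambda=(d-1)!\,\bm B_\Lambda=2\bm B_\Lambda$. Thus the recursion collapses to
$$
2\bm B_\Lambda\stackrel\Lambda=-\nabla_n^\top\bigl(I^{ab}\nabla_a m_b\bigr)-A+\mathrm{LOT},
$$
with $A=\nabla_m\rho_\sigma-\rho_\mu\rho_\sigma-\rho_\mu\, m^a\nabla_m n_a$ and LOT consisting of terms involving $\rho_\mu$ undifferentiated. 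The strategy is to evaluate each piece along $\Lambda$ and reorganize the result as a tangential divergence along $\Lambda$ plus a transverse derivative and curvature-type corrections.

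First, I would use the base case $k=1$ of Lemma~\ref{recurse} (with $d=3$, so $d-2=1$) to conclude that $\rho_\mu\stackrel\Lambda=-I^{ab}\nabla_a m_b$. Since $\bmu$ is a $\bsigma$-minimal unit defining density, $m$ is asymptotically unit on $\Xi$, so $\nabla_a m_b|_\Xi=\II^\Xi_{ab}$. Combined with Lemma~\ref{IIonn} ($\IIo^\Xi_{\hat n\hat n}|_\Lambda=0$), Lemma~\ref{H2H} (relating $H^\Xi|_\Lambda$ to $H_{\Lambda\hookrightarrow\Sigma}$), and the asymptotic minimal condition which forces $H^\Xi$ to vanish sufficiently along $\Sigma$, one obtains $\rho_\mu|_\Lambda$ in terms of $H_{\Lambda\hookrightarrow\Sigma}$ and trace-free extrinsic data. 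Substituting this back into the LOT and the $\rho_\mu$-terms inside $A$ lets me replace those terms by intrinsic and extrinsic curvature quantities of $\Lambda\hookrightarrow\Sigma$.

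Next, I would expand $\nabla_n^\top(I^{ab}\nabla_a m_b)$ via the product rule. Differentiating $I_{ab}=g_{ab}-m_a m_b-n_a n_b$ introduces $\nabla_n^\top m$ and $\nabla_n^\top n$; the first vanishes on $\Lambda$ up to controlled $\rho_\mu$-terms because $m.n\stackrel\Lambda=0$ and $m^2=1+\mathcal O(\mu^3)$, while $\nabla_a n_b|_\Sigma=\IIo^\Sigma_{ab}+H^\Sigma g_{ab}$ from Equation~\nn{singyamIIo}. Differentiating $\nabla_a m_b$ along $n$ produces $\nabla_n\II^\Xi_{ab}$, whose $n n$-component can be converted using Lemma~\ref{IIonn} to a transverse derivative of $\IIo^\Sigma_{\hat m\hat m}$ via the Codazzi equation for $\Sigma$. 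Reorganizing the projected components, one collects a tangential-to-$\Lambda$ divergence $\nabla^{b\Lambda}(\hat m^a\IIo^\Sigma_{ac}I^c{}_b)$, an $\hat m^a\hat m^b\nabla^\Sigma_{\hat m}\IIo^\Sigma_{ab}$ piece, and an $H_{\Lambda\hookrightarrow\Sigma}\,\IIo^\Sigma_{\hat m\hat m}$ piece. Finally, $A$ along $\Lambda$ is computed using $\rho_\sigma|_\Sigma=-H^\Sigma=\mathcal O(\sigma^2)$ (so that $\nabla_m\rho_\sigma|_\Lambda$ is extracted from the second-order content of $\bS_\bsigma=1+\mathcal O(\bsigma^3)$) together with $\nabla_m n_a|_\Sigma$ from Equation~\nn{singyamIIo}.

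The main obstacle will be the careful bookkeeping needed to combine and simplify the many terms produced by the product-rule expansion of $\nabla_n^\top(I^{ab}\nabla_a m_b)$, the $A$ contribution, and the LOT substitutions. In particular, extracting exactly the factor of $1/2$ in front of the $\hat m^a\hat m^b\nabla^\Sigma_{\hat m}\IIo^\Sigma_{ab}+H_{\Lambda\hookrightarrow\Sigma}\IIo^\Sigma_{\hat m\hat m}$ piece requires balancing contributions from both the derivative of the $m_am_b$ and $n_an_b$ terms in $I_{ab}$ against the singular Yamabe second-order data; recognizing the remaining transverse components as the tangential-to-$\Lambda$ divergence $\nabla^{b\Lambda}(\hat m^a\IIo^\Sigma_{ac}I^c{}_b)$ will demand a judicious use of the Codazzi identity for $\IIo^\Sigma$ and the projector identity $I^c{}_b=\delta^c_b-\hat m^c\hat m_b-\hat n^c\hat n_b$ along $\Lambda$. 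By Theorem~\ref{MINSOL}, the result is independent of the choice of extension, so the uniqueness of $\bm B_\Lambda$ is automatic once any valid $\bmu$ is used for the computation.
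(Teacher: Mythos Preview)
Your overall plan---specialize Lemma~\ref{recurse} to $d=3$, $k=2$, expand $\nabla_n^\top(I^{ab}\nabla_a m_b)$, and reorganize---is the same as the paper's. But there are two conceptual slips and one genuinely missing ingredient.

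First, you assert $\rho_\sigma|_\Sigma=-H^\Sigma=\mathcal O(\sigma^2)$. The first equality is correct, but $H^\Sigma$ is the mean curvature of $\Sigma$ with respect to the compactified metric $g$ and has no reason to vanish; the singular Yamabe condition $\bS_\bsigmas=1+\mathcal O(\bsigma^3)$ constrains $|\ext\sigma|^2+2\sigma\rho_\sigma$, not $H^\Sigma$ itself. In the paper one simply uses that $m$ is tangent to $\Sigma$ along $\Lambda$ (since $m.n|_\Lambda=0$) to get $\nabla_m\rho_\sigma|_\Lambda=-\nabla^\Sigma_{\hat m}H^\Sigma$, with no second-order Yamabe input needed. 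Similarly, your claim that ``the asymptotic minimal condition forces $H^\Xi$ to vanish sufficiently along $\Sigma$'' conflates the mean curvature $H^{g^o}_\Xi$ for the \emph{singular} metric (which does vanish to order $\sigma^{d-1}$) with $H^\Xi$ for $g$ (which does not). Indeed Lemmas~\ref{IIonn} and~\ref{H2H} give $H^\Xi|_\Lambda=H_{\Lambda\hookrightarrow\Sigma}$, generically nonzero. Both slips will corrupt your evaluation of $A$ and of the LOT contributions.

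Second, and more seriously: after commuting $\nabla_n\nabla_a$ to $\nabla_a\nabla_n$ and applying Codazzi--Mainardi on $\Sigma$, the expression for $2\mathcal B$ still contains the term $-\nabla^{b\Lambda}(\hat n^a\IIo^\Xi_{ab})$, which depends on the auxiliary minimal hypersurface $\Xi$ through $\IIo^\Xi$. Nothing in your outline removes this; the Codazzi equation for $\Sigma$ does not relate $\IIo^\Xi$ to $\IIo^\Sigma$. The missing identity is
\[
\hat n^a\IIo^\Xi_{ab}+\hat m^a\IIo^\Sigma_{ac}I^c{}_b\stackrel\Lambda=0,
\]
obtained by taking the $\Lambda$-tangential derivative of $\hat m\cdot\hat n=0$. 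This converts $-\nabla^{b\Lambda}(\hat n^a\IIo^\Xi_{ab})$ into $+\nabla^{b\Lambda}(\hat m^a\IIo^\Sigma_{ac}I^c{}_b)$, which then combines with the directly obtained $\nabla^{b\Lambda}(\hat m^a\IIo^\Sigma_{ac}I^c{}_b)$ to give the correct coefficient and yields the stated formula after using $H^\Xi|_\Lambda=H_{\Lambda\hookrightarrow\Sigma}$. Without this step the $\Xi$-data cannot be eliminated and the target expression is unreachable.
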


\begin{proof}
Without loss of generality we may always choose $\bmu$ to be a minimal { unit} defining density.
When $k=1$, Lemma~\ref{recurse}
simply says that 
$$
\rho_\mu
\stackrel\Lambda=-I^{ab}\nabla_a m_b
\stackrel\Lambda=-H_\Xi\, ,
$$
where the last equality uses Equations~\nn{aftervector} and~\nn{singyamIIo}.
This is a particular case of the more general result $\rho_\mu|_\Xi=-H_\Xi$ (see~\cite[Proposition 3.5]{Goal}).
Now we proceed to the case $k=2$ which determines the obstruction.
We must now 
be  careful to record the terms labeled ``LOT'' in Lemma~\ref{recurse}.
For that we return to Equation~\nn{sofarsogood} which for $k=2$ gives 
\begin{eqnarray*}
2{\mathcal B}\stackrel\Lambda=
(\nabla_n^\top)^2\langle \bsigma,\bmu\rangle&\stackrel\Lambda=&
  -\nabla_n^\top(
   I^{ab} \nabla_a^\top m_b)
- \nabla_m \rho_\sigma+\rho_\mu\rho_\sigma
+ \rho_\mu m^a\nabla_m n_a
  +
\rho_\mu(\nabla_n^\top)^2\sigma  \\[1mm]
&\stackrel\Lambda=&
-\nabla_n(
   I^{ab} \nabla_a m_b)
- \nabla_m \rho_\sigma
-H_\Xi\hh \IIo^\Sigma_{\hat m\hat m}
-
2H_\Sigma H_\Xi
\ ,
\end{eqnarray*}
where in the last line we used that
$ \nabla_n \big(n^2-(m.n)^2\big)\stackrel\Lambda= -2\rho_\sigma$ as well as Equation~\nn{singyamIIo}.

Now we note that
 \begin{equation}\label{superclaim}\nabla_n n_a\stackrel \Sigma= n_a H_\Sigma\stackrel \,, \qquad
 \nabla_n m_a\stackrel\Xi=
n^b\IIo^\Xi_{ab}+ n_a H_\Xi\, ,\qquad
\nabla_m m_a \stackrel\Xi= m_a H_\Xi\, .
 \end{equation}
 These identities follow  from
Equation~\nn{singyamIIo}.
Applying them  and remembering that $\IIo^\Xi_{\hat n\hat n}|_\Lambda=0$,
we find that along $\Lambda$, the quantity $-\nabla_n(
   I^{ab} \nabla_a m_b)$ equals
$$
-I^{ab}\nabla_n \nabla_a m_b
+2H_\Sigma\,  n_a \nabla_n m^a
\stackrel \Lambda =
-I^{ab}\nabla_n \nabla_a m_b
+2H_\Sigma H_\Xi 
\, . $$
So far we have found
$$
2{\mathcal B}\stackrel \Lambda =
-I^{ab} \nabla_n \nabla_a m_b
-\nabla_m\rho_\sigma-H_\Xi\hh \IIo^\Sigma_{\hat m\hat m}
\, .
$$
We now attack the first term on the right hand side along $\Lambda$ and find
$$
-I^{ab} \nabla_a \nabla_n m_b -I^{ab} n^c R_{cabd} m^d+
I^{ab}(\nabla_a n^c) \nabla_c m_b \, .
$$
Each term above is easily handled.
For the first we note that, because the operator $I^{ab}\nabla_a$ is tangential along $\Lambda$, by virtue of Equation~\nn{superclaim} we may replace $\nabla_n m_b$ in the first term by any extension of $\hat n^a\IIo^\Xi_{ab}+ \hat n_b H_\Xi$.
Then remembering that $\hat n^a\IIo^\Xi_{ab}$ gives a tangent covector to~$\Lambda$, and using that for any smooth extension $v_a$ of 
a covector $v^\Lambda_a\in T^*\Lambda$ it holds that $I^{ab}\nabla_a v_b\stackrel\Lambda=\nabla^{b\Lambda} v_b^\Lambda$,
we have
$
-I^{ab} \nabla_a \nabla_{ n} m_b
\stackrel\Lambda=
-\nabla^{b\hh\Lambda}(\hat n^a\IIo^\Xi_{ab})
+(\IIo^\Sigma_{\hat m\hat m}-H_\Sigma)H_\Xi\, .
$
Along $\Lambda$, the second term of the above display equals $\Ric_{\hat n\hat m}=\Rho_{\hat n\hat m}$. 
For the third term, using Equation~\nn{singyamIIo},  we find
$g^{ac}g^{bd}\IIo^\Sigma_{ab} \IIo^\Xi_{cd}-H_\Sigma\,  \IIo^\Xi_{\hat n\hat n}
 -H_\Xi\,  \IIo^\Sigma_{\hat m\hat m}
 + H_\Sigma H_\Xi$.
Using Equation~\nn{aftervector}, the identity
 \begin{equation}\label{nablamH}
 \nabla_m \rho_\sigma \stackrel\Lambda= - \nabla^\Sigma_{\hat m} H_\Sigma\, ,
 \end{equation}
  and orchestrating these results gives
 $$
 2{\mathcal B}\stackrel\Lambda=
 -\nabla^{b\hh\Lambda}(\hat n^a\IIo^\Xi_{ab})
-H_\Xi\hh \IIo^\Sigma_{\hat m\hat m} 
 +
 \IIo^\Sigma_{ab}\hh\hh  \IIo^{ab \hh\Xi}
+\nabla^\Sigma_{\hat m} H_\Sigma+ \Rho_{\hat n\hat m }\, .
 $$
By the Codazzi--Mainardi equation specialized to $d=3$ (see for example~\cite[Equation (2.9)]{GW15})  the last two terms obey the identity
\begin{equation}\label{nablamH1}
\nabla^\Sigma_{\hat m} H_\Sigma+ \Rho_{\hat n\hat m }\stackrel\Sigma=\hat m^a \nabla^{b\hh\Sigma} \IIo^\Sigma_{ab}\, .
\end{equation}
We now want to  rewrite 
the above in a form similar to the first term in 
$2{\mathcal B}$, this is done as follows:
First we note that it equals the restriction to $\Lambda$ of
$$
m^a(\nabla^b -n^b \nabla_n)\, \IIo^{\Sigma,\rm ext}_{ab}
= 
(\nabla^b -n^b \nabla_n)\, \big(m^a\IIo^{\Sigma,\rm ext}_{ab}\big)
-
\IIo^{\Sigma}_{ab}
\nabla^b m^a
\, ,
$$
where $\IIo^{\Sigma,\rm ext}_{ab}$ is any smooth extension to $M$  of the trace-free second fundamental form of~$\Sigma$.
Using Equation~\nn{singyamIIo}, the derivatives of $m$ in the last term can be replaced by $\IIo^\Xi$ while for first term we convert the leading derivative to one tangential to $\Lambda$. Then the  the above display becomes
\begin{multline*}
I^b_c\nabla^c \big(m^a\IIo^{\Sigma,\rm ext}_{ab}\big)
+m^b \nabla_m\big(m^a\IIo^{\Sigma,\rm ext}_{ab}\big)-\IIo^\Xi_{ab}\hh \IIo^{\Sigma \hh ab}\\[1mm]
\stackrel\Lambda=
\nabla^{b \Lambda}\big(\hat m^a\IIo^{\Sigma}_{ac}I^c_b\big)
+I^{ab}(\nabla_a m_b)\,  \IIo^\Sigma_{\hat m \hat m}
+\hat m^a \hat m^b\nabla_{\hat m}^\Sigma \, \IIo^{\Sigma}_{ab}+H_\Xi\,  \IIo^\Sigma_{\hat m\hat m}-\IIo^\Xi_{ab}\hh \IIo^{\Sigma \hh ab}\, .
\end{multline*}
Noting that along $\Lambda$ we have $I^{ab}\nabla_a m_b=H_\Xi$, collecting terms gives
$$
 2{\mathcal B}\stackrel\Lambda=
 -\nabla^{b\hh\Lambda}(\hat n^a\IIo^\Xi_{ab})
+\nabla^{b \Lambda}\big(\hat m^a\IIo^{\Sigma}_{ac}I^c_b\big)
+\hat m^a \hat m^b\nabla_{\hat m}^\Sigma \, \IIo^{\Sigma}_{ab}
+H_\Xi\hh \IIo^\Sigma_{\hat m\hat m} 
\, .
 $$
The proof is completed upon using Lemmas~\ref{IIonn} and~\ref{H2H}, and then  demonstrating the following identity (which in  fact holds in any dimension $d$) along $\Lambda$
\begin{equation}\label{Rodsequation}
\hat n^a \IIo_{ab}^\Xi + \hat m^a \IIo_{ac}^\Sigma I^c_b = 0\, .
\end{equation}
This is established by computing $\nabla^\Lambda_a (\hat m.\hat n)$ and noting that this quantity must vanish since $\hat m$ and $\hat n$ are perpendicular along $\Lambda$.

\end{proof}

\begin{remark}
The above proposition shows that the three-manifold minimal obstruction vanishes when $\Sigma$ is umbilic. In particular this is the case when $(M,g^o)$ is Poincar\'e--Einstein. 
Also, we note that the quantity  $\nabla^{b \Lambda}\big(\hat m^a\IIo^{\Sigma}_{ac}I^c_b\big)
+\tfrac12 \big(\hat m^a \hat m^b\nabla_{\hat m}^\Sigma \, \IIo^{\Sigma}_{ab}
+H_{\Lambda\hookrightarrow \Sigma}\hh \IIo^\Sigma_{\hat m\hat m} \big)$
is a  weight $-2$ boundary invariant for a conformally embedded surface $\Sigma$, which is new to the best of our knowledge. 
A straightforward computation shows that under the transformation $g\mapsto \Omega^2 g$, invariance requires all three terms to be present.
That this invariant is non-trivial can be verified by checking that the obstruction is non-vanishing for a sample geometry; an example is given below.\end{remark}

\begin{example}
Let $M={\mathbb R}^3$ with conformal class of metrics $\cc$ containing the metric 
$$
ds^2 = dx^2 + (1+\alpha x) dr^2 + (1+\beta x) r^2 d\theta^2\, ,
$$
where $(x,r,\theta)$ are cylindrical coordinates and $\alpha,\beta$ are constant parameters inserted to deform the geometry away from Euclidean 3-space. Take $\Sigma$ to be the plane $x=0$ and $\Lambda$ to be the circle $x=0=r-1$. Then for the asymptotically singular Yamabe metric $g^o=ds^2/(x[1+\frac18(\alpha+\beta)x-\frac1{24}(2\alpha^2-\alpha\beta+2\beta^2)x^2])^2$, the asymptotically minimal surface~$\Xi$ is given by $r=1-\frac12 x^2$ and the obstruction $B_\Lambda=\frac18 (\alpha-\beta)$. The trace-free second fundamental form $\IIo=\frac14 (\alpha-\beta)(dr^2 - r^2 d\theta^2)$, the mean curvature $H_{\Lambda\hookrightarrow \Sigma}=1$ and the unit conormal to $\Lambda$ in $\Sigma$ is $\hat m=dr$. Only the last term in the result for $B_\Lambda$ in Proposition~\ref{B3} is non-vanishing for this simple example; it produces the quoted result for the obstruction.
\end{example}

\section{Renormalized Volumes and Areas}
\label{Sec:RenVA}
Here we consider the sequence of conformal embeddings $\Lambda\hookrightarrow \Sigma \hookrightarrow (M,\cc)$ where
$\Sigma$ is an orientable hypersurface in $M$
and $\Lambda$ is a closed orientable hypersurface in~$\Sigma$. The closure of the interior of $\Lambda$ in $\Sigma$ is denoted by $\widetilde\Sigma$.
 We assume that $\widetilde \Sigma$ is compact and that $\widetilde\Sigma\cap \partial M=\emptyset$, this may involve a choice of interior for $\Lambda$.
 In addition we consider a second orientable hypersurface $\Xi\hookrightarrow (M,\cc)$ anchored along $\Lambda=\partial \Xi$ such that the union $\Xi \cup\widetilde\Sigma $ is closed with interior denoted by $D$. For technical reasons we require some subset of $D$ to be a collar neighborhood of $\widetilde\Sigma$. At this point we do not require any of these hypersurfaces to be minimal. This geometry is depicted below:

\begin{center}
\begin{tikzpicture}[scale=0.6, ultra thick]
\coordinate (LD) at (-4,-4);
\coordinate (RD) at (2,-3);
\coordinate (LU) at (-4,3);
\coordinate (RU) at (2,4);
\coordinate (CU)at (-1,2);
\coordinate (CD)at (-1,-2);
\coordinate(B) at (3,0);
\coordinate(IU) at (1.4,1.7);
\coordinate(ID) at (1.5,-1.65);
\draw (CU) to[out=-160,in=165] (CD);
\draw [dashed] (CD) to[out=15,in=0] (CU);
\draw (CU) to[out=-5,in=90] (B);
\draw (CD) to[out=5,in=-90] (B);
\draw (LU) to[out=-15,in=-145] (RU);
\draw (LD) to[out=45,in=165] (RD);
\draw (RU) to[out=-110,in=90] (IU);
\draw (RD) to[out=110,in=-90] (ID);
\draw (LU) to[out=-60,in=90] (LD);
\node at (5,3.5) {$(M, \cc)$};
\node at (-4.5,3) {$\Sigma$};
\node at (-2.2,1.5) {$\Lambda$};
\node at (3.3,1) {$\Xi$};
\node at (-1,0) {$\widetilde\Sigma$};
\node at (1,0) {$D$};
\end{tikzpicture}
\end{center}

We now input the additional data of a defining density $\bsigma=[g;\sigma]$ for $\Sigma$ which determines a metric $g^o$ on $M\backslash\Sigma$
that is singular along $\Sigma$. We assume $\sigma$ is positive on the side of~$\Sigma$ occupied by $D$.
We wish to study the ``volume'' of~$D$ and the ``area'' of $\Xi$ with respect to this singular metric.
These diverge so we must regulate them suitably.

To begin with, we introduce a unit defining density $\bmu=[g;\mu]$ for $\Xi$ where
$$
D=\{p\in M\, |\,  \mu(p)\leq0\leq \sigma(p)\} \, ,
$$
and 
$$
\Xi={\mathcal Z}(\bmu)\cap \{
p\in M\, |\,   \sigma(p)\geq0\}\, .
$$
Then we pick any true scale $\btau=[g;\tau]$ and define a {\it regulating hypersurface}
$$
\Sigma_\varepsilon = {\mathcal Z}(\bsigma/\btau-\varepsilon)\, ,
$$
where $1\gg\varepsilon \in {\mathbb R}_+$.
Then the metric $g^o$ is non-singular in the region
$$
D_\varepsilon:=\{p\in M\, |\,  \mu(p)<0< \tau(p)\hh\varepsilon<\sigma(p)\} \, .
$$
The choice of $\btau$ determines the regulating hypersurface, hence we call it the {\it regulator}.
 Of key interest are quantities that are independent of the  choice of regulator.

The {\it regulated volume} $\Vol_\varepsilon$ of $D$ is defined as the volume of the region $D_{\varepsilon}$ with respect to the singular metric $g^o$. This is regulator-dependent
 and given by
 \begin{equation}\label{vol_eps}
\Vol_\varepsilon=\int_{D_\varepsilon}\frac1{\bsigma^d}\, .
\end{equation}
It turns out to be  advantageous to write this result in terms of Heaviside step functions and an integral over the manifold $M$:
$$
\Vol_\varepsilon=\int_M \frac{\theta(\bmu)\theta(\bsigma/\btau-\varepsilon)}{\bsigma^d}\, .
$$
In what follows we will assume that $\partial M=\emptyset$ (alternatively one can multiply the measure of integration by a smooth unit cut-off function that vanishes far from the region $D$). This leads to no loss of generality for our purposes.


\medskip

The regulated area of $\Xi$ is computed by integrating the volume element of the pullback of $g^o$ along the  hypersurface $\Xi_\varepsilon:=\{p\in \Xi\, |\, \sigma(p)>\varepsilon \tau(p)\}\, .$
This could be computed (inefficiently)  by recycling the  renormalized volume computations of~\cite{GWvol} for the conformal manifold $(\Xi,\cc_\Xi)$ where $\cc_\Xi$ is the conformal class of metrics induced by $\cc$.
Instead it is  propitious to
also write the regulated area in terms of distributions; indeed using Equation~\nn{deltasurface} and that $\bmu$ is a unit defining density, we have
\begin{equation}\label{Area}
\Area_\varepsilon=\int_M\frac{\delta(\bmu)\theta(\bsigma/\btau-\varepsilon)}{\bsigma^{d-1}}\, .
\end{equation}

\subsection{The $\varepsilon$ expansion}

By using the distributional identity
$$
\frac{d\theta(\bsigma/\btau-\varepsilon)}{d\varepsilon} =-\delta(\bsigma/\btau-\varepsilon)\, ,
$$
in~\cite{GWvol,GWvolII} it was proved that
\begin{equation}\label{reg_vol}
\Vol_\varepsilon=\sum_{k=d-1}^1 \frac{v_k}{\varepsilon^k}
+{\mathcal V}\, \log\varepsilon+\Vol_{\rm ren}
+\varepsilon\, {\mathcal R}(\varepsilon)\, ,
\end{equation}
where ${\mathcal R}(\varepsilon)$ is smooth. 
The $\varepsilon$-independent term $\Vol_{\rm ren}$ defines the {\it renormalized volume}. The poles in $\varepsilon$ (which we denote ${\rm Poles}({\rm Vol}_\varepsilon)$) are called {\it divergences} and have coefficients  given by local (along $\Sigma$), but regulator-dependent, formul\ae 
\begin{equation}\label{volumedivergences}
v_k=\frac{(-1)^{d-1-k}}{(d-1-k)!k} \, \int_M\frac{
\theta(\bmu)\, \delta^{(d-1-k)}(\bsigma)}{\btau^k}\, .
\end{equation}
The coefficient of the $\log\varepsilon$  term is called the {\it anomaly}. It is again local but in addition regulator-independent and given by
\begin{equation}\label{volomaly}
{\mathcal V}=\frac{(-1)^d}{(d-1)!}\, \int_M\theta(\bmu)\, \delta^{(d-1)}(\bsigma)\, .
\end{equation}

Analogous results may be derived for the regulated area using the methods developed in~\cite{GWvol,GWvolII}, a sketch goes as follows: 
First we differentiate Equation~\nn{Area} with respect to $\varepsilon$ to obtain
$$
-\varepsilon^{d-1}\, \frac{d{\rm Area}_{\varepsilon}}{d\varepsilon}=
\int_M\frac{\delta(\bmu)\delta(\bsigma-\varepsilon\btau)}{\btau^{d-2}}\, .
$$
The right hand side is manifestly an analytic function of~$\varepsilon$, and hence we may compute its Taylor series coefficients by taking successive $\varepsilon$ derivatives and setting $\varepsilon$ to zero. This gives
\begin{equation}\label{dA/de}
-\varepsilon^{d-1}\, \frac{d{\rm  Area}_{\varepsilon}}{d\varepsilon}=
\int_M\frac{\delta(\bmu)\delta(\bsigma)}{\btau^{d-2}}
-
\varepsilon\int_M\frac{\delta(\bmu)\delta'(\bsigma)}{\btau^{d-3}}+\cdots+\frac{(-\varepsilon)^\ell}{\ell!}
\int_M\frac{\delta(\bmu)\delta^{(\ell)}(\bsigma)}{\btau^{d-\ell-2}}\, +\cdots
\, ,
\end{equation}
whence we have 
\begin{equation}\label{AREA}
\Area_\varepsilon=\sum_{k=d-2}^1 \frac{a_k}{\varepsilon^k}
+{\mathcal A}\, \log\varepsilon+\Area_{\rm ren}
+\varepsilon\, {\mathcal R}(\varepsilon)\, ,
\end{equation}
where ${\mathcal R}(\varepsilon)$ is smooth and $\Area_{\rm ren}$ is the constant of integration. The coefficients of the divergences are given by 
\begin{equation}\label{areadivergences}
a_k=
\frac{(-1)^{d-2-k}}{(d-2-k)!k}
\int_M \frac{\delta(\bmu)\delta^{(d-2-k)}(\bsigma)}{\btau^{k}}\, ,
\end{equation}
while the anomaly is 
\begin{equation}\label{arnomaly}
{\mathcal A}=\frac{(-1)^{d-1}}{(d-2)!}\int_M\delta(\bmu)\delta^{(d-2)}(\bsigma)\, . 
\end{equation}
The renormalized area $\Area_{\rm ren}$ is not determined by Equation~\nn{dA/de}
and thus is not forced  to be local along $\Sigma$. A main aim of  this article is to compute holographic formul\ae\   for the area and volume anomalies ${\mathcal A}$ and ${\mathcal V}$ as well as the divergence coefficients~$a_k,v_k$.

\section{Distributional Calculus}
\label{Sec:Calculus}

%
In this 
section we develop the general calculus required for handling 
 products of distributions involving multiple defining densities such as those appearing in~\nn{volumedivergences},~\nn{volomaly},~\nn{areadivergences} and~\nn{arnomaly}.
Just as for  single scale problems, main ingredients are the $\mathfrak{sl}_2$
algebra involving the Laplace--Robin operator described in Section~\ref{LapRob},
and  formal self-adjointness of the 
Laplace--Robin operator \eqref{FSA}.

\medskip

\subsection{Multiscale calculus}

In the previous section, we showed that the divergences and anomalies we are interested in computing are written in terms of integrals of the form 
\begin{equation}
\label{Intpq}
\int_M \theta(\bmu)\delta^{(k+1)} (\bsigma)  \bm f\, 
\mbox{ and }
\int_M \delta(\bmu)\delta^{(k)} (\bsigma)  \bm f\, ,
\end{equation}
where $k$ is a non-negative integer and $\bm f$ is some weight $-d+k+2$ density, 
or possibly log density when $k=d-2$.
Our aim is to rewrite integrals of the above form as
\begin{equation}
\label{Intpq1}
\int_M \theta(\bmu)\delta(\bsigma)  \bm F\, \quad
\mbox{ and }\quad
\int_M \delta(\bmu)\delta (\bsigma)  \bm G\, ,
\end{equation}
where $\bm F$ and $\bm G$ have weight 
$1-d$ and $2-d$, respectively.
The integrands in Equation~\nn{Intpq} have support along
the compact hypersurface and submanifold $\widetilde\Sigma$ and $\Lambda$, respectively. This will allow us to discard boundary terms when integrating by parts because we uniformly assume either that $\partial M\cap \widetilde\Sigma=\emptyset$ or that we are dealing with integrals where a cut-off function has been inserted to ensure vanishing support there.
Moreover, 
when~$\bsigma$ is a unit  defining density, the first integral in Equation~\nn{Intpq1} becomes $\int_{\widetilde\Sigma} \bm F$, and when in addition the density $\bmu$ is unit defining,  the second integral is $\int_\Lambda \bm G$. Hence $\bm F$ and $\bm G$
are holographic formul\ae~in the sense discussed in the introduction. 

%

%

We will mainly focus, partly because it is canonical and simplifying, on the case that~$\bsigma=[g;\sigma]$ is a  unit defining density; the method to handle to general $\bsigma$ is described in~\cite{GWvol}.  
Our main interest is in the case where
 $\Xi$ is the zero locus of a minimal unit defining density~$\bmu$. In other words the singular metric~$g^o$ has asymptotically constant scalar curvature and~$\Xi$ is an asymptotically minimal surface:
$$
{\Sc^{g^o}}=1+{\mathcal O}(\sigma^{d})\, ,\qquad
H^{g^o}_\Xi\stackrel \Xi={\mathcal O}(\sigma^{d-1})\, .
$$

\medskip

 A key tool is  the following (single scale) recursive identity for derivatives 
of delta functions of unit  defining densities 
(see~\cite[Proposition 3.3]{GWvol})
\begin{equation}\label{Ls2deltas}
(\D_\bsigma)^k \delta(\bsigma)
=(d-k-1)(d-k-2)\cdots(d-2)\hh \delta^{(k)}(\bsigma)\, , \quad 
k\in \{1,\dots,d-1\}\, .
\end{equation}

Turning to the multiscale setting, our starting point is  the following 
identity for Laplace--Robin operators acting on the Heaviside function:
\begin{lemma}\label{heavylap}
Let $\bmu$ and $\bsigma$ be arbitrary weight $w=1$ densities, then
$$
\D_\bsigmas \theta(\bmu) =-
\bsigma\bS_\bmus \delta'(\bmu)
 +(d-2)\langle\bsigma,\bmu\rangle \, \delta(\bmu)\, .$$
\end{lemma}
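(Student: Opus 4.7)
I would work in an arbitrary scale $g\in\cc$ and compute the right hand side of the definition of $\D_\bsigmas$ applied to the weight zero distributional density $\theta(\bmu)=[g;\theta(\mu)]$. Since $\theta(\bmu)$ has weight $w=0$, the definition~\eqref{Ldef} gives
$$
\D_\bsigmas \theta(\bmu)=\bigl[g;\,(d-2)\nabla_n\theta(\mu)-\sigma\,\Delta\theta(\mu)\bigr].
$$
The plan is to expand the two distributional derivatives, regroup so that the coefficient of $\delta'(\mu)$ is manifestly $-\sigma\hh\mathcal S_\mu$ and the coefficient of $\delta(\mu)$ is manifestly $(d-2)\langle\sigma,\mu\rangle$, and then re-bold.

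The first derivative gives $\nabla_n\theta(\mu)=(n\cdot m)\,\delta(\mu)$ with $m=\ext\mu$. For the Laplacian, using $\nabla_a\theta(\mu)=m_a\delta(\mu)$ and then differentiating once more,
$$
\Delta\theta(\mu)=(\Delta\mu)\,\delta(\mu)+m^2\,\delta'(\mu).
$$
Substituting $\Delta\mu=-d\rho_\mu-\mu J$ and applying the standard distributional identities $\mu\,\delta(\mu)=0$ and $\mu\,\delta'(\mu)=-\delta(\mu)$, the terms involving $\sigma\mu J$ drop out, and the term $-\sigma m^2\delta'(\mu)$ can be rewritten, after adding and subtracting $2\rho_\mu\mu$, as
$$
-\sigma m^2\delta'(\mu)=-\sigma(m^2+2\rho_\mu\mu)\delta'(\mu)+2\sigma\rho_\mu\mu\,\delta'(\mu)=-\sigma\,\mathcal S_\mu\,\delta'(\mu)-2\sigma\rho_\mu\,\delta(\mu),
$$
where in the last step I used the defining formula~\eqref{Snrs} for $\mathcal S_\mu$.

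Assembling these pieces, the coefficient of $\delta(\mu)$ becomes
$$
(d-2)(n\cdot m)+d\sigma\rho_\mu-2\sigma\rho_\mu=(d-2)\bigl[(n\cdot m)+\sigma\rho_\mu\bigr],
$$
and, modulo $\mu\,\delta(\mu)=0$, one may freely add $\mu\rho_\sigma$ inside the brackets. A direct expansion of~\eqref{thebracket} at $w=w'=1$ (using $\rho_\sigma=-\tfrac1d(\Delta\sigma+\sigma J)$ and the analogous formula for $\rho_\mu$) shows that
$$
\langle\bsigma,\bmu\rangle=\bigl[g;\,n\cdot m+\sigma\rho_\mu+\mu\rho_\sigma\bigr],
$$
so that the coefficient of $\delta(\mu)$ is precisely $(d-2)\langle\bsigma,\bmu\rangle$ once one multiplies by $\delta(\mu)$ and invokes $\mu\,\delta(\mu)=0$. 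Re-bolding then yields the claimed identity.

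The only mildly delicate point is bookkeeping of the $J$-terms: both the bracket formula and $\Delta\mu$ contain scalar curvature contributions, and one must verify these cancel cleanly after using $\mu\,\delta(\mu)=0$. Apart from that the argument is purely a distributional computation, and there is no genuine analytical obstacle once the definitions are unpacked in a chosen scale.
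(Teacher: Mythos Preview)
Your proposal is correct and follows essentially the same approach as the paper: choose a scale, expand $(d-2)\nabla_n\theta(\mu)-\sigma\Delta\theta(\mu)$ distributionally, and regroup so that the $\delta'(\mu)$-coefficient is $-\sigma\mathcal S_\mu$ and the $\delta(\mu)$-coefficient is $(d-2)\langle\sigma,\mu\rangle$. The only cosmetic difference is that you pass to the variable $\rho_\mu$ early (via $\Delta\mu=-d\rho_\mu-\mu J$), which makes the $J$-bookkeeping and the identification $\langle\bsigma,\bmu\rangle=[g;\,n\cdot m+\sigma\rho_\mu+\mu\rho_\sigma]$ slightly more transparent than the paper's route of working with $\Delta\mu$ directly.
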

\begin{proof}
This is a simple application of the distributional 
identity $d\theta(x)/dx=\delta(x)$ and the definition~\nn{Ldef} of the Laplace--Robin operator. Calling $\bsigma=[g;\sigma]$ and $\bmu=[g;\mu]$ one has
\begin{equation*}
\begin{split}
\D_\bsigmas \theta(\bmu)&=\D_\bsigmas [g;\theta(\mu)]=
[g;(d-2) \nabla_n\theta(\mu)-\sigma \Delta\theta(\mu)]\\
&=[g;(d-2) m.n\,  \delta(\mu) - \sigma (m^2 \delta'(\mu)+\delta(\mu)\Delta\mu)]\, .
\end{split}
\end{equation*}
Here, as usual, we denoted $n=\ext  \sigma$ and $m=\ext \mu$. To write the first term on the last line of the above display in terms of $\langle\bsigma,\bmu\rangle$, we would need to add 
$-\tfrac1d (\sigma \Delta \mu + \mu \Delta \sigma+2J\sigma\mu)$ to~$m.n$ (see Equation~\nn{thebracket}).
But $\mu \delta(\mu)=0=\mu^2\delta'(\mu)$, so this addition  is canceled by  a term $\tfrac{d-2}d \delta(\mu)\sigma\Delta\mu=
-\tfrac{d-2}d \delta'(\mu)\sigma\mu \Delta \mu 
+ \frac{2}{d}\delta'(\mu)\sigma\mu^2 J
$.
Writing the last term of the above display as $\sigma \delta'(\mu)\mu \Delta \mu$, 
 the coefficient of $-\sigma\delta'(\mu)$ is 
$m^2-\tfrac{2}{d}\mu(\Delta\mu+J\mu)$
which (see Equation~\nn{Scurvy}) correctly produces the required ${ {\mathcal S}}$-curvature contribution.
\end{proof}

The above lemma has the following corollary:
\begin{corollary}
Let $\bsigma$ be a weight $w=1$ density, then
$$
\D_\bsigmas \theta(\bsigma) =(d-1)
\bS_\bsigmas \delta(\bsigma)\, .$$
\end{corollary}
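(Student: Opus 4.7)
The corollary is an immediate specialization of Lemma~\ref{heavylap} to the case $\bmu = \bsigma$. My plan is to set $\bmu = \bsigma$ in the identity
$$\D_\bsigmas \theta(\bmu) = -\bsigma\bS_\bmus \delta'(\bmu) + (d-2)\langle\bsigma,\bmu\rangle\, \delta(\bmu)$$
and then invoke two facts already available in the text: first, that $\langle\bsigma,\bsigma\rangle = \bS_\bsigmas$ (recorded in Section~\ref{LRs}); and second, the standard distributional identity $x\delta'(x) = -\delta(x)$ from the list preceding Remark~\ref{Robinrem}, which gives $\bsigma\, \delta'(\bsigma) = -\delta(\bsigma)$ when applied on a coordinate patch where $\sigma$ pulls back to the coordinate $x$ (as justified in the discussion of Fubini's theorem near $\Sigma$).

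Substituting, the $\delta'$ term collapses to $+\bS_\bsigmas\, \delta(\bsigma)$, and the $\delta$ term becomes $(d-2)\bS_\bsigmas\, \delta(\bsigma)$. Adding these gives the factor $(d-1)$ and the stated formula. No obstacle arises here since the computation is purely algebraic once the lemma is in place; the only thing to be mindful of is that the identity $x\delta'(x) = -\delta(x)$ is a distributional statement, which is why the pointwise zero locus hypothesis on $\bsigma$ is not needed for the cancellation.
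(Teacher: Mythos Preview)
Your proof is correct and matches the paper's approach: the corollary is stated immediately after Lemma~\ref{heavylap} as a direct consequence, and your specialization $\bmu=\bsigma$ together with $\langle\bsigma,\bsigma\rangle=\bS_\bsigmas$ and $\bsigma\,\delta'(\bsigma)=-\delta(\bsigma)$ is exactly the intended argument.
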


Exactly the same proof technique as used for the above lemma and corollary extends to the Laplace--Robin operator acting on differentiated delta functions:

\begin{lemma}\label{lapdelder}
Let $\bmu$ and $\bsigma$ be arbitrary  weight $w=1$ densities. Then, if $k\in{\mathbb Z}_{\geq 1}$,
$$
\D_\bmus \delta^{(k-1)}(\bsigma) =-
\bmu\hh\bS_\bsigmas \delta^{(k+1)}(\bsigma)
 +(d-2k-2)\langle\bsigma,\bmu\rangle \, \delta^{(k)}(\bsigma)\, .
$$
\end{lemma}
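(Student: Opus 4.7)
The strategy is to mimic the proof of Lemma~\ref{heavylap} almost verbatim, simply replacing $\theta$ by $\delta^{(k-1)}$ and keeping careful track of the derivative of the delta function produced each time $\sigma$ is differentiated.

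First I would fix representatives $\bsigma=[g;\sigma]$ and $\bmu=[g;\mu]$ and observe that $\delta^{(k-1)}(\bsigma)$ is a density of weight $w=-k$ (by the discussion following Equation~\eqref{deltasurface}). Applying the definition~\eqref{Ldef} of the Laplace--Robin operator at this weight, and using the chain-rule identities
\begin{align*}
\nabla_m\delta^{(k-1)}(\sigma) &= (m.n)\,\delta^{(k)}(\sigma),\\
\Delta\delta^{(k-1)}(\sigma) &= n^2\,\delta^{(k+1)}(\sigma)+(\Delta\sigma)\,\delta^{(k)}(\sigma),
\end{align*}
yields
\begin{multline*}
\D_\bmus \delta^{(k-1)}(\bsigma)=\big[g;\,-\mu n^2\delta^{(k+1)}(\sigma)+(d-2k-2)(m.n)\,\delta^{(k)}(\sigma)\\[1mm]
-\mu(\Delta\sigma)\,\delta^{(k)}(\sigma)-(d-2k-2)k\rho_\mu\,\delta^{(k-1)}(\sigma)+k\mu J\,\delta^{(k-1)}(\sigma)\big].
\end{multline*}

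Next I would convert the two ``stray'' terms $-\mu(\Delta\sigma)\,\delta^{(k)}(\sigma)$ and $+k\mu J\,\delta^{(k-1)}(\sigma)$ into the form required by the bracket $\langle\bsigma,\bmu\rangle=[g;m.n+\sigma\rho_\mu+\mu\rho_\sigma]$ and the ${\mathcal S}$-curvature $\bS_\bsigma=[g;n^2+2\sigma\rho_\sigma]$. The key computational device is the distributional identity $\sigma\delta^{(j)}(\sigma)=-j\delta^{(j-1)}(\sigma)$, which one uses twice: applied at $j=k+1$ it lets us add $-2\mu\sigma\rho_\sigma\,\delta^{(k+1)}(\sigma)$ (thereby completing $n^2$ to $\bS_\bsigmas$) at the cost of a $+2\mu(k+1)\rho_\sigma\delta^{(k)}(\sigma)$; applied at $j=k$ it lets us rewrite $\sigma\rho_\mu\delta^{(k)}(\sigma)=-k\rho_\mu\delta^{(k-1)}(\sigma)$ so that the $\delta^{(k)}(\sigma)$ coefficient from the bracket reads correctly. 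Using $\rho_\sigma=-\tfrac1d(\Delta\sigma+J\sigma)$, the combination $d\mu\rho_\sigma\,\delta^{(k)}(\sigma)$ differs from $-\mu(\Delta\sigma)\,\delta^{(k)}(\sigma)$ precisely by $-\mu J\sigma\,\delta^{(k)}(\sigma)=+k\mu J\,\delta^{(k-1)}(\sigma)$, which is exactly the extra $\delta^{(k-1)}(\sigma)$ term appearing in the raw expansion. All three delta-derivative levels therefore balance.

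Assembling the pieces rewrites the right-hand side as
$$
\big[g;\,-\mu(n^2+2\sigma\rho_\sigma)\,\delta^{(k+1)}(\sigma)+(d-2k-2)(m.n+\sigma\rho_\mu+\mu\rho_\sigma)\,\delta^{(k)}(\sigma)\big],
$$
which is the claimed identity. The only mildly delicate step is the bookkeeping of the $J$-terms and the $\rho_\sigma$-terms that redistribute between $\delta^{(k-1)}$, $\delta^{(k)}$ and $\delta^{(k+1)}$ when $\sigma$ is absorbed into a lower derivative of the delta; as in the proof of Lemma~\ref{heavylap}, the cancellation is clean because $\rho_\sigma$ was engineered so that the combination $n^2+2\sigma\rho_\sigma$ is both the ${\mathcal S}$-curvature and the natural output of this distributional rearrangement. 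No new algebraic input is required.
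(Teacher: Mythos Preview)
Your proof is correct and follows exactly the approach the paper indicates: the paper merely states that ``exactly the same proof technique as used for the above lemma and corollary extends to the Laplace--Robin operator acting on differentiated delta functions,'' and you have carried out precisely that computation. The bookkeeping of the $\rho_\sigma$, $\Delta\sigma$, and $J$ terms via the identity $\sigma\delta^{(j)}(\sigma)=-j\delta^{(j-1)}(\sigma)$ is done correctly, and the coefficient $2(k+1)+(d-2k-2)=d$ in front of $\mu\rho_\sigma\delta^{(k)}(\sigma)$ is exactly what is needed to match $-\mu(\Delta\sigma)\delta^{(k)}(\sigma)+k\mu J\delta^{(k-1)}(\sigma)$.
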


\begin{remark}
The central result~\nn{Ls2deltas} is a corollary to the above lemma, since replacing~$\bmu$ by $\bsigma$ and using $\bsigma\delta^{(k+1)}(\bsigma)=-(k+1)\delta^{(k)}(\bsigma)$ gives (for $k\in {\mathbb Z}_{\geq 0}$)
\begin{equation}\label{Dondel}
\D_\bsigmas \delta^{(k-1)}(\bsigma) = (d-k-1) \, \bS_\bsigmas\delta^{(k)}(\bsigma)\, .
\end{equation}
In the above, we have employed the notation 
$\delta^{(0)}(\bsigma):=\delta(\bsigma)$.
Also, calling
$\delta^{(-1)}(\bsigma):=\theta(\bsigma)$, Lemma~\ref{heavylap}
extends Lemma~\ref{lapdelder} to the case $k=0$. 
\end{remark}

\subsection{Multiple distributions}\label{superlaprob}
Lemmas \ref{heavylap} and \ref{lapdelder} provide the basic 
identities required to handle integrals of the type \eqref{Intpq},
which  include distributions with codimension two support that are built from distributions with  support along hypersurfaces.  The central relations we need to derive are Equations~\nn{master1} and~\nn{master2}; these underlie the key Proposition~\ref{cool}.

To begin with we observe that
generically
the product of a differentiated delta function multiplied by a Heaviside function lies in the span of images of Laplace--Robin operators:
\begin{proposition}\label{thetadelta}
Let $\bsigma$ be a unit defining density and $\bmu$ a minimal unit defining density. If $\tfrac d2\neq k\in \{1,\ldots, d-2\}$, then 
$$
\theta(\bmu)
\delta^{(k)}(\bsigma) 
=\frac{1}{d-k-1}\Big[
\D_\bsigmas\big(\theta(\bmu)\delta^{(k-1)}(\bsigma)\big)
-\frac{k-1}{
d-2k}\D_\bmus\big(\delta(\bmu)\delta^{(k-2)}(\bsigma)\big)\Big]\, ,
$$
where the second term is absent  when $k=1$.
\end{proposition}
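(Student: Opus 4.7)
The plan is to derive two auxiliary operator identities and eliminate a common term between them. Specifically, I would establish
\begin{align*}
\D_\bsigmas\!\big(\theta(\bmu)\delta^{(k-1)}(\bsigma)\big) &= (d-k-1)\,\theta(\bmu)\delta^{(k)}(\bsigma) + (k-1)\,\delta'(\bmu)\delta^{(k-2)}(\bsigma),\\
\D_\bmus\!\big(\delta(\bmu)\delta^{(k-2)}(\bsigma)\big) &= (d-2k)\,\delta'(\bmu)\delta^{(k-2)}(\bsigma),
\end{align*}
after which solving the first for $\theta(\bmu)\delta^{(k)}(\bsigma)$ and using the second to eliminate $\delta'(\bmu)\delta^{(k-2)}(\bsigma)$ yields the quoted identity. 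The hypothesis $k\neq d/2$ is exactly what is needed to invert $(d-2k)$, while the factor $(d-k-1)$ is nonzero throughout $k\in\{1,\ldots,d-2\}$. When $k=1$ the second identity has an empty coefficient and only the first is needed.

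To prove the first identity, I would apply the generalized Leibniz rule \nn{Leibniz} for $\Lodz_\bsigmas = \tfrac{1}{d+2w-2}\D_\bsigmas$ to the weight $-k$ product $\theta(\bmu)\delta^{(k-1)}(\bsigma)$. The piece in which the operator strikes $\delta^{(k-1)}(\bsigma)$ produces $(d-k-1)\theta(\bmu)\delta^{(k)}(\bsigma)$ via Equation \nn{Dondel} together with the unit defining density property $\bS_\bsigmas = 1 + {\mathcal O}(\bsigma^d)$ (which ensures $\bS_\bsigmas\delta^{(k)}(\bsigma) = \delta^{(k)}(\bsigma)$ throughout $k\leq d-1$). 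The piece in which the operator strikes $\theta(\bmu)$ is given by Lemma~\ref{heavylap} as $-\bsigma\delta'(\bmu) + (d-2)\langle\bsigma,\bmu\rangle\delta(\bmu)$: the second term annihilates upon multiplication by $\delta^{(k-1)}(\bsigma)$ because Lemma~\ref{C}, together with the estimate $\bsigma\D_\bmus\log\bS_\bmus = {\mathcal O}(\bmu^{d-1})$ coming from $\log\bS_\bmus = {\mathcal O}(\bmu^d)$, forces $\langle\bsigma,\bmu\rangle\delta(\bmu)\delta^{(k-1)}(\bsigma) = 0$, while the first term simplifies via $\bsigma\delta^{(k-1)}(\bsigma) = -(k-1)\delta^{(k-2)}(\bsigma)$. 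The residual bracket correction $-2\bsigma\Langle\theta(\bmu),\delta^{(k-1)}(\bsigma)\Rangle$ is evaluated using $\nabla\theta(\bmu) = m\,\delta(\bmu)$, $\nabla\delta^{(k-1)}(\bsigma) = n\,\delta^{(k)}(\bsigma)$, and the distributional identities $m^2\delta'(\bmu) = \delta'(\bmu) + 2\rho_\bmu\delta(\bmu)$ and $\rho_\bmu\delta(\bmu) = -\tfrac{1}{d}(\Delta\bmu)\delta(\bmu)$, the first of which follows from $\bS_\bmus = m^2 + 2\bmu\rho_\bmu = 1 + {\mathcal O}(\bmu^d)$. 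The $(\Delta\bmu)\delta(\bmu)\delta^{(k-2)}(\bsigma)$ contributions arising from the two halves of the bracket formula \nn{thebracket} should cancel identically, leaving after summation precisely the coefficient $(k-1)$ on $\delta'(\bmu)\delta^{(k-2)}(\bsigma)$.

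The second identity is proved by the parallel application of \nn{Leibniz} to $\D_\bmus$ acting on $\delta(\bmu)\delta^{(k-2)}(\bsigma)$. Here the piece hitting $\delta(\bmu)$ contributes $(d-2)\delta'(\bmu)\delta^{(k-2)}(\bsigma)$, while the piece hitting $\delta^{(k-2)}(\bsigma)$, evaluated by Lemma~\ref{lapdelder}, equals $-\bmu\bS_\bsigmas\delta^{(k)}(\bsigma) + (d-2k)\langle\bsigma,\bmu\rangle\delta^{(k-1)}(\bsigma)$, both terms of which vanish upon multiplication by $\delta(\bmu)$ by $\bmu\delta(\bmu) = 0$ and the same Lemma~\ref{C} vanishing as above. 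The bracket $-2\bmu\Langle\delta(\bmu),\delta^{(k-2)}(\bsigma)\Rangle$ is handled by the same distributional identities, and after combining with the weight-dependent Leibniz factors one obtains the clean coefficient $(d-2k)$ on $\delta'(\bmu)\delta^{(k-2)}(\bsigma)$. The restriction $k\neq d/2$ is the precise condition that Leibniz rule is applicable (since $\delta^{(k-2)}(\bsigma)$ has weight $-(k-1)$ and $\Lodz_\bmus$ at that weight carries denominator $d-2k$).

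The main obstacle will be the bookkeeping in the bracket terms: one must verify the fortunate cancellation of $(\Delta\bmu)\delta(\bmu)\delta^{(k-2)}(\bsigma)$ and related contributions that come out of the four terms in \nn{thebracket}, a cancellation that depends sensitively on the minimality and singular Yamabe conditions through $\bS_\bmus = 1 + {\mathcal O}(\bmu^d)$ and on the explicit structure of $\langle\bsigma,\bmu\rangle$ supplied by Lemma~\ref{C}. A secondary subtlety is that for $k = (d-2)/2$ the weight $-k$ of the intermediate products hits the Yamabe critical value $1 - d/2$, at which the generic Leibniz rule must be replaced by its critical-weight analogue \nn{Leibniz'}; since the final identity is polynomial in $k$, however, continuity in the weight should extend the result across this single value without difficulty.
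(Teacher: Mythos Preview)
Your approach is essentially identical to the paper's: both proofs establish exactly the two auxiliary identities you wrote down, via the Leibniz rule~\nn{Leibniz} combined with Lemmas~\ref{heavylap} and~\ref{lapdelder}, and then eliminate the common $\delta'(\bmu)\delta^{(k-2)}(\bsigma)$ term. The paper packages the bracket computation for the first identity more cleanly, obtaining directly $\Langle\theta(\bmu),\delta^{(k-1)}(\bsigma)\Rangle = \tfrac{k}{d-2}\delta'(\bmu)\delta^{(k-1)}(\bsigma)$, whereas you unpack it via the constituent distributional identities, but the content is the same.

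One minor correction: you understate the number of exceptional weights. Beyond the value $k=\tfrac{d-2}{2}$ that you flag for the first identity, the Leibniz rule for the second identity also breaks at $d=4$ (where $\delta(\bmu)$ has Yamabe weight) and at $k=\tfrac{d}{2}-1$ (where the product has Yamabe weight). The paper deals with all of these exceptional cases not by continuity in the weight but by the more robust route of verifying the final polynomial identities directly in a choice of scale $g\in\cc$; your continuity argument would work too, but you should be aware there are several such points to check, not just one.
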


\begin{proof}
Since our strategy relies on the Leibniz rules 
of Section~\ref{LRs} we first assume $k\neq \frac d2-1$ 
and use Equation~\nn{thebracket} to compute (again much along the lines of the proof of Lemma~\ref{heavylap})
\begin{equation*}
\begin{split}
\Langle \theta(\bmu),\delta^{(k-1)}(\bsigma)\Rangle
&=
\langle\bsigma,\bmu\rangle\, 
\delta(\bmu)\delta^{(k)}(\bsigma)
+\tfrac{k}{d-2}\bS_\bmu \delta'(\bmu) \delta^{(k-1)}(\bsigma)
=\tfrac{k}{d-2}\,  \delta'(\bmu) \delta^{(k-1)}(\bsigma)\, .
\end{split}
\end{equation*}
%
Here we used the unit and minimal unit defining properties imply $\Langle \bsigma,\bmu\Rangle\stackrel \bmuss\sim {\mathcal O}( \bsigma^{d-1})$.
 Combining this display with
 Lemma~\ref{heavylap}, Equation~\ref{Dondel} and  the Leibniz rule of Section~\ref{LRs}
 we find
 \begin{equation*}
\begin{split}
\D_\bsigmas\big(\theta(\bmu)\delta^{(k-1)}(\bsigma)\big)&=(k-1)\delta'(\bmu)\delta^{(k-2)}(\bsigma)+
(d-k-1)\theta(\bmu) \delta^{(k)}(\bsigma)\, ,
\end{split}
\end{equation*}
where the first term is absent when $k=1$.
Although the above display was computed when~$k\neq \frac d2-1$, it is not difficult to choose a scale $g\in\cc$ and verify that it holds for any $k\in \{1,\ldots, d-2\}$.

Next  we address the other term on the right hand side of the result.
Because 
$\delta(\bmu)$ has Yamabe weight in four dimensions, we first consider $d\neq 4$ and  compute (again using Equation~\nn{thebracket} and taking $k\geq 2$) 
\begin{equation}\label{LARA}
\Langle\delta(\bmu),\delta^{(k-2)}(\bsigma)\Rangle
=\langle\bsigma,\bmu\rangle \, \delta'(\bmu)\delta^{(k-1)}(\bsigma)
+\tfrac{k-1}{d-4}\,  \delta''(\bmu) \delta^{(k-2)}(\bsigma)
+\tfrac1{d-2k} \, \delta(\bmu)\delta^{(k)}(\bsigma)\, ,
\end{equation}
in order to use  the generalized Leibniz rule~\nn{Leibniz} (away from $k=\frac d2-1$ and $d=4$), which then gives
\begin{equation}\label{Lmdmds}
\D_\bmu\big(\delta(\bmu)\delta^{(k-2)}(\bsigma)\big)=(d-2k)\hh
\delta'(\bmu)\delta^{(k-2)}(\bsigma)
\, .
\end{equation}
In fact the last display holds for any integer $k\in \{2,\ldots, d-2\}$ and for $d=4$; this can be checked by direct computation in  a choice scale $g\in\cc$.
%
%
%
%
Combining the above displays completes the proof.
\end{proof}

\begin{corollary}\label{thetadeltacor}
Let $\bsigma$ be a unit  defining density and $\bmu$ a minimal unit  defining density.
If $\bm f$ is a weight $w=k+1-d\neq1-\tfrac d2$ density with $k\in\{1,\ldots, d-2\}$, then 
\begin{equation*}
\int_M 
\theta(\bmu)\delta^{(k)}(\bsigma) 
\bm f=
\frac1{d-k-1}\left[
\int_M \theta(\bmu)\delta^{(k-1)}(\bsigma) \D_\bsigmas\!\bm f
+(k\!-\!1)
\int_M \delta(\bmu)\delta^{(k-2)}(\bsigma) \, \Lodz_\bmu \bm f\right].
\end{equation*}
The second term on the right hand side is absent when $k=1$.
\end{corollary}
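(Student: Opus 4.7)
The plan is to integrate the distributional identity from Proposition~\ref{thetadelta} against the weight $w=k+1-d$ density $\bm f$, and then use formal self-adjointness of the Laplace--Robin operator to transfer $\D_\bsigmas$ and $\D_\bmus$ onto $\bm f$. Multiplying by $\bm f$ and integrating gives
\begin{equation*}
\int_M \theta(\bmu)\delta^{(k)}(\bsigma)\,\bm f
= \tfrac{1}{d-k-1}\!\left[\int_M \D_\bsigmas\!\big(\theta(\bmu)\delta^{(k-1)}(\bsigma)\big)\bm f
- \tfrac{k-1}{d-2k}\int_M \D_\bmus\!\big(\delta(\bmu)\delta^{(k-2)}(\bsigma)\big)\bm f\right].
\end{equation*}
Since the distributions $\theta(\bmu)\delta^{(k-1)}(\bsigma)$ and $\delta(\bmu)\delta^{(k-2)}(\bsigma)$ are supported on (subsets of) $\widetilde\Sigma$, which is compact and disjoint from $\partial M$ by the standing hypotheses of Section~\ref{Sec:RenVA}, Equation~\eqref{FSA} applies with no boundary contribution. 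A quick weight count confirms that the total weight on either side is $-d$, so the integrals are invariantly defined.

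Applying self-adjointness to the first term immediately yields $\int_M \theta(\bmu)\delta^{(k-1)}(\bsigma)\,\D_\bsigmas\!\bm f$, as required. For the second term, self-adjointness produces $\int_M \delta(\bmu)\delta^{(k-2)}(\bsigma)\,\D_\bmus\!\bm f$, which must now be rewritten in terms of $\Lodz_\bmus$. Using the definition~\eqref{LODZ}, for $\bm f$ of weight $w=k+1-d$ one has $d+2w-2=2k-d$, so $\D_\bmus\bm f = (2k-d)\,\Lodz_\bmus\bm f$. The prefactor $-\tfrac{k-1}{d-2k}$ combines with $(2k-d)=-(d-2k)$ to give exactly $k-1$, recovering the claimed coefficient. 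When $k=1$, the $(k-1)$ factor kills the second term, consistent with its absence in Proposition~\ref{thetadelta}. Note also that the weight hypothesis $w\neq 1-\tfrac d2$ is equivalent to $2k-d\neq 0$, which is precisely what makes the conversion from $\D_\bmus$ to $\Lodz_\bmus$ legitimate and also matches the tacit $k\neq d/2$ hypothesis inherited from Proposition~\ref{thetadelta}.

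There is essentially no obstacle here beyond careful bookkeeping of weights and the $k=1$ degenerate case; the only subtlety is verifying that formal self-adjointness applies to the distributional integrands, which is handled by the compact support assumption. The corollary is therefore a direct consequence of Proposition~\ref{thetadelta}, Equation~\eqref{FSA}, and the definition~\eqref{LODZ}.
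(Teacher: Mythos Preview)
Your proof is correct and follows exactly the route the paper takes: invoke Proposition~\ref{thetadelta}, integrate against $\bm f$, and use formal self-adjointness~\eqref{FSA} together with the compact-support assumptions to move $\D_\bsigmas$ and $\D_\bmus$ onto $\bm f$, then rewrite $\D_\bmus\bm f=(2k-d)\Lodz_\bmus\bm f$ via~\eqref{LODZ}. The paper states this in a single sentence; you have simply spelled out the weight bookkeeping and the cancellation $-\tfrac{k-1}{d-2k}\cdot(2k-d)=k-1$ explicitly.
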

\begin{proof}
The above result is a direct corollary of Proposition~\ref{thetadelta}, the vanishing of $\partial M\cap \widetilde\Sigma$ (or the support of the integrands there) 
and the formal self-adjointness of the Laplace--Robin operator (see Equation~\nn{FSA}).  
\end{proof}

There is a critical value of $k$ excluded from  the above proposition and corollary, at which   the density $\bm f$ 
has
 bulk Yamabe weight $w=1-\frac d2$.
 At this weight the 
operator
 $\widehat \D_\bmu \bm f$ becomes ill-defined. 
 However, observe that in the above display, the product $\delta(\bmu)\widehat \D_\bmus$ appears;
 this suggests
   the replacement
\begin{equation*}
\widehat\D_\bmu \rightarrow \widetilde\D_\bmu \, ,
\end{equation*} 
when $k=\frac d2$,
where the  operator $\widetilde \D$ 
is defined in \eqref{Lodzhat}. This is executed as follows:

\begin{lemma}\label{crittheta}
Let $\bsigma$ be a unit  defining density and $\bmu$ a minimal unit defining density.
Suppose $d\geq 4$ is even, and let $\bm f$ be a weight $w=1-\frac d2$ density, then 
\begin{equation*}
\int_M 
\theta(\bmu)\delta^{(\frac d2)}(\bsigma) 
\bm f=
\frac2{d-2}\left[
\int_M \theta(\bmu)\delta^{(\frac d2-1)}(\bsigma) \D_\bsigmas\bm f
+\frac{d-2}2
\int_M \delta(\bmu)\delta^{(\frac d2-2)}(\bsigma) \, \widetilde\D_\bmu \bm f\right].
\end{equation*}

\begin{proof}
We begin by using $\delta^{(\frac d2)}(\bsigma) = \frac{2}{d-2}\D_\bsigmas \delta^{(\frac d2-1)}$ (see Equation~\nn{Dondel}) and formal self-adjointness of the Laplace--Robin operator to obtain
$$
\int_M 
\theta(\bmu)\delta^{(\frac d2)}(\bsigma) 
\bm f=
\frac2{d-2}
\int_M \delta^{(\frac d2-1)}(\bsigma) \D_\bsigmas\!\big(\theta(\bmu)\bm f\big)\, .
$$
Because $\bm f$ has critical weight we encounter the bulk Yamabe operator
$$
\D_\bsigmas  \big(\theta(\bmu)\bm f\big)
=-\bsigma \square\big(\theta(\bmu)\bm f\big)\, .
$$
Now we pick a $g\in \cc$ and compute
\begin{multline*}
\big(\Delta^g+[1-\tfrac d2] J\big)\big(\theta(\mu) f\big)=
\big(|m|_g^2\hh \delta'(\mu)+(\nabla^g.m)\hh \delta(\mu)\big) f
+ 2 \delta(\mu) \nabla_m f 
+\theta(\mu)\hh  \square f
\\
=\delta'(\mu)f + 2\delta(\mu)\big(\nabla_m-\tfrac{d-2}2\rho_\mu) f + \theta(\mu)\square f\, .
\end{multline*}
Here $m:=\ext \mu$ and the second line used that $\bmu$ is a unit conformal density so that $|m|^2_g
+2\rho_\mu \mu
=1+{\mathcal O}(\mu^d)$
with $\rho_\mu:=-\frac1d (\nabla^g.m+J\mu)$.
Using $\mu \delta(\mu)=0$, it follows that
$$
\D_\bsigmas  \big(\theta(\bmu)\bm f\big)
=-\bsigma \delta'(\bmu)\hh\bm f
-2\bsigma \delta(\bmu)\hh  \widetilde\D_\bmus \!\hh \bm f
+\theta(\bmu)\D_\bsigmas \!\bm f
\, ,
$$
and hence
\begin{multline*}
\int_M 
\theta(\bmu)\delta^{(\frac d2)}(\bsigma) 
\bm f=
\frac2{d-2}
\int_M \Big[\theta(\bmu)\delta^{(\frac d2-1)}(\bsigma) \D_\bsigmas \bm f
+(d-2)\delta(\bmu)\delta^{(\frac d2-2)}(\bsigma)\widetilde\D_\bmu \bm f 
\\
+\frac{d-2 }2\hh
\delta'(\bmu)\delta^{(\frac d2-2)}(\bsigma)
 \bm f
\Big]\, .
\end{multline*}
We still need to deal with the last term above  and find ourselves in the situation where Equation~\nn{Lmdmds}
cannot be used. Therefore we use the same method just employed to compute
\begin{equation*}\label{dpds}
\int_M \delta'(\bmu) \delta^{(\frac d2-2)}(\bsigma) \bm f =
\frac1{d-2}\int_M \delta(\bmu) \D_\bmus\big(
 \delta^{(\frac d2-2)}(\bsigma)\hh  \bm f \big)
 =-\int_M\delta(\bmu) \Lodz_\bmu\big(
 \delta^{(\frac d2-2)}(\bsigma)\hh  \bm f \big)
 \, .
\end{equation*}
The last integrand is easily calculated in some scale $g\in \cc$ and gives
$$
\delta(\mu)\big(\nabla_m-(d-2)\rho_\mu\big)
\big(\delta^{(\frac d2-2)}(\sigma) f\big)=
\delta(\mu)\delta^{(\frac d2-2)}(\sigma)\big(\nabla_m-\tfrac{d-2}2\rho_\mu\big )f\, .
$$
Here we used that $m.n+\sigma \rho_\mu$ (where $n:=\ext \sigma$) vanishes along $\Xi$ to order ${\mathcal O}(\sigma^{d-1})$
and $\sigma \delta^{(\frac d2-1)}(\sigma)=-\frac{d-2}{2}\delta^{(\frac d2-2)}(\sigma)$.
The above display corresponds to $\delta(\bmu) \delta^{(\frac d2-2)}(\bsigma) \widetilde\D_\bmu \bm f$ so that
\begin{equation}\label{snappo}
\int_M \delta'(\bmu) \delta^{(\frac d2-2)}(\bsigma) \bm f =
-\int_M \delta(\bmu) \delta^{(\frac d2-2)}(\bsigma) \widetilde\D_\bmu \bm f\, ,
\end{equation}
and this completes the proof.
\end{proof}
\end{lemma}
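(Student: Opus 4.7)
The plan is to adapt the proof of Corollary~\ref{thetadeltacor} to the critical weight $w=1-\tfrac d2$, where $\widehat{\D}_\bmu$ is ill-defined and must be replaced by the modified operator $\widetilde{\D}_\bmu$ of~\eqref{Lodzhat}. The key input specific to this weight is the identity $\D_\bsigmas = -\bsigma\,\square$ acting on any weight $1-\tfrac d2$ density, which will allow an explicit Leibniz expansion after the Laplace--Robin operator is transferred onto $\theta(\bmu)\bm f$.

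First, I would use the recursion~\eqref{Dondel} to write $\delta^{(d/2)}(\bsigma) = \tfrac{2}{d-2}\D_\bsigmas \delta^{(d/2-1)}(\bsigma)$ (legitimate because $\bm{\mathcal S}_\bsigmas=1+\mathcal O(\bsigma^d)$), then apply formal self-adjointness~\eqref{FSA} to move $\D_\bsigmas$ onto $\theta(\bmu)\bm f$. Next, in a chosen scale $g\in\cc$, I would compute $-\bsigma\,\square(\theta(\mu)f)$ by distributing $\Delta^g+(1-\tfrac d2)J$ with the ordinary Leibniz rule. The unit-defining property $|m|^2_g+2\rho_\mu\mu=1+\mathcal O(\mu^d)$ together with $\mu\delta(\mu)=\mu^2\delta'(\mu)=0$ eliminates higher powers of $\mu$, and the derivative combination $\nabla_m+(1-\tfrac d2)\rho_\mu$ acting on $f$ is precisely the representative of $\widetilde{\D}_\bmu\bm f$ at the critical weight. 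The upshot is
\[
\D_\bsigmas\bigl(\theta(\bmu)\bm f\bigr) = -\bsigma\,\delta'(\bmu)\,\bm f - 2\bsigma\,\delta(\bmu)\,\widetilde{\D}_\bmu\bm f + \theta(\bmu)\D_\bsigmas\bm f.
\]
Absorbing $\bsigma\delta^{(d/2-1)}(\bsigma)=-\tfrac{d-2}{2}\delta^{(d/2-2)}(\bsigma)$, two of the three desired terms appear together with an extra piece $\tfrac{d-2}{2}\int_M\delta'(\bmu)\delta^{(d/2-2)}(\bsigma)\bm f$ that must be eliminated.

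The main obstacle is cleaning up this residual $\delta'(\bmu)$-integral. Here I would again use $\delta'(\bmu)=\tfrac{1}{d-2}\D_\bmu\delta(\bmu)$ and transfer $\D_\bmu$ by self-adjointness onto $\delta^{(d/2-2)}(\bsigma)\bm f$, whose weight $-d/2$ is critical for $\bmu$ when $d=4$, so some care is again needed. Expanding the result in a scale gives $\delta(\mu)\bigl(\nabla_m-(d-2)\rho_\mu\bigr)\bigl(\delta^{(d/2-2)}(\sigma)f\bigr)$; the $\bsigma$-minimal hypothesis on $\bmu$ ensures the cross-term $m.n+\sigma\rho_\mu$ vanishes along $\Xi$ to sufficient order, so only the intrinsic normal derivative on $f$ survives and the expression simplifies to $-\int_M\delta(\bmu)\delta^{(d/2-2)}(\bsigma)\widetilde{\D}_\bmu\bm f$. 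Feeding this back converts the coefficient of $\delta(\bmu)\delta^{(d/2-2)}(\bsigma)\widetilde{\D}_\bmu\bm f$ from $(d-2)$ to $\tfrac{d-2}{2}$, giving the stated identity.

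The delicate points to verify will be: that the asymptotic unit and minimal-unit conditions kill all stray contributions of the form $\langle\bsigma,\bmu\rangle\,\delta(\bmu)\,\delta^{(j)}(\bsigma)$ arising in the Leibniz expansions; that the derivative combination emerging along $\Xi$ depends only on the equivalence class of $\bm f$ modulo $\bmu$, so that the $\widetilde{\D}_\bmu$-terms are genuinely well-defined; and that boundary contributions from $\partial M$ vanish, which follows from the standing compact-support hypothesis near $\widetilde\Sigma$.
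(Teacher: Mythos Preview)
Your proposal is correct and follows essentially the same route as the paper's proof: rewrite $\delta^{(d/2)}(\bsigma)$ via~\eqref{Dondel}, move $\D_\bsigmas$ across by self-adjointness, expand $-\bsigma\square(\theta(\mu)f)$ in a scale to produce the three-term identity for $\D_\bsigmas(\theta(\bmu)\bm f)$, absorb $\bsigma\delta^{(d/2-1)}(\bsigma)$, and then eliminate the residual $\delta'(\bmu)$-integral by one more self-adjointness step combined with the minimal condition on $\bmu$. One small slip: the product $\delta^{(d/2-2)}(\bsigma)\bm f$ has weight $(1-\tfrac d2)+(1-\tfrac d2)=2-d$, not $-\tfrac d2$, and this weight is never the Yamabe weight for $d\geq 4$, so no special care is actually needed there---the operator $\widehat\D_\bmu$ is perfectly well-defined on it, and your scale computation goes through unchanged.
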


Corollary \ref{thetadeltacor} and its critical extension, 
Lemma \ref{crittheta} are the two basic integrated
recursive relations required to handle integrals involving the product
of distributions~$\theta(\bmu)\delta^{(k)}(\bsigma)$. 
Adopting a unified notation by introducing the operator family
\begin{equation}\label{Lprime} 
\D^\prime_\bmus \bm f :=
\left\{ 
\begin{array}{cc} 
(d+w-2)\,\widehat\D_\bmu \bm f\, ,
& w\neq 1-\frac d2\, ,\\[2mm]
\frac{d-2}{2}\,\widetilde\D_\bmu\, \bm f\, ,
&w=1-\frac d2\, ,
\end{array} \right.
\end{equation}
both cases can be described by a  single
expression
\begin{equation}\label{master1}
\int_M 
\theta(\bmu)\, \delta^{(k)}(\bsigma)\bm f
= \frac{1}{d-k-1}\int_M \Big[
\theta(\bmu) \delta^{(k-1)}(\bsigma) 
\D_\bsigmas
+\delta(\bmu) \delta^{(k-2)}(\bsigma) 
\D^\prime_\bmus \Big] \bm f ~,
\end{equation}
where $k\in\{1,\ldots, d-2\}$  and the second term on the right hand side is absent when~$k=1$.

We will later need the following technical Lemma giving the algebra obeyed by $\D^\prime_\bmu$ and $\bsigma$ along $\Xi$.

\begin{lemma}\label{techlemm}
Let $\bm f$ be a weight $w-1\neq 2-d$ density, $\bsigma$ be a unit  defining density and~$\bmu$ a minimal unit defining density.
 Then along $\Xi$, the operator $\D^\prime_\bmu$ obeys
$$
(d+w-3)\hh\D^\prime_\bmu (\bsigma \bm f)\stackrel{\Xi}= 
(d+w-2)\hh\bsigma\D^\prime_\bmu \bm f
+{\mathcal O}(\bsigma^{d-1})\, .
$$
Moreover, when $w=3-d$ one has $\D^\prime_\bmu (\bsigma \bm f)\stackrel{\Xi}={\mathcal O}(\bsigma)$.
\end{lemma}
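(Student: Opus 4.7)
The plan is to reduce every case of the identity to the single relation
\begin{equation*}
\widehat{\D}_\bmu\bsigma = \langle\bsigma,\bmu\rangle\,,
\end{equation*}
which is immediate from~\eqref{lapland}, combined with the estimate
\begin{equation*}
\langle\bsigma,\bmu\rangle \stackrel{\Xi}{=} \mathcal{O}(\bsigma^{d-1})\,.
\end{equation*}
The latter follows from Lemma~\ref{C}: the minimal unit Yamabe condition $\bS_\bmu = 1 + \mathcal{O}(\bmu^d)$ forces $\D_\bmu\log\bS_\bmu$ to restrict to zero along $\Xi$, so the definition~\eqref{minimalM} of $\bM^\bsigmas_\bmus$ gives $\langle\bsigma,\bmu\rangle|_\Xi = \bM^\bsigmas_\bmus|_\Xi$, which by Lemma~\ref{C} equals $\bsigma^{d-1}\bm B|_\Xi$.

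In the generic range $w \notin \{1 - \tfrac d2,\, 2 - \tfrac d2,\, 3 - d\}$, the definition~\eqref{Lprime} of $\D'_\bmu$ reduces to a nonzero scalar multiple of $\widehat{\D}_\bmu$ on both $\bm f$ and $\bsigma\bm f$. After cancelling the prefactors $(d+w-2)(d+w-3)$, the claimed identity becomes
\begin{equation*}
\widehat{\D}_\bmu(\bsigma\bm f) \stackrel{\Xi}{=} \bsigma\,\widehat{\D}_\bmu\bm f + \mathcal{O}(\bsigma^{d-1})\,.
\end{equation*}
The Leibniz rule~\eqref{Leibniz} expands the left-hand side as
\begin{equation*}
(\widehat{\D}_\bmu\bsigma)\bm f + \bsigma\,\widehat{\D}_\bmu\bm f - \tfrac{2\bmu}{d+2w-2}\Langle\bsigma,\bm f\Rangle\,,
\end{equation*}
and the three terms are handled in order: the last vanishes on $\Xi$ by its $\bmu$ prefactor, the first equals $\langle\bsigma,\bmu\rangle\bm f = \mathcal{O}(\bsigma^{d-1})$ along $\Xi$, and the middle is the target.

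The two critical values $w = 1 - \tfrac d2$ and $w = 2 - \tfrac d2$ require $\widetilde{\D}_\bmu$ to replace $\widehat{\D}_\bmu$ on the Yamabe-weight factor (cf.~\eqref{Lodzhat}). Working in a scale $g \in \cc$ and computing directly, one finds that the critical-weight discrepancy $\widetilde{\D}_\bmu(\bsigma\bm f) - \bsigma\,\widehat{\D}_\bmu\bm f$ (respectively $\widehat{\D}_\bmu(\bsigma\bm f) - \bsigma\,\widetilde{\D}_\bmu\bm f$) reduces along $\Xi$ to $[(m.n) + \sigma\rho_\mu]f$, and the identity $(m.n) + \sigma\rho_\mu + \mu\rho_\sigma = \langle\bsigma,\bmu\rangle$ together with $\mu|_\Xi = 0$ again places this in $\mathcal{O}(\bsigma^{d-1})$. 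Reinstating the prefactors yields the stated identity in these subcases.

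For the ``moreover'' clause at $w = 3 - d$, the left-hand prefactor $(d+w-3) = 0$ renders the main identity vacuous. Instead, the same Leibniz decomposition (or its critical variant when $d = 4$) exhibits $\D'_\bmu(\bsigma\bm f)|_\Xi$ as a sum of terms each carrying at least one factor of $\bsigma$, yielding the claimed $\mathcal{O}(\bsigma)$ bound directly. The chief obstacle throughout is the case bookkeeping --- tracking which variant of $\D'_\bmu$ acts at which weight and verifying that the minimality estimate absorbs every remainder uniformly across the critical-weight split --- but once this is arranged, every step is a routine application of the Leibniz calculus of Section~\ref{LRs}.
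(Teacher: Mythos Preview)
Your proof is correct and rests on the same key estimate as the paper---namely that along $\Xi$ the bracket $\langle\bsigma,\bmu\rangle$ is $\mathcal{O}(\bsigma^{d-1})$ by minimality. The paper's route is more economical, however: it picks a scale from the outset and computes $\D'_\bmu(\bsigma\bm f)\big|_\Xi = (d+w-2)(\nabla_m + w\rho_\mu)(\sigma f)$ directly, observing that this single formula is valid for \emph{all} weights including the critical ones (since both $\widehat\D_\bmu$ and $\widetilde\D_\bmu$ reduce to the Robin operator $\nabla_m + w\rho_\mu$ along $\Xi$; cf.\ Remark~\ref{Robinrem}). Expanding $(\nabla_m + w\rho_\mu)(\sigma f)$ and using $m.n + \sigma\rho_\mu \stackrel\Xi= \mathcal{O}(\sigma^{d-1})$ then gives the result in two lines, with no case split. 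Your invariant Leibniz treatment of the generic range is perfectly sound, but since you must drop to a scale anyway for the critical weights and the ``moreover'' clause, the payoff of the invariant machinery is limited here.

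One small omission: your ``generic range'' $w\notin\{1-\tfrac d2,\,2-\tfrac d2,\,3-d\}$ does not exclude $w=2-d$, at which the prefactor $d+w-2$ vanishes and your cancellation step fails. This case is harmless---both sides of the identity are then identically zero along $\Xi$, since $\D'_\bmu$ annihilates weight $2-d$ densities---but it should be noted explicitly.
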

\begin{proof}
The simplest proof is based on a choice of scale $g\in \cc$ where $\bm f=[g;f]$. In that case, for all weights $w$ (including $w=1-\frac d2$ and $w=3-d$) we have  that  $\D^\prime_\bmu (\bsigma\bm f)$ along~$\Xi$ is given by
\begin{multline*}
(d+w-2)(\nabla_m+w \rho_\mu) (\sigma f)=
(d+w-2)(\sigma \nabla_m f+m.n f + w \rho_\mu \sigma f)
\\[1mm]
=(d+w-2)\hh\sigma (\nabla_m  + (w-1) \rho_\mu  )f + {\mathcal O}(\sigma^{d-1})\, .
\end{multline*}
In the above, we used that $\mu$ vanishes along $\Xi$, as well as the minimal condition~\nn{mn}.
It is not difficult to assemble the quoted results from this display.
\end{proof}

\medskip
For the particular case of the volume anomaly in 
three dimensions (see Section~\ref{3dV}), we shall 
also need the action of the operator $\D^\prime$ on 
the log of a true scale $\bm \tau$. 
Hence in that case we extend Definition~\ref{Lprime} 
to read
\begin{equation}\label{Lprimelog}
\D^\prime_\bmu \log\bm\tau := 
(d-2) \widehat\D_\bmu \log\bm\tau=\bm \D_\bmus  \log\bm\tau\, .
\end{equation} 
%

\medskip

We now proceed to develop the analogous machinery 
needed for handling  densities  
integrated against the product distribution $\delta(\bmu)\delta^{(k)}(\bsigma)$. Performing the $\delta(\bmu)$ integration these
give integrals  along the hypersurface $\Xi$. 
Therefore, to deal with the operators that appear in this 
case, we rely on  the notion of tangentiality developed 
in Section~\ref{LapRob}, see in particular 
Definition~\ref{TangDef}.

\begin{definition}\label{LTdef}
Let $\bsigma$ be a unit  defining density 
and $\bmu$ a minimal unit defining density.
The \emph{tangential} 
\emph{Laplace--Robin 
operator} 
for the hypersurface $\Xi=\mathcal Z(\bmu)$
is defined as follows:

In the case where $\bm f$ is a weight $w\neq 1-\tfrac d2,2-\tfrac d2$ 
density, 
$$\D^{\!\rm T}_\bsigmas:\Gamma(\ce M[w])\to \Gamma(\ce M[w-1])$$
according to 
\begin{equation}\label{LTnoncrit}
\D^{\!\rm T}_\bsigmas \bm f :=\Bigg[ 
\frac{d+2w-3}{d+2w-2} \, \D_\bsigmas 
-w\big(\Lodz_\bmu \langle\bsigma,\bmu\rangle\big)
+\frac{\bsigma\D_\bmus^2}{(d+2w-4)(d+2w-2)} \Bigg] \bm f \, .
\end{equation}
 When the density $\bm f$ has bulk Yamabe weight 
$w=1-\frac d2$,  we define the map 
$$\D^{\!\rm T}_\bsigmas:
\Gamma(\ce M[1-\tfrac d2])\to \Gamma_\Xi(\ce M[-\tfrac d2])$$
by (see Display~\nn{funnycod} and Equation~\nn{funnycombination})
\begin{equation}\label{LTcrit1}
\D^{\!\rm T}_\bsigmas \bm f:= \Big[ \D_\bsigmas 
+\tfrac{d-2}2\big(\Lodz_\bmu \langle\bsigma,\bmu\rangle\big)
+\bsigma\widehat \D_\bmu \widetilde\D_\bmu
-\widetilde \D_\bsigmas \Big] \bm f \, .
\end{equation}
%
For $\bm f$ of weight $w=2-\frac d2$ we define 
$$\D^{\!\rm T}_\bsigmas:\Gamma(\ce M[2-\tfrac d2])\to \Gamma_\Xi(\ce M[1-\tfrac d2])$$ 
by
\begin{equation}\label{LTcrit2}
\D^{\!\rm T}_\bsigmas \bm f:= \Big[\widehat \D_\bsigmas
+\tfrac{d-4}2\big(\Lodz_\bmu \langle\bsigma,\bmu\rangle\big)
+\bsigma \widetilde \D_\bmu \widehat \D_\bmu\Big] \bm f \, . 
\end{equation}
\end{definition}
\begin{remark}\label{Images}
Note that although the codomain of the map $\D_\bsigmas^{\!\rm T}$ depends on whether acting at a critical weight or not, 
we are  mostly interested in  computing of $\D_\bsigmas^{\!\rm T} \bm f\big|_\Xi$.
\end{remark}

The tangential Laplace--Robin operator can also be defined acting on log densities:

\begin{definition} \label{logLT}
Let $\bsigma$ and $\bmu$ be  unit defining 
and minimal unit defining densities respectively, 
 and let
 $\bm \tau$ be an arbitrary true scale.
 Then, if $d\neq 4$,  we define the  
  \emph{tangential Laplace--Robin 
operator} acting on log densities 
$$\D^{\!\rm T}_\bsigmas:\Gamma({\mathcal F} M[1])\to \Gamma(\ce M[-1])$$
by
\begin{equation*}
\D^{\!\rm T}_\bsigma \log\bm\tau :=
(d-3)\,\widehat\D_\bsigma\log \bm \tau
-(\widehat\D_\bmu \langle \bsigma,\bmu \rangle) 
+ \bsigma \widehat \D_\bmu^2 \log \bm \tau\, .
\end{equation*}
In the case when $d=4$, the {tangential Laplace--Robin 
operator}  operator acting on log densities
$$\D^{\!\rm T}_\bsigmas:\Gamma({\mathcal F} M[1])\to \Gamma_\Xi(\ce M[-1])$$
is defined as
\begin{equation*}
\D^{\!\rm T}_\bsigma \log\bm\tau :=
\widehat\D_\bsigma\log \bm \tau
-(\widehat\D_\bmu \langle \bsigma,\bmu \rangle)
+ \bsigma \widetilde \D_\bmu \widehat \D_\bmu \log \bm \tau\, .
\end{equation*}
\end{definition}
\begin{remark}
Observe that, just as in the case of Definition~\ref{LTdef} 
for the tangential Laplace--Robin operator acting on standard 
densities, the codomain of the  map $\D^{\!\rm T}_\bsigma$ acting on 
$\log \bm \tau$ is forced to  be differently defined in dimension four, since in this case 
$\widehat \D_\bmu \log \bm \tau$ has critical Yamabe 
weight $w=1-\frac d2=-1$ (see Remark~\ref{Images}).
\end{remark}

The key property of the Laplace--Robin operator $\D^{\!\rm T}_\bsigmas$ is that it is tangential along $\Xi$ in the sense of Definition~\ref{TangDef} and the obvious variant for codomains $\Gamma_\Xi(\ce M[w])$ :

\begin{lemma}
Let $\bsigma$ be a unit  defining density 
and $\bmu$ a minimal unit defining density.
Then the operators $\D^{\!\rm T}_\bsigmas$ of Definitions~\ref{LTdef} and~\ref{logLT} are tangential along $\Xi$.
\end{lemma}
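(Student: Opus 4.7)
The plan is to show, for each case of Definitions~\ref{LTdef} and~\ref{logLT}, that $\D^{\!\rm T}_\bsigmas(\bmu\bm f')|_\Xi$ vanishes in the sense appropriate to its codomain---strictly in $\Gamma_\Xi(\ce M[w'])$ at the critical weights, and up to the ${\mathcal O}(\bsigma^{d-1})$ ambiguity intrinsic to the asymptotic $\bsigma$-minimality~\nn{minimu} at noncritical weights. The coefficients in Equations~\nn{LTnoncrit}--\nn{LTcrit2} are tuned so that, after applying the Leibniz rules of Section~\ref{LRs} and the $\mathfrak{sl}(2)$ algebra from Section~\ref{LapRob}, the $\bsigma\Langle\bmu,\bm f'\Rangle$ contributions generated by $\D_\bsigmas(\bmu\bm f')$ cancel those generated by $\bsigma\D_\bmus^2(\bmu\bm f')$, leaving only a $\langle\bsigma,\bmu\rangle\bm f'$ residue controlled by Lemma~\ref{C}.

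For the noncritical case~\nn{LTnoncrit}, take $\bm f'\in\Gamma(\ce M[w-1])$ and note that the middle summand $-w(\Lodz_\bmu\langle\bsigma,\bmu\rangle)(\bmu\bm f')$ is manifestly $\bmu$-divisible and hence vanishes on $\Xi$. The generalized Leibniz rule~\nn{Leibniz}, combined with $\Lodz_\bsigmas\bmu=\langle\bsigma,\bmu\rangle$ from~\nn{lapland}, gives $\D_\bsigmas(\bmu\bm f')\big|_\Xi = \big[(d+2w-2)\langle\bsigma,\bmu\rangle\bm f' - 2\bsigma\Langle\bmu,\bm f'\Rangle\big]\big|_\Xi$. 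Two successive applications of the $\mathfrak{sl}(2)$ commutation identity $\D_\bmus(\bmu\bm h) = \bmu\D_\bmus\bm h + (d+2v)\bS_\bmus\bm h$ (for $\bm h$ of weight $v$), together with $\bS_\bmus|_\Xi=1$ and $\D_\bmus\bS_\bmus = {\mathcal O}(\bmu^{d-1})$ (which follows from $\bS_\bmus = 1 + {\mathcal O}(\bmu^d)$ and~\nn{envelop}), yield $\D_\bmus^2(\bmu\bm f')|_\Xi = 2(d+2w-3)\D_\bmus\bm f'|_\Xi = 2(d+2w-3)(d+2w-4)\Langle\bmu,\bm f'\Rangle|_\Xi$. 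Assembling these pieces with the coefficients $(d+2w-3)/(d+2w-2)$ and $1/[(d+2w-4)(d+2w-2)]$ makes the $\bsigma\Langle\bmu,\bm f'\Rangle$ terms cancel exactly, leaving $\D^{\!\rm T}_\bsigmas(\bmu\bm f')|_\Xi = (d+2w-3)\langle\bsigma,\bmu\rangle\bm f'|_\Xi$. Lemma~\ref{C} then writes $\langle\bsigma,\bmu\rangle = \bmu\bC + \bsigma^{d-1}\bB - \tfrac{\bsigma}{2(d-1)(d-2)}\D_\bmus\log\bS_\bmus$, and since $\D_\bmus\log\bS_\bmus = {\mathcal O}(\bmu^{d-1})$, this residue is ${\mathcal O}(\bsigma^{d-1})$ on $\Xi$, which is precisely the ambiguity to which $\bmu$ is determined as a minimal unit defining density.

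The critical cases~\nn{LTcrit1} and~\nn{LTcrit2} follow the same pattern: at $w\in\{1-\tfrac d2,\,2-\tfrac d2\}$, the noncritical factors $1/(d+2w-2)$ or $1/(d+2w-4)$ become singular, but these poles are cancelled by substituting $\widetilde\D_\bmus$ (respectively $\widetilde\D_\bsigmas$) for $\widehat\D$ precisely where needed, and the critical Leibniz rules~\nn{Leibniz'}--\nn{Leibniz''} produce the required analog of the $\bsigma\Langle\bmu,\bm f'\Rangle$ cancellation, with any leftover $\bmu$-multiples absorbed by the quotient codomain $\Gamma_\Xi(\ce M[w'])$. For log densities (Definition~\ref{logLT}), I use linearity: the $-(\widehat\D_\bmu\langle\bsigma,\bmu\rangle)$ summand is $\btau$-independent and so cancels in $\D^{\!\rm T}_\bsigmas\log(f\btau) - \D^{\!\rm T}_\bsigmas\log\btau$, and since $f|_\Xi=1$ forces $\log f = \bmu h$ for some smooth $h$, the remaining difference is precisely the weight-$0$ instance of the density computation above (or, when $d=4$, its critical variant using $\widetilde\D_\bmu$). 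The main technical obstacle will be the critical-weight bookkeeping, where the poles of the noncritical formulas are exactly cancelled by the designed $\widetilde\D$-replacements (in the same spirit as the construction~\nn{funnycombination}) and the output must be correctly interpreted in $\Gamma_\Xi$ rather than $\Gamma$.
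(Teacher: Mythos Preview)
Your proposal is correct and takes a genuinely different route from the paper. For the noncritical weights the paper simply cites \cite[Proposition~4.7]{GW161}, whereas you supply a self-contained argument using the Leibniz rule~\nn{Leibniz} and the $\mathfrak{sl}(2)$ identities~\nn{envelop} to reach the explicit residue $\D^{\!\rm T}_\bsigmas(\bmu\bm f')|_\Xi=(d+2w-3)\langle\bsigma,\bmu\rangle\bm f'|_\Xi$, which the minimality hypothesis then shows is ${\mathcal O}(\bsigma^{d-1})$. This is actually informative: it makes transparent that tangentiality holds only up to the asymptotic order dictated by~\nn{minimu}, a point the paper leaves implicit (indeed, the paper's own critical-weight computation quietly uses $m.n|_\Xi=-\rho_\mu\sigma$ from Equation~\nn{mn}, suppressing the same ${\mathcal O}(\sigma^{d-1})$ tail). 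For the critical weights $w=1-\tfrac d2,\,2-\tfrac d2$ the paper instead picks $g\in\cc$ and verifies vanishing by a brute-force local computation of each summand in~\nn{LTcrit1}--\nn{LTcrit2}; your sketch via the critical Leibniz rules~\nn{Leibniz'}--\nn{Leibniz''} is the invariant analog and is correct in principle, though you would need to flesh out the $\widetilde\D$-substitutions carefully since the modified brackets $\Langle\hspace{-3.25mm}-\hspace{-1mm}\cdot,\cdot\Rangle$ have cokernel-valued outputs. The scale computation is quicker here and avoids that bookkeeping. Your log-density reduction via $\log f=\bmu\bm\omega$ is exactly the paper's argument.
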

\begin{proof}
Tangentiality of the operator $\D^{\!\rm T}_\bsigmas$ acting at 
weights $w\neq 1-\frac d2$ and $2-\frac d2$ is proved 
in~\cite[Proposition 4.7]{GW161} (see also~\cite{GLW}). 
Hence only the two critical weights remain.
For the case $w=1-\frac d2$, consider $\bm f'$ to be 
an arbitrary weight $w=-\tfrac d2$ density. Then, the 
non-vanishing contributions to $\D^{\!\rm T}_\bsigma (\bmu \bm f')$
along $\Xi$ are given by
\begin{equation*}
\D^{\!\rm T}_\bsigma (\bmu \bm f')\big|_\Xi
= \Big(\D_\bsigmas -\widetilde \D_\bsigmas
+\bsigma\widehat \D_\bmu \widetilde\D_\bmu\Big)
(\bmu \bm  f')\big|_\Xi ~.
\end{equation*}
We need to show that the quantity in the above display in fact vanishes. For that,
picking~$g\in \cc$, a direct calculation using
the various definitions of the Laplace--Robin 
operator and its decorated extensions introduced 
in Sections~\ref{LapRob}~and~\ref{LRs}, yields
\begin{align*}
\D_\bsigma (\bmu \bm f')\big|_\Xi&= 
[g; -\sigma f' \Delta \mu - 2\sigma \nabla_m f']~, \\
-\widetilde \D_\sigma (\bmu \bm f')\big|_\Xi &=
[g; \rho_\mu \sigma f']~, \\
\bsigma \widehat \D_\bmu \widetilde \D_\bmu
(\bmu \bm f')\big|_\Xi&= 
[g;   2\sigma \nabla_m f'
- (d+1)\rho_\mu\sigma f']~,
\end{align*}
with $m:=\ext \mu$, 
and where we have used Equation~\eqref{Snrs} to express 
$m^2 = 1- 2\rho_\mu \mu+{\mathcal O}(\mu^d)$. Adding the three pieces above (remembering that $\Delta \mu|_\Xi = -d \rho_\mu$) we obtain 
$
\D^{\!\rm T}_\bsigma (\bmu \bm f')\big|_\Xi= 0
$,
as required from Definition~\ref{TangDef}.
A completely analogous computation,  
 now using a density
$\bm f'$ of weight $w=1-\frac d2$, proves tangentiality of~$\D^{\!\rm T}_\bsigmas$ at  weight $w=2-\frac d2$. The proof of tangentiality in the log-density case follows from the previous cases specialized to weight $w=0$, because  if $0<f\in C^\infty M$ and $f|_\Sigma=1$ we may write $f= e^{\bmu \bm \omega}$
where $\bm \omega\in \Gamma(\ce M[-1])$ and use $\log (f\btau) = \log \btau + \bmu \bm \omega$.
This reduces the required computations to the previous cases.
\end{proof}

Because the operator $\D^{\!\rm T}_\bsigma$ is tangential, we may define an analog of the Laplace--Robin operator along the hypersurface $\Xi$ by the following holographic formula:
$$
{\mathscr L}^\Xi_\bsigmas:\Gamma(\ce \Xi[w])\to \Gamma(\ce \Xi[w-1])
$$
with
\begin{equation}\label{curlyL}
{\mathscr L}^\Xi_\bsigmas
\bm f_\Xi = \D^{\!\rm T}_\bsigma
\bm f\big|_\Xi\, ,
\end{equation}
where $\bm f_\Xi \in \Gamma(\ce \Xi[w])$ and $\bm f\in \Gamma(\ce M[w])$ is any extension of $\bm f_{\Xi}$ to $M$.
The operator ${\mathscr L}^\Xi_\bsigmas$
 underlies an $\mathfrak{sl}(2)$ algebra analogous to that constructed for the Laplace--Robin operator in Section~\ref{LapRob}. The key relation for this is given below:

\begin{lemma}\label{Iwasatheorem}
Let $\bm f_\Xi\in \Gamma(\ce \Xi[w])$. Then 
$$
{\mathscr L}^\Xi_\bsigmas (\bsigma_{_{\!\Xi}} \bm f_\Xi) - \bsigma_{_{\!\Xi}}{\mathscr L}^\Xi_\bsigmas \bm f_\Xi = (d+2w-1) \bm f_\Xi
+{\mathcal O}(\bsigma_{_{\!\Xi}}^{d-1})\, ,
$$
where $\bsigma_{_{\!\Xi}}$ is the restriction of $\bsigma$ to $\Xi$. 
\end{lemma}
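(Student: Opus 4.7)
The plan is to compute the ambient commutator $\D^{\!\rm T}_\bsigma(\bsigma \bm f) - \bsigma \D^{\!\rm T}_\bsigma \bm f$, for $\bm f$ any extension of $\bm f_\Xi$ to $M$, modulo terms that vanish along $\Xi$ to order $\mathcal O(\bsigma^{d-1})$. Since $\D^{\!\rm T}_\bsigma$ is tangential along $\Xi$, the displayed identity on $\Xi$ will then follow from the holographic definition~\nn{curlyL}. I would organise this as three separate commutator computations, one for each weight regime of Definition~\ref{LTdef}, since the generic case is purely algebraic while the two critical weights require the modified operators $\widetilde\D_\bsigma$ or $\widehat\D_\bsigma$.

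For generic weight $w\neq 1-\tfrac d2, 2-\tfrac d2$, I would substitute~\nn{LTnoncrit} into $\D^{\!\rm T}_\bsigma(\bsigma\bm f)-\bsigma\D^{\!\rm T}_\bsigma\bm f$, using the weight-$(w+1)$ formula on the left and the weight-$w$ one on the right. The dominant contribution arises from the $\D_\bsigma$ piece: the bulk $\mathfrak{sl}(2)$ relation~\nn{envelop} gives $[\D_\bsigma,\bsigma]\bm f=(d+2w)\bm{\mathcal S}_\bsigmas\bm f$ on weight $w$, and a short computation combining the prefactors $\tfrac{d+2w-1}{d+2w}$ and $\tfrac{d+2w-3}{d+2w-2}$ produces exactly
\begin{equation*}
(d+2w-1)\bm{\mathcal S}_\bsigmas\bm f +\tfrac{2\bsigma}{(d+2w)(d+2w-2)}\D_\bsigma\bm f\,.
\end{equation*}
Since $\bsigma$ is a unit defining density, $\bm{\mathcal S}_\bsigmas=1+\mathcal O(\bsigma^d)$, which delivers the required $(d+2w-1)\bm f$ up to the allowed order. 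The remaining terms to be analysed are $-\bsigma\bm f(\Lodz_\bmu\langle\bsigma,\bmu\rangle)$ from the zeroth-order piece, and the $\D_\bmu^2$ commutator piece. For the latter one commutes $\D_\bmu$ past $\bsigma$ via the Leibniz identity~\nn{Leibniz}: because both $\bsigma$ and $\bmu$ have weight one, this produces a $\langle\bsigma,\bmu\rangle\D_\bmu$ contribution plus a $\Lodz_\bmu\bm f$ remainder, and the latter conspires to cancel the stray $\bsigma\D_\bsigma\bm f$ term obtained above.

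The crucial input that makes all non-leading terms $\mathcal O(\bsigma^{d-1})$ along $\Xi$ is minimality of $\bmu$: by Theorem~\ref{MINSOL} together with the log-term estimate used in the proof of Proposition~\ref{MEAN}, one has $\langle\bsigma,\bmu\rangle=\bmu\bm C+\bsigma^{d-1}\bm B'$. Consequently any appearance of $\langle\bsigma,\bmu\rangle$ that survives one application of $\D_\bmu$ and a subsequent restriction to $\Xi$ contributes only at order $\bsigma^{d-1}$, since $\D_\bmu\circ\bmu$ produces a bounded multiple of $\bm{\mathcal S}_\bmus$ (which is $1+\mathcal O(\bmu^d)$) while the $\bsigma^{d-1}$ piece is preserved. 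The critical weight cases $w=1-\tfrac d2$ and $w=2-\tfrac d2$ are treated analogously, using~\nn{LTcrit1} and~\nn{LTcrit2} and the distributional-identity type manipulations already worked out around Equations~\nn{Lodzhat}--\nn{funnycombination}: the role of the $\widetilde\D_\bsigma$ and $\widehat\D_\bsigma$ terms is precisely to absorb the would-be pole in $\tfrac{1}{d+2w-2}$ produced by the generic computation, so that the commutator identity persists. The main obstacle is careful bookkeeping in commuting $\D_\bmu$ past multiplication by $\bsigma$, because both have weight one and therefore generate $\langle\bsigma,\bmu\rangle$ cross-terms at every step; the minimality order $\mathcal O(\bsigma^{d-1})$ is precisely what is needed to discard these after restriction to $\Xi$.
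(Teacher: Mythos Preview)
Your direct-commutator approach is genuinely different from the paper's, and while it can in principle be made to work, several of the ``conspires to cancel'' claims are not yet substantiated and the route is considerably more laborious.

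The paper avoids the term-by-term commutator computation entirely by a trick: it works on the \emph{interior} of $\Xi$ in the scale of the singular metric $g^o=\bm g/\bsigma^2$, where $\bsigma=[g^o;1]$, so that $\sigma\equiv 1$, $n=\nabla\sigma=0$, and $\rho_\sigma=-\tfrac{1}{d}J^{g^o}$. In this scale, multiplication by $\bsigma$ is multiplication by $1$ (only the weight shifts), and a direct evaluation of~\nn{LTnoncrit} gives the manifestly $w$-polynomial expression
\[
\D^{\!\rm T}_\bsigma\bm f\big|_\Xi
=-\big(\Delta-\nabla_m^2+(2w-1)H_\Xi^{g^o}\nabla_m\big)f
+\Big(w(w-1)(H_\Xi^{g^o})^2-\tfrac{2w(d+w-2)}{d}J^{g^o}\Big)f\,.
\]
The commutator is then obtained by replacing $w\mapsto w+1$ and subtracting, which yields $(d+2w-1)\bm{\mathcal S}_\bsigma f$ plus terms proportional to $H_\Xi^{g^o}$. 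Since $-H_\Xi^{g^o}=\langle\bsigma,\bmu\rangle|_\Xi=\mathcal O(\bsigma^{d-1})$ by minimality and $\bm{\mathcal S}_\bsigma=1+\mathcal O(\bsigma^d)$, the identity follows on the interior, and smoothness carries it across $\Lambda$. A bonus is that the displayed formula has no poles at $w=1-\tfrac d2,\,2-\tfrac d2$, so one simply checks that~\nn{LTcrit1} and~\nn{LTcrit2} reproduce the same expression at those weights---all three regimes are handled at once rather than separately.

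In your approach, by contrast, you must verify that the $\D_\bmu^2$-commutator piece actually cancels the residual $\tfrac{2\bsigma}{(d+2w)(d+2w-2)}\D_\bsigma\bm f$ \emph{together with} the $-\bsigma\bm f\,\Lodz_\bmu\langle\bsigma,\bmu\rangle$ term. Note that the latter is \emph{not} itself $\mathcal O(\bsigma^{d-1})$ along $\Xi$: by Lemma~\ref{C} one has $\Lodz_\bmu\langle\bsigma,\bmu\rangle|_\Xi=\bm{\mathcal C}+\mathcal O(\bsigma^{d-2})$, so this term contributes at order $\bsigma$, not $\bsigma^{d-1}$. Hence a genuine algebraic cancellation among the three pieces is required, not merely the minimality estimate, and your proposal does not exhibit it. The $g^o$-scale computation sidesteps this bookkeeping entirely.
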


\begin{proof}
There many ways to prove this result, an expedient method is to 
first establish the result along the interior of $\Xi$ because this allows us to calculate using the scale determined by the singular metric $g^o$ for which (away from $\Sigma$) $\bsigma=[g^o;1]$. In particular, 
in this scale, when $w\neq 1-\frac d2, 2-\frac d2$, using Equation~\nn{LTnoncrit} we find
\begin{multline*}
\D^{\!\rm T}_\bsigma
\bm f
\stackrel{\Xi}
=
(d+2w-3)\big(
-\tfrac wd \J^{g^o}\big)f
-\tfrac{d+2w-3}{d+2w-2} (\Delta + w \J^{g^o}) f
-wf\nabla_m(\rho_\mu -\tfrac \mu d\J^{g^o})\\[1mm]
+\big(\nabla_m+(w-1)\rho_\mu\big)
\big(\nabla_m + w \rho_\mu -\tfrac\mu{d+2w-2}(\Delta + wJ^{g^o})\big)f
\end{multline*}
$$
=-\big(\Delta-\nabla_m^2+(2w-1)H_\Xi^{g^o}\nabla_{ m}\big) f+\Big(w(w-1)(H_\Xi^{g^o})^2-\tfrac{2w(d+w-2)}{d}\, J^{g^o}\Big)\hh f\, .
$$
Here we used that in this scale $n=\nabla\sigma=\nabla\, 1 =0$ and $\rho_\sigma =\rho_{\sss1}= -\tfrac 1d\J^{g^o}$. We also used that $\rho_\mu|_\Xi=-H_\Xi^{g^o}$. 
The second equality of the above display shows that the singularities at weights $1-\frac d2$, $2-\frac d2$ of  $\D^{\!\rm T}_\bsigma$ as defined by Equation~\nn{LTnoncrit} are removable. 
Indeed, it is straightforward to use Equations~\nn{LTcrit1} and~\nn{LTcrit2}
to establish that $\D^{\!\rm T}_\bsigma
\bm f$ along $\Xi$ is as stated above for these critical weights.
In the $g^o$ scale, the above display also gives the result for $\bsigma\D^{\!\rm T}_\bsigma
\bm f$ along $\Xi$. Moreover, to compute
$\D^{\!\rm T}_\bsigma
(\bsigma \bm f)$ we only need write out the above display replacing $w$ by $w+1$ because $\bsigma \bm f\in \Gamma(\ce M[w+1])$. Hence for the difference of these we find
$$
\bsigma\D^{\!\rm T}_\bsigma
\bm f
-
\D^{\!\rm T}_\bsigma
(\bsigma\bm f)
\stackrel{\Xi}
=
\frac{2J^{g^o}}d\, (d+2w-1)f
+2H_\Xi^{g^o}(\nabla_{\hat m}-w H_\Xi^{g^o})f\, .
$$
In the interior $- \tfrac 2d\J^{g^o} = \bS_{\bsigmas}$ and $-H^{g^o}_\Xi=\langle \bsigma,\bmu\rangle$. The former of these equals $1+{\mathcal O}(\bsigma^d)$ while along $\Xi$, the latter is ${\mathcal O}(\bsigma^{d-1}_{_{\!\Xi}})$.
This establishes the quoted result on the interior of~$\Xi$. 
Since both the left and right hand sides of this result are smoothly defined along~$\Xi$, the above interior computation suffices to establish their equality along all of~$\Xi$.

\end{proof}

The tangential Laplace--Robin operator~$\D^{\!\rm T}_\bsigma$ introduced in Definition~\ref{LTdef} is built from formally self adjoint Laplace--Robin
operators; see Equation~\eqref{FSA}. 
When integrating over manifolds without boundary (or integrands with no support there), we encounter the  
formal  adjoint~$(\D^{\!\rm T}_\bsigma)^\dagger$  of the tangential operator~$\D^{\!\rm T}_\bsigmas$. This is 
easily computed using formal self-adjointness of the $\D$--operator:
\begin{equation}\label{LTdagger}
(\D^{\!\rm T}_\bsigma)^\dagger  \bm f=  \Bigg[
\frac{d+2w+1}{d+2w} \, \D_\bsigmas 
+(d+w-1)\big(\Lodz_\bmu \langle\bsigma,\bmu\rangle\big)
+\frac{\D_\bmus^2\circ \bsigma}{(d+2w+2)(d+2w)}\Bigg]\bm f  ~,
\end{equation}
when $\bm f$ is a  density of weight $w\neq -1-\frac d2,-\frac d2$.

Remarkably, the product $\delta(\bmu)\delta^{(k)}(\bsigma)$ 
can be expressed  in terms of  the formal adjoint given in~\eqref{LTdagger} acting on 
a product of delta functions with one fewer derivatives. This result, given in the lemma below, may also be viewed as 
an generalization of  the key recursive 
 relation~\eqref{Dondel} to  products of distributions. 

%
\begin{proposition}\label{LST}
Let $\bsigma$ be a unit  defining density 
and $\bmu$ a minimal unit  defining density. 
If  $\tfrac d2,\tfrac d2-1\neq k\in \{1,\ldots, d-3\}$, then
\begin{equation*}
\delta(\bmu)\delta^{(k)}(\bsigma)=
\frac{1}{d-k-2} \big(\D_\bsigma^{\!\rm T}\big)^\dagger \big(\delta(\bmu)\delta^{(k-1)}(\bsigma)\big) .
\end{equation*}
\end{proposition}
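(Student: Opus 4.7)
The plan is to establish the identity by directly expanding $(\D^{\!\rm T}_\bsigma)^\dagger$ acting on $\delta(\bmu)\delta^{(k-1)}(\bsigma)$ and showing that, after suitable simplification, the result equals $(d-k-2)\delta(\bmu)\delta^{(k)}(\bsigma)$. Since $\delta(\bmu)\delta^{(k-1)}(\bsigma)$ is a density of weight $w=-k-1$, the explicit formula~\nn{LTdagger} specializes to
\[
(\D^{\!\rm T}_\bsigma)^\dagger \;=\; \tfrac{d-2k-1}{d-2k-2}\,\D_\bsigma \;+\; (d-k-2)\bigl[\Lodz_\bmu\langle\bsigma,\bmu\rangle\bigr] \;+\; \tfrac{1}{(d-2k)(d-2k-2)}\,\D_\bmu^{\,2}\!\circ\bsigma\, .
\]
The excluded values $k\in\{\tfrac{d}{2}-1,\tfrac{d}{2}\}$ correspond exactly to the poles of these numerical coefficients, i.e.\ to the critical Yamabe weights at which Definition~\ref{LTdef} (and hence the adjoint formula) must be replaced by its modified variants.

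First I would attack the $\D_\bsigma$ summand via the Leibniz rule~\nn{Leibniz} with $w=-1$ and $w'=-k$, producing three contributions: a principal diagonal term $\delta(\bmu)\D_\bsigma\delta^{(k-1)}(\bsigma)$ which, by Equation~\nn{Dondel} together with the unit condition $\bS_\bsigma=1+\mathcal{O}(\bsigma^d)$ and the distributional identity $\bsigma^d\delta^{(k)}(\bsigma)=0$ for $k<d$, evaluates to $(d-k-1)\delta(\bmu)\delta^{(k)}(\bsigma)$; a second diagonal term $(\D_\bsigma\delta(\bmu))\delta^{(k-1)}(\bsigma)$ which is handled by Lemma~\ref{lapdelder} with $\bsigma,\bmu$ interchanged, yielding $-\bsigma\bS_\bmu\delta''(\bmu)\delta^{(k-1)}(\bsigma)+(d-4)\langle\bsigma,\bmu\rangle\delta'(\bmu)\delta^{(k-1)}(\bsigma)$ (the first of which reduces via the minimal unit condition $\bS_\bmu=1+\mathcal{O}(\bmu^d)$); and a bracket correction $\Langle\delta(\bmu),\delta^{(k-1)}(\bsigma)\Rangle$ from the last term of~\nn{Leibniz}.

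For the second summand, Lemma~\ref{C} allows me to substitute $\langle\bsigma,\bmu\rangle=\bmu\bC+\bsigma^{d-1}\bB$ modulo corrections of order $\bsigma\bmu^{d-1}$ coming from the logarithmic term in $\bM^\bsigma_\bmu$, and these corrections are annihilated by the distributions when $k\leq d-3$; after applying $\Lodz_\bmu$, only the $\bmu\bC$ piece survives non-trivially. For the third summand, the distributional identity $\bsigma\delta^{(k-1)}(\bsigma)=-(k-1)\delta^{(k-2)}(\bsigma)$ (which vanishes identically when $k=1$, consistent with the formula at that value) reduces the expression to $-(k-1)\D_\bmu^{\,2}[\delta(\bmu)\delta^{(k-2)}(\bsigma)]$, which I would then expand by two applications of Lemma~\ref{lapdelder} in the $\bmu$ variable, combined with Leibniz~\nn{Leibniz}.

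The main obstacle is the coordinated bookkeeping required to show that all contributions not of the form $\delta(\bmu)\delta^{(k)}(\bsigma)$---namely the bracket term $\Langle\delta(\bmu),\delta^{(k-1)}(\bsigma)\Rangle$, the $\langle\bsigma,\bmu\rangle\delta'(\bmu)$ cross-pieces, and the $\D_\bmu^{\,2}$ contributions from the third summand---mutually cancel, while the coefficients of the surviving $\delta(\bmu)\delta^{(k)}(\bsigma)$ terms sum to precisely $(d-k-2)$. The unit and minimal unit defining properties of $\bsigma$ and $\bmu$ are indispensable here: they guarantee that the residuals $\bS_\bsigma-1$, $\bS_\bmu-1$, and $\langle\bsigma,\bmu\rangle-\bmu\bC-\bsigma^{d-1}\bB$ are each of order high enough to be annihilated by the distributions $\delta^{(k-1)}(\bsigma)$ and $\delta(\bmu)$ throughout the admissible range $k\in\{1,\dots,d-3\}$. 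As a sanity check and fallback, one may verify the identity pointwise by passing to a scale $g\in\cc$ and reducing the distributional manipulations, in a product (Fubini) neighborhood of $\Lambda$, to elementary one-dimensional identities for $\delta(x)$ and its derivatives.
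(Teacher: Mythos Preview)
Your proposal is correct and follows essentially the same computational route as the paper, just run in the opposite direction: the paper computes the three building blocks $\D_\bsigma\big(\delta(\bmu)\delta^{(k-1)}(\bsigma)\big)$, $\D_\bmu\big(\delta(\bmu)\delta^{(k-1)}(\bsigma)\big)$, and $\D_\bmu^2\big(\delta(\bmu)\delta^{(k-2)}(\bsigma)\big)$, solves linearly for $\delta(\bmu)\delta^{(k)}(\bsigma)$, and then recognizes the resulting combination as $(\D_\bsigma^{\!\rm T})^\dagger$, whereas you start from the adjoint formula and expand it to reach the same identity. One minor difference is your treatment of the $(\Lodz_\bmu\langle\bsigma,\bmu\rangle)$ term via direct substitution of $\langle\bsigma,\bmu\rangle=\bmu\bC+\bsigma^{d-1}\bB$ from Lemma~\ref{C}, while the paper instead obtains it by applying the Leibniz rule to $\D_\bmu\big(\langle\bsigma,\bmu\rangle\,\delta(\bmu)\delta^{(k-1)}(\bsigma)\big)=0$; both are valid and yield the same contribution.
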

 
\begin{proof}
We proceed along similar lines to the proof of 
Proposition~\ref{thetadelta}. First we use the generalized Leibniz rule~\nn{Leibniz}, Lemma~\ref{lapdelder} and
Equation~\nn{LARA}, avoiding special dimensions and values of $k$,
to compute 
\begin{multline}\label{intermediate}
\D_\bsigma\big(\delta(\bmu)\delta^{(k-1)}(\bsigma)\big)=
(d-k-3)\delta(\bmu)\delta^{(k)}(\bsigma)\\
+(d-4)\langle\bsigma,\bmu\rangle\delta'(\bmu)\delta^{(k-1)}(\bsigma)
+(k-1)\delta''(\bmu)\delta^{(k-2)}(\bsigma)\, .
\end{multline}
The above display in fact holds in all dimensions and for $k\in \{2,\ldots, d-3\}$.
When $k=1$, the last term is absent. 
Equation~\nn{Lmdmds} gives 
$
\D_\bmu\big(\delta(\bmu)\delta^{(k-1)}(\bsigma)\big)
=(d-2k-2)\delta'(\bmu)\delta^{(k-1)}(\bsigma)
$,
so it remains to employ our Leibniz identities to calculate (when $k\neq 1$)
\begin{multline*}
\D_\bmu^2\big(\delta(\bmu)\delta^{(k-2)}(\bsigma)\big)=(d-2k)\D_\bmu\big(\delta'(\bmu)\delta^{(k-2)}(\bsigma)\big)\\
=(d-2k)\big[
\delta(\bmu)\delta^{(k)}(\bsigma)
+(d-2k)\langle\bsigma,\bmu\rangle\delta'(\bmu)\delta^{(k-1)}(\bsigma)
 \\
+(d-2k-1)\delta''(\bmu)\delta^{(k-2)}(\bsigma)\big]\, ,
\end{multline*}
where again the last term is absent when $k=1$.
In the above we used  the identity
$$\, 
\bmu\Langle\delta'(\bmu),\delta^{(k-2)}(\bsigma)\Rangle
\!=-2\langle\bsigma,\bmu\rangle\delta'(\bmu)\delta^{(k-1)}(\bsigma)
-\tfrac{2}{d-2k}\delta(\bmu)\delta^{(k)}(\bsigma) 
-\tfrac{3(k-1)}{d-6} \delta''(\bmu)
\delta^{(k-2)}(\bsigma) \, ,
$$ 
which can be derived from  Equation~\nn{thebracket}. However once again, the display before last in fact holds for $k\in \{2,\ldots, d-3\}$.

Simple algebra now gives 
\begin{multline*}
\delta(\bmu)
\delta^{(k)}(\bsigma)=
\frac{1}{d-2k-2}\Big(\Big[
\frac{d-2k-1}{d-k-2}
\D_\bsigmas-\langle\bsigma,\bmu\rangle\D_\bmus
\Big]\big(\delta(\bmu)\delta^{(k-1)}(\bsigma)\big)\\
-\frac{k-1}{(d-k-2)(d-2k)}\D_\bmus^2
\big(\delta(\bmu)\delta^{(k-2)}(\bsigma)\big)\Big)~.
\end{multline*}
Here, using that 
$\D_\bmu\big(\langle\bsigma,\bmu\rangle  
\delta(\bmu) \delta^{(k-1)}(\bsigma)\big) =0$
together with the Leibniz rule \eqref{Leibniz} (for which one needs to use that $\bmu\Langle\langle\bsigma,\bmu\rangle, \delta(\bmu)\delta^{(k-1)}(\bsigma)\Rangle=-(\widehat \D_\bmu \langle\bsigma,\bmu\rangle)\,  \delta(\bmu)\delta^{(k-1)}(\bsigma)\hh$),  
we can then rewrite the second term in the above
expression as 
\begin{equation*}
\langle\bsigma,\bmu\rangle \D_\bmu 
\big( \delta(\bmu) \delta^{(k-1)}(\bsigma)\big)
=- (d-2k-2) \big(\widehat \D_\bmu \langle\bsigma,\bmu\rangle\big)
\delta(\bmu) \delta^{(k-1)}(\bsigma)~,
\end{equation*}
while (for $k\neq 1$) the third term can be combined with the first two
by means of $\bsigma\delta^{(k-1)}(\bsigma)=-(k-1)\delta^{(k-2)}(\bsigma)$,
producing 
\begin{equation*}
\begin{split}
\delta(\bmu)\delta^{(k)}(\bsigma)=
\frac{1}{d-k-2}\Bigg[\frac{d-2k-1}{d-2k-2} \, \D_\bsigmas 
+(d-k-2)\big(\widehat \D_\bmu \langle\bsigma,\bmu\rangle\big)\hspace{1cm}
\\\hspace{1cm}
+\, \frac{\D_\bmus^2\circ \bsigma}{(d-2k)(d-2k-2)} \Bigg]
\big(\delta(\bmu)\delta^{(k-1)}(\bsigma)\big)\, .
\end{split}
\end{equation*}
Expressing the factors within the square bracket in terms
of the weight $w=-k-1$, one identifies the formal adjoint 
operator \eqref{LTdagger}, and thus completes the proof.

\end{proof}

\begin{remark}
Proposition~\ref{LST} may also be read as saying
$$
\frac{1}{d-k-2}
\big(\D_\bsigma^{\!\rm T}\big)^\dagger 
\, 
\big(\delta(\bmu)\delta^{(k-1)}(\bsigma)\big) =
\frac{1}{d-k-1}\,  \delta(\bmu)\, \D_\bsigmas\delta^{(k-1)}(\bsigma)\, .
$$
Notice that the right hand side integrated against a density-valued test function $\bm f$ is independent of the extension of $\bm f$ off of the zero locus of $\bmu$. This explains why the formal adjoint of a tangential operator
must appear on the left hand side. Moreover, the above display
suggests an obvious extension to higher codimension problems where the anchoring submanifold is the zero locus of a set of defining densities $\{\bmu_1,\ldots,\bmu_\ell\}$ and one deals with products of delta distributions $\delta(\bmu_1)\cdots \delta(\bmu_\ell)$.
\end{remark}

We now develop the  integrated analog of Proposition~\ref{LST} and extend this to include  critical weights.

\begin{proposition}\label{bigcor}
Let $\bsigma$ be a unit defining density 
and $\bmu$ a minimal unit defining density.
Then if $\bm f$ is a weight $w=k+2-d$ density with 
$k\in
 \{1,\ldots, d-3\}$ and $d\geq 4$,
\begin{equation}\label{master2}
\int_M 
\delta(\bmu)\delta^{(k)}(\bsigma) 
\bm f=\frac{1}{d-k-2}\, 
\int_M \delta(\bmu)\delta^{(k-1)}(\bsigma) \, 
\D_\bsigma^{\!\rm T}\, \bm f\, .
\end{equation}
\end{proposition}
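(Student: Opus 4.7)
The plan is to establish Proposition~\ref{bigcor} by integrating the operator identity of Proposition~\ref{LST} against the test density $\bm f$, using the formal self-adjointness relation~\eqref{FSA} of the Laplace--Robin operator, and then separately handling the two critical weights that Proposition~\ref{LST} explicitly excludes. Since the integrand $\delta(\bmu)\delta^{(k)}(\bsigma)\bm f$ has support concentrated along the compact codimension-two submanifold $\Lambda=\Sigma\cap\Xi$, which by assumption does not meet $\partial M$ (or is cut off away from it), all boundary terms arising in the integration-by-parts manipulations can be safely discarded.

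First, for the non-critical range $k\in\{1,\ldots,d-3\}$ with $k\neq \tfrac d2-1,\tfrac d2$, I would apply Proposition~\ref{LST} to rewrite $\delta(\bmu)\delta^{(k)}(\bsigma)$ as $\tfrac{1}{d-k-2}(\D_\bsigma^{\!\rm T})^\dagger\big(\delta(\bmu)\delta^{(k-1)}(\bsigma)\big)$ and pair both sides with $\bm f$. The weight of $\bm f$ is $w=k+2-d$, so that $\D_\bsigma^{\!\rm T}$ from Equation~\eqref{LTnoncrit} acts between appropriate weights; the adjoint formula~\eqref{LTdagger} was obtained precisely by applying formal self-adjointness of each $\D$-factor appearing in $\D_\bsigma^{\!\rm T}$, so moving $(\D_\bsigma^{\!\rm T})^\dagger$ from the distribution side onto $\bm f$ yields exactly $\D_\bsigma^{\!\rm T}\bm f$ and the identity~\eqref{master2} follows at once.

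The bulk of the work is the two critical weights $k=\tfrac d2-1$ (so $w=1-\tfrac d2$) and $k=\tfrac d2$ (so $w=2-\tfrac d2$). Here Proposition~\ref{LST} itself is unavailable because the derivation used the standard Leibniz rule~\eqref{Leibniz}, which breaks down at Yamabe weight. I would redo the chain of identities in the proof of Proposition~\ref{LST} at these critical values, this time employing the modified Leibniz rules~\eqref{Leibniz'} and~\eqref{Leibniz''} together with the tilded operator $\widetilde\D_\bsigmas$ from Equation~\eqref{Lodzhat}. The key observation is that the definitions~\eqref{LTcrit1} and~\eqref{LTcrit2} of $\D_\bsigma^{\!\rm T}$ at critical weights were chosen precisely so that, after integration by parts, the additional terms produced by the modified Leibniz rules reassemble into $\D_\bsigma^{\!\rm T}$ (or into its appropriate adjoint version) on the test density side. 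Explicitly, the combinations $\bsigma\widehat\D_\bmu\widetilde\D_\bmu$, $\bsigma\widetilde\D_\bmu\widehat\D_\bmu$, and the correction $\tfrac{d-2}2(\widehat\D_\bmu\langle\bsigma,\bmu\rangle)$ appearing in Definition~\ref{LTdef} reproduce exactly the residue of the would-be singular prefactor $(d+2w-2)^{-1}$ or $(d+2w-4)^{-1}$ in~\eqref{LTnoncrit}.

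The main obstacle will be this critical-weight bookkeeping: one must verify that the poles at $w=1-\tfrac d2$ and $w=2-\tfrac d2$ in the non-critical formula~\eqref{LTnoncrit} cancel exactly against the corresponding poles appearing when commuting derivatives past $\bsigma$ and $\bmu$ in the distributional Leibniz rule, and that the finite remainder coincides with the prescribed critical-weight definitions. A useful sanity check is Lemma~\ref{techlemm}, which controls the tangential behavior of $\D'_\bmu$ along $\Xi$; combined with Equation~\eqref{snappo} (the analog of which is needed to process terms like $\delta'(\bmu)\delta^{(k-1)}(\bsigma)\bm f$ appearing in the critical analysis), this should close the argument. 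Once both critical cases are verified, formula~\eqref{master2} holds uniformly for all $k\in\{1,\ldots,d-3\}$.
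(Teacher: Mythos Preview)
Your treatment of the non-critical range is exactly the paper's: Proposition~\ref{LST} plus formal self-adjointness of each $\D$-factor immediately gives~\eqref{master2} when $k\neq\tfrac d2-1,\tfrac d2$.

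For the two critical weights your plan diverges from the paper's execution. You propose to rerun the operator-identity proof of Proposition~\ref{LST} with the modified Leibniz rules~\eqref{Leibniz'},~\eqref{Leibniz''} substituted in. The paper instead abandons the LST template at the critical weights and starts over from the single identity
\[
\int_M \delta(\bmu)\delta^{(k)}(\bsigma)\bm f
=\frac{1}{d-k-1}\int_M \delta^{(k-1)}(\bsigma)\,\D_\bsigmas\!\big(\delta(\bmu)\bm f\big),
\]
obtained from~\eqref{Dondel} and self-adjointness of $\D_\bsigmas$. It then computes $\D_\bsigmas(\delta(\bmu)\bm f)$ explicitly in a chosen scale $g\in\cc$, feeds the result back in, and repeatedly uses~\eqref{snappo},~\eqref{Lmdmds}, and the identity~\eqref{libel} $\delta(\bmu)\bm\updelta_{\rm R}^{\sss\Xi}\delta^{(k)}(\bsigma)=0$ to reduce everything to the form required by Definition~\ref{LTdef}. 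Your route---tracking how the three intermediate identities in the LST proof deform at critical weight---should in principle also succeed, but the paper's direct scale computation avoids having to control the modified bracket $\Langle\hspace{-3.25mm}- \hspace{-1mm}\cdot,\cdot\Rangle$ simultaneously at several places and makes the emergence of the specific combinations in~\eqref{LTcrit1} and~\eqref{LTcrit2} more transparent. Either way the critical cases are genuinely laborious; your sketch correctly identifies the ingredients (\eqref{snappo}, the tilded operators, pole cancellation) but does not yet constitute a proof of those cases.
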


\begin{proof}
The proof for non-critical weights
$w\neq 1-\tfrac d2,2-\tfrac d2$ follows
directly from Proposition~\ref{LST}.
For critical weights, first observe that
by virtue of Equation~\nn{Dondel} and formal self-adjointness of $\D_\bsigmas$ we have
\begin{equation}\label{Startpt}
\int_M \delta(\bmu)\delta^{(k)}(\bsigma) \bm f
=\frac1{d-k-1}\int_M \delta^{(k-1)}(\bsigma) \D_\bsigmas \!\big(\delta(\bmu)\bm f\big)\, .
\end{equation}
We now break the computation of $ \D_\bsigmas \!\big(\delta(\bmu)\bm f\big)$ into the two cases of interest.

\medskip

{\it Case (i)} $w=1-\frac d2$ and thus $k=\frac d2-1$:
We begin by computing $
\D_\bsigma\big(\delta(\bmu)\bm f\big)
$ using the method of proof of Lemma~\ref{crittheta}. We pick a  scale $g\in \cc$ and find \begin{multline}
-2f (\nabla_n-\rho_\sigma)\delta(\mu)
-2 \delta(\mu) (\nabla_n+[1-\tfrac d2]\rho_\sigma)f
-\sigma (\Delta^g-\tfrac d2 J)\big(\delta(\mu)f\big)\\[1mm]
=-2f(m.n+\mu \rho_\sigma+\sigma \rho_\mu)\delta'(\mu)
-2 \delta(\mu) (\nabla_n+[1-\tfrac d2]\rho_\sigma)f
-\sigma (m^2+2\mu \rho_\mu )\delta''(\mu)f
\\
-2\sigma \delta'(\mu) (\nabla_m+[1-\tfrac d2]\rho_\mu) f
-\sigma \delta(\mu)  (\Delta^g+[1-\tfrac d2] J) f\, .
\label{just a comp}
\end{multline}
Thus, using that $\bsigma$ and $\bmu$ are unit and minimal unit defining densities respectively, 
we have
$$
\D_\bsigma\big(\delta(\bmu)\bm f\big)=
\delta(\bmu)(\D_\bsigmas 
-2\widetilde\D_\bsigmas) \bm f 
-2\delta'(\bmu)\big(\bsigma \hh\widetilde\D_\bmus \bm  +
 \langle\bsigma,\bmu\rangle \big)\bm f
-\bsigma \delta''(\bmu) \bm f \, .
$$
Using the identities $\bm \sigma \delta^{(\frac d2-2)}(\bsigma)=-\frac12(d-4)\, \delta^{(\frac d2 -3)}(\bsigma)$ and $\D_\bsigma \bm f = -\bsigma \square \bm f$,
we so far have
\begin{multline*}
\int_M \delta(\bmu)\delta^{(\frac d2-1)}(\bsigma) \bm f
=\frac2{d}\int_M
\Big(-2\delta^{(\frac d2-2)}(\bsigma)\big[
\delta(\bmu)\hh\widetilde\D_\bsigmas 
+ 
\delta'(\bmu)
  \langle\bsigma,\bmu\rangle\big] \bm f
\\[1mm]
\qquad\qquad\qquad
+\frac12 (d-4)\hh \delta^{(\frac d2-3)}(\bsigma)\Big[
\delta(\bmu)\hh \square
+2\delta'(\bmu)\hh\widetilde\D_\bmus 
+\delta''(\bmu)\Big]
\bm f
\Big)\, .
\end{multline*}
 We can handle the terms involving $\delta^{(\frac d2-2)}(\bsigma)
\delta'(\bmu)$
and $\delta^{(\frac d2-3)}(\bsigma)
\delta'(\bmu)$ in the above expression
 using Equations~\nn{snappo}
 and~\nn{Lmdmds}, respectively;  remembering that $ \langle\bsigma,\bmu\rangle|_\Xi={\mathcal O}(\bsigma^{d-1})$ and that $\D_\bmus$ is formally self-adjoint, we thus have 
\begin{multline*}
\int_M \delta(\bmu)\delta^{(\frac d2-1)}(\bsigma) \bm f
=\frac2{d}\int_M
\Big(
-2\delta^{(\frac d2-2)}(\bsigma)\delta(\bmu)\Big[\hh
\widetilde\D_\bsigmas 
-
\big(\Lodz_\bmus  \langle\bsigma,\bmu\rangle\big)\Big] \bm f
\\[1mm]
\qquad\qquad
+\frac12(d-4)\hh \delta^{(\frac d2-3)}(\bsigma)\Big[\delta(\bmu)
\big(\square
-2\Lodz_\bmu\hh\widetilde\D_\bmus \big)
+\delta''(\bmu)\Big]
\bm f
\Big)\, .
\end{multline*}
To handle the last term above,
we recall that $\D_\bmu \delta'(\bmu)=(d-3)\delta''(\bmu)$ and so 
perform a  computation similar to that of Display~\nn{just a comp} to obtain 
\begin{multline*}
\D_\bmus\big(\delta^{(\frac d2-3)}(\bsigma)\bm f\big)=
\delta^{(\frac d2-3)}(\bsigma)\big(\D_\bmus
-(d-4)\hh\widetilde\D_\bmus\big) \bm f \\[1mm]
-\hh \delta^{(\frac d2-2)}(\bsigma)\big(2\bmu\hh \widetilde\D_\bsigmas +(d-4)
 \langle\bsigma,\bmu\rangle \big)\bm f
-\bmu\hh  \delta^{(\frac d2-1)}(\bsigma)\bm f  \, .
\end{multline*}
Using Equations~\nn{Dondel},~\nn{snappo}
 and~\nn{Lmdmds} again, as well as $\bmu \delta'(\bmu)=-\delta(\bmu)$ and $\delta(\bmu) \hh \Lodz_\bmus \D_\bmus \bm f = -\delta(\bmu) \square \bm f$, gives
 \begin{multline*}
 \int_M \delta^{(\frac d2-3)}(\bsigma)\delta''(\bmu) \bm f =
 \frac1{d-3} \int_M \delta(\bmu)
 \Big(
 \delta^{(\frac d2-3)}(\bsigma) \big(\square
+(d-4)
\hh\Lodz_\bmus\widetilde\D_\bmus\big) \bm f 
\\[1mm]
\qquad\qquad 
+
\delta^{(\frac d2-2)}(\bsigma) 
\big(2 \widetilde\D_\bsigmas \bm f
 +(d-4)\bm f  \Lodz_\bmus
 \langle\bsigma,\bmu\rangle \big)
 +\delta^{(\frac d2-1)}(\bsigma) \bm f
\Big)\, .
 \end{multline*}
 Collating the above calculations yields
\begin{align*}
 \int_M 
\delta(\bmu)\hh\delta^{(\frac d2-1)}(\bsigma) 
\bm f=
\frac{1}{d-2}
\int_M\delta(\bmu) \Big(\delta^{(\frac d2-2)}(\bsigma) 
\Big[
&-2\hh\widetilde\D_\bsigmas
+(d-2)\big(\Lodz_\bmu \langle\bsigma,\bmu\rangle\big)
\Big]\\
&-(d-4)\delta^{(\frac d2-3)}(\bsigma) \, 
\big[\Lodz_\bmu \widetilde \D_\bmu
-\square\big]\Big)\bm f~.
 \end{align*}
Here, using the identity 
$\delta^{(\frac d2-3)}(\bsigma)=
-\frac{2}{d-4}\bsigma \delta^{(\frac{d}{2}-2)}(\bsigma)$
and the fact that acting on a critical density 
$\D_\bsigma \bm f=-\bsigma \square \bm f$, we can 
rewrite the previous display as
\begin{align*}
\int_M \delta(\bmu)\delta^{(\frac d2-1)}(\bsigma)\bm f
=\frac{2}{d-2} \int_M\!\delta(\bmu)\delta^{(\frac d2-2)}(\bsigma) 
\Big[ \D_\bsigma 
+\tfrac{d-2}{2}\big(\Lodz_\bmu \langle\bsigma,\bmu\rangle\big)
+\big(\bsigma \widehat \D_\bmu \widetilde \D_\bmu\!
-\!\widetilde \D_\bsigma \big)\Big]\bm f .
 \end{align*}
Then  the quoted 
result follows  by virtue of Equation \eqref{LTcrit1} in 
Definition~\ref{LTdef}.
  
\medskip 

{\it Case (ii)} $w=2-\frac d2$ and $k=\frac d2$: Note that this case only pertains to $d>4$. We begin by first noting that
$$
\D_\bsigmas  \big(\delta(\bmu)\bm f\big)
=-\bsigma \square\big(\delta(\bmu)\bm f\big)\, .
$$
Now the proof again proceeds along the lines of
Lemma~\ref{crittheta}. Choosing a scale $g\in\cc$ we compute $\square \big(\delta(\bmu)\bm f\big)$ and find (using $\delta(\mu)=-\mu\delta'(\mu)$)
\begin{multline*}
\big(\Delta+[1-\tfrac d2] J\big)\big(\delta(\mu) f\big)=
\big(m^2 \delta''(\mu)+\nabla.m\hh \delta'(\mu)\big) f
+ 2 \delta'(\mu) \nabla_m f 
+\delta(\mu)\hh  \big(\Delta+[1-\tfrac d2] J\big) f\, .\\[1mm]
=\delta''(\mu)(m^2+2\rho_\mu\mu)f + \delta'(\mu)\big[2(\nabla_m+[2-\tfrac d2]\rho_\mu) f -
\mu (\Delta + [2-\tfrac d2]J)f\big]\, .
\end{multline*}
Thus, using the unit conformal property of $\bmu$ and Equation~\nn{Dondel},
it follows that
$$
\, 
\, \D_\bsigmas \! \big(\delta(\bmu)\bm f\big)=
-\bsigma \delta''(\bmu)\bm f
-2\bsigma \delta'(\bmu) \Lodz_\mu  \bm f=-
\frac\bsigma{d-2}\hh
\Big(
\frac{1}{d-3}\hh
 \bm f
\D_\bmus 
+2 (\Lodz_\mu  \bm f) \Big)\D_\bmu\delta(\bmu)\, .
$$
In turn via formal self-adjointness of $\D_\bmus$ and the identity $\bsigma \delta^{(\frac d2-1)}(\bsigma)=-\frac12 (d-2)\delta^{(\frac d2-2)}(\bsigma)$, it follows from Equation~\nn{Startpt} that
\begin{multline}\label{1/2way}
\int_M \delta(\bmu)\delta^{(\frac d2)}(\bsigma) \bm f
=\frac{1}{d-2}\int_M 
\delta(\bmu)
\Big(\frac{1}{d-3}\D_\bmus^2\big(
\delta^{(\frac d2-2)}
(\bsigma)\bm f\big)
+2 
\D_\bmu\big(\delta^{(\frac d2-2)}
(\bsigma)
 \Lodz_\bmus  \bm f\big)
\Big)
\, .
\end{multline}
Care is now needed because the delta distribution $\delta^{(\frac d2-2)}(\bsigma)$ has Yamabe weight $1-\frac d2$. A simple computation in a choice of scale shows that because the defining density $\bmu$ is minimal unit, we have 
\begin{equation}\label{libel}
\delta(\bmu)
\bm \updelta_{\rm R}^{\sss\Xi}
\delta^{(k)}(\bsigma)=0\, ,
\end{equation}
for any positive integer $k\leq d-3$. Here $\bm \updelta_{\rm R}^{\sss\Xi}$ is the Robin operator defined in Equation~\nn{ROBIN}.
In particular this implies that $\delta(\bmu)\hh\widetilde\D_\bmus \hh\delta^{(\frac d2-2)}(\bsigma)=0$.
Then using this fact in Equation~\nn{Leibniz''} applied to the last term in Equation~\nn{1/2way} we have
\begin{multline*}
 \Lodz_\bmu \big(\delta^{(\frac d2-2)}(\bsigma)\hh \Lodz_\bmus\bm f\big)=
\Big(\widetilde\D_\bmus\delta^{(\frac d2-2)}(\bsigma)-\tfrac 12\bmu\square\delta^{(\frac d2-2)}(\bsigma)\Big)\Lodz_\bmus\bm f 
\\[1mm]+ \delta^{(\frac d2-2)}(\bsigma)\hh \big(\widetilde\D_\bmus
\Lodz_\bmus \bm f -\tfrac 12\bmu\square\Lodz_\bmus \bm f \big) 
+\bmu \, \,  
\Langle\hspace{-3.35mm}- \hspace{-.8mm} \delta^{(\frac d2-2)}(\bsigma), \Lodz_\bmus\bm f\hh\rangle\hspace{-2.2mm}-\, ,
\end{multline*}
so
$$
\delta(\bmu)\D_\bmu\big(\delta^{(\frac d2-2)}
(\bsigma)
 \Lodz_\bmus  \bm f\big)
=-(d-2)\delta(\bmu)\delta^{(\frac d2-2)}
(\bsigma)\widetilde\D_\bmu\Lodz_\bmu \bm f\, .
$$
This handles the last term in Equation~\nn{1/2way} so we now have
\begin{multline*}\label{2/3way}
\int_M \delta(\bmu)\delta^{(\frac d2)}(\bsigma) \bm f
=\frac{1}{d-2}\int_M 
\delta(\bmu)
\Big(\frac{1}{d-3}\D_\bmus^2\big(
\delta^{(\frac d2-2)}
(\bsigma)\bm f\big)
-2(d-2) 
\delta^{(\frac d2-2)}
(\bsigma)
\widetilde\D_\bmus
 \Lodz_\bmus  \bm f
\Big)
\, .
\end{multline*}
To treat the first term on the right hand side of the above display, we first note that
$\D_\bmus^2\big(
\delta^{(\frac d2-2)}
(\bsigma)\bm f\big)
=(d-2)(d-4)\hh\Lodz_\bmu^2
\big(
\delta^{(\frac d2-2)}
(\bsigma)\bm f\big)
$, so
\begin{equation}
\label{stuff}
\delta(\bmu) \D_\bmus^2\big(
\delta^{(\frac d2-2)}(\bsigma)\bm f\big)
=(d-2)(d-4)\hh \delta(\bmu)\hh \bm\updelta_{\rm R}^{\sss\Xi}\hh 
\Lodz_\bmus
\big(
\delta^{(\frac d2-2)}(\bsigma)\bm f\big)\, .
\end{equation}
Note that because the Robin operator $\bm{\updelta}_{\rm R}^{\sss\Xi}$ is first order, it obeys the standard Leibniz rule.
Then we  use Equation~\nn{Leibniz'} to compute 
\begin{multline*}
\Lodz_\mu \big(\delta^{(\frac d2-2)}(\bsigma)\hh \bm f\big)=
\Big(\big(\widetilde\D_\bmus-\tfrac1{d-4}\D_\bmus\big)\delta^{(\frac d2-2)}(\bsigma)\Big)\bm f + \delta^{(\frac d2-2)}(\bsigma)\hh \Lodz_\bmus \bm f +\frac{2\bmu}{d-4} \: \,  
\, \Langle\hspace{-2.6mm}- \hspace{-.1mm}\,\! \delta^{(\frac d2-2)}(\bsigma), \bm f\Rangle\\[1mm]
=
\frac{1}{d-4} \bmu \delta^{(\frac d2)}(\bsigma) \bm f
+
\delta^{(\frac d2-2)}(\bsigma)\hh \Lodz_\bmu \bm f
+
\delta^{(\frac d2-1)} 
(\bsigma)
\Big[
\frac{2\bmu}{d-4} \hh \Lodz_\bsigma \bm f
+
\langle \bsigma,\bmu\rangle\hh \bm f\Big]\, .
\end{multline*}
The terms in square brackets in the last line above come from the first and last terms on the right hand side of the previous line. This requires a computation in a choice of scale of the type performed above, relying on Equation~\nn{takeoff}.
The remaining terms on 
 last line above used $\D_\bmus \delta^{(\frac d2-2)}(\bsigma)=-\bmu \square\delta^{(\frac d2-2)}(\bsigma)$ and 
that $\bm\sigma$ is unit defining so 
$$
\square\,  \delta^{(\frac d2-2)}(\bsigma) = \delta^{(\frac d2)}(\bsigma) \, .
$$
Now using Equations~\nn{stuff} and~\nn{libel},
%
and $\delta(\bmu)\delta_{\, \rm R}^\Xi\bmu=\delta(\bmu)$ it follows that
\begin{multline*}
\delta(\bmu) \D_\bmus^2\big(
\delta^{(\frac d2-2)}(\bsigma)\bm f\big)
=(d-2) 
\delta(\bmu)\delta^{(\frac d2)}(\bsigma) \bm f+
(d-2)(d-4)\hh \delta(\bmu)\hh
\delta^{(\frac d2-2)}(\bsigma)\hh \widetilde\D_\bmus\Lodz_\bmu \bm f
\\[1mm]
+(d-2)\delta(\bmu)
\delta^{(\frac d2-1)} 
(\bsigma)
\Big[
  2\Lodz_\bsigma \bm f
+(d-4)\bm f\Lodz_\bmus
\langle \bsigma,\bmu\rangle\hh \Big]
\, .
\end{multline*}
Orchestrating the above gives
\begin{multline*}
\int_M \delta(\bmu)\delta^{(\frac d2)}(\bsigma) \bm f
=\int_M 
\delta(\bmu)
\Big(-
\frac{d-2}{d-4}
\hh \hh
\delta^{(\frac d2-2)}(\bsigma)\hh \widetilde\D_\bmus\Lodz_\bmu \bm f
\\[1mm]
+\frac{1}{d-4}
\delta^{(\frac d2-1)} 
(\bsigma)
\Big[
  2\, \Lodz_\bsigma \bm f
+(d-4)\bm f\Lodz_\bmus
\langle \bsigma,\bmu\rangle\hh \Big]
\Big)
\, .
\end{multline*}
Using the identity
$\delta^{(\frac d2-2)}(\bsigma)=
-\frac{2}{d-2}\bsigma \delta^{(\frac{d}{2}-1)}(\bsigma)$
one recognizes (from Equation~\eqref{LTcrit2})
 the action of the tangential operator on
$\bm f$. The final result then follows. 
\end{proof}


We have now established the two main recursive relations
needed to compute integrals of the type~\eqref{Intpq} and in turn the regulated volume and 
area expansions in Equations~\nn{reg_vol} and~\nn{AREA}. The   proposition below gives our result for such integrals. Its proof follows simply by iterating
the recursions given in Equations~\eqref{master1}~and~\eqref{master2}.
\begin{proposition}\label{cool}
Let $\bsigma$ be a unit defining density and 
$\bmu$ a minimal unit defining density.
Then, if $\bm f$ is a weight $w\neq0$ density
and $1\leq k\leq d-3, d-2$, respectively, the following
relations hold
\begin{align*}
\int_M \delta(\bmu)\, \delta^{(k)}(\bsigma)\bm f
&=~\frac{(d-k-3)!}{(d-3)!} \int_M \delta(\bmu)\,\delta(\bsigma)
(\D^{\!\rm T}_\bsigma)^k \bm f ~,\\[3mm]
\int_M \theta(\bmu)\, \delta^{(k)}(\bsigma)\bm f
&=~\frac{(d-k-2)!}{(d-2)!}\,  \Bigg[ \int_M \theta(\bmu) \delta(\bsigma) \, \D_\bsigmas^{k}\bm f \\
&\hspace{2.7cm}+(d-2)\, \sum_{j=0}^{k-2}\int_M
\delta(\bmu) \delta(\bsigma) \, (\D^{\!\rm T}_\bsigma)^j\, 
\D^\prime_\bmu\,  \D^{k-2-j}_\bsigma\bm f \Bigg]~,
\end{align*}
where the second term on the right hand side of the last displayed equality is absent when~$k=1$.
\end{proposition}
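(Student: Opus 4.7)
The plan is to establish both identities by straightforward iteration of the master recursions~\eqref{master1} and~\eqref{master2} proved in the preceding two sections, together with careful bookkeeping of the prefactors. Because the first identity (for $\delta(\bmu)\delta^{(k)}(\bsigma)$) feeds into the proof of the second (for $\theta(\bmu)\delta^{(k)}(\bsigma)$), I would prove them in this order.

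For the first identity, I would apply~\eqref{master2} exactly $k$ times. Each application lowers the order of the delta derivative on $\bsigma$ by one and extracts a factor $1/(d-k-2)$ at that stage; after $k-j$ applications the inverse factor becomes $1/(d-(k-j)-2)$. The resulting telescoping product
$$
\prod_{j=0}^{k-1} \frac{1}{d-k-2+j} \;=\; \frac{(d-k-3)!}{(d-3)!}
$$
gives the claimed prefactor, while the $k$-fold composition $(\D^{\!\rm T}_\bsigma)^k$ appears directly. A small verification is that at each stage the density being integrated against has weight in the range where~\eqref{master2} (i.e.\ Proposition~\ref{bigcor}) applies; since starting from weight $w=k+2-d$ the weights encountered are $k+2-d,\,k+1-d,\,\dots,\,3-d,\,2-d$, and Proposition~\ref{bigcor} was stated and proved uniformly across the critical Yamabe weights, there is no obstruction.

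For the second identity, I would iterate~\eqref{master1}. Each application splits the integrand into a $\theta(\bmu)$-piece with $\D_\bsigmas$ applied to $\bm f$, plus a $\delta(\bmu)$-remainder involving $\D^\prime_\bmus$. After $k$ iterations the $\theta(\bmu)$-piece contracts to $\int_M \theta(\bmu)\delta(\bsigma)\D_\bsigmas^k\bm f$ with prefactor $\prod_{i=0}^{k-1}\frac{1}{d-k-1+i}=\frac{(d-k-2)!}{(d-2)!}$, matching the claimed leading term. The $\delta(\bmu)$-remainders take the form
$$
\frac{1}{(d-k-1)(d-k)\cdots(d-k-1+j)}\int_M \delta(\bmu)\,\delta^{(k-2-j)}(\bsigma)\,\D^\prime_\bmu\D_\bsigmas^{j}\bm f
$$
for $j=0,1,\dots,k-2$. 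Applying the first identity of the proposition to each such remainder converts it to an integral of $\delta(\bmu)\delta(\bsigma)(\D^{\!\rm T}_\bsigma)^{k-2-j}\D^\prime_\bmu\D_\bsigmas^j\bm f$ against the kernel, multiplied by $(d-k+j-1)!/(d-3)!$. The two factorials cancel cleanly to produce the uniform coefficient $(d-k-2)!/(d-3)!=(d-2)\cdot(d-k-2)!/(d-2)!$, matching the claimed prefactor $(d-2)\cdot(d-k-2)!/(d-2)!$ in the sum. Finally, reindexing $j\mapsto k-2-j$ brings the sum into the form stated in the proposition.

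The main obstacle I anticipate is purely combinatorial/bookkeeping rather than analytical: confirming that the accumulated prefactors collapse exactly to the stated $(d-k-3)!/(d-3)!$ and $(d-k-2)!/(d-2)!$ after the double iteration, and that the reindexing of the sum over the remainder terms gives precisely $\sum_{j=0}^{k-2}(\D^{\!\rm T}_\bsigma)^j\D^\prime_\bmu\D_\bsigmas^{k-2-j}$. A secondary technical point worth spelling out is that throughout the iteration the test densities never fall at weights where the operators $\D_\bsigmas$, $\D^\prime_\bmu$, or $\D^{\!\rm T}_\bsigma$ become ill-defined; this is guaranteed by the uniform treatments of the critical weights in Definitions~\ref{LTdef} and~\eqref{Lprime}, together with the hypothesis $w\neq 0$ which prevents the final weight in the iteration from hitting a value where the overall construction degenerates. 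The case $k=1$ of the second identity, where the sum is empty, follows directly as the base case of the iteration by the parenthetical clause in~\eqref{master1}.
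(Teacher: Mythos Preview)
Your proposal is correct and takes essentially the same approach as the paper, which simply states that the proof ``follows simply by iterating the recursions given in Equations~\eqref{master1} and~\eqref{master2}.'' Your detailed bookkeeping of the factorial prefactors and the reindexing of the remainder sum is exactly what this iteration entails, and the telescoping you describe is accurate.
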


\section{Holographic Formul\ae} 
\label{Sec:HolFor}
\subsection{Divergences}
We now deduce  results in arbitrary dimensions for the 
volume and area divergences of the regulated volume~$\Vol_\varepsilon$ and area~$\Area_\varepsilon$ defined in Section~\ref{Sec:RenVA}. Recall that in distributional form their coefficients are
given (from Equations~\nn{volumedivergences}
and~\nn{areadivergences}) by: 
\begin{align*}
v_k=\frac{(-1)^{d-1-k}}{(d-1-k)!k} \, \int_M\frac{
\theta(\bmu)\, \delta^{(d-1-k)}(\bsigma)}{\btau^k}\, , 
\quad
a_k=\frac{(-1)^{d-2-k}}{(d-2-k)!k}
\int_M \frac{\delta(\bmu)\delta^{(d-2-k)}(\bsigma)}{\btau^{k}}\, .
\end{align*}
The following pair of theorems express the  above expressions as integrals over $\widetilde\Sigma$ and $\Lambda$,
with local holographic formul\ae\ for integrands.
 These follow immediately from Proposition~\ref{cool} by setting    $\bm f=\btau^{-k}$, where the regulator $\btau$ is an  arbitrary
true scale.

\begin{theorem}\label{Vdiv} 
The 
divergences  in the regulated volume $\Vol_\varepsilon$
determined by the sequence of embeddings $\Lambda \hookrightarrow \Sigma \hookrightarrow (M,\cc)$ and the regulating hypersurface $\Sigma_\varepsilon$
are local integrals given by
\begin{align*}
{\rm Poles}(\Vol_\varepsilon)=\sum_{k={d-1}}^1
\frac{(-1)^{d-k-1}}{(d-k-1)!\hh k\hh \varepsilon^k} \, \frac{(k-1)!}{(d-2)!}\, 
\Bigg[\int_{\widetilde\Sigma} {\bm v}_k 
+&(d-2)\int_\Lambda {\bm  v}'_{k}\hh \Bigg]~.
\end{align*}
The integrands are given 
in terms of
the  unit  and minimal unit  defining 
densities $\bsigma$ and~$\bmu$ for $\Sigma$ and $\Xi$, respectively, and the 
regulator $\btau$, by the holographic formul\ae
\begin{equation*}
\bm v_k
=\D_\bsigmas^{d-1-k}\btau^{-k}\Big|_{\widetilde\Sigma}~,\quad
\bm v'_{k\leq d-3}=\sum_{j=0}^{d-k-3}
(\D_\bsigma^{\!\rm T})^j\, \D_\bmu^\prime \, \D^{d-k-3-j}_\bsigma \btau^{-k}
\Big|_{\Lambda}\, ,\quad \bm v'_{d-2}=0=\bm v'_{d-1} ~.  
\end{equation*}


\end{theorem}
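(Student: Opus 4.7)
The plan is to obtain Theorem~\ref{Vdiv} by applying Proposition~\ref{cool} directly to the distributional expression for $v_k$ given in Equation~\eqref{volumedivergences}, with the test density $\bm f=\btau^{-k}$ (of weight $-k\neq 0$) and with the order of derivative on $\delta(\bsigma)$ taken to be $d-1-k$. Substituting the order $d-1-k$ for the symbol ``$k$'' appearing in Proposition~\ref{cool} turns the combinatorial prefactor $\tfrac{(d-k-2)!}{(d-2)!}$ there into $\tfrac{(k-1)!}{(d-2)!}$, which, when multiplied by the prefactor $\tfrac{(-1)^{d-1-k}}{(d-1-k)!\,k}$ of~\eqref{volumedivergences}, reproduces the overall coefficient $\tfrac{(-1)^{d-k-1}}{(d-k-1)!\,k}\cdot\tfrac{(k-1)!}{(d-2)!}$ appearing in the statement.

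The second step is to convert the resulting ambient integrals over $M$ into surface integrals over $\widetilde\Sigma$ and $\Lambda$. For the ``bulk'' piece one applies the distributional identity~\eqref{deltasurface}: since $\bsigma$ is a unit defining density one has $\sqrt{\bm{\mathcal S}_\bsigmas}\big|_\Sigma=1$, so
\begin{equation*}
\int_M \theta(\bmu)\,\delta(\bsigma)\,\D_\bsigmas^{d-1-k}\btau^{-k}
=\int_{\widetilde\Sigma} \D_\bsigmas^{d-1-k}\btau^{-k}\big|_{\widetilde\Sigma}\, ,
\end{equation*}
the factor $\theta(\bmu)$ cutting the integration domain down to the interior $\widetilde\Sigma$. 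This gives the holographic formula $\bm v_k = \D_\bsigmas^{d-1-k}\btau^{-k}|_{\widetilde\Sigma}$. For the ``boundary'' piece one applies~\eqref{deltasurface} twice: once using $\bsigma$ to reduce $\delta(\bsigma)$ to integration along $\Sigma$, and then once more using that $\bmu|_\Sigma$ is a unit defining density for the hypersurface $\Lambda \hookrightarrow \Sigma$ (using the right-angle property following Equation~\eqref{minimal} together with $\bmu$ being minimal unit). This yields $\int_\Lambda (\D^{\!\rm T}_\bsigma)^j\,\D^\prime_\bmu\,\D^{d-k-3-j}_\bsigma\btau^{-k}\big|_\Lambda$, summed as $j$ ranges over $0,\ldots,d-3-k$, which is precisely $\bm v'_k$.

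Well-definedness of the $\bm v'_k$ integrands along $\Lambda$ is guaranteed by the tangentiality of $\D^{\!\rm T}_\bsigma$ along $\Xi$ (established in the lemma following Definition~\ref{logLT}), which passes through the $\delta(\bmu)$ factor in the ambient integral: the output of the tangential operator is insensitive to the extension of $\btau^{-k}$ off $\Xi$, and in turn to off-$\Lambda$ behavior after restriction. Finally, the absent cases $\bm v'_{d-2}=0=\bm v'_{d-1}$ follow because the $j$-sum in Proposition~\ref{cool} is empty whenever the order of the delta distribution is $\leq 1$, i.e.\ whenever $d-1-k\leq 1$.

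The only non-routine aspect of the argument is bookkeeping: matching prefactors and confirming the edge cases $k=d-2,d-1$. All of the real analytic content has already been absorbed into Proposition~\ref{cool} and Equation~\eqref{deltasurface}; there is no critical-weight issue here since the weight of $\btau^{-k}$ is $-k$ and the successive actions of $\D_\bsigma$ and $\D^{\!\rm T}_\bsigma$ move it away from the exceptional Yamabe weight rather than through it (with the tangential $\widetilde \D$--modifications built into Definition~\ref{LTdef} and into $\D^\prime_\bmu$ precisely to handle any crossing of a critical weight en route).
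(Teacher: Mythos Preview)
Your proposal is correct and follows essentially the same route as the paper: the authors' proof is the single sentence ``These follow immediately from Proposition~\ref{cool} by setting $\bm f=\btau^{-k}$,'' and you have correctly filled in the bookkeeping (substituting $d-1-k$ for the index in Proposition~\ref{cool}, matching prefactors, and handling the edge cases $k=d-2,d-1$), together with the delta-integration step that the paper relegates to the Remark following Theorem~\ref{Adiv}.

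One small inaccuracy: your claim that the weight ``moves away from the exceptional Yamabe weight rather than through it'' is not quite right---for small $k$ one can indeed start at or pass through $1-\tfrac d2$ (e.g.\ $d=4$, $k=1$, where $\btau^{-1}$ already sits at the Yamabe weight and $\D^\prime_\bmu$ uses its critical definition, exactly as in the paper's $d=4$ example). But your parenthetical is the correct resolution: Proposition~\ref{cool} is already built on the master relations~\eqref{master1} and~\eqref{master2}, which incorporate the critical-weight modifications of $\D^\prime_\bmu$ and $\D^{\!\rm T}_\bsigma$, so no separate argument is needed here.
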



\begin{theorem}\label{Adiv}
The 
divergences  in the regulated area $\Area_\varepsilon$
determined by the sequence of embeddings $\Lambda \hookrightarrow \Sigma \hookrightarrow (M,\cc)$ and the regulating hypersurface $\Sigma_\varepsilon$,
are the local integrals
\begin{equation*}
{\rm Poles}(\Area_\varepsilon)=\sum_{k={d-2}}^1
\frac{(-1)^{d-k-2}}{(d-k-2)!k\varepsilon^k} \,
 \frac{(k-1)!}{(d-3)!} \int_\Lambda {\bm  a}_k \, .
\end{equation*}
The integrands are given  holographically
in terms of
the  unit  and minimal unit  defining 
densities $\bsigma$ and~$\bmu$ for $\Sigma$ and $\Xi$, respectively, and the 
regulator $\btau$, by
\begin{equation*}
\bm a_k=(\D_\bsigma^{\!\rm T})^{d-k-2} \, \btau^{-k}
\Big|_{\Lambda}\, .
\end{equation*}
%
\end{theorem}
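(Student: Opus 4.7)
The plan is to start from the distributional expression for the area divergence coefficients given in Equation~\nn{areadivergences}, namely
$$
a_k=\frac{(-1)^{d-2-k}}{(d-2-k)!\,k}\int_M \frac{\delta(\bmu)\,\delta^{(d-2-k)}(\bsigma)}{\btau^{k}}\, ,
$$
and then apply the first (codimension-two) identity of Proposition~\ref{cool} to reduce the derivatives on $\delta(\bsigma)$ down to a single delta function, exchanging each derivative for a power of the tangential Laplace--Robin operator $\D^{\!\rm T}_\bsigmas$ acting on $\btau^{-k}$.

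More precisely, first I would set $\bm f=\btau^{-k}$ (a density of weight $-k\neq 0$) and invoke Proposition~\ref{cool} with its $k$ replaced by $d-2-k$; this is legal precisely when $1\leq k\leq d-3$, and yields
$$
\int_M \delta(\bmu)\,\delta^{(d-2-k)}(\bsigma)\,\btau^{-k}
=\frac{(k-1)!}{(d-3)!}\int_M \delta(\bmu)\,\delta(\bsigma)\,(\D^{\!\rm T}_\bsigmas)^{d-2-k}\btau^{-k}\, .
$$
Next I would localize the right-hand side to $\Lambda$. Since $\bsigma$ and $\bmu$ are unit and minimal unit defining densities for the transversely intersecting hypersurfaces $\Sigma$ and $\Xi$ (meeting at right angles along $\Lambda$ by the minimal condition), iterating the distributional surface identity~\nn{deltasurface} gives
$$
\int_M \delta(\bmu)\,\delta(\bsigma)\,\bm g=\int_\Lambda \bm g\big|_\Lambda\, ,
$$
for any density $\bm g$ of weight $2-d$. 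Because $(\D^{\!\rm T}_\bsigmas)^{d-2-k}\btau^{-k}$ has exactly this weight, and because $\D^{\!\rm T}_\bsigmas$ is tangential to $\Xi$ (established in Section~\ref{superlaprob}), the restriction to $\Lambda$ is unambiguous. Assembling the constant prefactor $(-1)^{d-2-k}/[(d-2-k)!\,k]$ with the combinatorial factor $(k-1)!/(d-3)!$ produced by the reduction reproduces exactly the coefficient appearing in the statement, with integrand $\bm a_k=(\D^{\!\rm T}_\bsigmas)^{d-2-k}\btau^{-k}\big|_\Lambda$.

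Finally I would handle the boundary case $k=d-2$, which is not covered by the range of Proposition~\ref{cool}, but is trivial: here the distributional formula reads $a_{d-2}=\frac{1}{d-2}\int_M \delta(\bmu)\,\delta(\bsigma)\,\btau^{-(d-2)}$, which localizes directly to $\frac{1}{d-2}\int_\Lambda \btau^{-(d-2)}$, matching the theorem with $\bm a_{d-2}=\btau^{-(d-2)}\big|_\Lambda$ (since $(\D^{\!\rm T}_\bsigmas)^0=\mathrm{Id}$). I expect the bulk of the proof to be essentially mechanical once Proposition~\ref{cool} is in hand; the only delicate point to watch is the critical-weight subtleties in the definition of $\D^{\!\rm T}_\bsigmas$, which can arise along the chain of applications when $-k$ passes through the Yamabe weight $1-\tfrac d2$ or $2-\tfrac d2$. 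This is the main obstacle, but it is already absorbed into the unified recursion~\nn{master2} and Proposition~\ref{cool}, so the argument goes through uniformly in the weight.
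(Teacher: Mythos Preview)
Your proposal is correct and follows essentially the same approach as the paper: the paper states that Theorems~\ref{Vdiv} and~\ref{Adiv} ``follow immediately from Proposition~\ref{cool} by setting $\bm f=\btau^{-k}$,'' which is exactly what you do, together with the localization to~$\Lambda$ via Equation~\nn{deltasurface} (the unit measure factors are noted in the remark following the theorems). Your separate treatment of the trivial endpoint $k=d-2$ and your observation that the critical-weight cases are already absorbed into Proposition~\ref{cool} are both accurate.
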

\begin{remark}
Note that the results stated in the two previous Theorems hold 
upon performing delta integrations according to Equation~\eqref{deltasurface}; 
the measure factors $\sqrt{{\bm {\mathcal S}}}$ usually incurred when integrating along submanifolds, 
with respect to their induced conformal structures, are both one because 
 the defining densities $\bsigma$ 
and $\bmu$ are  conformal unit and minimal conformal unit, 
respectively.
\end{remark}

\subsection{Anomalies}
\label{Sec:Anomaly}
We now study  the critical $\log \varepsilon$ 
divergences appearing in the regulated volume and area expansions. In particular, in this section we prove Theorems~\ref{Vanomaly} and~\ref{Aanomaly}.
In arbitrary dimensions, the volume and area anomalies are given, 
respectively, by the integrals 
\begin{equation*}
\mathcal V=\frac{(-1)^d}{(d-1)!} \int_M
\theta(\bmu)\, \delta^{(d-1)}(\bsigma) ~,\quad 
\mathcal A=\frac{(-1)^{d-1}}{(d-2)!}
\int_M\delta(\bmu)\delta^{(d-2)}(\bsigma)\, ;
\end{equation*}
see Section~\ref{Sec:RenVA}.
In the volume case, the recursion~\eqref{Dondel}
gives 
%
%
%
%
\begin{equation*}
\D_\bsigmas \delta^{(d-2)}(\bsigma) = 0 ~,
\end{equation*}
and so cannot be used to handle $\delta^{(d-1)}(\bsigma)$.
Instead one must introduce  
a $\log$ 
density into the problem (see~\cite{GWvol} and~\cite{GWvolII}).
This  relies on the following proposition.
from~\cite[Proposition 4.4]{GWvolII}
specialized to assume that $\bsigma$ is a unit defining density and written in the notation of Section~\ref{LRs}. 
%
\begin{proposition}\label{logt}
Let $f$ be a  weight zero density,
where $f\delta^{(d-1)}(\bm \sigma)$ is compactly supported,
and let~$\bm \t$ be any 
true scale, then 
%
\begin{equation*}
\int_M f \delta^{(d-1)}(\bm \sigma) =  
-\int_M \delta^{(d-2)}(\bm \sigma) 
\big(f \D_\bsigma \log \bm \t 
+ \btau^{-1} \D_{\bsigmas,\btaus} f \big) ~.
\end{equation*}
\end{proposition}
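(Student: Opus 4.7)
The plan is to work in a chosen scale $g\in\cc$, writing $\bsigma=[g;\sigma]$, $\btau=[g;\tau]$ and $\bm f=[g;f]$, and to verify the identity as a distributional statement in that scale; since both sides are intrinsically defined, independence of the conclusion from the choice of scale is automatic. The key observation, exploiting that $\bsigma$ is a \emph{unit} defining density so that $|\ext\sigma|_g^2 = 1 - 2\sigma\rho_\sigma$, is the distributional identity
\begin{equation*}
\nabla_n\hh \delta^{(d-2)}(\bsigma) = \delta^{(d-1)}(\bsigma) + 2(d-1)\rho_\sigma\hh \delta^{(d-2)}(\bsigma)\,,\qquad n:=\ext\sigma\,.
\end{equation*}
This follows by multiplying the unit relation by $\delta^{(d-1)}(\sigma)$ and applying the annihilations $\sigma^k\delta^{(d-1)}(\sigma)=0$ for $k\geq d$, together with $\sigma\delta^{(d-1)}(\sigma)=-(d-1)\delta^{(d-2)}(\sigma)$.

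Substituting this identity into the left-hand side and integrating by parts the vector field $\nabla_n$, using $\nabla\cdot n = \Delta\sigma = -d\rho_\sigma-J\sigma$, reduces the left-hand side to
\begin{equation*}
\int_M f\hh \delta^{(d-1)}(\bsigma) = -\int_M \delta^{(d-2)}(\bsigma)\bigl[\nabla_n f + (d-2)f\rho_\sigma - J\sigma f\bigr]\,.
\end{equation*}
Boundary contributions vanish because $f\hh\delta^{(d-1)}(\bsigma)$ is compactly supported. Expanding the right-hand side of the proposition directly in the scale $g$ via definitions~\eqref{Llog1} of $\D_\bsigma\log\btau$ and~\eqref{DD} of $\D_{\bsigma,\btau}$---the latter collapsing at weight zero to $\btau^{-1}\D_{\bsigma,\btau} f = [g;\nabla_n f - \sigma\tau^{-1}\nabla_k f]$ with $k:=\ext\tau$---and comparing, the proposition reduces to the vanishing
\begin{equation*}
\int_M\delta^{(d-2)}(\bsigma)\bigl[(d-2)f\tau^{-1}\nabla_n\tau - \sigma f\Delta\log\tau - \sigma\tau^{-1}\nabla_k f\bigr] = 0\,.
\end{equation*}

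To dispatch this last identity, I will apply $\sigma\delta^{(d-2)}(\sigma)=-(d-2)\delta^{(d-3)}(\sigma)$ to the two $\sigma$-terms, and convert the first term via $(\nabla_n\tau)\hh\delta^{(d-2)}(\sigma) = \nabla_k\hh \delta^{(d-3)}(\sigma)$ (which follows from $k\cdot n = \nabla_n\tau$); one further integration by parts, together with the elementary identities $\Delta\log\tau = \tau^{-1}\Delta\tau - \tau^{-2}|\nabla\tau|^2$ and $\nabla_k\tau=|\nabla\tau|^2$, produces a pairwise cancellation of the $\tau^{-1}\nabla_k f$, $f\tau^{-1}\Delta\tau$ and $f\tau^{-2}|\nabla\tau|^2$ contributions against $\delta^{(d-3)}(\bsigma)$. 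The proof is essentially bookkeeping rather than conceptually difficult; the main subtlety to monitor is the precise factor $(d-2)$ supplied by the log-density normalization~\eqref{Llog1}, whose matching with the coefficient appearing in $\sigma\delta^{(d-2)}(\sigma)=-(d-2)\delta^{(d-3)}(\sigma)$ is what drives the final cancellation.
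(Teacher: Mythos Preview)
Your proof is correct. The paper does not actually prove this proposition; it imports it from~\cite[Proposition~4.4]{GWvolII}, remarking only that the argument there (stated for smooth $f$) extends to the distributional cases $f=\theta(\bmu)$ and $f=\bsigma\delta(\bmu)$ needed later. Your direct verification in a chosen scale---exploiting the unit-defining relation $n^2=1-2\sigma\rho_\sigma+{\mathcal O}(\sigma^d)$ to rewrite $\delta^{(d-1)}(\sigma)$ in terms of $\nabla_n\delta^{(d-2)}(\sigma)$, integrating by parts, and then reducing the residual $\tau$-dependent terms to a telescoping cancellation against $\delta^{(d-3)}(\sigma)$---is a clean self-contained argument that avoids the external reference. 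One small point of care: the integrations by parts you perform involve $\delta^{(d-2)}(\sigma)$ and $\delta^{(d-3)}(\sigma)$, not just $\delta^{(d-1)}(\sigma)$, so the compact-support hypothesis on $f\delta^{(d-1)}(\bsigma)$ alone is not literally what kills the boundary terms; what actually does the job is the paper's standing assumption (Section~\ref{Sec:RenVA}) that $\partial M=\emptyset$ or that a cutoff has been inserted. With that understood, your argument is complete.
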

\noindent
Note that the proof of~\cite{GWvolII} 
was for the case that $f$ is a smooth function, but it is easily seen to  
   extend  to the 
particular instance where $f=\theta(\bmu)$, because $\widetilde \Sigma$ is 
compact.
 Thus
the proposition can be applied to recast the 
$\mathcal V$-integral as   
\begin{equation}\label{Vnabla}
\mathcal V =- \frac{(-1)^d}{(d-1)!} \int_M 
\delta^{(d-2)}(\bsigma)\Big(\theta(\bmu) \D_\bsigma \log \bm \t 
+ \btau^{-1} \D_{\bsigmas,\btaus} \theta(\bmu) \Big)  \, .
\end{equation}
The differentiated Dirac delta distribution 
$\delta^{(d-2)}(\bsigma)$  in the first term on the right hand side above may now handled using Equation~\eqref{master1} and Proposition~\ref{cool}, while the second term is discussed below.

As for the area anomaly $\mathcal A$, 
the distributional identity $\delta^{(d-2)}(\bsigma)=
\frac{1}{1-d} \bsigma \delta^{(d-1)}(\bsigma)$
implies that
\begin{equation*}
\mathcal A = \frac{(-1)^d}{(d-1)!}
\int_M \bsigma\delta(\bmu)\delta^{(d-1)}(\bsigma)\, ,
\end{equation*}
which in turn can be rewritten using 
Proposition~\ref{logt} for the case where 
$f = \bsigma \delta(\bmu)$, as
\begin{equation}\label{Anabla}
\mathcal A= \frac{(-1)^{d-1}}{(d-1)!} \int_M 
\delta^{(d-2)}(\bsigma)\Big(\bsigma\delta(\bmu) \D_\bsigma \log \bm \t 
+ \btau^{-1} \D_{\bsigmas,\btaus} \big(\bsigma\delta(\bmu)\big) \Big)  \, .
\end{equation}
Here we have again extended the
scope of Proposition~\ref{logt}
to the case where $f$ is a weight zero, distribution-valued density.
The proof applies {\it mutatis mutandis} to this case because $\Lambda=\partial\widetilde \Sigma$ is closed.
Again, the first term on the right hand side can be treated using Proposition~\ref{cool}.
For the remaining terms involving  
$\D_{\bsigma, \btau}$ operators in Equations~\nn{Vnabla} and~\nn{Anabla}, we need the 
following 
lemma:
\begin{lemma}\label{Lst}
Let $\bsigma$ be a weight one density,  $\bmu$ a hypersurface defining density, 
and $\bm \t$ a true scale. Then
\begin{align*}
\btau^{-1} \D_{\bsigmas,\btaus}\theta(\bmu) &=
\delta(\bmu)\Big( \langle\bsigma,\bmu\rangle-
 \frac{\bsigma}{d-2} \D_\bmu \log\bm \tau\Big)\, , \\[2mm]
\btau^{-1} \D_{\bsigmas,\btaus} (\bsigma\delta(\bmu)) &=
\delta(\bmu) \Big(\bm{\mathcal S}_\bsigmas- \frac{\bsigma}{d-2} \D_\bsigma \log \bm \t  \Big) 
+ \bsigma\delta'(\bmu)\Big(\langle\bsigma,\bmu\rangle -\frac{\bsigma}{d-2} \D_\bmu \log\bm \tau \Big)  \, .
\end{align*}
\end{lemma}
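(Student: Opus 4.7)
The plan is to prove both identities by direct computation in a chosen scale $g\in\cc$, treating $\bsigma=[g;\sigma]$, $\bmu=[g;\mu]$, $\btau=[g;\tau]$ as functions so that all operators and brackets become explicit differential expressions in $\sigma,\mu,\tau$ and their derivatives. The right-hand sides are built from the Laplace--Robin operator $\D$ acting on standard and log densities (see Equations~\nn{Ldef} and~\nn{Llog1}), the ${\mathcal S}$-curvature~\nn{Snrs}, and the bracket~\nn{thebracket}, while the left-hand sides use the definition~\nn{DD} of $\D_{\bsigmas,\btaus}$. The proof will be essentially the observation that after multiplying the right-hand side out, all of the ``extra'' terms (those involving $\Delta\mu$, $\Delta\sigma$, $\Delta\log\tau$, and the scalar curvature $J$) either cancel in pairs or are killed by the distributional identities $\mu\,\delta(\mu)=0$ and $\mu\,\delta'(\mu)=-\delta(\mu)$.

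For the first identity I would first compute, using $\ext\theta(\mu)=\delta(\mu)m$ with $m:=\ext\mu$, that
\[
\btau^{-1}\D_{\bsigmas,\btaus}\theta(\bmu)
=\delta(\bmu)\bigl[g;\,m\!\cdot\! n-\tau^{-1}\sigma\,m\!\cdot\! k\bigr],
\]
where $n:=\ext\sigma$ and $k:=\ext\tau$. On the other side, I would expand $\langle\bsigma,\bmu\rangle=[g;m\!\cdot\! n+\sigma\rho_\mu+\mu\rho_\sigma]$ from~\nn{thebracket} at weights $w=w'=1$, and $\D_\bmus\log\btau=[g;(d-2)(\tau^{-1}m\!\cdot\! k+\rho_\mu)-\mu(\Delta\log\tau+J)]$ from~\nn{Llog1}. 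Multiplying by $\delta(\bmu)$ discards every $\mu$-multiplied term, and the $(d-2)$ prefactor in $\D_\bmus\log\btau$ cancels the $(d-2)^{-1}$ in $\frac{\bsigma}{d-2}\D_\bmus\log\btau$, so that the $\rho_\mu$ contributions from the bracket and the log-term cancel, leaving exactly the expression above.

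For the second identity the same strategy applies but now using the distributional Leibniz rule $\nabla_a(\sigma\delta(\mu))=(\nabla_a\sigma)\delta(\mu)+\sigma\,m_a\,\delta'(\mu)$. Plugging this into~\nn{DD} and dividing by $\tau$ yields four pieces, two proportional to $\delta(\bmu)$ and two proportional to $\delta'(\bmu)$; in scale these are $n^2\delta(\mu)+\sigma(m\!\cdot\! n)\delta'(\mu)-\tau^{-1}\sigma(n\!\cdot\! k)\delta(\mu)-\tau^{-1}\sigma^2(m\!\cdot\! k)\delta'(\mu)$. On the right-hand side I would expand $\bm{\mathcal S}_\bsigmas=[g;n^2+2\sigma\rho_\sigma]$ and the two log-terms as above, then use $\mu\,\delta'(\mu)=-\delta(\mu)$ to convert the $\mu\rho_\sigma\delta'(\bmu)$ and $\mu(\Delta\log\tau+J)\delta'(\bmu)$ contributions coming from $\bsigma\delta'(\bmu)\langle\bsigma,\bmu\rangle$ and $\bsigma\delta'(\bmu)\frac{\bsigma}{d-2}\D_\bmus\log\btau$ into $\delta(\bmu)$ contributions. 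After this rearrangement, the $\rho_\sigma\delta(\bmu)$ pieces and the $(\Delta\log\tau+J)\delta(\bmu)$ pieces each cancel pairwise between the two summands, reproducing precisely the four terms of the left-hand side.

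The computation is tedious but entirely mechanical; the only subtle bookkeeping is ensuring that the $(d-2)^{-1}$ normalisations attached to the two $\D$-log-density terms conspire with the $(d-2)$ prefactors produced by~\nn{Llog1} to cancel all second-derivative and $J$-curvature contributions not present on the left-hand side (where $\D_{\bsigmas,\btaus}$ acts at worst as a first-order operator on $\theta(\bmu)$ and $\bsigma\delta(\bmu)$). I expect that the entire proof can be packaged as two short computations in scale, with all cancellations made transparent by first reducing both sides to expressions supported on $\delta(\bmu)$ and $\delta'(\bmu)$ via $\mu\,\delta=0$ and $\mu\,\delta'=-\delta$.
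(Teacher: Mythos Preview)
Your proposal is correct and follows essentially the same approach as the paper's proof: both compute directly in a chosen scale $g\in\cc$, evaluate the left-hand side using the definition~\nn{DD} together with $\ext\theta(\mu)=\delta(\mu)\,m$ and the distributional Leibniz rule for $\sigma\delta(\mu)$, and then match against the right-hand side by expanding $\langle\bsigma,\bmu\rangle$, ${\mathcal S}_\bsigma$ and the log-density Laplace--Robin terms, using $\mu\,\delta(\mu)=0$ and $\mu\,\delta'(\mu)=-\delta(\mu)$ to effect the cancellations you describe. The paper organises the second computation slightly differently---it rewrites the left-hand side step by step into the right-hand side rather than expanding both and comparing---but the ingredients and key cancellations (the $\rho_\sigma$ pair and the $\Delta\log\tau+J$ pair) are identical.
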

\begin{proof}
The first equality can be demonstrated directly from the definition 
of the operator~$\D_{\bsigmas,\btaus}$ acting on the weight zero
density $\theta(\bmu)$. Indeed,~\eqref{DD} implies that
\begin{equation*}
\btau^{-1} \D_{\bsigmas,\btaus} \theta(\bmu) 
=\big[g; \nabla_n \theta(\mu) - \sigma\tau^{-1} \nabla_k \theta(\mu) \big] 
= \big[g; \delta (\mu) \big( n.m - \sigma\tau^{-1} m.k \big) \big]\, ,
\end{equation*}
where we have denoted the triple of one-forms 
$(m, n, k):=(\ext \mu, \ext \sigma, \ext \tau)$  and used the distributional identity $\theta'(x)=\delta(x)$.
Remembering the distributional identity $x\delta(x)=0$, and that
%
 $
 \langle\bsigma,\bmu\rangle
:=\big[g; \langle\sigma,\mu\rangle  \big]
=\big[g; \sigma \rho_\mu+ n.m  + \mu \rho_\sigma\big]
$
%
as well as the definition of $\widehat\D_\bmu \log \btau$ in Equation~\nn{Llog1} and~\nn{LODZ},
we can write the expression 
above~as
\begin{equation*}
\btau^{-1} \D_{\bsigmas,\btaus} \theta(\bmu) 
=\big[g; \delta (\mu) \big( \langle\sigma,\mu\rangle 
- \sigma( \nabla_m \log \tau + \rho_\mu) \big) \big]
= \delta (\bmu) \Big( \langle\bsigma,\bmu\rangle 
- \bsigma \widehat\D_\bmu \log \btau \Big)\, ,
\end{equation*}
as required.

The second equality of the lemma can be obtained via a similar route: From~\eqref{DD} and recalling that the product
$\bsigma \delta(\bmu)$ has weight zero, it follows that
\begin{equation*}
\btau^{-1} \D_{\bsigmas,\btaus} (\bsigma \delta(\bmu))
=\big[g; \nabla_n (\sigma \delta(\mu))  
- \sigma\tau^{-1} \nabla_k (\sigma \delta(\mu)) \, \big]~.
\end{equation*}
%
The above can be further developed using our  distributional calculus:
\begin{align*}
\nabla_n &(\sigma \delta(\mu)) - \sigma\tau^{-1} \nabla_k (\sigma \delta(\mu))
= \delta(\mu) \big(\nabla_n \sigma - \sigma \tau^{-1} \nabla_k \sigma \big)
+  \sigma \big(\nabla_n \delta(\mu) - \sigma \tau^{-1} \nabla_k \delta(\mu) \big)\\
&=\delta(\mu) \big(n^2- \sigma \nabla_n \log\tau \big)
+ \sigma \delta'(\mu) \big(n.m - \sigma \nabla_m \log \tau \big) \\
&= \delta(\mu) \big(\mathcal S_\sigma - \sigma\rho_\sigma-\sigma(\nabla_n \log\tau+\rho_\sigma)\big)
+ \sigma \delta'(\mu) \big(\langle\sigma,\mu\rangle -\mu\rho_\sigma- \sigma (\nabla_m \log \tau+\rho_\mu)\big) ~.
\end{align*}
Observe that the second and sixth terms above 
cancel each other.
Next, using the action of the Laplace--Robin operator 
on log-densities \eqref{Ldef}, the last expression of the 
previous display may  be written as
\begin{align*}
\delta(\mu) \Big[
\mathcal S_\sigma\! -\! \sigma \big(\widehat {\rm L}_\bsigma \log\tau\!-\! \frac{\sigma}{d} (\Delta\log\tau + J) \big)\Big]
+ \sigma \delta'(\mu) \Big[
\langle\sigma,\mu\rangle\! - \!\sigma \big(\widehat {\rm L}_\bmu \log\tau
\!-\! \frac{\mu}{d} (\Delta\log\tau + J) \big)\Big] \, .
\end{align*} 
Once again, thanks to the identity $x\delta'(x)=-\delta (x)$, the terms 
with coefficient $1/d$ cancel, 
so we obtain
%
\begin{equation*}
\btau^{-1} \D_{\bsigmas,\btaus} (\bsigma \delta(\bmu))
=\delta(\bmu) \Big(\mathcal S_\bsigma - \bsigma\widehat \D_\bsigma \log \bm \t  \Big)
+\bsigma\delta'(\bmu)\Big(\langle\bsigma,\bmu\rangle-\bsigma\widehat\D_\bmu \log\bm \tau\Big)  ~,
\end{equation*}
again as required.
\end{proof}

We are now ready to complete the proofs of our first two main results.

\begin{proof}[Proof of Theorems~\ref{Vanomaly} and~\ref{Aanomaly}]
We begin with the volume anomaly. By virtue of Lemma \ref{Lst}, 
the volume anomaly integral \eqref{Vnabla} 
may be written as
\begin{align*}
 \mathcal V&=  \frac{(-1)^{d-1}}{(d-1)!} \int_M 
 \delta^{(d-2)}(\bsigma)\Big(\theta(\bmu) \D_\bsigma \log \bm \t
 +\delta(\bmu) \langle\bsigma,\bmu\rangle
 -\frac{\bsigma}{d-2}\delta(\bmu) \D_\bmu \log\bm \tau \Big)~.
\end{align*}
Here the second term under the integral vanishes due to the 
minimal surface condition (as formulated in Equation~\nn{thebr}) while the last one can 
be rewritten  by means  of the identity 
$\bsigma \delta^{(d-2)}(\bsigma)=
-(d-2) \delta^{(d-3)}(\bsigma)$; this gives
%
%
\begin{equation}\label{Vfinal}
\mathcal V= \frac{(-1)^{d-1}}{(d-1)!} \int_M 
 \Big( \theta(\bmu) \delta^{(d-2)}(\bsigma) \D_\bsigma 
 +\delta(\bmu) \delta^{(d-3)}(\bsigma) \D_\bmu \Big) \log \bm \t ~.
 \end{equation}
Likewise,  Lemma~\ref{Lst} applied   to the surface 
anomaly integral~\eqref{Anabla} yields
\begin{align*}
\mathcal A= \frac{(-1)^{d-1}}{(d-1)!} \int_M 
\delta^{(d-2)}(\bsigma)\bigg[\bsigma\delta(\bmu) \D_\bsigma \log \bm \t 
+ \bsigma\delta'(\bmu)\Big(\langle\bsigma,\bmu\rangle
-&\frac{\bsigma}{d-2} \D_\bmu \log\bm \tau\Big)\\ 
+ \delta(\bmu)\Big(1-&\frac{\bsigma}{d-2} 
\D_\bsigma \log \bm \t \Big)\bigg]  ~,
\end{align*}
where we used that  $\bS_\bsigma=1+{\mathcal O}(\bsigma^d)$. Applying the same set of distributional identities as used earlier, the integral above 
can be re-expressed   as
\begin{align}\label{almostA}
\mathcal A=\frac{(-1)^{d}}{(d-1)!} \int_M \Big[
(d-3)\delta(\bmu)\delta^{(d-3)}(\bsigma)\D_\bsigma \log\bm\t 
&+(d-2) \delta'(\bmu) \delta^{(d-3)}(\bsigma)\langle\bsigma,\bmu\rangle\\
-\, \delta(\bmu) \delta^{(d-2)}(\bsigma)
&+(d-3)  \delta'(\bmu) \delta^{(d-4)}(\bsigma) \D_\bmu \log\bm\t\Big]~. \nonumber
\end{align}
Observe that the third integrand is 
proportional to the original anomaly~\eqref{arnomaly}, subtracting this from the left hand side yields 
$(1-\frac{1}{d-1})\mathcal A$ there. The second term on the right can 
be handled using~\eqref{Lmdmds}, 
which implies
\begin{equation*}
\delta'(\bmu)\delta^{(d-3)}(\bsigma) 
=-\frac{1}{d-2} \D_\bmu 
\big(\delta(\bmu)\delta^{(d-3)}(\bsigma) \big)~.
\end{equation*}
When $d\neq 4$, 
the very last term can also be treated using~\eqref{Lmdmds}.  When $d=4$, however, the 
density $\delta(\bsigma)$ has critical weight and the integrated formula~\eqref{snappo} is needed. This distinction between critical ($d=4$) and non--critical ($d\neq4$) 
weights fits with that for the tangential 
Laplace--Robin operator $\D^{\!\rm T}_\bsigma$ acting on $\log$--densities
as given in Definition~\ref{logLT}.
%
Indeed, by using this definition directly in 
Equation~\eqref{almostA} and performing computations that rely on formal self-adjointness of $\D_\bmus$ and the identity $\bsigma \delta^{(d-3)}(\bsigma) = -(d-3) \delta^{(d-4)}(\bsigma)$, 
we obtain
\begin{align}\label{Afinal}
\mathcal A= \frac{(-1)^{d-2}}{(d-2)!} \int_M \delta(\bmu)
 \delta^{(d-3)}(\bsigma) \D^{\!\rm T}_\bsigma \log\bm\t ~.
\end{align}
Equation~\nn{Afinal} is a nice analog of the key Lemma 3.8 of~\cite{GWvol}.

Applying Proposition~\ref{cool} to  
Equations \eqref{Vfinal} and \eqref{Afinal} 
completes the proof of our results for the volume and area anomalies.
\end{proof}

\medskip

The regulated volume and area anomalies ${\mathcal V}$ and ${\mathcal A}$ do not depend on the choice of regulator $\bm \tau$, even though their corresponding extrinsic $Q$ and $T$ curvature integrands do. Remember that the choice of a true scale $\btau=[g;1]$ is equivalent to a choice of 
   metric $g$ in the conformal class~$\cc$. 
  (In what follows we sometimes abbreviate our notation for the dependence of ~$Q$ and~$T$ curvatures and related operators on the underlying data that determines them, for example writing $\bm T_{\sss\Lambda}^g$ for $\bm T_{\Lambda\hookrightarrow \Sigma \hookrightarrow (M,\cc)}^{\btaus}$.) 
   Changing the regulator $\bm \tau$ to
a new true scale $\hat\btau=e^f\btau$ where $f\in C^\infty M$,  the identity $\log (e^f \btau)=\log \btau + f$ implies
\begin{align}\label{transform}
&\bm Q_{\Sigma \hookrightarrow (M,\cc)}^{\hat\btaus}
-\bm Q_{\Sigma \hookrightarrow (M,\cc)}^{\btaus}=\D^{d-1}_{\bsigmas} f\big|_\Sigma\, ,\nonumber
\\[2mm]
&\bm T_{\Lambda\hookrightarrow \Sigma \hookrightarrow (M,\cc)}^{\hat\btaus} -
\bm T_{\Lambda\hookrightarrow \Sigma \hookrightarrow (M,\cc)}^{\btaus} 
= \sum_{j=0}^{d-3} (\D^{\!\rm T}_\bsigma)^j\, 
\D^\prime_\bmu \bm \D_\bsigma^{d-j-3}f\hh\Big|_{\Lambda} \, ,
\\[2mm]
&
\bm Q_{\Lambda\hookrightarrow \Sigma \hookrightarrow (M,\cc)}^{\hat\btaus}
-
\bm Q_{\Lambda\hookrightarrow \Sigma \hookrightarrow (M,\cc)}^{\btaus} =(\D_\bsigma^{\!\rm T})^{d-2} f \big|_{\Lambda}\, .\nonumber
\end{align}
The above formul\ae\ encode how extrinsic (and submanifold) 
$Q$ and $T$ curvatures transform when 
moving to a conformally related metric $e^{-2f} g$. Necessarily the sum of the integral over $\widetilde\Sigma$ of the right hand side of the first equation displayed above plus $(d-2)$ times the integral over $\Lambda=\partial \widetilde\Sigma$ of the right hand side of the second equation   must vanish because ${\mathcal V}$ does not depend on~$f$. Similarly, the integral of the right hand side of the third equation over $\Lambda$ vanishes because~${\mathcal A}$ is also independent of $f$. The three operators acting on $f$ above are conformally invariant
and canonically determined by the structure $\Lambda\hookrightarrow \Sigma\hookrightarrow (M,\cc)$, and are therefore of independent interest. They are the subject of the following section.



\section{Extrinsic Conformal Laplacian Powers and Associated Boundary Operators}
\label{Sec:GJMS}

Theorems~\ref{Vanomaly}  and~\ref{Aanomaly} suggest that, given the data of a minimal defining density $\bsigma$ and unit minimal defining density $\bmu$ determined by the sequence of embeddings $\Lambda \hookrightarrow \Sigma \hookrightarrow (M,\cc)$, we define the following operators 
\begin{eqnarray*}
\bm {\mathcal P}_{\Sigma \hookrightarrow (M,\cc)}^{(k)}\:&:=&\:\:\: \D^{k}_\bsigma \: :\Gamma\big(\ce M\big[\tfrac{k-d+1}2\big]\big)\to \Gamma\big(\ce M\big[\tfrac{-k-d+1}2\big]\big)\, ,\quad k\in\{1,\ldots, d-1\}\, ,
\\[2mm]
\bm {\mathcal U}_{\Lambda\hookrightarrow \Sigma \hookrightarrow (M,\cc)}\!\!&:=
&\sum_{j=0}^{d-3} (\D^{\!\rm T}_\bsigma)^j\, 
\D^\prime_\bmu \bm \D_\bsigma^{d-j-3}:\Gamma(\ce M[0])\to \Gamma(\ce M[2-d])
\, ,
\\[2mm]
\bm {\mathcal P}^{(k)}_{\Lambda\hookrightarrow \Sigma \hookrightarrow (M,\cc)}\!\!&:=&\:\: \big(\D^{\!\rm T}_\bsigma\big)^{k} :\Gamma\big(\ce M\big[\tfrac{k-d+2}2\big]\big)\to
\Gamma\big(\ce M\big[\tfrac{-k-d+2}2\big]\big)\, ,\quad  k\in\{1,\ldots, d-2\}\, .
\end{eqnarray*}

The first important property of these operators are that they are tangential (see Definition~\ref{TangDef}
and the text that follows there).
This means that they give holographic formul\ae\
for operators along $\Sigma$ or $\Lambda$. These tangentiality properties are stated  in the next proposition.

\begin{proposition}\label{proptang}
The operator $\bm {\mathcal P}_{\Sigma \hookrightarrow (M,\cc)}^{(k)}$ is tangential along $\Sigma={\mathcal Z}(\bsigma)$, the operator $\bm {\mathcal U}_{\Lambda\hookrightarrow \Sigma \hookrightarrow (M,\cc)}$ is tangential  with respect to $\Sigma$ along $\Lambda={\mathcal Z}(\bsigma,\bmu)$, and the operator~$\bm {\mathcal P}^{(k)}_{\Lambda\hookrightarrow \Sigma \hookrightarrow (M,\cc)}$ is tangential along $\Lambda={\mathcal Z}(\bsigma,\bmu)$.
\end{proposition}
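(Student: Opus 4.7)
The plan is to establish the three tangentiality claims by $\mathfrak{sl}(2)$ enveloping algebra arguments, handled directly for parts~(1) and~(3) and via an integral identity for part~(2).

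For part~(1), I would first verify by direct computation in a choice of scale $g \in \cc$, using the definition \nn{Ldef} of the Laplace--Robin operator together with the identity $d\rho_\sigma = -\Delta\sigma - J\sigma$, that the commutator obeys $[\D_\bsigma, \bsigma] = h \cdot \bm{\mathcal S}_\bsigma$ acting on any weight $w$ density (the seemingly non-trivial $\sigma$-terms conspire to assemble the full $\bm{\mathcal S}_\bsigma$ factor). Iterating via the standard identity $[\D_\bsigma^k, \bsigma] = \sum_{j=0}^{k-1} \D_\bsigma^j\, [\D_\bsigma,\bsigma]\, \D_\bsigma^{k-1-j}$ and using that $\bsigma$ is a unit defining density, so $\bm{\mathcal S}_\bsigma \equiv 1 \pmod{\bsigma^d}$ while each $\D_\bsigma^j$ for $j < d$ can drop the order in $\bsigma$ by at most $j$, yields upon restriction to $\Sigma$
\begin{equation*}
[\D_\bsigma^k,\bsigma]\,\bm f'\big|_\Sigma \:=\: \left(\sum_{j=0}^{k-1}(d+2w'-2k+2+2j)\right) \D_\bsigma^{k-1} \bm f'\big|_\Sigma\, ,
\end{equation*}
for $\bm f'$ of weight $w'$. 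At the critical weight $w' = (k-d-1)/2$ the coefficient sum telescopes to zero, giving $\D_\bsigma^k(\bsigma\bm f')|_\Sigma = 0$, which is the required tangentiality.

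For part~(3), the same argument applies to the operator $\mathscr{L}^\Xi_\bsigma$ induced by $\D^{\!\rm T}_\bsigma$ on $\Xi$ (well-defined because $\D^{\!\rm T}_\bsigma$ is already known to be tangential along $\Xi$). Lemma~\ref{Iwasatheorem} plays the role of the base commutator identity, $[\mathscr{L}^\Xi_\bsigma, \bsigma_\Xi] = (d+2w-1)\,\mathrm{Id} + \mathcal{O}(\bsigma_\Xi^{d-1})$, and the enveloping-algebra expansion evaluated at the critical weight $(k-d+2)/2$ again produces a vanishing coefficient sum, yielding $(\D^{\!\rm T}_\bsigma)^k(\bsigma \bm f')|_\Lambda = 0$. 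For the $\bmu \bm f''$ piece, iterating the tangentiality of $\D^{\!\rm T}_\bsigma$ along $\Xi$ shows that $(\D^{\!\rm T}_\bsigma)^k(\bmu \bm f'')$ is itself divisible by $\bmu$ along $\Xi$, hence vanishes on $\Lambda \subset \Xi$. Together these give the claimed tangentiality of $(\D^{\!\rm T}_\bsigma)^k$ along $\Lambda$.

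For part~(2), the argument is more indirect. The regulator-independence of $\mathcal V$, manifest from the distributional expression~\nn{volomaly}, combined with the alternative expression~\nn{Vfinal} derived from Lemma~\ref{Lst}, forces the integral identity
\begin{equation*}
\int_M \bigl(\theta(\bmu)\delta^{(d-2)}(\bsigma)\, \D_\bsigma f + \delta(\bmu)\delta^{(d-3)}(\bsigma)\, \D_\bmu f \bigr) \,=\, 0
\end{equation*}
for all $f \in C^\infty M$. Applying Proposition~\ref{cool} (with $k=d-2$ to the first term and $k=d-3$ to the second) converts this into
$\int_{\widetilde\Sigma} \D_\bsigma^{d-1} f + (d-2) \int_\Lambda \bm{\mathcal U} f = 0$.
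Setting $f = \bsigma \bm h$ for an arbitrary weight $-1$ density $\bm h$, and invoking part~(1) at its critical weight $w' = 0$ (i.e., $k = d-1$) to annihilate the first integral, leaves $\int_\Lambda \bm{\mathcal U}(\bsigma \bm h) = 0$ for every such $\bm h$. Since $\bm{\mathcal U}(\bsigma \bm h)|_\Lambda$ is a linear differential operator applied to $\bm h$ (hence determined by a finite jet of $\bm h$ along $\Lambda$) and $\bm h$ is otherwise arbitrary, a standard localization argument, with $\bm h$ taken to be a bump concentrating at a prescribed point of $\Lambda$ with prescribed jet, forces the pointwise vanishing $\bm{\mathcal U}(\bsigma \bm h)|_\Lambda = 0$. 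The main obstacle is part~(2): while the integral identity route is clean, it requires care to avoid circularity with Theorem~\ref{intPU} (which is why I derive the identity directly from the regulator-independence of the distributional formula~\nn{volomaly} rather than from Theorem~\ref{intPU}), and also requires justifying the passage from integrated to pointwise vanishing. A more laborious purely algebraic alternative would expand each $\D_\bsigma^{d-3-j}(\bsigma \bm h)$ using the commutator of part~(1), then propagate the leading and subleading $\bsigma$-pieces through $\D_\bmu^\prime$ (via Lemma~\ref{techlemm}) and $(\D^{\!\rm T}_\bsigma)^j$ (via Lemma~\ref{Iwasatheorem}), and verify by combinatorial bookkeeping that the sum in $\bm{\mathcal U}$ is precisely the telescoping combination that cancels upon restriction to $\Lambda$.
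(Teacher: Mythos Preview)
Your arguments for parts~(1) and~(3) are correct and essentially match the paper's: both use the $\mathfrak{sl}(2)$ enveloping algebra identity~\nn{envelop} (respectively its analog from Lemma~\ref{Iwasatheorem}) to see that $\D_\bsigma^k\circ\bsigma$ and $(\D^{\!\rm T}_\bsigma)^k\circ\bsigma$ vanish along the relevant submanifold at the critical weight, and for~(3) the $\bmu$-piece is disposed of by iterating the known tangentiality of $\D^{\!\rm T}_\bsigma$ along~$\Xi$.

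For part~(2), however, your main approach has a genuine gap at the passage from integrated to pointwise vanishing. From $\int_\Lambda \bm{\mathcal U}(\bsigma\bm h)=0$ for all~$\bm h$ you \emph{cannot} conclude $\bm{\mathcal U}(\bsigma\bm h)|_\Lambda=0$ pointwise: the operator $\bm h\mapsto\bm{\mathcal U}(\bsigma\bm h)|_\Lambda$ may contain tangential derivative terms along~$\Lambda$, and any such term integrates to zero over the closed manifold~$\Lambda$ regardless of~$\bm h$. (A model: on $\Lambda=S^1$ the operator $P(\bm h)=\partial_\theta(\bm h|_\Lambda)$ satisfies $\int_\Lambda P(\bm h)=0$ for every~$\bm h$ but is nonzero.) Localization with bumps does not help, since the tangential integration by parts occurs within the support of the bump. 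What the integrated identity gives you is only that the formal $\Lambda$-adjoint of the operator $\bm h\mapsto\bm{\mathcal U}(\bsigma\bm h)|_\Lambda$ annihilates the constant function~$1$---a single scalar relation, far weaker than vanishing of the operator.

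The paper instead carries out precisely the algebraic alternative you sketch at the end: expand $\D_\bsigma^{d-j-3}(\bsigma\bm f)$ via~\nn{envelop}, push the surviving $\bsigma$-factor through $\D'_\bmu$ using Lemma~\ref{techlemm} (which along~$\Xi$ gives $\D'_\bmu(\bsigma\,\cdot\,)=\tfrac{j+1}{j}\bsigma\,\D'_\bmu(\,\cdot\,)+\mathcal O(\bsigma^{d-1})$ for $j\geq1$, with the $j=0$ term already $\mathcal O(\bsigma)$), and then through $(\D^{\!\rm T}_\bsigma)^j$ using Lemma~\ref{Iwasatheorem}. The resulting two sums, after a reindexing $j\mapsto j+1$ in one of them, cancel term by term. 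This telescoping is the content of the tangentiality, and it must be done algebraically; the integral route does not substitute for it.
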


\begin{proof}
Tangentiality of the operator $\bm {\mathcal P}_{\Sigma \hookrightarrow (M,\cc)}^{(k)}$ 
is a special case of a result proved in \cite[Theorem 4.1]{GW} 
using the $\mathfrak{sl}(2)$ algebra in Equation~\nn{thesl2}
and its enveloping algebra identities~\nn{envelop}. The result there pertains to general tractor bundles and so applies to conformal densities. The quantity $I^2$ in that article is the ${ {\mathcal S}}$-curvature, which equals one to order ${\mathcal O}(\bsigma^d)$
in the present context because $\bsigma$ is a unit defining density
(see Equation~\nn{asymWill}). It is not difficult to verify (again using the  $\mathfrak{sl}(2)$ algebra) that the ${ {\mathcal S}}$-curvature
can be replaced by unity without destroying tangentiality so long as the 
power~$k$ of the Laplace--Robin operator is in $\{1,\ldots, d-1\}$.

Tangentiality of the operator $\bm {\mathcal P}^{(k)}_{\Lambda\hookrightarrow \Sigma \hookrightarrow (M,\cc)}$
along $\Lambda$ follows from a similar argument to that for~$\bm {\mathcal P}_{\Sigma \hookrightarrow (M,\cc)}^{(k)}$. The codimension two submanifold $\Lambda$ is the intersection of $\Xi$ and~$\Sigma$ so it suffices to check tangentiality to each of these hypersurfaces along $\Lambda$ separately.
Tangentiality  to~$\Xi$ is guaranteed because already the operator $\D^{\!\rm T}_\bsigma$ is tangential  to $\Xi$. 
Then  
Lemma~\ref{Iwasatheorem}
provides  the analog of the $\mathfrak{sl}(2)$ algebra used in the previous argument.

Only the proof of tangentiality of the operator $\bm {\mathcal U}_{\Lambda\hookrightarrow \Sigma \hookrightarrow (M,\cc)}$
to $\Sigma$ along $\Lambda$ remains. For that 
we need to show that 
 $$\sum_{j=0}^{d-3} (\D^{\!\rm T}_\bsigma)^j\, 
\D^\prime_\bmu \bm \D_\bsigma^{d-j-3}(\bsigma\bm f)$$
vanishes along $\Lambda$ for any smooth weight $-1$ density $\bm f$. We first use the $\mathfrak {sl}(2)$ algebra identities~\nn{envelop} to show
$$
 \bm \D_\bsigma^{d-j-3}( \bsigma \bm f) = \bsigma
 \bm \D_\bsigma^{d-j-3} \bm f+
(d-j-3)(j+2)\bm \D_\bsigma^{d-j-4}  \bm f
+{\mathcal O}(\bm \sigma^{j+4})\, .
$$
The term ${\mathcal O}(\bm \sigma^{j+4})$ appears because $\bm {\mathcal S}_\bsigmas=1+{\mathcal O}(\bsigma^d)$.
Now, along $\Xi$ when $j\neq 0$, Lemma~\ref{techlemm} gives
$$
\D^\prime_\bmu 
(\bsigma \bm \D_\bsigma^{d-j-3}\bm f)\stackrel{\Xi}= 
\frac{j+1}{j}\hh
\hh\bsigma\D^\prime_\bmu  \D_\bsigma^{d-j-3}\bm f
+{\mathcal O}(\bsigma^{d-1})\, .
$$
When $j=0$, Lemma~\ref{techlemm} says $\D^\prime_\bmu 
(\bsigma \bm \D_\bsigma^{d-3}\bm f)\stackrel\Xi={\mathcal O}(\bsigma)$, so this term in the sum does not contribute along $\Lambda$.

To compute $(\D^{\!\rm T}_\bsigma)^j(\bsigma\D^\prime_\bmu 
 \bm \D_\bsigma^{d-j-3}\bm f)$ along $\Xi$ for $j\geq 1$,  we note that along $\Xi$ the operator~$\D^{\!\rm T}_\bsigma$ equals~${\mathscr L}^\Xi_\bsigmas$ (see Equation~\nn{curlyL}), which obeys Lemma~\ref{Iwasatheorem}. Thus, using again 
the $\mathfrak {sl}(2)$ algebra identities~\nn{envelop} adjusted to account for the factor $d+2w-1$ (rather than $d+2w$) appearing in Lemma~\ref{Iwasatheorem}, along $\Lambda$ we have
$$
(\D^{\!\rm T}_\bsigma)^j(\bsigma\D^\prime_\bmu 
 \bm \D_\bsigma^{d-j-3}\bm f)
\stackrel\Lambda=
-j
(d-j-2)
(\D^{\!\rm T}_\bsigma)^{j-1}\D^\prime_\bmu 
 \bm \D_\bsigma^{d-j-3}\bm f
\, .
$$
Assembling the above three displays gives
\begin{multline*}
\sum_{j=0}^{d-3} (\D^{\!\rm T}_\bsigma)^j\, 
\D^\prime_\bmu \bm \D_\bsigma^{d-j-4}(\bsigma\bm f)
\stackrel\Lambda=
-\sum_{j=1}^{d-3}
(j+1)
(d-j-2)\hh
(\D^{\!\rm T}_\bsigma)^{j-1}\D^\prime_\bmu 
 \bm \D_\bsigma^{d-j-3}\bm f
\, 
\\ + \sum_{j=0}^{d-4} (d-j-3)(j+2)
(\D^{\!\rm T}_\bsigma)^j\, 
\D^\prime_\bmu
\bm \D_\bsigma^{d-j-4}  \bm f\, .
\end{multline*}
Reindexing one of the summations shows that the right hand side above vanishes as required.
\end{proof}

As a consequence of this proposition, we can define the following differential operators holographically:  \begin{eqnarray}
\mathbf{P}_{\Sigma \hookrightarrow (M,\cc)}^{(k)}:\Gamma\big(\ce \Sigma\big[\tfrac{k-d+1}2\big]\big)&\to& \Gamma\big(\ce \Sigma\big[\tfrac{-k-d+1}2\big]\big)\, ,\qquad k\in\{1,\ldots, d-1\}\, ,
\nonumber\\[2mm]
\stackrel{\rotatebox{90}{$\in$}}
{{\bm f}^{\!\!\!\phantom A}}\quad\qquad &\mapsto&
\quad
\stackrel{\rotatebox{90}{$\in$}}{(\D^{k}_\bsigma {\bm f}_{\rm ext})\big|_\Sigma}\, ,
\nonumber\\[7mm]
\mathbf{U}_{\Lambda\hookrightarrow \Sigma \hookrightarrow (M,\cc)}:\Gamma(\ce \Sigma[0])
&\to&
\qquad\quad
 \Gamma\big(\ce \Sigma[2-d]\big)|_\Lambda
\, ,
\label{PUPdef}
\\[2mm]
\stackrel{\rotatebox{90}{$\in$}}
{{ f}^{\!\!\!\phantom A}}\quad\quad &\mapsto&
\Big(
\sum_{j=0}^{d-3} (
\stackrel{\rotatebox{90}{$\in$}}{
\D^{\!\rm T}_\bsigma)^j\, 
\D^\prime_\bmu \bm \D_\bsigma^{d-j-3}
}\! {f}_{\rm ext}\Big)\Big|_\Lambda\, ,
\nonumber\\[7mm]
\mathbf{P}^{(k)}_{\Lambda\hookrightarrow \Sigma \hookrightarrow (M,\cc)}:\Gamma\big(\ce \Lambda\big[\tfrac{k-d+2}2\big]\big)
&\to&
\Gamma\big(\ce \Lambda\big[\tfrac{-k-d+2}2\big]\big)\, ,\quad  k\in\{1,\ldots, d-2\}\, ,
\nonumber\\[2mm]
\stackrel{\rotatebox{90}{$\in$}}
{{\bm f}^{\!\!\!\phantom A}}\quad\qquad &\mapsto&
\quad
\stackrel{\rotatebox{90}{$\in$}}{\big((
\D^{\!\rm T}_\bsigma\big)^{k} 
 {\bm f}_{\rm ext}\big)\big|_\Lambda}
\, .\nonumber
\end{eqnarray}
In the above the subscript ``ext'' denotes an arbitrary smooth extension to the bulk manifold $M$. 
 The operators $\mathbf{P}_{\Sigma \hookrightarrow (M,\cc)}^{(k)}$ are the
 extrinsically coupled, conformal Laplacian powers
  introduced in~\cite{GW15} (in that work these operators are also generalized to act not only on densities but also on general tractors, moreover the integer $k$ can be extended to include any positive even $k$). Also here we have defined the operators $\mathbf{ P}^{(k)}_{\Sigma \hookrightarrow (M,\cc)}$ with an additional factor of $(-1)^k$ as compared to~\cite{GW} in order simplify later expressions.
 For  the case when the  interior conformal class of metrics includes a formal Poincar\'e--Einstein metric (in the sense of~\cite{FGast,FGrnew}),  it was proved in~\cite{GW}
that the operator  $\mathbf{ P}_{\Sigma \hookrightarrow (M,\cc)}^{(k)}$ reduces to  the Laplacian powers of~\cite{GJMS}.
Those results were summarized in Theorem~\ref{oldiebutgoodie}.

The operator $\mathbf {P}_\Sigma:=\mathbf { P}_{\Sigma \hookrightarrow (M,\cc)}^{(d-1)}$ is of particular significance because it determines the conformal transformation property of the  ${\bm Q}_{\Sigma\hookrightarrow(M,\cc)}$ curvature. It naturally pairs with the operator~$\mathbf { U}_{\Lambda\hookrightarrow \Sigma\hookrightarrow (M,\cc)}$ because together they satisfy Theorem~\ref{intPU}, whose proof we are now ready to give.


\begin{proof}[Proof of Theorem~\ref{intPU}]
That $\mathbf{U}_{\Lambda\hookrightarrow \Sigma \hookrightarrow (M,\cc)}$ is canonically determined by the stated embedding data, follows by uniqueness of the unit and minimal unit defining densities $\bsigma$ and $\bmu$ up to higher order terms in $\bsigma$, which cannot contribute by virtue of the $\mathfrak{sl}(2)$ algebra of Equation~\nn{thesl2} and its analog along $\Xi$ in Lemma~\ref{Iwasatheorem}.

As explained at the end of Section~\ref{Sec:Anomaly}, because the volume anomaly ${\mathcal V}$ is independent of the choice of regulator $\bm \tau$, it follows from Theorem~\ref{Vanomaly}, upon replacing $\btau$ by $e^f \btau$ where $f\in C^\infty M$ and the identity $\log (e^f\btau) = f + \log \btau$, that
$$
\int_{\widetilde\Sigma}
\D_{\bsigmas}^{d-1} f + 
(d-2)
\int_{\Lambda} \sum_{j=0}^{d-3} (\D^{\!\rm T}_\bsigma)^j\, 
\D^\prime_\bmu \bm \D_\bsigma^{d-j-3}f=0\, .
$$
The integral formula of the theorem now follows directly by using the definition of the $(
\mathbf {P}_{\Sigma \hookrightarrow (M,\cc)},\mathbf {U}_{\Lambda\hookrightarrow\Sigma \hookrightarrow (M,\cc)})$-pair 
given in Equation~\nn{PUPdef}
(and text directly thereafter).

The first of the leading derivative terms 
 can be deduced from a counting argument and the divergence theorem which
 implies that
$$
\int_{\widetilde\Sigma} \Delta_\Sigma \tilde f= \int_{\Lambda}
\nabla_{\hat m} \tilde f\,  ,
$$
for any smooth function  $\tilde f$ on  ${\widetilde\Sigma}$ where $\hat m$ is the outward unit normal to $\Lambda$.
Since the smooth function $f$ is arbitrary
and $\mathbf{P}_{\Sigma \hookrightarrow (M,\cc)}$ has leading derivative term $\big((d-2)!!\big)^2\big(\Delta_\Sigma\big)^{\frac {d-1}2}$, this implies that
the operator $\mathbf {U}_{\Lambda\hookrightarrow\Sigma \hookrightarrow (M,\cc)}$
has the leading derivative term stated   modulo terms that are total derivatives. To establish that the latter are of lower transverse order, it suffices to write out  
the holographic formula for $\mathbf U_{\Lambda\hookrightarrow \Sigma \hookrightarrow (M,\cc)}$ in a flat limit. More precisely for $M={\mathbb R}^d$ with its standard conformally flat structure,  take $\Sigma$ to be a hyperplane. Moreover, since we are considering the leading derivative terms, we take $\Lambda$ to be a hyperplane in $\Sigma$. It is then easy to verify that the minimal surface $\Xi$ is the hyperplane in $M$ intersecting $\Sigma$ along $\Lambda$ at right angles. We leave it to the reader to compute $\mathbf {U}_{\Lambda\hookrightarrow\Sigma}$ in this setting and  complete the proof.

%
%
%
%
%
%
%

\end{proof}

\begin{remark}
Observe that the operator $\mathbf U_{\Lambda\hookrightarrow \Sigma \hookrightarrow (M,\cc)}$
is order $d-2$ in derivatives $\nabla_{\hat m}$ normal to~$\Lambda$. Choosing $(M,g^o)$ to be Poincar\'e--Einstein,
Theorem~\ref{intPU} solves holographically the problem of finding
a canonical, conformally invariant 
operator of this order acting on functions, determined by the embedding $\Lambda\hookrightarrow (\Sigma, \cc_\Sigma)$ when $\dim (\Sigma)$ is even.

\end{remark}

We can also now give the proof of Theorem~\ref{Plambdaop}.


\begin{proof}[Proof of Theorem~\ref{Plambdaop}]
A somewhat tedious direct computation in a choice of scale shows that along $\Xi$, the operator $\D_\bsigmas^T$ differs only by terms involving multiplication by extrinsic curvatures from the Laplace--Robin operator intrinsic to $\Xi$ depending on the defining density $\sigma|_\Xi$. 
(Alternatively, this result is an immediate consequence of the fact that the Laplace--Robin operator can be constructed from the Thomas $D$-operator of~\cite{Thomas,BEG} as well as the results for the Thomas $D$-operator along hypersurfaces given in~\cite[Section 4.2]{GW161}.) Canonical determinedness of 
$\mathbf{P}_{\Lambda\hookrightarrow \Sigma \hookrightarrow (M,\cc)}^{(k)}$ and its leading derivative behavior can then be proved following {\it mutatis mutandis}  the proof method of Theorem~\ref{oldiebutgoodie}. The displayed integral formula follows via the reasoning given below Equation~\nn{transform}. (This logic is also the same as that used to prove the analogous statement with boundary in Theorem~\nn{intPU}.)
\end{proof}

\section{Examples}\label{Sec:Examples}

We now give the proofs of Theorems~\ref{exvol} and~\ref{exarea}, which give examples of our results  when the bulk $M$ is a three- or four-manifold.
The proofs are direct computations  of the holographic formul\ae\  of Theorems~\ref{Vdiv} and~\ref{Adiv} determining the divergences in the regulated volume and area expansions, as well as Theorems~\ref{Vanomaly} and~\ref{Aanomaly} which give the extrinsic $(Q, T)$-curvature pair. 
The critical extrinsic Laplacian powers and the corresponding conformally invariant boundary operators are directly computed from Definition~\ref{PUPdef}.
In general, $Q$ and $T$ curvatures yield integrated conformal invariants so involve a mixture of terms that are either dependent or independent of the choice of $g\in \cc$ (or equivalently a true scale $\btau$). We will use a mixed bolded and unbolded notation to keep track of these dependencies. We also suppress the full extrinsic embedding data dependence appearing as subscripts on curvatures and operators.
We organize these calculations by dimensionality and start with dimension three where we encounter at most two powers of the  Laplace--Robin $\D_{_\pdot}$ operator. We will conclude by computing four-dimensional quantities that involve the action of no more than three $\D_{_\pdot}$ operators.

\subsection{Three bulk dimensions}
\label{3dV}
\subsubsection{Divergences}
In this case ${\rm dim}(\Lambda)=1$. The non-critical divergences in the regulated volume expansion~\eqref{reg_vol} are determined by Theorem~\ref{Vdiv}. These are given by 
$$
{\rm Poles}(\Vol_\varepsilon) = \frac{1}{2\varepsilon^2}\bigg(\int_{\widetilde\Sigma} {\bm v}_2 
+\int_\Lambda {\bm  v}'_{2}\hh \bigg)-\frac{1}{\varepsilon}\bigg(\int_{\widetilde\Sigma} {\bm v}_1 
+\int_\Lambda {\bm  v}'_{1}\hh \bigg)~.
$$
Here, the two integrals along $\Lambda$ vanish because ${\bm  v}'_{2}=0={\bm  v}'_{1}$. Next, computing in the scale $\btau= [g; 1]$ we find
$$
{\bm  v}_{2}= 1~, \qquad
{\bm  v}_{1}=\D_\bsigmas \btau^{-1}\Big|_{\widetilde\Sigma}=-H_{\Sigma}~.
$$
Thus
$$
{\rm Poles}(\Vol_\varepsilon) = \frac{1}{2\varepsilon^2}\int_{\widetilde\Sigma} \ext V_{g_\Sigma} +\frac{1}{\varepsilon}\int_{\widetilde\Sigma} \ext V_{g_\Sigma}  H^g_{\Sigma}~.  
$$

For the divergences in the regulated area expansion~\eqref{AREA}, Theorem~\ref{Adiv}  gives
$$
{\rm Poles}(\Area_\varepsilon)=\frac{1}{\varepsilon} \int_\Lambda \ext V_{g_\Lambda}~,
$$
where $g_\Lambda$ is the metric induced along $\Lambda$ by $g$. 

\subsubsection{$(\bm Q^g_\Sigma,\bm T^g_{\Lambda})$ pair}
The corresponding extrinsically coupled $Q$-curvature along $\Sigma$ involves two powers of the Laplace--Robin operator  and was calculated in~\cite{GWvol} and is given by
\begin{equation*}
\bm Q^g_{ \Sigma 
} = J^{g_\Sigma}-\frac12\,
\bm K_\Sigma\, ,
\end{equation*}
where $\bm K_\Sigma$ denotes the {\it rigidity density} of a conformally embedded hypersurface $\Sigma$ which is defined  (in any dimension $d\geq 3$) by
$$
\bm K_\Sigma:=
\bm\IIo_{ab}^\Sigma\,\bm \IIo_\Sigma^{ab}
\in \Gamma(\ce \Sigma[-2])\, .
$$
From Theorem~\ref{Vanomaly} and using Equations~(\ref{Lprimelog},\ref{Llog1}), the transgression  is  
\begin{equation*}
\bm T_{ \Lambda
} 
= \D^\prime_\bmu \log\bm\tau\big|_{\Lambda}
= \D_\bmu\log\bm\tau\big|_{\Lambda}
\stackrel{g}=\rho_\mu \big|_{\Lambda}
=-H_\Xi\big|_\Lambda ~.
\end{equation*}
In the above, $g $ is the metric determined by $\btau$.
The Chern--Gau\ss--Bonnet 
theorem states that the Euler characteristic~$\chi_{\widetilde\Sigma}$ 
of a surface~$\widetilde\Sigma$ is given by 
\begin{equation*}
2\pi \chi_{\widetilde\Sigma} = \int_{\widetilde\Sigma} \J^{g_\Sigma}
+\int_{\partial\widetilde\Sigma}\big(- H_{\partial \widetilde\Sigma\hookrightarrow\widetilde\Sigma}\big) \, ,
\end{equation*}
where the mean curvature $H_{\partial \widetilde\Sigma\hookrightarrow\widetilde\Sigma}$ is given by the  divergence of any extension of the inward unit conormal to $\partial \widetilde\Sigma$ (this quantity is minus the geodesic curvature of $\partial \widetilde\Sigma$).
Thus,  using that Lemmas~\ref{IIonn} and~\ref{H2H} give that $H_{\Lambda\hookrightarrow \Sigma}=H_{\Xi}|_\Lambda$ (note that the defining density~$\bmu$ is positive on the interior of $\Xi$ so that $\ext \mu$ is the inward conormal), Theorem~\ref{Vanomaly} then yields the volume anomaly
%
\begin{equation}\label{3-manifold-anom}
\mathcal V = \pi \chi_{\widetilde\Sigma}
-\frac 14\int_{\widetilde\Sigma} \bm\IIo_{ab}^\Sigma\,\bm \IIo_\Sigma^{ab}\, .
\end{equation}
This establishes the regulated volume expansion and $(\bm Q,\bm T)$ curvatures of Theorem~\ref{exvol} when $d=3$.

\subsubsection{$(\mathbf P_\Sigma,\mathbf  U_{\Lambda})$ pair}

The extrinsic conformal Laplacian $\mathbf P_\Sigma$ was computed in~\cite{GW161} and equals the Laplacian intrinsic to $\Sigma$, 
$$
\mathbf P_\Sigma=\bm \Delta_\Sigma\, .
$$
We have written the Laplace  operator in bold
because, for surfaces,  it is conformally invariant acting on weight zero densities $f_\Sigma$. When $f$ is any smooth extension of $f_\Sigma$
to~$M$, the corresponding $\mathbf U$ operator has holographic formula
$$
\mathbf U_{\Lambda}f_\Sigma = \D^\prime_\bmus f\big|_\Lambda\, .
$$
Using Equation~\nn{Lprime}, and choosing a scale $g\in \cc$, the above equals $\nabla_m f$. Hence we have
$$
\mathbf U_{\Lambda} = \bm\nabla_{\bm{ \hat m}}\, .
$$
This is manifestly conformally invariant and, because $\bm{\hat m}$ is the inward pointing unit normal, the divergence theorem  gives
$
\int_{\widetilde\Sigma}
\mathbf P_{\Sigma} f 
+  \int_{\Lambda}\mathbf U_{\Lambda}f=0$
in concordance with Theorem~\ref{intPU}. This establishes the $(\mathbf P_\Sigma,\mathbf  U_{\Lambda})$  operator pair when $d=3$ in Theorem~\ref{exvol}.

\subsubsection{$\bm Q_{\Lambda}$ and $\mathbf P_{\Lambda}$}
Our holographic formula for the ${ Q}$-curvature of $\Lambda$ in three bulk dimensions is given by (see Theorem~\ref{Aanomaly})
$$
\bm Q_{\Lambda}=\D_\bsigma^{\!\rm T}\log \btau\big|_\Lambda=-\widehat\D_\bmu \langle \bsigma,\bmu \rangle\big|_\Lambda=-\hh 
\bm\IIo^\Sigma_{\bm{ \hat\bm  m}   {\bm{\hat  m}}}|_\Lambda\, .
$$
In the above we used Definition~\nn{logLT}
and Lemma~\nn{C}.
Hence the area anomaly is given by
\begin{equation*}
\mathcal A =\int_\Lambda \bm\IIo^\Sigma_{\bm{ \hat\bm  m}   {\bm{\hat  m}}}\, .
\end{equation*}
Observe that this vanishes when the embedding of $\Sigma$ is umbilic, and thus also when the singular metric $g^o$ is (asymptotically) Poincar\'e--Einstein. Moreover, the above integrand is manifestly conformally invariant, and hence it is possible that the corresponding extrinsic Laplacian power  may vanish; indeed this is the case:
$$
\mathbf P_{\Lambda}=0\, .
$$
To see this, it is easy to verify using Definition~\ref{LTdef}
that $\D_\bsigma^{\!\rm T}f|_\Lambda=0$ for $f\in C^\infty M$, and hence that Theorem~\ref{Plambdaop} gives a vanishing result for $\mathbf P_{\Lambda}$.

%

\newpage

\subsection{Four bulk dimensions}
\subsubsection{Divergences}
Theorem~\ref{Vdiv} indicates that the divergences in the regulated volume expansion are
$$
{\rm Poles}(\Vol_\varepsilon) = 
\frac{1}{3\varepsilon^3}\int_{\widetilde\Sigma} {\bm v}_3
-\frac{1}{4\varepsilon^2}\int_{\widetilde\Sigma} {\bm v}_2
+\frac{1}{4\varepsilon}\bigg(\int_{\widetilde\Sigma} {\bm v}_1 
+2\int_\Lambda {\bm  v}'_{1}\hh \bigg)~, 
$$
where ${\bm  v}'_{3}=0={\bm  v}'_{2}$. In the $\bm \tau$ scale, the expansion coefficients are
$$
{\bm v}_3=1~, \qquad 
{\bm v}_2=\D_\bsigmas \btau^{-2}\Big|_{\widetilde\Sigma}
=-4H^g_{\Sigma}~, \qquad 
{\bm v}_1=\D_\bsigmas^{2}\btau^{-1}\Big|_{\widetilde\Sigma}
=-2J^g_{\Sigma}~.
$$
(These have been calculated in~\cite{GWvol}.) The boundary contribution is computed as follows:
$$
{\bm  v}'_{1}= \D_\bmu^\prime \btau^{-1}\big|_{\Lambda}
=\widetilde\D_\bmu\, \bm\tau^{-1}\big|_{\Lambda}
=-\rho_{\mu}\big|_{\Lambda}
=H_{\Lambda\hookrightarrow \Sigma}^{g_\Sigma}\, ,
$$
where we have made use of Definitions~\ref{Lprime} and~\ref{Lodzhat} as well as the identity $H_{\Lambda\hookrightarrow \Sigma}=H_{\Xi}|_\Lambda$ (which in turn follows from Lemmas~\ref{IIonn} and~\ref{H2H}) to compute the action of the critical operator $\D^\prime_\bmus$ on the bulk Yamabe weight $-1$ density~$\btau^{-1}$. Altogether this gives 
$$
{\rm Poles}(\Vol_\varepsilon) = \frac{1}{3\varepsilon^3} \int_{\widetilde\Sigma} \ext V^{g_\Sigma}
+
\frac{1}{\varepsilon^2}
\int_{\widetilde\Sigma} \ext V^{g_\Sigma}
H_{\Sigma}^g
-\frac{1}{2\varepsilon}
\int_{\widetilde\Sigma} \ext V^{g_\Sigma}
J_{\Sigma}^g
+\frac{1}{2\varepsilon}\int_\Lambda  
dV^{g_\Lambda}H_{\Lambda\hookrightarrow \Sigma}^{g_\Sigma}~.
$$

\medskip

Turning to the divergences in the $d=4$ area expansion, Theorem~\ref{Adiv} says that
$$
{\rm Poles}(\Area_\varepsilon)= \frac{1}{2\varepsilon^2}\int_\Lambda\bm a_2 -\frac{1}{\varepsilon}\int_\Lambda\bm a_1\, ,
$$
where the expansion coefficients
$$
\bm a_2=1~, \qquad
\bm a_1= \D_\bsigma^{\!\rm T} \, \btau^{-1}\big|_{\Lambda}
=\big(\Lodz_\bmu \langle\bsigma,\bmu\rangle -\widetilde \D_\bsigmas \big)\btau^{-1}\big|_{\Lambda}
=\bm\IIo^\Sigma_{\bm{ \hat\bm  m}   {\bm{\hat  m}}} - H_{\Sigma}~.
$$
In the computation of $\bm a_1$ we  used the definition of the critical tangential operator $\D^{\rm T}_\bsigmas$ given in Equation~\ref{LTcrit1} as well as Lemma~\ref{C}. Thus we have
$$
{\rm Poles}(\Area_\varepsilon)=
\frac1{2\varepsilon^2}
\int_\Lambda \ext V_{g_\Lambda}
+\frac{1}{\varepsilon}\int_\Lambda \ext V_{g_\Lambda}
\big(H_{\Sigma}-\bm\IIo^\Sigma_{\bm{ \hat\bm  m}   {\bm{\hat  m}}} \big)~.
$$

\subsubsection{$(\bm Q_\Sigma,\bm T_{\Lambda})$ pair}
Here $\Lambda$ is a surface without boundary.
In four dimensions, the extrinsic $Q$-curvature involves three powers of the Laplace--Robin operator and was calculated in~\cite{GGHW15,GWvol}. It is given for some choice of $g\in \cc$ by 
\begin{equation*}
\bm Q_{\Sigma} = -4\nabla^a_\Sigma\nabla^b_\Sigma \,\bm{\IIo}_{ab}^\Sigma
-8\, \bm{\IIo}^{ab}_\Sigma\, \bm{\mathcal F}_{ab}^\Sigma ~,
\end{equation*}
where $\bm{\mathcal F}_{ab}^\Sigma\in \Gamma(\odot^2 T^*M[0])$ is the {\it Fialkow tensor}~\cite{Fialkow} (see also~\cite{YuriThesis,Grant,Stafford,GW15})
 and equals (in any dimension $d\geq 4$)
\begin{align}\label{Fialkow}
\bm {\mathcal F}_{ab}^\Sigma
&:=\frac1{d-3}\Big(\bm \IIo_{\!ac}^\Sigma\, \bm g_\Sigma^{cd} \hh \hh\bm \IIo_{bd}^\Sigma
-\frac1{2(d-2)}\hh \bm{  g}_{ab}^\Sigma\,  \bm{K}_\Sigma
-\bm W_{\!cabd}\,\bm{\hat n}^c\bm{\hat n}^d\Big)~.\nonumber
\end{align}

The transgression again follows from Theorem~\ref{Vanomaly} and  Equations~(\ref{Lprime} ,\ref{Lprimelog}):
\begin{equation*} 
\bm T_{\Lambda\hookrightarrow \Sigma} = \big(\D^\prime_\bmu \D_\bsigma 
+ \D_\bsigma^{\!\rm T} \D^\prime_\bmu\big) \log\bm\tau \Big|_{\Lambda}
=\big(\widetilde\D_\bmu \D_\bsigma 
+ 2\D_\bsigma^{\!\rm T} \widehat\D_\bmu\big) \log\bm\tau \Big|_{\Lambda}\, .
\end{equation*}
Using~\nn{Llog1}, the first term on the right is
\begin{multline*}
\widetilde\D_\bmu \D_\bsigma \log\bm\tau \big|_{\Lambda}
\stackrel g= \big(\nabla_m - \rho_\mu\big) \big(2\rho_\sigma- \sigma J  \big)\big|_\Lambda
\nonumber\\[1mm]
=\big(2\nabla_m \rho_\sigma - 2\rho_\mu\rho_\sigma\big) \big|_\Lambda
=-\hat m^a \nabla_\Sigma^b\hh  \IIo_{ab}^\Sigma+2 \Rho_{\hat m \hat n} - 2 H_\Sigma H_\Xi\, .
\end{multline*}
In the above $g$ is the metric determined by $\btau$. 
We also used $\nabla_m\sigma|_\Lambda=m.n|_\Lambda=0$, and
that $\nabla_m \rho_\sigma \stackrel\Lambda= - \nabla^\Sigma_{\hat m } H_\Sigma$ 
as well as
 the trace of the $d=4$ Codazzi--Mainardi equation (see~\cite[Equation (2.9)]{GW15}) along $\Lambda$: 
$$
\hat m^a
\nabla^b_\Sigma \hh\IIo^\Sigma_{ab} =
2\big(\nabla^\Sigma_{\hat m} H_\Sigma + \Rho_{\hat m \hat n}\big)\, .
$$
The remaining term in $\bm T_{\Lambda}$ can be similarly  handled using Equation~\nn{LTcrit1}:
\begin{align*}
2\D_\bsigma^{\!\rm T} \widehat\D_\bmu \log\bm\tau \Big|_{\Lambda}
&=2\Big(\big(\Lodz_\bmu \langle\bsigma,\bmu\rangle\big)
-\widetilde \D_\bsigmas\Big)
\widehat\D_\bmu \log\bm\tau \Big|_{\Lambda}\\
&\stackrel g= 2\Big(
\IIo^\Sigma_{\hat m\hat m}
- \nabla_{\hat n}+\rho_\sigma \Big)
\big(\rho_\mu -\frac 12\mu J\big) \Big|_{\Lambda} \\
&=-2\hh\hh\IIo^\Sigma_{\hat m\hat m} H_\Xi 
+\hat n^a \nabla_\Xi^b \IIo_{ab}^\Xi-2 \Rho_{\hat m \hat n}
+ 2H_\Sigma H_\Xi \, .
\end{align*}
Orchestration gives
$$
\bm T_{\Lambda}\stackrel g=-{\hat m}^a \nabla_\Sigma^b \hh \IIo_{ab}^\Sigma
+{\hat n}^a \nabla_\Xi^b \hh\hh\IIo_{ab}^\Xi-2\hh \hh \IIo^\Sigma_{{\hat m\hat m}} H_\Xi\, .
$$
We want to write this expression in terms of the embedding sequence $\Lambda\hookrightarrow \Sigma \hookrightarrow (M,\cc)$.
Recalling that Lemmas~\ref{IIonn} and~\ref{H2H} imply that $H_{\Lambda\hookrightarrow \Sigma}=H_{\Xi}|_\Lambda$, so we only need deal with the second term in the above
(note that along $\Lambda$, the unit conormal $ {\hat m}$ is determined by the embedding $\Lambda\hookrightarrow \Sigma$). For this  we 
first note that the trace of the $d=4$ Codazzi--Mainardi equation (see for example~\cite[Equation (2.9)]{GW15}) along $\Lambda$
gives 
$$
\hat n^a
\nabla^b_\Xi \hh\IIo^\Xi_{ab} =
2\big(\nabla^\Xi_{\hat n} H_\Xi + \Rho_{\hat m \hat n}\big)\, .
$$
Therefore, we need to study $\nabla^\Xi_{\hat n} H_\Xi$ which we can write along $\Lambda$ in terms of our canonical extensions $n$ and $m$ of $\hat n$ and $\hat m$, as (using that $\rho_\mu=-\frac 14  (\nabla.m+\mu \J)$ and $m.n|_\Lambda=0$)
\begin{multline*}
\frac 14 \nabla_n (\nabla.m+\mu \J)
\stackrel\Lambda=
\frac14\Big( \nabla^b \nabla_n m_b
-Ric_{mn}-(\nabla_b n^a)\nabla_a m^b \Big)\\
\stackrel\Lambda=
\frac14
\Big(
\Delta(m.n)
-m^a \Delta n_a
-2\Rho_{\hat m\hat n}
-2(\IIo_{ab}^\Sigma+g_{ab}H_\Sigma )
(\IIo_\Xi^{ab}+g^{ab} H_\Xi)
\Big)\, .
\end{multline*}
Hence
$$
\nabla^\Xi_{\hat n}H_\Xi +\Rho_{\hat n \hat m}\stackrel\Lambda=
\frac12 \Rho_{\hat m \hat n}
-\frac12 \, \IIo^\Sigma_{ab}\hh\IIo_\Xi^{ab}-2H_\Sigma H_{\Lambda\hookrightarrow \Sigma}+
\frac14 \Delta (m.n) -\frac14 m^a \Delta n_a\, .
$$
The last term above is easily computed using similar techniques:
$$
m^a \Delta n_a = m^a \nabla_b \nabla_a n^b = \nabla_m \nabla.n + \Ric_{mn}
\stackrel\Lambda=4\nabla^\Sigma_{\hat m} H_\Sigma+2\Rho_{\hat m \hat n}\, .
$$
Moreover,
\begin{multline*}
\Delta(m.n)
=\Delta\big(-\mu \rho_\sigma -\sigma \rho_\mu + \mu\hh {\mathcal C} + {\mathcal O}(\sigma^3)\big)
\\\stackrel\Lambda=
8 H_\Sigma H_\Xi
+2\nabla_{\hat m}^\Sigma H_\Sigma 
+2\nabla^\Xi_{\hat n} H_\Xi
+4 H_\Xi {\mathcal C} + 2 \nabla_{\hat m}^\Sigma{\mathcal C}\, .
\end{multline*}
Here we used $\Delta \mu\stackrel\Xi = 4 H_\Xi$  and 
$\Delta \sigma\stackrel\Sigma= 4 H_\Sigma$. It follows, using the traced Codazzi--Mainardi equation along $\Sigma$,  that 
\begin{align*}
\nabla^\Xi_{\hat n}H_\Xi
+ \Rho_{\hat m \hat n}
&\stackrel\Lambda=
-\nabla_{\hat m}^\Sigma H_\Sigma 
- \Rho_{\hat m\hat n}
+(\nabla^\Sigma_{\hat m} + 2 H_{\Lambda\hookrightarrow \Sigma})\hh \hh\IIo_{\hat m \hat m}^\Sigma
- \IIo^\Sigma_{ab}\hh\IIo_\Xi^{ab}
\\
&
\stackrel\Lambda=
-\frac12 \hat m^a
\nabla^b_\Sigma \hh\IIo^\Sigma_{ab} 
+(\nabla^\Sigma_{\hat m} + 2 H_{\Lambda\hookrightarrow \Sigma})\hh \hh\IIo_{\hat m \hat m}^\Sigma
- \IIo^\Sigma_{ab}\hh\IIo_\Xi^{ab}
\,.
\end{align*}
We still need to develop the last term above. For that we compute using the same methodology as above as follows:
\begin{multline*}
\IIo^{ab}_\Sigma \hh\IIo_{ab}^\Xi
\stackrel\Lambda=
\IIo^{ab}_\Sigma
(\nabla_a m_b + g_{ab}\rho_\mu)
\stackrel\Lambda=
\IIo^{ab}_\Sigma \nabla_a m_b
\stackrel\Lambda=
\IIo^{ab}_\Sigma \big([\nabla_a-m_a\nabla_m] m_b
+\frac 12 m_a \nabla_b m^2\big)
\\[1mm]
\stackrel\Lambda=
\IIo^{ab}_\Sigma \big([\nabla^\Sigma_a-m_a\nabla^\Sigma_m] m_b
+\frac 12 m_a \nabla_b m^2\big)
\stackrel\Lambda=
\IIo^{ab}_\Sigma 
\big(\II_{ab}^{\Lambda\hookrightarrow \Sigma}+\hat m_a\hat m_b H_\Xi\big)
=
\IIo^{ab}_\Sigma \hh \IIo_{ab}^{\Lambda\hookrightarrow \Sigma}\, .
\end{multline*}
Thus, noting that the definition of the Robin operator in Equation~\nn{ROBIN} gives
$\nabla^\Sigma_{\hat m} \hh
\IIo_{\hat m \hat m}^\Sigma
+  H_{\Lambda\hookrightarrow \Sigma}\hh \hh\IIo_{\hat m \hat m}^\Sigma\stackrel\Lambda =\bm \updelta_{\rm R}^{\sss\Lambda\hookrightarrow \Sigma} \, \bm \IIo^\Sigma_{\bm{\hat m}\bm {\hat m}}
$,
we finally obtain
$$
\bm T_{\Lambda}
\stackrel g=
-2 \bm{ \hat m}^a
\nabla^b_\Sigma \hh\bm \IIo^\Sigma_{ab}
+2
\bm \updelta_{\rm R}^{\sss\Lambda\hookrightarrow \Sigma} 
\, \bm \IIo^\Sigma_{\bm{\hat m}\bm {\hat m}}\
-2\hh \hh\bm \IIo^\Sigma_{ab}\hh\hh\bm \IIo_{\Lambda\hookrightarrow \Sigma}^{ab}
\, .
$$
Remembering that $\hat m$ is the inward normal to $\Lambda$, then the divergence theorem together with 
Theorem~\ref{Vanomaly} yields the volume anomaly
\begin{equation}\label{4-manifold-anom}
\mathcal V = 
\frac23\, \int_{\widetilde\Sigma}\bm{\IIo}^{ab}_\Sigma\, \bm{\mathcal F}_{ab}^\Sigma
-\frac13
\int_\Lambda
\Big(
\bm \updelta_{\rm R}^{\sss\Lambda\hookrightarrow \Sigma} 
\, \bm \IIo^\Sigma_{\bm{\hat m}\bm {\hat m}}\
-\hh \hh\bm \IIo^\Sigma_{ab}\hh\hh\bm \IIo_{\Lambda\hookrightarrow \Sigma}^{ab}\Big)\, .
\end{equation}
Here the integrands of the above result are manifestly conformally invariant because the scale dependent terms in the ${T}$-curvature conspire to precisely cancel those in the extrinsic ${Q}$-curvature. 
Notice that this anomaly vanishes when $\Sigma$ is umbilic and in particular  when the bulk is Poincar\'e--Einstein.

%
%
%
%
%
%
%
%

%
%

\subsubsection{$(\mathbf P_\Sigma,\mathbf  U_{\Lambda})$ pair}

The extrinsic conformal Laplacian power was calculated in \cite{GW161} (see also~\cite{GGHW15}) and, acting on weight zero densities,  is given by 
(not forgetting a factor $(-1)^{d-1}=-1$ accounting for differing sign  conventions)
$$
\mathbf P_\Sigma=8\bm \nabla_a^\Sigma \circ\bm \IIo^{ab}_\Sigma \circ \bm \nabla^\Sigma_b\, .
$$
Observe that although $\Sigma$ has dimension three so that there is no intrinsic conformally invariant Laplacian power at this weight~\cite{GJMS}, the above operator is Laplacian-like with the trace-free second fundamental form appearing in place of the inverse metric. We have used a bold notation for both gradient operators because the gradient of a function and the divergence of a weight $-3$ vector are conformally invariant operations in three dimensions. 
It follows from the above display and the divergence theorem that
if $f\in C^\infty M$, then
$$
\int_{\widetilde\Sigma}
\mathbf P_{\Sigma} f = -8\int_\Lambda
\bm {\hat m}_a\hh  \bm \IIo^{ab}_\Sigma \bm \nabla_b^\Sigma f\, ,
$$
where $\bm {\hat m}$ is the inward unit normal to $\Lambda$.

The $\mathbf U_\Lambda$ operator associated to $\mathbf P_{\Sigma}$ can be computed from Equation~\nn{PUPdef}. 
First note that when $d=4$ we have
$$
\sum_{j=0}^{d-3} (
\D^{\!\rm T}_\bsigma)^j\, 
\D^\prime_\bmu \bm \D_\bsigma^{d-j-3}
=\D^{\!\rm T}_\bsigma\D^\prime_\bmu
+\D^\prime_\bmu  \D_\bsigma
\stackrel\Lambda
=
2(\bm{\mathcal C}
-\widetilde \D_\bsigmas)
\widehat\D_\bmu
+
\widetilde\D_\bmu \D_\bsigma\, . 
$$
To achieve the last equality we used the definitions of  $\D_\bmus^\prime$ and $\D^{\!\rm T}_\bsigma$  in Equations~\nn{Lprime} and~\nn{LTcrit1} of $\D^{\!\rm T}_\bsigma$, as well as Equation~\nn{thebr} in conjunction with the identity $\widehat\D_\bmu \bmu \stackrel\Xi = 1$.
We now choose a scale $g\in\cc$ and write out the operators appearing on the right hand side above. Working along $\Lambda$ this gives
$$
2({\mathcal C}\!-\!\nabla_n+\rho_\sigma)\nabla_m 
+(\nabla_m-\rho_\mu)(2\nabla_n-\sigma \Delta)
\stackrel\Lambda=
2[\nabla_m,\nabla_n]
-2H_\Sigma \nabla_m \!+ \! 2 H_\Xi \nabla_n
+2{\mathcal C}\nabla_m\, .
$$
Recalling that $\nabla_m n_a \stackrel\Lambda=m^b \IIo^\Sigma_{ba}+m_a H_\Sigma$ and
$\nabla_n m_a \stackrel\Lambda=n^b \IIo^\Xi_{ba}+n_a H_\Xi$, the commutator term yields
$
[\nabla_m,\nabla_n]
\stackrel\Lambda=
m^b \IIo^\Sigma_{ba}\nabla^a_\Sigma
+H_\Sigma \nabla_m
-n^b \IIo^\Xi_{ba}\nabla^a_\Xi- H_\Xi\nabla_n
$, where along $\Lambda$ and acting on scalars we have used that $\nabla^a_\Xi
=\nabla^a-m^a \nabla_m$ and $\nabla^a_\Sigma
=\nabla^a-n^a \nabla_n$.
In concordance with the tangentiality result of Proposition~\ref{proptang}, applying this commutator result to the above display, all instances of the gradient operator $\nabla_n$ along the conormal to $\Sigma$ cancel, and we are left with the operator
$$
2m^b \IIo^\Sigma_{ba}\nabla^a_\Sigma
-2n^b \IIo^\Xi_{ba}\nabla^a_\Xi+2{\mathcal C}\nabla_m\, .
$$ 
Using the identity of Equation~\ref{Rodsequation} and Lemma~\ref{C}, we have the conformally invariant result for the operator $ \mathbf U_{\Lambda}$
$$
\mathbf U_{\Lambda} = 4\bm {\hat m}^a\bm \IIo^\Sigma_{ab} \bm \nabla_\Sigma^b\, .
$$
Indeed, as proved in Theorem~\ref{intPU}
$$
\int_{\widetilde\Sigma}
\mathbf P_{\Sigma} f 
+ 2 \int_{\Lambda}\mathbf U_{\Lambda}f=0\, .$$
We observe that just as for the ${\mathbf P}_\Sigma$ operator, the ${\mathbf U}_\Lambda$ operator is obtained from its $d=3$ counterpart $\bm\nabla_{\bm{ \hat m}}$
by replacing the inverse metric with the trace-free second fundamental form.

\subsubsection{$\bm Q_{\Lambda}$ and 
$\mathbf P_{\Lambda}$}

The holographic formula for the ${\bm Q}$ curvature of $\Lambda$ in four bulk dimensions is given by (see Theorem~\ref{Aanomaly})
\begin{equation*}
\bm Q_{\Lambda}=
(\D_\bsigma^{\!\rm T})^2\log\bm \t \big|_\Lambda\, .
\end{equation*}
In four dimensions  $\D_\bsigma^{\!\rm T} \log\bm\tau$ 
has bulk Yamabe  weight $w=-1$, so we must use Equation~\nn{LTcrit1} of Definition~\ref{LTdef} as well as Definition~\ref{logLT}.  This yields
\begin{eqnarray*}
\D_\bsigma^{\!\rm T}{}^2\log\bm \t 
&\stackrel\Lambda=& \Big((\widehat\D_\bmus \langle\bsigma,\bmu\rangle)
-\widetilde\D_\bsigmas \Big)
\Big(\widehat\D_\bsigmas \log\bm \t  - \widehat\D_\bmus \langle\bsigma,\bmu\rangle
+\bsigma\widetilde \D_\bmus \widehat\D_\bmu \log\bm \t  \Big)\\[1mm]
&\stackrel\Lambda=&
-\widetilde \D_\bmu \widehat\D_\bmu \log\bm \t 
-\widetilde\D_\bsigmas 
\widehat\D_\bsigma \log\bm \t
+\widetilde\D_\bsigmas
\widehat\D_\bmu \langle\bsigma,\bmu\rangle
+
\bm{\mathcal C}\, (\widehat\D_\bsigma \log\bm \t  - \bm{\mathcal C})
\, .
\end{eqnarray*}
%
Here $\bm{\mathcal C}$ is as given in Lemma~\ref{C} and we have also used $\widetilde\D_\bsigmas\bsigma|_\Sigma = 1$.
We now calculate each of the  four terms above for the choice of $g\in \cc$ determined by $\btau$. First, 
using Equations~\nn{LODZ} and~\nn{Llog1} and $\nabla_m \mu|_\Xi=1$,
\begin{equation*}
-\widetilde \D_\bmu \widehat\D_\bmu \log\bm \t \big|_\Lambda
= -\big(\nabla_m - \rho_\mu \big)\big( \rho_\mu- \tfrac 12 \mu J\big)\stackrel\Lambda
=-\nabla_m \rho_\mu +\rho_\mu^2 +\frac 12 J ~.
\end{equation*}
Similarly
\begin{equation*}
-\widetilde \D_\bsigmas \widehat\D_\bsigmas \log\bm \t \stackrel\Lambda
=-\nabla_n \rho_\sigma +\rho_\sigma^2 +\frac 12 J ~.
\end{equation*}
For the second last of the abovementioned four terms we  use that $\widehat\D_\bsigma \log\bm \t\stackrel\Lambda=
\rho_\sigma$. That leaves 
the term%
\begin{align*}
\widetilde\D_\bsigmas \widehat\D_\bmu \langle\bsigma,\bmu\rangle 
&\stackrel\Lambda= \nabla_n \nabla_m \big(m.n +\sigma\rho_\mu+ \mu\rho_\sigma\big) -\rho_\sigma {\mathcal C}\\
&\stackrel\Lambda= \nabla_n\nabla_m m.n  + \rho_\mu\nabla_n m.n 
+ \nabla_m\rho_\mu +\nabla_n \rho_\sigma
 -\rho_\sigma {\mathcal C}\, .\end{align*}
Here we used $\nabla_n \mu = m.n\stackrel\Lambda=0$.
Using that 
 $\nabla_m m = \frac 12 \nabla m^2 =-\nabla (\mu\rho_\mu)+{\mathcal O}(\mu^3)$
we have
$$
\nabla_n\nabla_m m.n
\stackrel\Lambda=
\nabla_n\big(-\nabla_n(\mu\rho_\mu)
+m^a \nabla_m n_a\big)
\stackrel\Lambda=
-\rho_\mu\nabla_n m.n
+\nabla_n(m^a \nabla_m n_a)\, .
$$
Collating the above computations we find
$$
(\D_\bsigma^{\!\rm T})^2\log\bm \t \stackrel\Lambda 
=J + H_\Sigma^2 + H_\Xi^2- \mathcal C^2 + \nabla_n(m^a\nabla_mn_a)\, .
$$
We now focus on the last term on the line above:
\begin{align*}
\nabla_n(m^a \nabla_m n_a)
&\stackrel\Lambda=
\IIo^\Xi_{\hat n a}
g^{ab}\IIo^\Sigma_{\hat m b}
+m^a [\nabla_n,\nabla_m]n_a
-m^a\nabla_m \nabla_a(\sigma\rho_\sigma)\\[1mm]
&\stackrel\Lambda=
\IIo^\Xi_{\hat n a}
g^{ab}\IIo^\Sigma_{\hat m b}
+m^a(\nabla_n m^c) \nabla_c n_a
-m^a (\nabla_m n^b) \nabla_b n_a\\
&\qquad\qquad+R_{\hat n\hat m\hat m\hat n}
+({\mathcal C} + H_\Sigma)H_\Sigma
\\[1mm]
&\stackrel\Lambda=
2\hh\IIo^\Xi_{\hat na}g^{ab}\IIo^\Sigma_{\hat mb}
-
\IIo^\Sigma_{\hat ma}g^{ab}\IIo^\Sigma_{\hat mb}
-H_\Sigma\hh{\mathcal C}
-W_{\hat n\hat m\hat n\hat m}
-\Rho_{\hat m \hat m}-\Rho_{\hat n\hat n}\, .
\end{align*}
In the above we used $\nabla n \stackrel\Sigma = \IIo^\Sigma + g H_\Sigma$
as well as the analogous relation and~$\nabla m$. Also $\nabla_n n= -\nabla (\sigma\rho_\sigma)+{\mathcal O}(\sigma^3)$.
This implies that $m^a\nabla_m n_a\stackrel\Sigma=
\IIo_{\hat m\hat m}^\Sigma+H_\Sigma\stackrel\Lambda={\mathcal C} + H_\Sigma$, which was used to reach the second equality.
Moreover $n|_\Sigma = \hat n$, $m|_\Xi = \hat m$, $\rho_\sigma|_\Sigma=-H_\Sigma$ and $\rho_\mu|_\Xi=-H_\Xi$.
To achieve the last line we additionally employed Equation~\nn{superclaim}.
Altogether we now get
\begin{multline*}
(\D_\bsigma^{\!\rm T})^2\log\bm \t \stackrel\Lambda 
=J-\Rho_{\hat m \hat m}-\Rho_{\hat n\hat n} + H_\Sigma^2 + H_\Xi^2 
\\[1mm]+2\hh\hh\IIo^\Xi_{\hat na}g^{ab}\IIo^\Sigma_{\hat mb}
-
\IIo^\Sigma_{\hat ma}g^{ab}\IIo^\Sigma_{\hat mb}
-W_{\hat n\hat m\hat n\hat m}
-{\mathcal C}({\mathcal C}+H_\Sigma)\, .
\end{multline*}
Employing Equation~\nn{Rodsequation} and that
 $H_{\Lambda\hookrightarrow \Sigma}=H_{\Xi}|_\Lambda
$
 (see Lemma~\ref{H2H}), we then have
$$
{Q}_{\Lambda}=
H_{\Lambda\hookrightarrow \Sigma}^2
+\Rho_{ab}\hh g^{ab}_\Lambda
+H_\Sigma^2
 - H_{\Sigma}{\mathcal C}
 -2{\mathcal C}^2
 -3\hh \IIo^\Sigma_{\hat ma}g^{ab}_\Lambda\IIo^\Sigma_{\hat mb}
 -W_{\hat n\hat m\hat n\hat m}\, .$$
Along $\Sigma$,  the Fialkow--Gau\ss\ Equation~\cite[Equation 2.7]{GW15} gives
 $$
 \Rho_{ab}-\hat n_a \Rho_{\hat n b} 
 -\hat n_b \Rho_{\hat a}+\hat n_a\hat n_b \Rho_{\hat n\hat n}
 =\Rho^\Sigma_{ab}-H_\Sigma \IIo_{ab}^\Sigma-\frac12 (g_{ab}-\hat n_a \hat n _b) H_\Sigma^2 +{\mathcal F}_{ab}^\Sigma\, ,
 $$
 where the 
 Fialkow tensor in $d=4$ is given by
 $
 {\bm {\mathcal F}}^\Sigma_{ab}=
 \bm \IIo^\Sigma_{ac} \hh \bm g_\Sigma^{cd}\hh\bm \IIo^\Sigma_{bd}-\frac14 \bm g^\Sigma_{ab}\hh\hh
 \bm K_\Sigma
+\bm W_{\bm{\hat n}a\bm {\hat n}b}$.
 Note that $\bm g_\Sigma^{ab} {\bm{\mathcal  F}}_{ab}^\Sigma=\frac14 \bm K_\Sigma$.
 Thus, along $\Lambda$,
 \begin{align*}
 P_{ab}g^{ab}_\Lambda&=J_\Sigma-P^\Sigma_{\hat m \hat m}
 +H_\Sigma{\mathcal C}
 - H_\Sigma^2
+\frac14 K_\Sigma
- {\mathcal F}^\Sigma_{ {\hat m}{\hat m}}
\, .
 \end{align*}
 Using
 $
 W_{\hat n\hat m\hat n\hat m}=
 {\mathcal F}^\Sigma_{ {\hat m}{\hat m}}
-\IIo^\Sigma_{\hat ma}g_\Lambda^{ab}\IIo^\Sigma_{\hat mb}-{\mathcal C}^2+\frac14 K_\Sigma
 $,
 this gives
 \begin{align*}
  Q_{\Lambda}
 &=
 H_{\Lambda\hookrightarrow \Sigma}^2
+J_\Sigma-P^\Sigma_{\hat m \hat m}
- 2{\mathcal F}^\Sigma_{ {\hat m}{\hat m}}
 -2\hh \IIo^\Sigma_{\hat ma}g_\Lambda^{ab}\IIo^\Sigma_{\hat mb}
 -{\mathcal C}^2\, .
 \end{align*}
A necessary condition for the singular metric $g^o$ to be Poincar\'e--Einstein is that the embedding $\Sigma\hookrightarrow (M,\cc)$ is umbilic so that $\, \bm \IIo_{ab}^\Sigma=0$.
Moreover the Fialkow tensor vanishes, this is easily verified by demonstrating that $\bm W_{\bm {\hat n} a\bm {\hat n}b}\stackrel\Sigma=0$ for Poincar\'e--Einstein structures. In that case only the first three terms on the right hand side of the above display survive and the area anomaly (see Theorem~\ref{Aanomaly}) is given by ${\mathcal A}=\frac12\int_\Lambda\big(H_{\Lambda\hookrightarrow \Sigma}^2
+J_\Sigma-P^\Sigma_{\hat m \hat m}\big)$. 
This is in concordance with the original expression for the log coefficient of Graham and Witten~\cite{GrahamWitten} (noting that their mean curvature is the sum, not average of the eigenvalues of the second fundamental form).

To see that $\bm Q_{\Lambda}^g$ is an extrinsically coupled $Q$-curvature type invariant for the submanifold $\Lambda$, we recall that the Gau\ss\ equations imply
$$
J_\Sigma-\Rho^\Sigma_{\hat m \hat m} = J_\Lambda -H_{\Lambda\hookrightarrow \Sigma}^2
+ \frac12 K_{\Lambda\hookrightarrow \Sigma}\, ,
$$
where $J_\Lambda :=\frac 12 Sc_\Lambda$. 
Thus we have
$$
{\bm Q}^{g}_{\Lambda}=J_\Lambda
+ \frac12 \bm K_{\Lambda\hookrightarrow \Sigma}
- 2\bm {\mathcal F}^\Sigma_{ \bm {\hat m}\bm {\hat m}}
 -2\hh \bm\IIo^\Sigma_{\bm{\hat m}a}\bm g_\Lambda^{ab}\bm \IIo^\Sigma_{\bm{\hat m}b}
 -\bm {\mathcal C}^2\, .
$$

Hence, using the Gau\ss--Bonnet theorem,  the area anomaly is given by 
$$
{\mathcal A}=\pi \chi_\Lambda + 
 \frac14 
 \int_\Lambda 
 \Big[
 \bm K_{\Lambda\hookrightarrow \Sigma}
 - 4\bm {\mathcal F}^\Sigma_{ {\hat m}{\hat m}}
 -4\hh \bm \IIo^\Sigma_{\hat ma}\bm g^{ab}_\Lambda \bm \IIo^\Sigma_{\hat mb}
  -2\bm{\mathcal C}^2
\Big]\, .
$$
The first term above is proportional to the integral over the intrinsic $Q$-curvature of the manifold $\Lambda$
and gives the three manifold anomaly in the regulated volume when the bulk is Poincar\'e--Einstein. 

\medskip

Finally we turn to the operator $\mathbf P_\Lambda$.
From Theorem~\ref{Plambdaop} and Equations~(\ref{LTcrit1},\ref{LTcrit2}) of Definition~\ref{LTdef}, the extrinsic Laplacian power associated to the submanifold $Q$-curvature~$\bm Q_{\Lambda}$ 
has holographic formula
$$
\mathbf P_{\Lambda}\stackrel\Lambda=
\big(\D^{\!\rm T}_\bsigmas{}\big)^2= \big( \bm {\mathcal C}
-\widetilde \D_\bsigmas \big)\circ \big(
\widehat \D_\bsigmas
+\bsigma \widetilde \D_\bmu \widehat \D_\bmu
\big)
= 
\bm {\mathcal C}\widehat \D_\bsigmas
-\widetilde \D_\bsigmas 
\widehat \D_\bsigmas
- \widetilde \D_\bmu \widehat \D_\bmu
\, .
$$
Proceeding in a choice of $g\in \cc$ the above operator becomes (along $\Lambda$)
\begin{multline*}
{\mathcal C} 
\nabla_n
-(\nabla_n-\rho_\sigma)
\circ\big(\nabla_n-\tfrac12 \sigma
\Delta\big)
-
(\nabla_m-\rho_\mu)
\circ\big(\nabla_m-\tfrac12 \mu
\Delta\big)\\=
\Delta-\nabla_n^2-\nabla_m^2
+({\mathcal C}-H_\Sigma) \nabla_n
-H_\Xi \nabla_m\, .
\end{multline*}
It is not difficult to verify that the Laplacian $\Delta_\Lambda$ along $\Lambda$ acting on scalars has holographic formula 
\begin{multline*}
g^{ab}(\nabla_a - m_a \nabla_m - n_a \nabla_n)
 (\nabla_b - m_b \nabla_m - n_b \nabla_n)
 \\[1mm]
 \stackrel\Lambda=
 \Delta 
  -\nabla_m^2
 -\nabla.m \nabla_m
 -\nabla_n^2
-\nabla.n \nabla_n \qquad \qquad\qquad\qquad
\\
\qquad\qquad\qquad
 +(\nabla_m m^a)\nabla_a
 +m^a (\nabla_m m_a) \nabla_m
 +m^a (\nabla_m n_a) \nabla_n
 \\
 +(\nabla_n n^a) \nabla_a
 +n^a (\nabla_n m_a)\nabla_m
+n^a (\nabla_n n_a)\nabla_n\, .
\end{multline*}
Note that $m^a \nabla_m m_a = \frac12 \nabla_m m^2 \stackrel\Xi= H_\Xi
\stackrel\Xi= \frac 14 \nabla.m$
and 
 $\nabla_m m_a = \frac12 \nabla_a m^2
\stackrel\Xi= \hat m_a H_\Xi$. Analogous identities hold replacing $m$ with $n$.
Also $m^a m^b \nabla_a n_b\stackrel \Lambda= \mathcal C + H_\Sigma$ and
$n^a n^b \nabla_a m_b\stackrel \Lambda=  H_\Xi$. This establishes that
$$
\mathbf P_{\Lambda}=\bm \Delta_\Lambda\, .
$$

\newcommand{\msn}[2]{\href{http://www.ams.org/mathscinet-getitem?mr=#1}{#2}}
\newcommand{\hepth}[1]{\href{http://arxiv.org/abs/hep-th/#1}{arXiv:hep-th/#1}}
\newcommand{\maths}[1]{\href{http://arxiv.org/abs/math/#1}{arXiv:math/#1}}
\newcommand{\mathph}[1]{\href{http://lanl.arxiv.org/abs/math-ph/#1}{arXiv:math-ph/#1}}
\newcommand{\arxiv}[1]{\href{http://arxiv.org/abs/#1}{arXiv:#1}}

\addtocontents{toc}{\SkipTocEntry}
\section*{Acknowledgements}
C.A. would like to thank the hospitality of QMAP at UC Davis 
and the Department of Mathematics at King's College London 
during earlier stages of this project.
A.W.~was also supported by a Simons Foundation Collaboration Grant for Mathematicians ID 317562, and  thanks the University of Auckland for warm hospitality.
A.W. and A.R.G.
 gratefully acknowledge support from the Royal Society of New Zealand via Marsden Grant 16-UOA-051.

\end{document}